\documentclass[11pt,a4paper,american]{article}
\usepackage{babel}
\usepackage[T1]{fontenc}
\usepackage[utf8]{inputenc}
\usepackage{fullpage}
\usepackage[babel]{microtype}
\usepackage{csquotes}
\usepackage{authblk}
\usepackage{titlesec}
\usepackage[giveninits,sortcites,maxbibnames=10,style=ext-numeric]{biblatex}
\usepackage{hyperref}
\usepackage{enumitem}
\usepackage{amsmath}
\usepackage{mathtools}
\usepackage{amsthm}
\usepackage{thmtools}
\usepackage{thm-restate}
\usepackage{dsfont}
\usepackage{amssymb}
\usepackage{diffcoeff}
\usepackage{zref-clever}
\usepackage{xcolor}
\usepackage{mleftright}

\hypersetup{
    hypertexnames = false,
    pdfencoding = auto,
    bookmarksnumbered = true,
    colorlinks = true,
    linkcolor = blue,
    citecolor = teal,
    pdfauthor = {Marco Pozza},
    pdftitle = {Lax–Oleinik formula for nonautonomous Hamilton–Jacobi equations on networks},
    pdfkeywords = {time-dependent Hamilton–Jacobi equations;embedded networks;nonautonomous Hamiltonians;viscosity solutions;comparison principle;representation formulas},
    pdfsubject = {MSC(2020): 35F21, 35R02, 35B51, 49L25},
}

\titlespacing{\paragraph}{0pt}{1em}{.7em}

\declaretheorem[style=plain,parent=section,title=Theorem]{theo}
\declaretheorem[style=plain,sibling=theo,title=Proposition]{prop}
\declaretheorem[style=plain,sibling=theo,title=Corollary]{cor}
\declaretheorem[style=plain,sibling=theo,title=Lemma]{lem}
\declaretheorem[style=definition,sibling=theo,title=Definition]{defin}
\declaretheorem[style=remark,sibling=theo,title=Remark]{rem}

\newlist{Hassum}{enumerate}{1}
\setlist[Hassum]{label=\textbf{(H\arabic*)},ref=\textnormal{\textbf{(H\arabic*)}},font=\normalfont}
\newlist{myenum}{enumerate}{1}
\setlist[myenum]{label=\textbf{\roman*)},ref=\textnormal{\textbf{(\roman*)}},font=\normalfont}
\newlist{mylist}{itemize}{1}
\setlist[mylist]{label=\textbullet,font=\normalfont}
\newlist{steproof}{enumerate}{2}
\setlist[steproof]{font=\bfseries}
\setlist[steproof,1]{label={Step \arabic*.},ref=\arabic*,wide=0pt,nosep}
\setlist[steproof,2]{label=Step \thesteproofi\alph*.,ref=\thesteproofi\alph*,wide=0pt,nosep}

\zcRefTypeSetup{condenum}{nocap}
\zcRefTypeSetup{equation}{
    Name-sg =,
    name-sg =,
    Name-pl =,
    name-pl =,
    Name-sg-ab =,
    name-sg-ab =,
    Name-pl-ab =,
    name-pl-ab =,
    refbounds = {\textup{(},,,\textup{)}},
    reffont = \normalfont,
}
\zcRefTypeSetup{item}{nocap}
\zcRefTypeSetup{steproof}{nocap}

\zcLanguageSetup{american}{
    cap,
    type = steproof,
    Name-sg = Step,
    name-sg = step,
    Name-pl = Steps,
    name-pl = steps,
    type = condenum,
    Name-sg = Condition,
    name-sg = condition,
    Name-pl = Conditions,
    name-pl = conditions,
}

\zcsetup{
    nocomp,
    rangesep = {--},
    nameinlink = false,
    countertype = {
        theo = theorem,
        prop = proposition,
        cor = corollary,
        lem = lemma,
        defin = definition,
        rem = remark,
        myenumi = item,
        Hassumi = condenum,
        steproofi = steproof,
        steproofii = steproof,
    },
}

\DeclareFieldFormat{titlecase:title}{\MakeSentenceCase*{#1}}

\DeclareMathOperator{\co}{co}

\newcommand{\Crm}{\mathrm{C}}
\newcommand{\Dcal}{\mathcal{D}}
\newcommand{\Ecal}{\mathcal{E}}
\newcommand{\Fcal}{\mathcal{F}}
\newcommand{\ovH}{\overline{H}}
\newcommand{\Hcal}{\mathcal{H}}
\newcommand{\ovL}{\overline{L}}
\newcommand{\whL}{\widehat{L}}
\newcommand{\Lrm}{\mathrm{L}}
\newcommand{\Nds}{\mathds{N}}
\newcommand{\Ocal}{\mathcal{O}}
\newcommand{\ovOcal}{\overline{\Ocal}}
\newcommand{\Qcal}{\mathcal{Q}}
\newcommand{\ovQcal}{\overline{\Qcal}}
\newcommand{\Rds}{\mathds{R}}
\newcommand{\Trm}{\mathrm{T}}
\newcommand{\Vbf}{\mathbf{V}}
\newcommand{\ovW}{\overline{W}}
\newcommand{\ovc}{\overline{c}}
\newcommand{\ovs}{\overline{s}}
\newcommand{\ovt}{\overline{t}}
\newcommand{\ovw}{\overline{w}}
\newcommand{\ovTheta}{\overline{\Theta}}
\newcommand{\whTheta}{\widehat{\Theta}}
\newcommand{\ovalpha}{\overline{\alpha}}
\newcommand{\whalpha}{\widehat{\alpha}}
\newcommand{\ovbeta}{\overline{\beta}}
\newcommand{\whbeta}{\widehat{\beta}}
\newcommand{\ovepsilon}{\overline{\epsilon}}
\newcommand{\whepsilon}{\widehat{\epsilon}}
\newcommand{\wtgamma}{\widetilde{\gamma}}
\newcommand{\ovell}{\overline{\ell}}
\newcommand{\ovvartheta}{\overline{\vartheta}}
\newcommand{\whvartheta}{\widehat{\vartheta}}
\newcommand{\ovxi}{\overline{\xi}}
\newcommand{\myemail}[2][]{\textsuperscript{#1}\href{mailto:#2}{\texttt{#2}}}

\mleftright

\title{Lax--Oleinik Formula for Nonautonomous Hamilton--Jacobi Equations on Networks}
\author{Marco Pozza}
\affil{Link Campus University, Rome, Italy. \textit{Email~address:}~\myemail{m.pozza@unilink.it}}
\date{}

\begin{document}

    \maketitle

    \begin{abstract}
        We provide a Lax--Oleinik-type representation formula for solutions to nonautonomous Hamilton--Jacobi equations posed on networks with a rather general geometry. The networks may possess countably many arcs and allow for the presence of loops. We consider Hamiltonians that are convex and superlinear in the momentum variable, and satisfy a Lipschitz-type condition in the time variable. The representation formula is constructed via an overall Lagrangian that accounts for both the arc-specific dynamics and vertex-based constraints, called flux limiters, which ensure the well-posedness of the problem. We prove that the corresponding action functional admits Lipschitz continuous minimizers without needing to rule out the Zeno phenomenon. Furthermore, we demonstrate that the formula yields the unique solution to the problem even when the flux limiters exceed standard upper bounds.
    \end{abstract}

    \paragraph{2020 Mathematics Subject Classification:} 35F21, 35R02, 35B51, 49L25.

    \paragraph{Keywords:} time-dependent Hamilton--Jacobi equations, embedded networks, nonautonomous Hamiltonians, viscosity solutions, comparison principle, representation formulas.

    \section{Introduction}

    True to the title, the purpose of this paper is to provide a Lax--Oleinik-type representation formula for solutions to nonautonomous Hamilton--Jacobi equations posed on networks with a rather general geometry.

    The relevance of this kind of formula lies in its ability to enhance the qualitative analysis of Hamilton--Jacobi equations on networks, similarly to what happens for manifolds or Euclidean spaces. In this setting, in fact, they essentially come into play in a variety of theoretical constructions, such as weak KAM theory~\cite{Fathi08}, the existence of regular subsolutions~\cite{Bernard07}, homogenization problems~\cite{Souganidis99}, large time behavior of solutions~\cite{Fathi98,DaviniSiconolfi06,Ishii08} and selection principles in the ergodic approximation~\cite{DaviniFathiIturriagaZavidovique16}. Moreover, the representation formula provided in~\cite{PozzaSiconolfi23} for autonomous equations posed on networks plays a central role in many situations: large time behavior of solutions~\cite{Pozza25_2}, homogenization problems~\cite{PozzaSiconolfiSorrentino24}, and numerical schemes for the approximation of both solutions~\cite{CarliniSiconolfi25} and effective Hamiltonians~\cite{CoscettiPozza25}.

    We consider a connected network \(\Gamma\) embedded in \(\Rds^{N}\) with at most countable many \emph{arcs} \(\gamma\), namely regular simple curves with Euclidean length \(\lvert \gamma \rvert\) and parametrized in \([0, \lvert \gamma \rvert]\), linking points of \(\Rds^{N}\) called \emph{vertices}. A Hamiltonian on \(\Gamma\) is a collection of Hamiltonians \(H_{\gamma} \colon [0, \lvert \gamma \rvert] \times [0, \infty) \times \Rds \to \Rds\) indexed by the arcs, depending on state, time and momentum variable, with the crucial feature that Hamiltonians associated to arcs possessing different support are totally unrelated. We are interested in the corresponding family of equations
    \begin{equation}\label{eq:introprob}
        \difcp{U}{t} + H_{\gamma}(s, t, \difcp{U}{s}) \qquad \text{in \((0, \lvert \gamma \rvert) \times (0, \infty)\)},
    \end{equation}
    and a solution on \(\Gamma\) is a continuous function \(u \colon \Gamma \times [0, \infty) \to \Rds\) such that \(u(\gamma(s), t)\) solves~\zcref{eq:introprob}, in the viscosity sense, for every arc \(\gamma\) and satisfies additional conditions on the discontinuity interfaces
    \begin{equation}\label{eq:discinterf}
        \{(x, t), \, t \in [0, \infty)\} \qquad \text{with \(x \in \Vbf\)},
    \end{equation}
    where \(\Vbf\) denotes the set of vertices. It has been established in~\cite{ImbertMonneau17} that, to get existence and uniqueness of solutions, \zcref{eq:introprob} must be coupled not only with a continuous initial datum at \(t = 0\), but also with a \emph{flux limiter}, that is the choice of appropriate maps \(c_{x} \colon [0, \infty) \to \Rds\) for each \(x \in \Vbf\). We also report the contribution of~\cite{LionsSouganidis17,Morfe20}, where the time-dependent problem is studied in junctions with Kirchoff-type Neumann conditions at vertices.

    The notion of solution employed here is an extension of the one given in~\cite{Siconolfi22} for the autonomous case, where flux limiters appear in the condition the solution must satisfy on the discontinuity interfaces~\zcref{eq:discinterf} and, among other things, bound from above the time derivative of subsolutions on vertices. Even if an initial datum is fixed, solutions can change according to the choice of flux limiter so that they must be taken into account in representation formulas.

    It has been proved in~\cite{CarliniCoscettiPozza25} that, for the autonomous equation, this definition of solution is equivalent to the one introduced in~\cite{ImbertMonneau17}. While we expect this equivalence to hold in the nonautonomous setting, its proof is outside the scope of the present work.

    We assume that the \(H_{\gamma}\) are continuous on all their arguments plus convex and superlinear in the momentum variable, which are usual conditions to ensure the validity of the Lax--Oleinik formula for time-dependent problems posed on manifolds and Euclidean settings. Under these conditions it is natural to define the Lagrangians \(L_{\gamma}\) as the convex conjugates of the Hamiltonians. We further define an overall Lagrangian \(L\) on the whole tangent bundle \(\Trm \Gamma\), playing the role of integrand in the Lax--Oleinik formula, by gluing together the \(L_{\gamma}\) and taking into account the chosen flux limiter \(c_{x}\) setting
    \begin{equation*}
        L(x, q, t) = \frac{\lvert q \rvert^{2}}{2} + c_{x}(t) \qquad \text{for any \(x \in \Vbf\)}.
    \end{equation*}
    This Lagrangian is convex in the speed variable, but it may be discontinuous at vertices and, in general, is not lower semicontinuous. Nevertheless, we show in \zcref{actlsc} that the action
    \begin{equation*}
        \xi \longmapsto \int_{t_{1}}^{t_{2}} L \left( \xi(\tau), \dot{\xi}(\tau), \tau \right) \dl \tau,
    \end{equation*}
    defined on the set of absolutely continuous curves from \([t_{1}, t_{2}]\) into \(\Gamma\), is lower semicontinuous.

    To ensure the existence of Lipschitz continuous minimizers for the action we also require a Lipschitz continuity condition on the time variable for both the Hamiltonians and the flux limiter, see \zcref[nosort]{condHtlip,condfllip}. We point out that, in the manifolds/Euclidean setting, a classical alternative condition to obtain Lipschitz continuous minimizers is Lipschitz continuity with respect to the state variable, which is not suitable for our problem due to the discontinuity of the overall Lagrangian at vertices.

    Our work extends the representation formula provided in~\cite{PozzaSiconolfi23} for solutions to autonomous equations on networks to the nonautonomous case, while introducing several improvements. First, we no longer need to rule out Zeno phenomena in the minimization of the action functional---namely, the possibility that candidate minimizers wildly oscillate around a discontinuity interface---which was a focal point of the aforementioned paper. Nonetheless, since this phenomenon is notoriously disruptive for both analysis and simulations, we provide a criterion to rule it out in \zcref{sec:nozeno} motivated by potential future applications. \\
    Another improvement concerns the flux limiter. In~\cite{PozzaSiconolfi23} it was the choice of a constant \(c_{x}\) for every \(x \in \Vbf\) which must satisfy an upper bound in order to avoid the Zeno phenomenon. Here each \(c_{x}\) is a time-dependent function and, while we initially impose an upper bound, we show in \zcref{sec:beyondfl} that our representation formula still provides the unique solution to the problem even for flux limiters that do not respect this bound. \\
    Furthermore, we consider a network under more general conditions: the arcs can be countable many rather than finite, and \emph{loops}---arcs with initial and final points coinciding---are admitted. We emphasize that in the known literature about time-dependent Hamilton--Jacobi equations on networks no loop are admitted, see, e.g., \cite{ImbertMonneau17,Siconolfi22}.

    Previous contributions on the same topic we deal with include~\cite{ImbertMonneauZidani12}, where the results are given in the case of junctions (i.e., networks with a single vertex), and~\cite{IturriagaSanchezMorgado17}, whose focus is on the relationship between the Lax--Oleinik formula and weak KAM theory and the connection with time-dependent problems is not taken into account. In both cases the Hamiltonians are assumed to be autonomous. \\
    Additionally, a finite horizon deterministic mean field game on networks with finitely many arcs is analyzed in~\cite{AchdouMannucciMarchiTchou24}. The authors prove that the value function of the related optimal control problem is a solution to a nonautonomous equation with Hamiltonians of the form
    \begin{equation*}
        H_{\gamma}(s, t, \mu) = \frac{\lvert \mu \rvert^{2}}{2} - \ell_{\gamma}(s, t),
    \end{equation*}
    where \(\ell_{\gamma}\) is a bounded continuous function.

    The paper is organized as follows: \zcref{sec:preliminaries} provides some basics facts regarding networks, Hamiltonians defined on it, and presents the main hypothesis. \zcref[S]{sec:minact} is devoted to the definition of the overall Lagrangian \(L\) and to the properties of the corresponding action functional; to avoid weighing down this \zcref[noref,nocap]{sec:minact}, we have decided to postpone the more technically and lengthy proofs to \zcref{sec:mincurve}. The Hamilton--Jacobi equation is introduced in \zcref{sec:problem}. \zcref[S]{sec:local} gathers a comparison principle and a representation formula for solutions to the local equations~\zcref{eq:introprob}. Following~\cite{Siconolfi22}, in \zcref{sec:disc} we define a semidiscrete problem and exploit its connection with our time-dependent equation to prove a comparison result. The representation formula is the main subject of \zcref{sec:solution}. Finally, \zcref{sec:Honbounded} records some results about one-dimensional Hamiltonians on an interval which are needed in this paper.

    \paragraph{Acknowledgments.} The author is a member of the INdAM research group GNAMPA.

    \section{Preliminaries}\label{sec:preliminaries}

    \subsection{Networks}

    We fix a dimension \(N\) and \(\Rds^{N}\) as ambient space. An \emph{embedded network}, or \emph{continuous graph}, is a subset \(\Gamma \subset \Rds^{N}\) of the form
    \begin{equation*}
        \Gamma = \bigcup_{\gamma \in \Ecal} \gamma([0, \lvert \gamma \rvert]) \subset \Rds^{N},
    \end{equation*}
    where \(\Ecal\) is a finite or countable collection of regular (i.e., \(\Crm^{1}\) with non-vanishing derivative) simple oriented curves, called \emph{arcs} of the network, with Euclidean length \(\lvert \gamma \rvert\) and parameterized by arc length in \([0, \lvert \gamma \rvert]\) (i.e., \(\lvert \dot{\gamma} \rvert \equiv 1\) for any \(\gamma \in \Ecal\)). Note that we are also assuming existence of one-sided derivatives at the endpoints \(0\) and \(\lvert \gamma \rvert\). We stress that a regular change of parameters does not affect our results.

    Observe that on the support of any arc \(\gamma\) we also consider the inverse parametrization defined as
    \begin{equation*}
        \wtgamma(s) \coloneqq \gamma(\lvert \gamma \rvert - s) \qquad \text{for \(s \in [0, \lvert \gamma \rvert]\)}.
    \end{equation*}
    We call \(\wtgamma\) the \emph{inverse arc} of \(\gamma\). We assume
    \begin{equation}\label{eq:nosovrap}
        \gamma((0, \lvert \gamma \rvert)) \cap \gamma'([0, \lvert \gamma \rvert]) = \emptyset \qquad \text{whenever \(\gamma' \ne \gamma\), \(\wtgamma\)}.
    \end{equation}

    We call \emph{vertices} the initial and terminal points of the arcs, and denote by \(\Vbf\) the sets of all such vertices. It follows from~\zcref{eq:nosovrap} that vertices are the only points where arcs with different support intersect and, in particular,
    \begin{equation*}
        \gamma((0, \lvert \gamma \rvert)) \cap \Vbf = \emptyset \qquad \text{for any \(\gamma \in \Ecal\)}.
    \end{equation*}

    We assume that the network is connected, namely given two vertices there is a finite concatenation of arcs linking them.

    For each \(x \in \Vbf\), we define \(\Ecal_{x} \coloneqq \{\gamma \in \Ecal : \gamma(0) = x\}\).

    We require that \(\Gamma\) satisfies the next conditions:
    \begin{enumerate}[label=\textbf{(\ensuremath{\mathbf{\Gamma}}\arabic*)},ref=\textnormal{\textbf{(\ensuremath{\mathbf{\Gamma}}\arabic*)}},font=\normalfont]\zcsetup{reftype=condenum}
        \item\label{condlengthbound} \(\inf\limits_{\gamma \in \Ecal} \lvert \gamma \rvert > 0\);
        \item \(\Gamma\) is locally finite, i.e., \(\Ecal_{x}\) is finite for any \(x \in \Vbf\).
    \end{enumerate}

    Usually, in the known literature about time-dependent Hamilton--Jacobi equations on networks, see, e.g., \cite{ImbertMonneau17,Siconolfi22,LionsSouganidis17}, \emph{loops}---any arc \(\gamma\) with \(\gamma(0) = \gamma(\lvert \gamma \rvert)\)---are not admitted. In this paper we do not assume this condition.

    The network \(\Gamma\) inherits a geodesic distance, denoted with \(d_{\Gamma}\), from the Euclidean metric of \(\Rds^{N}\). \zcref[S]{condlengthbound} ensures that \(d_{\Gamma}\) is a complete metric. It is clear that given \(x, y \in \Gamma\) there is at least a geodesic linking them.

    We consider the following differential structure on \(\Gamma\):
    \begin{mylist}
        \item If \(x \in \Gamma \setminus \Vbf\) let \(\gamma \in \Ecal\) and \(s \in (0, \lvert \gamma \rvert)\) be such that \(x = \gamma(s)\), then we set the \emph{tangent space} of \(\Gamma\) at \(x\) as
        \begin{equation*}
            \Trm_{x} \Gamma \coloneqq \left\{ q \in \Rds^{N} : \text{\(q = \lambda \dot{\gamma}(s)\) with \(\lambda \in \Rds\)} \right\}.
        \end{equation*}
        We point out that, according to~\zcref{eq:nosovrap}, \(\dot{\gamma}(s)\) is univocally determined up to a sign.
        \item Given \(x \in \Vbf\), we define \(\Trm_{x} \Gamma\) as the convex hull of the \(\lambda \dot{\gamma}(0)\) with \(\lambda \in \Rds\) and \(\gamma \in \Ecal_{x}\), i.e.,
        \begin{equation*}
            \Trm_{x} \Gamma \coloneqq \co \left\{ q \in \Rds^{N} : \text{\(q = \lambda \dot{\gamma}(0)\) with \(\lambda \in \Rds\) and \(\gamma \in \Ecal_{x}\)} \right\}.
        \end{equation*}
    \end{mylist}
    We further define the \emph{cotangent space} \(\Trm^{*}_{x} \Gamma\) as the dual space of \(\Trm_{x} \Gamma\), and set the \emph{tangent bundle} \(\Trm \Gamma\) and \emph{cotangent bundle} \(\Trm^{*} \Gamma\) as the disjoint union of the \(\Trm_{x} \Gamma\) and \(\Trm_{x}^{*} \Gamma\) at all points of \(\Gamma\), respectively.

    \subsection{Hamiltonians on \texorpdfstring{\(\Gamma\)}{Γ}}

    A \emph{Hamiltonian} on \(\Gamma\) is a collection of Hamiltonians \(\Hcal \coloneqq \{H_{\gamma}\}_{\gamma \in \Ecal}\), where
    \begin{align*}
        H_{\gamma} \colon [0, \lvert \gamma \rvert] \times \Rds^{+} \times \Rds & \longrightarrow \Rds \\
        (s, t, \mu) & \longmapsto H_{\gamma}(s, t, \mu),
    \end{align*}
    satisfying
    \begin{equation}\label{eq:Hcomp}
        H_{\wtgamma}(s, t, \mu) = H_{\gamma}(\lvert \gamma \rvert - s, t, - \mu) \qquad \text{for any arc \(\gamma\), \(s \in [0, \lvert \gamma \rvert]\), \(\mu \in \Rds\) and \(t \in \Rds^{+}\)}.
    \end{equation}
    We emphasize that,
    \begin{mylist}
        \item apart the above compatibility condition, the Hamiltonians \(H_{\gamma}\) are \emph{unrelated};
        \item we are not assuming any periodicity on \(H_{\gamma}\) when \(\gamma\) is a loop.
    \end{mylist}

    We require that
    \begin{Hassum}
        \item\label{condHcont} each \(H_{\gamma}\) is continuous;
        \item\label{condHconv} each \(H_{\gamma}\) is convex in \(\mu\);
        \item\label{condHcoer} for every \(T \in \Rds^{+}\) there exist the nondecreasing, convex and superlinearly coercive functions \(\vartheta_{T}, \Theta_{T} \colon \Rds^{+} \to \Rds\) such that, for any \(\gamma \in \Ecal\), \(s \in [0, \lvert \gamma \rvert]\), \(\mu \in \Rds\) and \(t \in [0, T]\):
        \begin{equation*}
            \vartheta_{T}(\lvert \mu \rvert) \le H_{\gamma}(s, t, \mu) \le \Theta_{T}(\lvert \mu \rvert);
        \end{equation*}
        \item\label{condHtlip} for each \(T \in \Rds^{+}\) there are \(\alpha_{T}, \beta_{T} \in \Rds^{+}\) and \(\epsilon_{T} \in (0, T]\) such that, for any \(\gamma \in \Ecal\), \(s \in [0, \lvert \gamma \rvert]\) and \(\mu \in \Rds\),
        \begin{equation*}
            H_{\gamma}(s, t_{1}, \mu) - H_{\gamma}(s, t_{2}, \mu) \le \left( \alpha_{T} \frac{H_{\gamma}(s, t_{2}, \mu)}{1 + \lvert \mu \rvert} + \beta_{T} \right) \lvert t_{2} - t_{1} \rvert,
        \end{equation*}
        whenever \(t_{1}, t_{2} \in [0, T]\) and \(\lvert t_{2} - t_{1} \rvert < \epsilon_{T}\).
    \end{Hassum}

    Under \zcref{condHcoer,condHconv} is natural to define, for any \(\gamma \in \Ecal\), the \emph{Lagrangian} corresponding to \(H_{\gamma}\) as
    \begin{equation*}
        L_{\gamma}(s, \lambda, t) \coloneqq \max_{\mu \in \Rds} \{\lambda \mu - H_{\gamma}(s, t, \mu)\}.
    \end{equation*}

    \begin{prop}\label{loclagprop}
        The Lagrangian \(L_{\gamma} \colon [0, 1] \times \Rds \times \Rds^{+} \to \Rds\) is a continuous map convex in the speed variable so that, for any arc \(\gamma\), \(s \in [0, \lvert \gamma \rvert]\), \(\lambda \in \Rds\) and \(t \in \Rds^{+}\),
        \begin{equation}\label{eq:loclagprop.1}
            L_{\wtgamma}(s, \lambda, t) = L_{\gamma}(\lvert \gamma \rvert - s, - \lambda, t).
        \end{equation}
        Furthermore, for every \(T > 0\),
        \begin{myenum}
            \item\label{en:loclagprop1} there exist two nondecreasing, convex and superlinearly continuous maps \(\whvartheta_{T}, \whTheta_{T} \colon \Rds^{+} \to \Rds\) such that, for any \(\gamma \in \Ecal\), \(s \in [0, \lvert \gamma \rvert]\), \(\lambda \in \Rds\) and \(t \in [0, T]\),
            \begin{equation*}
                \whvartheta_{T}(\lvert \lambda \rvert) \le L_{\gamma}(s, \lambda, t) \le \whTheta_{T}(\lvert \lambda \rvert);
            \end{equation*}
            \item\label{en:loclagprop2} there are \(\whalpha_{T}, \whbeta_{T} \in \Rds^{+}\) and \(\whepsilon_{T} \in (0, T]\) such that, for any \(\gamma \in \Ecal\),
            \begin{equation}\label{eq:loclagprop.2}
                L_{\gamma}(s, \lambda, t_{1}) - L_{\gamma}(s, \lambda, t_{2}) \le \left( \whalpha_{T} \lvert \lambda \rvert + \whbeta_{T} \right) \lvert t_{2} - t_{1} \rvert,
            \end{equation}
            whenever \(t_{1}, t_{2} \in [0, T]\), \(\lvert t_{2} - t_{1} \rvert < \whepsilon_{T}\), \(s \in [0, \lvert \gamma \rvert]\) and \(\mu \in \Rds\).
        \end{myenum}
    \end{prop}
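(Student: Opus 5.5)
The plan is to derive every claim directly from the definition of \(L_{\gamma}\) as the convex conjugate of \(H_{\gamma}\) in the momentum variable, using \zcref{condHcont,condHconv,condHcoer,condHtlip}. Convexity in \(\lambda\) is immediate, since \(L_{\gamma}\) is a supremum of affine functions of \(\lambda\). For~\zcref{eq:loclagprop.1} I will change variable \(\mu \mapsto -\mu\) in the maximum and use~\zcref{eq:Hcomp}:
\[
L_{\wtgamma}(s,\lambda,t)=\max_{\mu}\{\lambda\mu-H_{\gamma}(\lvert\gamma\rvert-s,t,-\mu)\}=\max_{\mu}\{(-\lambda)\mu-H_{\gamma}(\lvert\gamma\rvert-s,t,\mu)\}.
\]

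For \zcref{en:loclagprop1}, fix \(T\) and define \(\whvartheta_{T}(r) \coloneqq \max_{\mu}\{r\lvert\mu\rvert-\Theta_{T}(\lvert\mu\rvert)\}\) and \(\whTheta_{T}(r) \coloneqq \max_{\mu}\{r\lvert\mu\rvert-\vartheta_{T}(\lvert\mu\rvert)\}\). These are the conjugates of the even extensions of \(\Theta_{T}\) and \(\vartheta_{T}\), restricted to \(\Rds^{+}\). The bounds in \zcref{condHcoer} reverse under conjugation: for the lower bound choose \(\mu\) with sign equal to that of \(\lambda\), and for the upper bound use \(\lambda\mu\le\lvert\lambda\rvert\lvert\mu\rvert\). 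Since \(\vartheta_{T}\) and \(\Theta_{T}\) are superlinear and finite, their conjugates are finite, convex, nondecreasing on \(\Rds^{+}\) and superlinear.

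Continuity of \(L_{\gamma}\) follows by a compactness argument. On a compact set of \((s,\lambda,t)\) with \(t\le T\) and \(\lvert\lambda\rvert\le R\), the maximizers satisfy
\[
\lvert\mu\rvert\cdot R+\ldots\ \ge\ \vartheta_{T}(\lvert\mu\rvert)+\whvartheta_{T}(\lvert\lambda\rvert),
\]
so they lie in a fixed bounded set \(\lvert\mu\rvert\le M\) by superlinearity of \(\vartheta_{T}\). Hence \(L_{\gamma}\) is locally a maximum over a compact set of the continuous function \(\lambda\mu-H_{\gamma}\), and is therefore continuous.

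The main step is \zcref{en:loclagprop2}, and it is where I expect the only real difficulty: the state-dependent factor \(H/(1+\lvert\mu\rvert)\) in \zcref{condHtlip} must be turned into a bound linear in \(\lvert\lambda\rvert\). Take \(\whepsilon_{T}=\epsilon_{T}\). Let \(\mu_{1}\) be a maximizer for \(L_{\gamma}(s,\lambda,t_{1})\). Testing the maximum defining \(L_{\gamma}(s,\lambda,t_{2})\) with \(\mu_{1}\), and then applying \zcref{condHtlip} with the roles of \(t_{1},t_{2}\) swapped, gives
\[
L_{\gamma}(s,\lambda,t_{1})-L_{\gamma}(s,\lambda,t_{2})\le H_{\gamma}(s,t_{2},\mu_{1})-H_{\gamma}(s,t_{1},\mu_{1})\le\Bigl(\alpha_{T}\frac{H_{\gamma}(s,t_{1},\mu_{1})}{1+\lvert\mu_{1}\rvert}+\beta_{T}\Bigr)\lvert t_{2}-t_{1}\rvert .
\]
The key observation is that optimality of \(\mu_{1}\) together with \zcref{en:loclagprop1} yields
\[
H_{\gamma}(s,t_{1},\mu_{1})=\lambda\mu_{1}-L_{\gamma}(s,\lambda,t_{1})\le\lvert\lambda\rvert\lvert\mu_{1}\rvert-\min\whvartheta_{T}.
\]
Set \(C_{T}\coloneqq\max\{0,-\min\whvartheta_{T}\}\); the minimum is finite because \(\whvartheta_{T}\) is superlinear. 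Then
\[
\frac{H_{\gamma}(s,t_{1},\mu_{1})}{1+\lvert\mu_{1}\rvert}\le\lvert\lambda\rvert+C_{T},
\]
and if this quotient is negative we simply bound it by \(0\). This proves~\zcref{eq:loclagprop.2} with \(\whalpha_{T}=\alpha_{T}\) and \(\whbeta_{T}=\alpha_{T}C_{T}+\beta_{T}\).
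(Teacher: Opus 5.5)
Your proposal is correct and follows essentially the same route as the paper: item (i) comes from conjugating the bounds in \textbf{(H3)}, and item (ii) from taking a maximizer \(\mu\) for \(L_{\gamma}(s,\lambda,t_{1})\), applying \textbf{(H4)} with \(t_{1},t_{2}\) swapped, and bounding \(H_{\gamma}(s,t_{1},\mu)/(1+\lvert\mu\rvert)\) by \(\lvert\lambda\rvert\) plus a constant depending on \(\whvartheta_{T}(0)\), which gives \(\whalpha_{T}=\alpha_{T}\). Your compactness argument for continuity, with maximizers confined to a bounded set by the superlinearity of \(\vartheta_{T}\), simply fills in the step the paper calls apparent.
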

    \begin{proof}
        Continuity and convexity are apparent, while~\zcref{eq:loclagprop.1} is a consequence of~\zcref{eq:Hcomp}. \zcref[S]{en:loclagprop1} can be deduced from \zcref{condHcoer} by setting \(\whvartheta_{T}\) and \(\whTheta_{T}\) as the convex conjugates of \(\Theta_{T}\) and \(\vartheta_{T}\), respectively. \\
        Next we prove \zcref{en:loclagprop2}. First we fix \(T > 0\) and let \(\epsilon_{T}\) be as in \zcref{condHtlip}, then, given \(\gamma \in \Ecal\), \(s \in [0, \lvert \gamma \rvert]\), \(\lambda \in \Rds\) and \(t_{1}, t_{2} \in [0, T]\) with \(\lvert t_{1} - t_{2} \rvert < \epsilon_{T}\), we choose a \(\mu \in \Rds\) such that
        \begin{equation*}
            L_{\gamma}(s, \lambda, t_{1}) = \lambda \mu - H_{\gamma}(s, t_{1}, \mu).
        \end{equation*}
        Exploiting \zcref{condHtlip} we then have
        \begin{equation}\label{eq:loclagprop1}
            \begin{aligned}
                L_{\gamma}(s, \lambda, t_{1}) - L_{\gamma}(s, \lambda, t_{2}) & \le \lambda \mu - H_{\gamma}(s, t_{1}, \mu) - \lambda \mu + H_{\gamma}(s, t_{2}, \mu) \\
                & \le \left( \alpha_{T} \frac{H_{\gamma}(s, t_{1}, \mu)}{1 + \lvert \mu \rvert} + \beta_{T} \right) \lvert t_{2} - t_{1} \rvert.
            \end{aligned}
        \end{equation}
        To conclude we observe that, according to \zcref{en:loclagprop1},
        \begin{equation*}
            - \left\lvert \whvartheta_{T}(0) \right\rvert + H_{\gamma}(s, t_{1}, \mu) \le L_{\gamma}(s, \lambda, t_{1}) + H_{\gamma}(s, t_{1}, \mu) = \lambda \mu \le \lvert \lambda \rvert \lvert \mu \rvert < \lvert \lambda \rvert(1 + \lvert \mu \rvert),
        \end{equation*}
        thereby
        \begin{equation*}
            \frac{H_{\gamma}(s, t_{1}, \mu)}{1 + \lvert \mu \rvert} < \lvert \lambda \rvert + \left\lvert \whvartheta_{T}(0) \right\rvert,
        \end{equation*}
        which together with~\zcref{eq:loclagprop1} proves~\zcref{eq:loclagprop.2}.
    \end{proof}

    \section{Minimal Action Functional}\label{sec:minact}

    \subsection{The Lagrangian on \texorpdfstring{\(\Gamma\)}{Γ}}

    We define an overall Lagrangian \(L\) on the tangent bundle of \(\Gamma\) by gluing together the local Lagrangians \(L_{\gamma}\). Central to this construction is the flux limiter, a function defined on \(\Vbf\) that characterizes the behavior of \(L\) on the vertices.

    \begin{defin}\label{fldef}
        A \emph{flux limiter} is any map \((x, t) \mapsto c_{x}(t)\) from \(\Vbf \times \Rds^{+}\) to \(\Rds\) continuous in \(t\) and such that
        \begin{equation}\label{eq:fldef}
            c_{x}(t) \le \min_{\gamma \in \Ecal_{x}} L_{\gamma}(0, 0, t) = \min_{\gamma \in \Ecal_{x}} \max_{\mu \in \Rds} - H_{\gamma}(0, t, \mu) \qquad \text{for any \((x, t) \in \Vbf \times \Rds^{+}\)}.
        \end{equation}
    \end{defin}

    In this paper we will only consider flux limiters satisfying the following two conditions:
    \begin{enumerate}[label=\textbf{(FL\arabic*)},ref=\textnormal{\textbf{(FL\arabic*)}},font=\normalfont]\zcsetup{reftype=condenum}
        \item\label{condflbound} for each \(T \in \Rds^{+}\) there exists a constant \(C_{T}\) such that
        \begin{equation*}
            \lvert c_{x}(t) \rvert \le C_{T} \qquad \text{for any \(x \in \Gamma\) and \(t \in [0, T]\)};
        \end{equation*}
        \item\label{condfllip} for each \(T \in \Rds^{+}\) there are \(\whbeta_{T} \in \Rds^{+}\) and \(\whepsilon_{T} \in (0, T]\) such that, for any \(x \in \Vbf\),
        \begin{equation*}
            c_{x}(t_{1}) - c_{x}(t_{2}) \le \whbeta_{T} \lvert t_{2} - t_{1} \rvert
        \end{equation*}
        whenever \(t_{1}, t_{2} \in [0, T]\) and \(\lvert t_{2} - t_{1} \rvert < \whepsilon_{T}\).
    \end{enumerate}

    The next \zcref[noref]{maxfl}, which ensures that these conditions hold true for the maximal flux limiter, is an easy consequence of \zcref{loclagprop}.

    \begin{prop}\label{maxfl}
        The flux limiter
        \begin{equation*}
            (x, t) \longmapsto \min_{\gamma \in \Ecal_{x}} L_{\gamma}(0, 0, t)
        \end{equation*}
        satisfies \zcref{condfllip,condflbound}.
    \end{prop}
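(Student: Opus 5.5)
We need to show that \(c_{x}(t) \coloneqq \min_{\gamma \in \Ecal_{x}} L_{\gamma}(0, 0, t)\) is a flux limiter satisfying \zcref{condflbound,condfllip}. First, it is well defined: by local finiteness \(\Ecal_{x}\) is finite, so the minimum exists. It is continuous in \(t\) because each \(L_{\gamma}(0,0,\cdot)\) is continuous by \zcref{loclagprop} and a finite minimum of continuous maps is continuous. It satisfies~\zcref{eq:fldef} with equality, hence it is a flux limiter.

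For \zcref{condflbound}, fix \(T\) and apply \zcref{en:loclagprop1} of \zcref{loclagprop} with \(\lambda = 0\). For every \(\gamma\) and \(t \in [0,T]\) this gives
\begin{equation*}
    \whvartheta_{T}(0) \le L_{\gamma}(0,0,t) \le \whTheta_{T}(0).
\end{equation*}
These bounds are uniform in \(\gamma\), so they pass to the minimum over \(\Ecal_{x}\). We may therefore take \(C_{T} \coloneqq \max\{\lvert \whvartheta_{T}(0) \rvert, \lvert \whTheta_{T}(0) \rvert\}\), which is independent of \(x\).

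For \zcref{condfllip}, fix \(T\) and use \zcref{en:loclagprop2} with \(\lambda = 0\). Whenever \(t_{1}, t_{2} \in [0,T]\) and \(\lvert t_{2}-t_{1}\rvert < \whepsilon_{T}\), this yields
\begin{equation*}
    L_{\gamma}(0,0,t_{1}) - L_{\gamma}(0,0,t_{2}) \le \whbeta_{T}\lvert t_{2}-t_{1}\rvert ,
\end{equation*}
uniformly in \(\gamma\). To pass this one-sided estimate to the minimum, pick \(\gamma_{2} \in \Ecal_{x}\) realizing \(c_{x}(t_{2})\). Then
\begin{equation*}
    c_{x}(t_{1}) - c_{x}(t_{2}) \le L_{\gamma_{2}}(0,0,t_{1}) - L_{\gamma_{2}}(0,0,t_{2}) \le \whbeta_{T}\lvert t_{2}-t_{1}\rvert .
\end{equation*}
The constants \(\whbeta_{T}\) and \(\whepsilon_{T}\) are those of \zcref{loclagprop}, so they do not depend on \(x\).

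The only point needing care is this last step. The Lipschitz-type bound is one-sided, so we must choose the minimizer at \(t_{2}\), not at \(t_{1}\), for the inequality to go in the right direction. We also rely on the uniformity in \(\gamma\) of all constants, which \zcref{loclagprop} provides.
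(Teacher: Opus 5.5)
Your proof is correct and is exactly the argument the paper has in mind: the paper writes no proof, only calling the result an easy consequence of \zcref{loclagprop}, which amounts to taking \(s = 0\), \(\lambda = 0\) in items (i) and (ii) there, using that the constants are uniform in \(\gamma\), and passing to the minimum over the finite set \(\Ecal_{x}\) by choosing the minimizing arc at \(t_{2}\), as you do. Since the hypothesis in (ii) is symmetric in \(t_{1}, t_{2}\), your argument in fact gives a two-sided local Lipschitz bound for \(c_{x}\), although only the one-sided bound is needed for \zcref{condfllip}.
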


    Fixed a flux limiter \(c_{x}\), we define an overall Lagrangian \(L \colon \Trm \Gamma \times \Rds^{+} \to \Rds\) in the following way:
    \begin{mylist}
        \item if \(x \notin \Vbf\), i.e., \(x = \gamma(s)\) for some \(\gamma \in \Ecal\) and \(s \in (0, \lvert \gamma \rvert)\),
        \begin{equation*}
            L(x, q, t) \coloneqq L_{\gamma}(s, q \cdot \dot{\gamma}(s), t) ;
        \end{equation*}
        \item if \(x \in \Vbf\) we set
        \begin{equation*}
            L(x, q, t) \coloneqq \frac{\lvert q \rvert^{2}}{2} + c_{x}(t).
        \end{equation*}
    \end{mylist}

    Exploiting~\zcref{loclagprop,condfllip,condflbound} we get:

    \begin{prop}\label{gLagprop}
        The Lagrangian \(L\) is a well-defined measurable function on \(\Trm \Gamma \times \Rds^{+}\), is convex in the speed variable and for each \(T \in \Rds^{+}\)
        \begin{mylist}
            \item there exist the nondecreasing, convex and superlinearly coercive functions \(\ovvartheta_{T}, \ovTheta_{T} \colon \Rds^{+} \to \Rds\) such that, for any \((x, q) \in \Trm \Gamma\) and \(t \in [0, T]\):
            \begin{equation*}
                \ovvartheta_{T}(\lvert q \rvert) \le L(x, q, t) \le \ovTheta_{T}(\lvert q \rvert) ;
            \end{equation*}
            \item there are \(\ovalpha_{T}, \ovbeta_{T} \in \Rds^{+}\) and \(\ovepsilon_{T} \in (0, T]\) such that
            \begin{equation*}
                L(x, q, t_{1}) - L(x, q, t_{2}) \le \left( \ovalpha_{T} \lvert q \rvert + \ovbeta_{T} \right) \lvert t_{2} - t_{1} \rvert,
            \end{equation*}
            whenever \((x, q) \in \Trm \Gamma\), \(t_{1}, t_{2} \in [0, T]\) and \(\lvert t_{2} - t_{1} \rvert < \ovepsilon_{T}\).
        \end{mylist}
    \end{prop}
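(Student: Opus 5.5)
The proof splits into interior points of arcs and vertices, treating well-definedness, measurability, convexity, growth bounds and time regularity in turn.

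\emph{Well-definedness.} For \(x \notin \Vbf\), by~\zcref{eq:nosovrap} the representation \(x = \gamma(s)\) is unique up to replacing \((\gamma, s)\) with \((\wtgamma, \lvert \gamma \rvert - s)\). In that case \(\dot{\wtgamma}(\lvert \gamma \rvert - s) = -\dot{\gamma}(s)\). So~\zcref{eq:loclagprop.1} of \zcref{loclagprop} gives
\[
L_{\wtgamma}(\lvert \gamma \rvert - s, q \cdot \dot{\wtgamma}(\lvert \gamma \rvert - s), t) = L_{\gamma}(s, q \cdot \dot{\gamma}(s), t).
\]
At vertices the definition is explicit.

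\emph{Measurability and convexity.} \(\Trm \Gamma \times \Rds^{+}\) is a countable union of the pieces over open arcs (one per pair \(\{\gamma, \wtgamma\}\)) together with the countably many vertex fibers. On each piece \(L\) is continuous: on an arc piece by continuity of \(L_{\gamma}\) and of \(\dot{\gamma}\), and on a vertex fiber by continuity of \(c_{x}\). Hence \(L\) is Borel measurable. Convexity in \(q\) holds because on arcs \(L\) is the composition of the convex map \(L_{\gamma}(s, \cdot, t)\) with a linear map, and at vertices \(\lvert q \rvert^{2}/2\) is convex.

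\emph{Bounds.} On arcs, \(\lvert q \cdot \dot{\gamma}(s) \rvert = \lvert q \rvert\) since \(q\) is parallel to the unit vector \(\dot{\gamma}(s)\). Thus \zcref{en:loclagprop1} of \zcref{loclagprop} gives
\[
\whvartheta_{T}(\lvert q \rvert) \le L(x, q, t) \le \whTheta_{T}(\lvert q \rvert).
\]
At vertices, \zcref{condflbound} gives
\[
\lvert q \rvert^{2}/2 - C_{T} \le L(x, q, t) \le \lvert q \rvert^{2}/2 + C_{T}.
\]
Set \(\ovvartheta_{T} = \min\{\whvartheta_{T}, \lvert \cdot \rvert^{2}/2 - C_{T}\}\) and \(\ovTheta_{T} = \max\{\whTheta_{T}, \lvert \cdot \rvert^{2}/2 + C_{T}\}\). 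The max of convex nondecreasing superlinear functions keeps all three properties. The min is still nondecreasing and superlinear but may fail convexity. This is the one genuine obstacle, and I would fix it by replacing the min with its convex envelope on \(\Rds^{+}\). The convex envelope of a superlinear function bounded below is still superlinear, since for each \(M\) it dominates \(M r - K_{M}\), which is convex. It is also nondecreasing, because the min is nondecreasing and bounded below, so the envelope can be taken monotone. Alternatively, I would use \(\ovvartheta_{T}(r) = \max_{M\in\{1,\dots\}}(\ldots)\) built from the affine minorants \(M r - K_{M}\) directly.

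\emph{Time regularity.} On arcs the estimate follows from \zcref{en:loclagprop2} of \zcref{loclagprop}, again using \(\lvert q \cdot \dot{\gamma}(s) \rvert = \lvert q \rvert\). At vertices,
\[
L(x,q,t_{1}) - L(x,q,t_{2}) = c_{x}(t_{1}) - c_{x}(t_{2}),
\]
which is controlled by \zcref{condfllip}. Take \(\ovalpha_{T} = \whalpha_{T}\), \(\ovbeta_{T}\) equal to the maximum of the two \(\whbeta_{T}\) constants from \zcref{loclagprop} and \zcref{condfllip}, and \(\ovepsilon_{T}\) equal to the minimum of the two \(\whepsilon_{T}\) thresholds.
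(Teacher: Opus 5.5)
Your proof is correct and takes the same route as the paper, which obtains the statement by combining \zcref{loclagprop} on arc points (via \(\lvert q \cdot \dot{\gamma}(s) \rvert = \lvert q \rvert\)) with \zcref{condflbound,condfllip} at vertices. Your convexification of \(\min\{\whvartheta_{T}, \lvert \cdot \rvert^{2}/2 - C_{T}\}\), by a convex envelope or a supremum of affine minorants \(Mr - K_{M}\), supplies a detail the paper leaves implicit: the pointwise minimum of the two lower bounds need not be convex.
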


    Given a curve \(\xi \colon [t_{1}, t_{2}] \to \Gamma\), we define the corresponding action functional as
    \begin{equation*}
        \int_{t_{1}}^{t_{2}} L \left( \xi, \dot{\xi}, \tau \right) \dl \tau.
    \end{equation*}

    \begin{theo}\label{actlsc}
        Let \(\{\xi_{n}\}_{n \in \Nds}\) be a sequence of curves defined in \([t_{1}, t_{2}]\), uniformly converging to a curve \(\zeta \colon [t_{1}, t_{2}] \to \Gamma\) and such that \(\left\{ \dot{\xi}_{n} \right\}\) weakly converges to \(\dot{\zeta}\) in \(\Lrm^{2}\). Then
        \begin{equation}\label{eq:actlsc.1}
            \liminf_{n \to \infty} \int_{t_{1}}^{t_{2}} L \left( \xi_{n}, \dot{\xi}_{n}, \tau \right) \dl \tau \ge \int_{t_{1}}^{t_{2}} L \left( \zeta, \dot{\zeta}, \tau \right) \dl \tau.
        \end{equation}
    \end{theo}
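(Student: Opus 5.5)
We split the time interval according to where the limit curve \(\zeta\) sits, treating the part where \(\zeta\) is inside an arc and the part where \(\zeta\) is on a vertex separately. We may assume the \(\liminf\) in~\zcref{eq:actlsc.1} is finite and, after passing to a subsequence, that it is a limit. Weak convergence in \(\Lrm^{2}\) makes \(\{\dot{\xi}_{n}\}\) bounded in \(\Lrm^{2}\). By \zcref{gLagprop}, \(L \ge \ovvartheta_{T}(\lvert q \rvert) \ge -C\) on \([t_{1}, t_{2}] \subset [0, T]\), so the negative parts of the integrands are uniformly bounded. Set \(I \coloneqq \{\tau : \zeta(\tau) \notin \Vbf\}\), which is relatively open, and \(J \coloneqq \{\tau : \zeta(\tau) \in \Vbf\}\). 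It suffices to prove, for every compact \(K \subset I\) and for \(J\),
\begin{equation*}
\liminf_{n} \int_{A} L(\xi_{n}, \dot{\xi}_{n}, \tau) \dl \tau \ge \int_{A} L(\zeta, \dot{\zeta}, \tau) \dl \tau, \qquad A = K \text{ or } A = J.
\end{equation*}
The uniform lower bound on \(L\) lets us discard the remaining small set \(I \setminus K\) at an arbitrarily small cost, and combining the two inequalities is superadditivity of \(\liminf\).

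\textbf{Part 1: \(\zeta\) inside an arc.} The set \(I\) is a countable union of open intervals, on each of which \(\zeta\) stays inside a single arc \(\gamma\). Take a compact subinterval \([a, b]\) of one of them. There \(\zeta\) keeps a positive distance from \(\Vbf\), so by uniform convergence \(\xi_{n}([a, b])\) lies in the open arc \(\gamma((0, \lvert \gamma \rvert))\) for \(n\) large. Then \(\xi_{n} = \gamma(s_{n})\), \(\zeta = \gamma(s)\), with \(s_{n} \to s\) uniformly. Since \(\gamma\) is \(\Crm^{1}\) with unit speed, \(\dot{s}_{n} = \dot{\xi}_{n} \cdot \dot{\gamma}(s_{n}) \rightharpoonup \dot{s}\) weakly in \(\Lrm^{2}\). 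The integrand \(L_{\gamma}(s, \lambda, t)\) is continuous and convex in \(\lambda\) and bounded below, by \zcref{loclagprop}. The classical Ioffe/Tonelli lower semicontinuity theorem (strong convergence in the state, weak convergence in the velocity) therefore gives the inequality on \([a, b]\). A finite union of such intervals exhausts \(I\) up to a set of small measure.

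\textbf{Part 2: \(\zeta\) on a vertex.} This is the main obstacle, because there \(L = \lvert q \rvert^{2}/2 + c_{x}(t)\) is not related to the nearby \(L_{\gamma}\) by any continuity. Since \(\Vbf\) is discrete, \(J = \bigcup_{x} J_{x}\) with \(J_{x} = \zeta^{-1}(x)\) closed. For a.e. \(\tau \in J_{x}\) we have \(\dot{\zeta}(\tau) = 0\), so the target integrand on \(J_{x}\) is just \(c_{x}(\tau)\). We have to show
\begin{equation*}
\liminf_{n} \int_{J_{x}} L(\xi_{n}, \dot{\xi}_{n}, \tau) \dl \tau \ge \int_{J_{x}} c_{x}(\tau) \dl \tau.
\end{equation*}
The key pointwise estimate is \(L(y, q, t) \ge c_{x}(t) - o(1)\) for \(y\) near \(x\) and any \(q\). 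At the vertex itself this holds because \(\lvert q \rvert^{2}/2 \ge 0\). On an arc \(\gamma \in \Ecal_{x}\) we use \(L_{\gamma}(s, \lambda, t) \ge \min_{\lambda} L_{\gamma}(s, \lambda, t) = -\max_{\mu}(-\ldots)\), so we need the near-minimum comparison \(\min_{\lambda} L_{\gamma}(s, \lambda, t) \to \min_{\lambda} L_{\gamma}(0, \lambda, t)\) as \(s \to 0\), uniformly in \(t \in [0, T]\).

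The definition of a flux limiter, \zcref{fldef}, only controls \(c_{x} \le L_{\gamma}(0, 0, t)\), not \(\min_{\lambda} L_{\gamma}(0, \lambda, t)\). So the pointwise bound fails when \(\lambda\) is away from zero. This is exactly where the weak convergence \(\dot{\xi}_{n} \rightharpoonup \dot{\zeta} = 0\) on \(J_{x}\) has to be used: on \(J_{x}\) the velocities average to zero.

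I would first try a convexity/Jensen argument. Cover \(J_{x}\) by finitely many short intervals \([a, b]\) on which \(c_{x}\) and \(L_{\gamma}(0, \cdot, \cdot)\) vary by at most \(\epsilon\), using continuity in \(t\) and \zcref{gLagprop}. On each piece, \(\xi_{n}\) stays in a small star around \(x\), and we split the time spent on each arc \(\gamma \in \Ecal_{x}\) (finitely many, by local finiteness). The function \(\lambda \mapsto L_{\gamma}(0, \lambda, t)\) is convex, so we can replace it from below by the affine function \(L_{\gamma}(0, 0, t) + p_{\gamma}\lambda\) with \(p_{\gamma}\) a subgradient at \(0\), up to an \(o(1)\) error from \(s_{n} \to 0\).

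The resulting linear terms \(\int p_{\gamma} \dot{s}_{n}\) over the times spent on \(\gamma\) are controlled by boundary terms. The point is that \(\xi_{n}\) enters and leaves the arc through the vertex, where \(s_{n} = 0\), and at the ends of \([a, b]\), where \(s_{n} \to 0\) uniformly. Hence each such integral tends to zero. Summing over arcs and over the vertex times, and using \(L_{\gamma}(0, 0, t) \ge c_{x}(t)\), gives the bound \(\int c_{x} - O(\epsilon)\).

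The delicate point is that the set of times \(\xi_{n}\) spends on \(\gamma\) may be a union of infinitely many intervals, which is the Zeno behaviour. The boundary terms still telescope, since \(s_{n}\) vanishes at every crossing through \(x\). Measurability of the decomposition and the uniform continuity of \(L_{\gamma}\) near \(s = 0\) on bounded sets complete the argument, with the \(\Lrm^{2}\) bound handling unbounded \(\lambda\) through the uniform Lipschitz-in-\(\lambda\) control on compacts.
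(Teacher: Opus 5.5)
\textbf{Part 1 and the reduction are fine.} Your arc-by-arc Ioffe argument replaces the paper's use of the convexified lower semicontinuous envelope \(\ovL\) on \(E_{2}\).

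\textbf{The gap is in Part 2.} Your telescoping only works if \(\zeta \equiv x\) on each covering interval \([a, b]\). That assumption is what gives \(s_{n} \to 0\) uniformly on \([a, b]\). It is also what justifies replacing \(L_{\gamma}(s_{n}, \cdot, \cdot)\) by \(L_{\gamma}(0, \cdot, \cdot)\), and what reduces \(\int p_{\gamma} \dot{s}_{n}\) over the times spent on \(\gamma\) to boundary terms at crossings of \(x\) and at \(a, b\).

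But \(J_{x} = \zeta^{-1}(x)\) is only a closed set, and it can have positive measure and empty interior. For example, take \(\gamma \in \Ecal_{x}\), a fat Cantor set \(C \subset [t_{1}, t_{2}]\) with short gaps, and \(\zeta(\tau) = \gamma(\mathrm{dist}(\tau, C))\). Then every interval meeting \(J_{x} = C\) contains gaps on which \(\zeta\) lies inside the arc and \(s_{n} \not\to 0\).
\begin{itemize}
\item Telescoping over \([a, b]\) gives a lower bound for \(\int_{[a, b]} L(\xi_{n}, \dot{\xi}_{n}, \tau) \dl \tau\) by \(\int_{[a, b]} c_{x} - O(\epsilon)\). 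That is not the inequality on \(J_{x}\) your reduction needs: the gaps lie in \(I\), they are already counted in Part 1, and there the target integrand is \(L(\zeta, \dot{\zeta}, \tau)\), not \(c_{x}\).
\item Restricting instead to \(J_{x} \cap [a, b]\) breaks the telescoping. The times spent on \(\gamma\) acquire infinitely many new endpoints (the ends of the gaps), and you give no control on the resulting infinite sum of boundary terms.
\end{itemize}

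\textbf{The missing idea} is to use weak convergence on the fixed set \(J_{x}\) instead of telescoping, as in the paper's \zcref{WlscL}.
\begin{itemize}
\item \emph{Affine minorant.} Take bounded measurable \(p_{\gamma}\), for \(\gamma \in \Ecal_{x}\), with \(H_{\gamma}(0, \tau, p_{\gamma}(\tau)) = - L_{\gamma}(0, 0, \tau) \le - c_{x}(\tau)\); these exist by \zcref{selection}. Then \(L_{\gamma}(s, \lambda, \tau) \ge p_{\gamma}(\tau) \lambda - H_{\gamma}(s, \tau, p_{\gamma}(\tau)) \ge c_{x}(\tau) + p_{\gamma}(\tau) \lambda - \omega(s)\), with \(\omega\) independent of \(\lambda\). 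This also settles your concern about unbounded \(\lambda\).
\item \emph{Linear term.} Let \(f_{\gamma}\) be Lipschitz on \(\Gamma\), equal to the arc coordinate \(s\) on \(\gamma([0, r])\) and vanishing outside \(\gamma((0, 2r))\), and set \(\sigma_{n}^{\gamma} \coloneqq f_{\gamma} \circ \xi_{n}\). Then \(\sigma_{n}^{\gamma} \to f_{\gamma} \circ \zeta\) uniformly and \(\dot{\sigma}_{n}^{\gamma}\) is bounded in \(\Lrm^{2}\). Hence \(\dot{\sigma}_{n}^{\gamma}\) converges weakly to the derivative of \(f_{\gamma} \circ \zeta\), which vanishes a.e.\ on \(J_{x}\).
\item \emph{Conclusion.} For \(n\) large, \(\xi_{n}(J_{x})\) stays within \(r\) of \(x\). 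So a.e.\ on \(J_{x}\) you get \(L(\xi_{n}, \dot{\xi}_{n}, \tau) \ge c_{x}(\tau) + \sum_{\gamma} p_{\gamma}(\tau) \dot{\sigma}_{n}^{\gamma}(\tau) - o(1)\). Moreover \(\int_{J_{x}} p_{\gamma} \dot{\sigma}_{n}^{\gamma} \dl \tau \to 0\), whatever the structure of \(J_{x}\) and regardless of any Zeno behaviour of the \(\xi_{n}\). No short intervals or piecewise-constant subgradients are needed.
\end{itemize}

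\textbf{A remark on the paper's own split.} Your decomposition \(J = \zeta^{-1}(\Vbf)\) is sharper than the paper's, whose \(E_{1}\) collects only nondegenerate intervals. In the fat Cantor example the paper's claim that \(\zeta \notin \Vbf\) a.e.\ on \(E_{2}\) fails, and \(\ovL(x, 0, \tau)\) can lie strictly below \(c_{x}(\tau)\) there. So the paper's proof also needs this same argument on \(J \setminus E_{1}\).
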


    To prove this semicontinuity result we need a preliminary \zcref[noref]{WlscL}.

    \begin{lem}\label{WlscL}
        Let \(\{\xi_{n}\}_{n \in \Nds}\) be a sequence of curves defined in \([t_{1}, t_{2}]\), uniformly converging to a vertex \(z\) and such that \(\left\{ \dot{\xi}_{n} \right\}\) weakly converges to \(0\) in \(\Lrm^{2}\). Then
        \begin{equation}\label{eq:WlscL.1}
            \liminf_{n \to \infty} \int_{t_{1}}^{t_{2}} L \left( \xi_{n}, \dot{\xi}_{n}, \tau \right) \dl \tau \ge \int_{t_{1}}^{t_{2}} c_{z}(\tau) \dl \tau.
        \end{equation}
    \end{lem}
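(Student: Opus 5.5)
The idea is to bound the Lagrangian from below, along each arc near \(z\), by an affine function of the speed whose constant term is essentially \(c_{z}\). The linear term will then integrate to a boundary term that vanishes because \(\xi_{n} \to z\) uniformly. Notice that this argument uses only the uniform convergence of \(\xi_{n}\) to \(z\) (and the absolute continuity of the curves), not the weak convergence of the derivatives.

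\emph{Setup.} Since \(\Gamma\) is locally finite, \(\Ecal_{z}\) is finite. Using \zcref{condlengthbound}, fix \(r>0\) smaller than half of \(\inf_{\gamma} \lvert \gamma \rvert\). Let \(B\) be the set of points \(\gamma(s)\) with \(\gamma \in \Ecal_{z}\) and \(s \in [0, r)\). Then \(B\) is a star made of finitely many segments meeting only at \(z\). A loop at \(z\) simply contributes two such segments, one through \(\gamma\) and one through \(\wtgamma\).

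\emph{Partition in time.} Fix \(\delta > 0\) and partition \([t_{1}, t_{2}]\) into intervals \(I_{k} = [\tau_{k}, \tau_{k+1}]\), \(k = 0, \dots, m-1\), of small length. For each \(k\) and each \(\gamma \in \Ecal_{z}\), choose \(\mu_{\gamma}^{k}\) minimizing \(H_{\gamma}(0, \tau_{k}, \cdot)\). Then
\[
-H_{\gamma}(0, \tau_{k}, \mu_{\gamma}^{k}) = L_{\gamma}(0, 0, \tau_{k}) \ge c_{z}(\tau_{k}).
\]
There are only finitely many such \(\mu_{\gamma}^{k}\). By \zcref{condHcont}, \(H_{\gamma}\) is uniformly continuous on compact sets, and \(c_{z}\) is continuous. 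Hence, after shrinking \(r\) and refining the partition, we get for \(s \in [0, r)\), \(t \in I_{k}\), \(\lambda \in \Rds\):
\[
L_{\gamma}(s, \lambda, t) \ge \lambda \mu_{\gamma}^{k} - H_{\gamma}(s, t, \mu_{\gamma}^{k}) \ge \lambda \mu_{\gamma}^{k} + c_{z}(t) - \delta .
\]
At the vertex itself, \(L(z, q, t) = \lvert q \rvert^{2}/2 + c_{z}(t) \ge c_{z}(t)\).

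\emph{Primitive and integration.} On \(B\) define \(w_{k}(\gamma(s)) \coloneqq s\, \mu_{\gamma}^{k}\). This is well defined because the segments of \(B\) meet only at \(z\), where \(w_{k}\) vanishes. It is Lipschitz continuous and \(w_{k}(z) = 0\). For \(n\) large, \(\xi_{n}([t_{1}, t_{2}]) \subset B\), so \(w_{k} \circ \xi_{n}\) is absolutely continuous. Its derivative equals \(\lambda \mu_{\gamma}^{k}\) at a.e. time \(\tau\) with \(\xi_{n}(\tau) = \gamma(s)\), \(s > 0\), where \(\lambda = \dot{\xi}_{n}(\tau) \cdot \dot{\gamma}(s)\). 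It vanishes a.e. on \(\{\xi_{n} = z\}\), since there both \(\dot{\xi}_{n}\) and the derivative of \(w_{k} \circ \xi_{n}\) are zero a.e. Combining with the previous inequality, the integrand satisfies \(L(\xi_{n}, \dot{\xi}_{n}, \tau) \ge \frac{d}{d\tau} w_{k}(\xi_{n}(\tau)) + c_{z}(\tau) - \delta\) a.e. on \(I_{k}\). Integrating over \(I_{k}\) and summing gives
\[
\int_{t_{1}}^{t_{2}} L(\xi_{n}, \dot{\xi}_{n}, \tau) \dl \tau \ge \sum_{k} \bigl[ w_{k}(\xi_{n}(\tau_{k+1})) - w_{k}(\xi_{n}(\tau_{k})) \bigr] + \int_{t_{1}}^{t_{2}} c_{z} \dl \tau - \delta (t_{2} - t_{1}).
\]
The sum has finitely many terms, each tending to \(w_{k}(z) - w_{k}(z) = 0\) by uniform convergence. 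Taking \(\liminf\) and then letting \(\delta \to 0\) yields~\zcref{eq:WlscL.1}.

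\emph{Main obstacle.} The delicate point is the chain rule for \(w_{k} \circ \xi_{n}\) on the set of times spent at the vertex, together with the precise identification of the arc-wise speed \(\lambda\) with the derivative of \(s\) along the curve. I would handle this by working arc by arc on the open sets \(\{\xi_{n} \in \gamma((0, r))\}\), where \(s(\tau) = \gamma^{-1}(\xi_{n}(\tau))\) is absolutely continuous and \(\dot{s} = \dot{\xi}_{n} \cdot \dot{\gamma}(s)\). On the complement, I would use that an absolutely continuous real function has zero derivative a.e. on any of its level sets.
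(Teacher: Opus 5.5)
Your proof is correct, but it takes a genuinely different route from the paper's.

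\textbf{The paper's route.} The paper argues by convex duality in $\Rds^{N}$. It extends $L$ by $+\infty$ off $\Trm\Gamma$ and takes the conjugate $L^{*}$. The measurable selection of \zcref{selection} gives $p_{i}\in\Lrm^{\infty}$ with $H_{\gamma_{i}}(0,t,p_{i}(t))=-L_{\gamma_{i}}(0,0,t)$. It then bounds $L\ge p\cdot q-L^{*}$ along the field $p(x,t)=p_{i}(t)\dot{\gamma}_{i}(0)$. The $L^{*}$ term is controlled by uniform convergence and Fatou's Lemma. The linear term $\int p(\xi_{n}(\tau),\tau)\cdot\dot{\xi}_{n}(\tau)\,\mathrm{d}\tau$ is disposed of through the weak convergence $\dot{\xi}_{n}\rightharpoonup 0$.

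\textbf{Your route.} You freeze the dual variable on short time intervals, paying $\delta$ via continuity of $H_{\gamma}$ and $c_{z}$. You then use that $\gamma(s)\mapsto s\mu^{k}_{\gamma}$ is an exact primitive on the star. The linear term therefore telescopes into finitely many boundary terms, which vanish by uniform convergence.

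\textbf{What each buys.} Your route needs no measurable selection and no Fatou. It also shows that the weak $\Lrm^{2}$ hypothesis is not needed: the inequality holds for any absolutely continuous curves converging uniformly to $z$. It avoids a step the paper passes over quickly. There, the multiplier $p(\xi_{n}(\tau),\tau)$ depends on $n$ through the arc containing $\xi_{n}(\tau)$, so weak convergence of $\dot{\xi}_{n}$ is not by itself enough. One has to split arc by arc and use that each arc-wise coordinate speed is $\Lrm^{2}$-bounded with primitive tending uniformly to $0$, hence weakly null. Your localization to $r<\inf_{\gamma}\lvert\gamma\rvert/2$ also handles loops cleanly. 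The paper's field $p(x,t)$, defined on whole arc interiors, would need the same localization there, since $\gamma$ and $\wtgamma$ share their interior. The cost of your route is the partition argument and the chain rule for $w_{k}\circ\xi_{n}$. Your sketch for the latter does give global absolute continuity: local absolute continuity on the open sets $\{\xi_{n}\in\gamma((0,r))\}$ with derivative dominated by $\lvert\dot{\xi}_{n}\rvert$, plus vanishing derivative a.e.\ on level sets.

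\textbf{One point to tighten.} Finiteness of the $\mu^{k}_{\gamma}$ for a given partition is not what makes the uniform-continuity estimate work, because refining the partition changes them. What you need is that every minimizer of $H_{\gamma}(0,t,\cdot)$ with $t\in[t_{1},t_{2}]$ lies in a fixed interval $[-R,R]$. This follows from \textbf{(H3)} with $T\ge t_{2}$: at a minimizer, $\vartheta_{T}(\lvert\mu\rvert)\le H_{\gamma}(0,t,\mu)\le H_{\gamma}(0,t,0)\le\Theta_{T}(0)$. Then choose $r$ and the mesh from the moduli of continuity of $c_{z}$ on $[t_{1},t_{2}]$ and of the finitely many $H_{\gamma}$, $\gamma\in\Ecal_{z}$, on $[0,\lvert\gamma\rvert]\times[t_{1},t_{2}]\times[-R,R]$. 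With that order of choices the argument is no longer circular.
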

    \begin{proof}
        First we extend \(L\) to \(\Rds^{N} \times \Rds^{N} \times \Rds^{+}\) by assuming that \(L(x, q, t) = \infty\) whenever \((x, q) \notin \Trm \Gamma\), then we define \(L^{*} \colon \Rds^{N} \times \Rds^{N} \times \Rds^{+} \to \Rds\) as
        \begin{equation*}
            L^{*}(x, p, t) \coloneqq \sup_{q \in \Rds^{N}} \{pq - L(x, q, t)\} \qquad \text{for \((x, p, t) \in \Rds^{N} \times \Rds^{N} \times \Rds^{+}\)}.
        \end{equation*}
        We know from the theory of convex conjugate functions that
        \begin{equation}\label{eq:WlscL1}
            L(x, q, t) = \sup_{p \in \Rds^{N}} \{pq - L^{*}(x, p, t)\} \qquad \text{for all \((x, q, t) \in \Rds^{N} \times \Rds^{N} \times \Rds^{+}\)}.
        \end{equation}
        It is also apparent that whenever \(x = \gamma(s)\) for some \(\gamma \in \Ecal\) and \(s \in (0, \lvert \gamma \rvert)\)
        \begin{equation}\label{eq:WlscL2}
            L^{*}(x, p, t) = H_{\gamma}(s, t, p \cdot \dot{\gamma}(s)),
        \end{equation}
        while for every \(x \in \Vbf\)
        \begin{equation}\label{eq:WlscL3}
            L^{*}(x, p, t) = \frac{\lvert p \rvert^{2}}{2} - c_{x}(t).
        \end{equation}
        Next we denote with \(\{\gamma_{1}, \dotsc, \gamma_{m}\}\) the collection of arcs which make up the set \(\Ecal_{z}\), define for each \(i \in \{1, \dotsc, m\}\) the maps \(p_{i} \in \Lrm^{\infty}([t_{1}, t_{2}])\) such that
        \begin{equation}\label{eq:WlscL4}
            H_{\gamma_{i}}(0, t, p_{i}(t)) = - L_{\gamma_{i}}(0, 0, t) \qquad \text{for a.e.\ \(t \in [t_{1}, t_{2}]\)},
        \end{equation}
        which exist by \zcref{selection}, and set the \(\Lrm^{\infty}\) function
        \begin{equation*}
            p(x, t) \coloneqq
            \begin{dcases}
                p_{i}(t) \dot{\gamma}_{i}(0) & \text{if \(x \in \gamma_{i}((0, \lvert \gamma_{i} \rvert))\)}, \\
                0 & \text{otherwise},
            \end{dcases}
            \qquad \text{for \((x, t) \in \Rds^{N} \times [t_{1}, t_{2}]\)}.
        \end{equation*}
        According to~\zcref{eq:WlscL2,eq:WlscL3} we have that either
        \begin{equation*}
            L^{*}(\xi_{n}(t), p(\xi_{n}(t), t), t) = - c_{z}
        \end{equation*}
        or
        \begin{equation*}
            L^{*}(\xi_{n}(t), p(\xi_{n}(t), t), t) = H_{\gamma_{i}} \left( \gamma^{- 1}_{i}(\xi_n(t)), t, p_{i}(t) \right) \qquad \text{for some \(i \in \{1, \dotsc, m\}\)},
        \end{equation*}
        consequently the uniform convergence of \(\{\xi_{n}\}\), \zcref{eq:WlscL4,fldef} imply
        \begin{equation}\label{eq:WlscL5}
            \limsup_{n \to \infty} L^{*}(\xi_{n}(t), p(\xi_{n}(t), t), t) \le - c_{z} \qquad \text{for a.e.\ \(t \in [t_{1}, t_{2}]\)}.
        \end{equation}
        We finally get from~\zcref{eq:WlscL1}
        \begin{align*}
            \liminf_{n \to \infty} \int_{t_{1}}^{t_{2}} L \left( \xi_{n}, \dot{\xi}_{n}, \tau \right) \dl \tau & = \liminf_{n \to \infty} \int_{t_{1}}^{t_{2}} \sup_{p \in \Rds} \left( p \cdot \dot{\xi}_{n}(\tau) - L^{*}(\xi_{n}, p, \tau) \right) \dl \tau \\
            & \ge \liminf_{n \to \infty} \int_{t_{1}}^{t_{2}} \left( p(\xi_{n}(\tau), \tau) \cdot \dot{\xi}_{n}(\tau) - L^{*}(\xi_{n}(\tau), p(\xi_{n}(\tau), \tau), \tau) \right) \dl \tau,
        \end{align*}
        then the weak convergence of \(\left\{ \dot{\xi}_{n} \right\}\), \zcref{eq:WlscL5} and Fatou's Lemma yield~\zcref{eq:WlscL.1}.
    \end{proof}

    \begin{proof}[Proof of \zcref{actlsc}]
        Let \(E_{1}\) be the set made up by the union of the proper subintervals \(I\) of \([t_{1}, t_{2}]\) such that \(\zeta(I) \subseteq \Vbf\). It is apparent that \(E_{1}\), if not empty, is the union of an at most countable collection of closed intervals and
        \begin{equation*}
            \dot{\zeta}(t) = 0 \qquad \text{for a.e.\ \(t \in E_{1}\)},
        \end{equation*}
        thus \zcref{WlscL} yields
        \begin{equation}\label{eq:actlsc1}
            \liminf_{n \to \infty} \int_{E_{1}} L \left( \xi_{n}, \dot{\xi}_{n}, \tau \right) \dl \tau \ge \int_{E_{1}} L \left( \zeta, \dot{\zeta}, \tau \right) \dl \tau.
        \end{equation}
        Let us now denote by \(E_{2}\) the open set \((t_{1}, t_{2}) \setminus E_{1}\). According to~\zcref{eq:actlsc1}, to prove~\zcref{eq:actlsc.1} it is enough to show that
        \begin{equation}\label{eq:actlsc2}
            \liminf_{n \to \infty} \int_{E_{2}} L \left( \xi_{n}, \dot{\xi}_{n}, \tau \right) \dl \tau \ge \int_{E_{2}} L \left( \zeta, \dot{\zeta}, \tau \right) \dl \tau.
        \end{equation}
        We start by taking the lower semicontinuous envelope
        \begin{equation*}
            \whL(x, q, t) \coloneqq \lim_{r \to 0^{+}} \inf \left\{ L \left( x', q', t' \right) : \left( x', q', t' \right) \in \left( \Trm \Gamma \times \Rds^{+} \right) \cap B_{r}((x, q, t)) \right\}
        \end{equation*}
        and setting \(\ovL \colon \Trm \Gamma \times \Rds^{+} \to \Rds\) as the largest convex function in \(q\) bounded above by \(\whL\), see~\cite[Corollary~4.22]{Clarke13}. We clearly have that
        \begin{equation*}
            L(x, q, t) \ge \whL(x, q, t) \ge \ovL(x, q, t) \qquad \text{for every \((x, q, t) \in \Trm \Gamma \times \Rds^{+}\)},
        \end{equation*}
        which becomes an identity whenever \(x \notin \Vbf\), thereby
        \begin{equation}\label{eq:actlsc3}
            L \left( \xi_{n}, \dot{\xi}_{n}, t \right) \ge \ovL \left( \xi_{n}, \dot{\xi}_{n}, t \right) \qquad \text{for all \(t \in E_{2}\)},
        \end{equation}
        and, since \(\zeta(t) \notin \Vbf\) for a.e.\ \(t \in E_{2}\),
        \begin{equation}\label{eq:actlsc4}
            L \left( \zeta, \dot{\zeta}, t \right) = \ovL \left( \zeta, \dot{\zeta}, t \right) \qquad \text{for a.e.\ \(t \in E_{2}\)}.
        \end{equation}
        It is straightforward to check that \(\ovL\) satisfies the conditions of a classic lower semicontinuity result for integral functionals (see, e.g., \cite[Theorem~6.38]{Clarke13}), therefore
        \begin{equation*}
            \liminf_{n \to \infty} \int_{E_{2}} \ovL \left( \xi_{n}, \dot{\xi}_{n}, \tau \right) \dl \tau \ge \int_{E_{2}} \ovL \left( \zeta, \dot{\zeta}, \tau \right) \dl \tau,
        \end{equation*}
        which together with~\zcref{eq:actlsc4,eq:actlsc3} proves~\zcref{eq:actlsc2}.
    \end{proof}

    \subsection{Optimal Curves}

    We consider the set
    \begin{equation*}
        \Dcal \coloneqq \left\{ (x_{1}, t_{1}, x_{2}, t_{2}) \in \left( \Gamma \times \Rds^{+} \right)^{2} : \text{\(t_{2} > t_{1}\) or \(t_{1} = t_{2}\), \(x_{1} = x_{2}\)} \right\}
    \end{equation*}
    and the \emph{minimal action functional}
    \begin{equation}\label{eq:minactdef}
        \Phi(x_{1}, t_{1}, x_{2}, t_{2}) \coloneqq \inf \left\{ \int_{t_{1}}^{t_{2}} L \left( \xi, \dot{\xi}, \tau \right) \dl \tau \right\} \qquad \text{for \((x_{1}, t_{1}, x_{2}, t_{2}) \in \Dcal\)},
    \end{equation}
    where the infimum is taken over the curves \(\xi \colon [t_{1}, t_{2}] \to \Gamma\) with \(\xi(t_{1}) = x_{1}\) and \(\xi(t_{2}) = x_{2}\). We further assume that
    \begin{equation*}
        \Phi(x_{1}, t_{1}, x_{2}, t_{2}) \coloneqq \infty \qquad \text{whenever \(t_{1} = t_{2}\) and \(x_{1} \ne x_{2}\).}
    \end{equation*}
    Note that for any pair of points \(x_{1}\) and \(x_{2}\) in \(\Gamma\) and \(t_{2} > t_{1} \ge 0\) there are curves defined in \([t_{1}, t_{2}]\) linking \(x_{1}\) to \(x_{2}\) with finite action functional. It is enough to take a geodesic linking \(x_{1}\) to \(x_{2}\), which does exist since the network is connected, and to change the parametrization in order to define it in \([t_{1}, t_{2}]\). Moreover, we can always select an optimal curve which realizes the infimum in~\zcref{eq:minactdef}.

    \begin{restatable}{theo}{mincurvelip}\label{mincurvelip}
        Fixed \(T > 0\) and \(C > 0\), there is a positive constant \(\kappa\) such that, for every \(x_{1}, x_{2} \in \Gamma\) and \(t_{1}, t_{2} \in [0, T]\) satisfying
        \begin{equation*}
            d_{\Gamma}(x_{1}, x_{2}) \le C(t_{2} - t_{1}),
        \end{equation*}
        there exists an optimal \(\kappa\)-Lipschitz continuous curve for \(\Phi(x_{1}, t_{1}, x_{2}, t_{2})\).
    \end{restatable}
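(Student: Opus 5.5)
We will first obtain an optimal curve by the direct method, and then prove a uniform speed bound for it through a time-reparametrization argument. The reparametrization is the classical device for nonautonomous Lagrangians that are Lipschitz in time, and it avoids any use of regularity in the state variable.

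\textbf{Existence.} Fix $(x_1,t_1,x_2,t_2)$ with $t_2>t_1$. A reparametrized geodesic from $x_1$ to $x_2$ has constant speed $d_\Gamma(x_1,x_2)/(t_2-t_1)\le C$. Hence, by \zcref{gLagprop},
\[
\Phi(x_1,t_1,x_2,t_2)\le \ovTheta_T(C)(t_2-t_1),
\]
and the bound is uniform over the admissible data. The superlinear lower bound $\ovvartheta_T$ then gives a uniform bound on $\int \ovvartheta_T(|\dot\xi_n|)$ along a minimizing sequence. We will use a truncation, or observe that minimizing sequences may be taken with $\Lrm^2$-bounded derivatives after a preliminary reparametrization. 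With this, the $\dot\xi_n$ are equi-integrable, the curves are equicontinuous, and they stay in a compact part of $\Gamma$, since local finiteness and \zcref{condlengthbound} make bounded balls compact. Ascoli's theorem and weak compactness then give a limit $\zeta$, and \zcref{actlsc} shows that $\zeta$ is optimal. In case $\Lrm^2$ bounds are not directly available, we work with $\Lrm^1$-weak convergence and note that the proof of \zcref{actlsc} adapts.

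\textbf{Lipschitz bound.} Let $\xi$ be optimal. For a measurable set where $|\dot\xi|$ is large, we compare $\xi$ with the reparametrized curve $\xi\circ\sigma$, where $\sigma$ is a bi-Lipschitz, absolutely continuous change of time with $\sigma(t_1)=t_1$ and $\sigma(t_2)=t_2$. Concretely, we take $\sigma^{-1}(t)=t_1+\int_{t_1}^t \rho$ with a density $\rho=1+\delta h$, where $h$ is bounded with zero mean. We then write the action of the reparametrized curve in the original variable $\tau$:
\[
\int L\bigl(\xi(\tau),\dot\xi(\tau)/\rho(\tau),\sigma^{-1}(\tau)\bigr)\rho(\tau)\,\dl\tau .
\]
Vertices cause no trouble here: at vertex times $\dot\xi=0$ a.e., and $L(x,0,\cdot)=c_x$ is Lipschitz by \zcref{condfllip}.

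Optimality gives a one-sided variational inequality. Its time-shift error is controlled by \zcref{gLagprop}, namely by $(\ovalpha_T|q|+\ovbeta_T)\,|\sigma^{-1}(\tau)-\tau|$; this requires the shift to stay below $\ovepsilon_T$, which holds for $\delta$ small. Convexity lets us use the difference quotient of $\lambda\mapsto\lambda L(x,q/\lambda,t)$ at $\lambda=1$, which is an energy-like quantity $E=L-q\cdot\partial_qL$, or a subgradient version of it. Choosing $h$ positive where $|\dot\xi|>R$ and negative elsewhere yields the estimate
\[
\text{avg}_{\{|\dot\xi|>R\}}(-E)\le \text{avg}_{\{|\dot\xi|\le R\}}(-E)+K\int(\ovalpha_T|\dot\xi|+\ovbeta_T),
\]
where $K$ depends only on $T$. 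Superlinearity makes $-E$ grow superlinearly in $|q|$ (it is essentially $H$ evaluated at the conjugate momentum), while the right-hand side is bounded through the action bound and the bound on the length $\int|\dot\xi|$. This forces $|\dot\xi|\le\kappa$ a.e.

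We expect the main obstacle to be making this last step rigorous. $L$ is only convex, not differentiable, so we will use one-sided directional derivatives in $\lambda$, which are monotone. We must also control the energy uniformly over arcs and at vertices, where $L$ jumps. Finally, the condition in \zcref{condHtlip} is only local in time, which forces the variation to be small; we plan to cover $[t_1,t_2]$ by subintervals of length below $\ovepsilon_T$. We would try first the Clarke–Vinter style argument for autonomous-in-state reparametrizations, adapted to the vertex pieces.
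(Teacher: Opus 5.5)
Your route is genuinely different from the paper's, and it can be made to work, but the key estimate does not close as you wrote it.

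\textbf{How the paper argues.} The paper never starts from a minimizer. In \zcref{minseqminact} it shows that \emph{any} curve with $\int_{t_1}^{t_2}\lvert\dot\xi\rvert\le M(t_2-t_1)$, where $M$ depends only on $T$ and $C$, can be replaced by a $\kappa$-Lipschitz curve with the same support and no larger action. This is done by one finite reparametrization: $\dot\varphi=\lvert\dot\xi\rvert/\kappa$ on $E_\kappa=\{\lvert\dot\xi\rvert>\kappa\}$, $\dot\varphi=\tfrac12$ on a set $F_\kappa\subseteq\{\lvert\dot\xi\rvert\le 2M\}$ of measure $2\varepsilon_\kappa$, and $\dot\varphi=1$ elsewhere. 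Minimizing sequences are therefore uniformly Lipschitz, and Arzel\`a--Ascoli with weak $\Lrm^2$ convergence feeds directly into \zcref{actlsc}.

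The core computation is the same as yours. It uses the subgradient inequality $(1-a)L^*(x,p,t)\ge aL(x,q/a,t)-L(x,q,t)$ for $p\in\partial_qL(x,q/a,t)$, the time shift controlled by \zcref{gLagprop}, and \zcref{dualbound}. That subgradient inequality already removes the need for one-sided derivatives or $\delta\to0$.

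\textbf{The step that fails.} It is the choice ``$h$ negative elsewhere''; the negative part of $h$ must sit on a set of \emph{uniformly} bounded speed. On $\{\lvert\dot\xi\rvert\le R\}$, the quantity $-E=L^*(\xi,p,\tau)=p\cdot\dot\xi-L$, with $p\in\partial_qL$, is not controlled by the action or by the length. For $L$ of exponential growth in $q$ it is of order $\lvert q\rvert L$. Its only pointwise bound there, $\sup_{\lvert q\rvert\le R}\Theta_T^*(\lvert p\rvert)$, grows with $R$ exactly like the lower bound on the left side, so no contradiction follows.

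The fix is as follows.
\begin{itemize}
\item Take $R>2M$ and $F\subseteq\{\lvert\dot\xi\rvert\le 2M\}$ with $\lvert F\rvert\ge(t_2-t_1)/2$, which exists by Chebyshev and the length bound.
\item Set $h=\mathbf{1}_{E_R}-\frac{\lvert E_R\rvert}{\lvert F\rvert}\mathbf{1}_F$ and take $\delta$ small.
\item The time shift is then at most $2\delta\lvert E_R\rvert<\ovepsilon_T$, so no subdivision of $[t_1,t_2]$ is needed, and speeds on $F$ stay below $4M$.
\item Dividing the optimality inequality by $\delta\lvert E_R\rvert$ gives $\Upsilon\bigl(R/(1+\delta)\bigr)\le\Lambda+\Xi$, with $\Lambda,\Xi,\Upsilon$ as in the paper. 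This is false for $R$ large, so $\lvert E_R\rvert=0$.
\end{itemize}

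\textbf{Existence.} A bound on $\int\ovvartheta_T(\lvert\dot\xi_n\rvert)$ gives only equi-integrability, so \zcref{actlsc}, which is stated for weak $\Lrm^2$ convergence, does not apply as is. You need a weak-$\Lrm^1$ version. That version is plausible: the integral semicontinuity theorem used in its proof is a weak-$\Lrm^1$ result, and \zcref{WlscL} tests $\dot\xi_n$ only against bounded functions. Still, it must be checked rather than asserted. Your alternative, a preliminary reparametrization giving $\Lrm^2$ bounds, is precisely \zcref{minseqminact}, and once you have it your second step is redundant.

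\textbf{What each approach buys.} Once repaired, your argument proves more: every minimizer, not just one, is $\kappa$-Lipschitz. The paper's argument needs no a priori minimizer and no semicontinuity beyond weak $\Lrm^2$.
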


    The Lipschitz continuity of the optimal curves yields a Lipschitz continuity result for the minimal action.

    \begin{restatable}{cor}{minactlip}\label{minactlip}
        Given \(T > 0\) and \(C > 0\), the minimal action \(\Phi\) is Lipschitz continuous on
        \begin{equation*}
            A_{C} \coloneqq \left\{ (x_{1}, t_{1}, x_{2}, t_{2}) \in (\Gamma \times [0, T])^{2} : d_{\Gamma}(x_{1}, x_{2}) \le C(t_{2} - t_{1}) \right\}.
        \end{equation*}
    \end{restatable}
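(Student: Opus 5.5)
To prove \zcref{minactlip} I would use \zcref{mincurvelip} to obtain \(\kappa\)-Lipschitz optimal curves and then compare actions by explicit curve modifications. The aim is separate Lipschitz bounds in each of the four variables. Since \(\Phi\) is defined by an infimum, it suffices to show one-sided estimates of the form
\begin{equation*}
    \Phi(x_{1}', t_{1}', x_{2}', t_{2}') \le \Phi(x_{1}, t_{1}, x_{2}, t_{2}) + K \left( d_{\Gamma}(x_{1}, x_{1}') + d_{\Gamma}(x_{2}, x_{2}') + \lvert t_{1} - t_{1}' \rvert + \lvert t_{2} - t_{2}' \rvert \right)
\end{equation*}
for pairs of points of \(A_{C}\), with \(K\) depending only on \(T\) and \(C\). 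Exchanging the roles of the two points then gives the Lipschitz continuity. First I would enlarge \(C\) to, say, \(2C\) and apply \zcref{mincurvelip} with \(C' = 2C\). This provides a common \(\kappa\) for an enlarged region that contains all the intermediate configurations appearing in the construction.

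\emph{Variation of the endpoints in space.} Take a \(\kappa\)-Lipschitz optimal curve \(\xi\) for \(\Phi(x_{1}, t_{1}, x_{2}, t_{2})\) and set \(\delta = d_{\Gamma}(x_{2}, x_{2}')\). If \(\delta\) is small compared with \(t_{2} - t_{1}\), I would follow \(\xi\) on \([t_{1}, t_{2} - \delta]\) and then travel along a geodesic from \(\xi(t_{2} - \delta)\) to \(x_{2}'\). This geodesic has length at most \((\kappa + 1)\delta\) and is traversed in time \(\delta\), so its speed is bounded by \(\kappa + 1\). The action lost on \([t_{2} - \delta, t_{2}]\) is at least \(-\lvert \ovvartheta_{T}(0)\rvert \delta\), and the action added is at most \(\ovTheta_{T}(\kappa + 1)\delta\), both by \zcref{gLagprop}. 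The endpoint \(x_{1}\) is handled symmetrically. If instead \(\delta\) is not small relative to \(t_{2} - t_{1}\), the condition \(d_{\Gamma} \le C(t_{2} - t_{1})\) makes both points degenerate. In that regime \(\Phi\) is bounded in absolute value by a constant times \(t_{2} - t_{1}\), which is itself controlled by \(\delta\). The lower bound comes from \(\ovvartheta_{T}\) and the upper bound from a constant-speed geodesic with speed at most \(C\).

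\emph{Variation in time.} This is the delicate step, because it is where the time dependence of \(L\) enters. For \(t_{2}' \ne t_{2}\) I would reparametrize linearly: set \(\eta(\tau) = \xi(t_{1} + (\tau - t_{1})\rho)\) with \(\rho = (t_{2} - t_{1})/(t_{2}' - t_{1})\). The speed scales by \(\rho\) and the integrand is evaluated at a shifted time \(\sigma\) with \(\lvert \sigma - \tau \rvert \le \lvert t_{2} - t_{2}' \rvert\). The time shift is controlled by the second bullet of \zcref{gLagprop}, which costs \((\ovalpha_{T}\kappa + \ovbeta_{T})\lvert t_{2} - t_{2}' \rvert\) per unit time. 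Where \(\lvert t_{2} - t_{2}' \rvert \ge \ovepsilon_{T}\), I would split the shift into finitely many steps of size less than \(\ovepsilon_{T}\), which changes only the constant. The speed scaling requires convexity in \(q\) together with local Lipschitz continuity of \(L\) in \(q\) on the ball of radius \(2\kappa\). This holds on arcs because the \(L_{\gamma}\) are convex and uniformly bounded by \(\ovvartheta_{T}\) and \(\ovTheta_{T}\), so they are uniformly Lipschitz on bounded sets. At vertices \(L\) is explicit. Alternatively, I would avoid rescaling at vertex times altogether by inserting or removing a short rest, or a short geodesic segment, at the end. The obstacle I expect is keeping \(\rho\) bounded when \(t_{2} - t_{1}\) is small. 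When the time gap is small relative to the perturbation, I would again use the degenerate bound \(\lvert\Phi\rvert \le K(t_{2} - t_{1})\), which holds on the enlarged region, and reduce to that case. Combining the spatial and temporal estimates by the triangle inequality then yields a uniform Lipschitz constant on \(A_{C}\).
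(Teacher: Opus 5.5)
Your proof is correct, but it differs from the paper's in both main steps.

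\emph{Spatial variation.} You do endpoint surgery inside the fixed window \([t_{1}, t_{2}]\): follow the optimal curve up to \(t_{2} - \delta\), then take a geodesic of speed at most \(\kappa + 1\). This uses only the growth bounds \(\ovvartheta_{T}\) and \(\ovTheta_{T}\). The paper instead does the following:
\begin{itemize}
\item it lengthens the time interval by \(T_{1} + T_{2} = (d_{\Gamma}(x_{1}, x_{1}') + d_{\Gamma}(x_{2}, x_{2}'))/C\) and prepends and appends geodesic pieces;
\item it translates the optimal curve in time by \(T_{1}\), which is where it uses the time-Lipschitz bound of \(L\), hence the restriction \(T_{1} < \ovepsilon_{2T}\);
\item it returns to time \(t_{2}\) through its time estimate on \([0, 3T]\);
\item it needs a chaining argument (its Step 3c) to remove the smallness restriction on \(d_{\Gamma}(x_{1}, x_{1}')\).
\end{itemize}
Your version is shorter and avoids all of that.

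\emph{Time variation.} Here the roles are reversed. Your global linear rescaling needs the time-Lipschitz bound (second bullet of \zcref{gLagprop}) and a uniform Lipschitz bound for \(q \mapsto L(x, q, t)\) on balls. The paper's Step 1 is purely local. It either appends a rest at \(x_{2}\), at cost \(\ovTheta_{T}(0)(t_{2}' - t_{2})\), or replays the last stretch \([2t_{2} - t_{2}', t_{2}']\) of the optimal curve at double speed on \([2t_{2} - t_{2}', t_{2}]\). The replay is used when \(t_{2}' - t_{2} < t_{2} - t_{1}\); otherwise a crude bound suffices. It uses only \(\ovTheta_{T}(2\kappa)\) and \(\ovvartheta_{T}(0)\). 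Your fallback of inserting a rest or a short geodesic at the end is essentially this argument. Combined with your spatial step, it shows the corollary follows from \zcref{mincurvelip} and the growth bounds alone, with no direct use of the time-Lipschitz hypothesis.

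\emph{Final combination.} Make the chaining explicit. Route it through \((x_{1}, \min\{t_{1}, t_{1}'\}, x_{2}, \max\{t_{2}, t_{2}'\})\) and \((x_{1}', \min\{t_{1}, t_{1}'\}, x_{2}', \max\{t_{2}, t_{2}'\})\). Both stay in \(A_{C}\) because the time gap only grows. Then move both spatial endpoints at once using a single optimal curve. Moving one endpoint at a time while the time gap is small can leave every \(A_{C'}\), where \(\Phi\) grows superlinearly. Enlarging \(C\) to \(2C\) does not by itself prevent this.
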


    The proof of these results are in \zcref{sec:minact}.

    \subsection{Zeno Behavior}\label{sec:nozeno}

    Under our assumptions, an optimal curve for the minimal action functional could exhibit the so-called \emph{Zeno behavior}, namely a minimizer could oscillate infinitely many times around one or more vertices. Notoriously, this phenomenon can be disruptive to both analysis and simulations. Although our study is not affected by such behavior, we provide here a criterion to rule it out, motivated by potential future applications.

    Our condition to avoid Zeno behavior is rather technical; however, it is implied by a simpler and fairly general assumption, namely that the chosen flux limiter is strictly smaller than the maximal one:
    \begin{equation}\label{eq:suff_nozeno}
        c_{x}(t) < \min_{\gamma \in \Ecal_{x}} L_{\gamma}(0, 0, t) \qquad \text{for all \(x \in \Vbf\) and \(t \in \Rds^{+}\)}.
    \end{equation}

    To prove our criterion we need to introduce a new class of curves.

    \begin{defin}
        We call a curve \(\xi \colon [t_{1}, t_{2}] \to \Gamma\) \emph{admissible} if there are at most finitely many intervals \([a, b] \subseteq [t_{1}, t_{2}]\) such that \(\xi(a) = \xi(b) \in \Vbf\) and \(\xi((a, b)) \cap \Vbf = \emptyset\).
    \end{defin}

    It is apparent that a curve is admissible if and only if it does not exhibit Zeno behavior.

    \begin{prop}\label{adcurve}
        Given \(T > 0\) and a \(\kappa\)-Lipschitz continuous curve \(\xi \colon [t_{1}, t_{2}] \subseteq [0, T] \to \Gamma\), assume that the flux limiter \(c_{x}\) satisfies the following inequality for some \(\varepsilon > 0\):
        \begin{equation}\label{eq:adcurve.1}
            c_{x}(t) \le \min_{\substack{\gamma \in \Ecal_{x}, \, s \in [0, \frac{\kappa \varepsilon}{2}], \\ r \in [t - \varepsilon, t + \varepsilon] \cap [0, T]}} L_{\gamma}(s, 0, r) \qquad \text{for any \(t \in [t_{1}, t_{2}]\) and \(x \in \xi([t_{1}, t_{2}]) \cap \Vbf\)}.
        \end{equation}
        Then, there is an admissible \(\kappa\)-Lipschitz continuous curve \(\zeta \colon [t_{1}, t_{2}] \to \Gamma\) with the same endpoints of \(\xi\) such that
        \begin{equation*}
            \int_{t_{1}}^{t_{2}} L \left( \xi, \dot{\xi}, \tau \right) \dl \tau \ge \int_{t_{1}}^{t_{2}} L \left( \zeta, \dot{\zeta}, \tau \right) \dl \tau.
        \end{equation*}
    \end{prop}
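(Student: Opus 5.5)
The plan is to replace the \emph{short} excursions of \(\xi\) away from vertices by rest at the vertex, and to leave the long excursions untouched.

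Consider the family of maximal open intervals \((a,b) \subseteq [t_{1},t_{2}]\) with \(\xi(a)=\xi(b)=x\in\Vbf\) and \(\xi((a,b))\cap\Vbf=\emptyset\). These intervals are pairwise disjoint, so there are at most countably many of them. I split them according to length.

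\textbf{Short excursions.} If \(b-a\le\varepsilon\), then on \((a,b)\) the curve stays on a single arc \(\gamma\in\Ecal_{x}\), with \(\gamma(0)=x\). Assume for the moment it is not a loop traversed fully. Since \(\xi\) is \(\kappa\)-Lipschitz and returns to \(x\) at time \(b\), the arc parameter satisfies
\[
s(\tau)\le \kappa\min\{\tau-a,\,b-\tau\}\le \kappa\varepsilon/2 .
\]
Moreover \(\tau\in[a,b]\subseteq[a-\varepsilon,a+\varepsilon]\). Jensen's inequality for the convex function \(L_{\gamma}(s(\tau),\cdot,\tau)\) does not apply directly, because \(s\) and \(\tau\) vary. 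I would argue pointwise instead. Let \(\mu(\tau)\) attain the max defining \(L_{\gamma}(s(\tau),0,\tau)\), so that \(-H_{\gamma}(s(\tau),\tau,\mu(\tau))=L_{\gamma}(s(\tau),0,\tau)\). Then
\[
L_{\gamma}(s,\dot s,\tau)\ge \dot s\,\mu - H_{\gamma}(s,\tau,\mu) = \dot s\,\mu + L_{\gamma}(s,0,\tau).
\]
This is not obviously integrable to zero. A cleaner route is the subgradient at zero velocity: \(L_{\gamma}(s,\lambda,\tau)\ge L_{\gamma}(s,0,\tau)+\lambda\,\partial_{\lambda}L\). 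The linear term integrates against \(\dot s\), and \(\int_{a}^{b}\dot s\,d\tau=0\), but it carries variable weights.

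\textbf{Main obstacle.} The difficulty is showing
\[
\int_{a}^{b} L(\xi,\dot\xi,\tau)\,d\tau \ge \int_{a}^{b} c_{x}(\tau)\,d\tau .
\]
I would first try the trivial bound \(L_{\gamma}(s,\lambda,\tau)\ge -H_{\gamma}(s,\tau,0)\). This fails, since \(-H_{\gamma}(s,\tau,0)\) is in general smaller than \(\max_{\mu}-H_{\gamma}\). I would therefore select \(\mu\) independent of \(\tau\) piecewise, using \zcref{selection}. Then \(\int \dot s\,\mu\) vanishes when \(\mu\) is constant, and the error is controlled through the \(\min\) over \(r,s\) in \eqref{eq:adcurve.1}, which is precisely designed to give
\[
\max_{\mu} -H_{\gamma}(s,\tau,\mu)\ge c_{x}(\tau')
\]
uniformly on the window. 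Concretely, I would fix \(\bar t\in[a,b]\) and choose a constant \(\bar\mu\) realizing the value. Integrating \(\dot s\,\bar\mu\) gives zero. The remaining term is \(-H_{\gamma}(s,\tau,\bar\mu)\), which is close to \(L_{\gamma}(s,0,\tau)\ge c_{x}(\tau)\) up to the min structure. If this is not exact, I would pass to a Riemann-sum refinement in which each sub-piece carries its own constant \(\mu\).

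\textbf{Construction of \(\zeta\).} Once the integral comparison holds, set \(\zeta\equiv x\) on each short excursion \([a,b]\). On such an interval \(L(x,0,\tau)=c_{x}(\tau)\), so the action does not increase, and \(\zeta\) remains \(\kappa\)-Lipschitz. The long excursions, those with \(b-a>\varepsilon\), number at most \((t_{2}-t_{1})/\varepsilon\), so only finitely many excursion intervals remain in \(\zeta\), and \(\zeta\) is admissible. The endpoints are unchanged, because each excursion interval starts and ends at the same vertex \(x\), so \(\zeta(a)=\xi(a)\) and \(\zeta(b)=\xi(b)\).
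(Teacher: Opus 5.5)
Your outer construction is the same as the paper's. You freeze \(\xi\) at the vertex on every excursion of length at most \(\varepsilon\), keep the at most \((t_2-t_1)/\varepsilon\) longer ones, and check endpoints, the \(\kappa\)-Lipschitz bound and admissibility. The gap is the step you call the main obstacle, \(\int_a^b L(\xi,\dot\xi,\tau)\,d\tau\ge\int_a^b c_x(\tau)\,d\tau\), which is never proved, and none of your fallbacks delivers it. A constant \(\bar\mu\) gives \(\int_a^b\bar\mu\,\dot s\,d\tau=0\), but it controls \(-H_\gamma(s,\tau,\bar\mu)\) only at the single point where \(\bar\mu\) realizes \(L_\gamma(s,0,\tau)\). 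A Riemann refinement with \(\mu\) piecewise constant in \emph{time} destroys the cancellation, because \(\sum_i\mu_i\,(s(\tau_{i+1})-s(\tau_i))\neq0\) in general. The paper's device (\zcref{nozenoaux}, built on \zcref{c1sol}) is a multiplier that depends on the \emph{state}: \(\mu=W'(s)\) with \(W\) Lipschitz on \([0,\kappa\varepsilon/2]\). Then \(\int_a^b W'(s)\dot s\,d\tau=W(s(b))-W(s(a))=0\) along every closed excursion, while \(H_\gamma(s,\tau,W'(s))\le k\le-c_x(\tau)\) throughout the window.

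Be aware, however, that this step needs more than \zcref{eq:adcurve.1}, so you cannot expect to close it as stated. Constructing \(W\) requires, for each \(s\), one \(\mu\) with \(H_\gamma(s,r,\mu)\le k\) for \emph{all} \(r\) in the time window. That is a min--max bound, and the set defining \(\sigma_a\) in \zcref{c1sol} can in fact be empty. By contrast, \zcref{eq:adcurve.1} only bounds \(\max_{s,r}\min_\mu H_\gamma(s,r,\mu)\).

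The excursion inequality can actually fail. Take \(H_\gamma(s,t,\mu)=(\mu-t)^2/2\), which satisfies \zcref{condHcont,condHconv,condHcoer,condHtlip}. It gives \(L_\gamma(s,\lambda,t)=\lambda^2/2+t\lambda\), so \(L_\gamma(s,0,t)=0\), and \(c_x\equiv0\) satisfies \zcref{eq:adcurve.1} for every \(\kappa\) and \(\varepsilon\). Integrating \(\tau\dot s\) by parts, a closed excursion has action \(\int_a^b(\dot s^2/2-s)\,d\tau\). For a tent of slope \(\lambda<\min\{\kappa,(b-a)/2\}\) this equals \(\lambda(b-a)(2\lambda-(b-a))/4<0=\int_a^b c_x\,d\tau\). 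So freezing short excursions can strictly increase the action.

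With slack, your constant-\(\bar\mu\) idea does work. Under the strict inequality \zcref{eq:suff_nozeno}, and after shrinking \(\varepsilon\), take \(\bar\mu\) realizing \(L_\gamma(0,0,a)\). Then absorb the oscillation of \(H_\gamma(\cdot,\cdot,\bar\mu)\) and of \(c_x\) over \([0,\kappa\varepsilon/2]\times[a,b]\) into the positive margin.

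Finally, the loop case you set aside is harmless. A full traversal of a loop takes time at least \(\inf_{\gamma}\lvert\gamma\rvert/\kappa\), so only finitely many occur, and they can be left untouched.
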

    \begin{proof}
        If \(\xi\) is admissible there is nothing to prove, thus we assume that it is not the case, i.e., that there is an infinite collection of nondegenerate intervals \([a_{n}, b_{n}] \subset [t_{1}, t_{2}]\) with disjoint interior such that \(\xi(a_{n}) = \xi(b_{n}) \in \Vbf\) and \(\xi((a_{n}, b_{n})) \cap \Vbf = \emptyset\). Note that they are countable many because \(\Rds\) is separable. Choosing an interval \([a_{k}, b_{k}]\) with \(b_{k} - a_{k} \le \varepsilon\), there is a vertex \(x = \xi(a_{k}) = \xi(b_{k})\) and a \(\gamma \in \Ecal_{x}\) such that \(\xi([a_{k}, b_{k}])\) is contained in the support of the arc \(\gamma\), while according to~\cite[Lemma~3.2]{PozzaSiconolfi23} \(\eta \coloneqq \gamma^{- 1} \circ \xi |_{[a_{k}, b_{k}]}\) is \(\kappa\)-Lipschitz continuous. It follows from~\zcref{eq:adcurve.1} and \zcref{nozenoaux} that
        \begin{equation*}
            \int_{a_{k}}^{b_{k}} L \left( \xi, \dot{\xi}, \tau \right) \dl \tau = \int_{a_{k}}^{b_{k}} L_{\gamma}(\eta, \dot{\eta}, \tau) \dl \tau \ge \int_{a_{k}}^{b_{k}} c_{x}(\tau) \dl \tau,
        \end{equation*}
        which in turn shows that, if we define the \(\kappa\)-Lipschitz continuous curve
        \begin{equation*}
            \zeta(t) \coloneqq
            \begin{dcases}
                \xi(a_{n}) & \text{if \(t \in [a_{n}, b_{n}]\) with \(b_{n} - a_{n} \le \varepsilon\)}, \\
                \xi(t) & \text{otherwise},
            \end{dcases}
        \end{equation*}
        we then have
        \begin{equation*}
            \int_{t_{1}}^{t_{2}} L \left( \xi, \dot{\xi}, \tau \right) \dl \tau \ge \int_{t_{1}}^{t_{2}} L \left( \zeta, \dot{\zeta}, \tau \right) \dl \tau.
        \end{equation*}
        This concludes the proof because \(\zeta\) is admissible. Indeed, if \([a, b] \subseteq [t_{1}, t_{2}]\) is an interval such that \(\zeta(a) = \zeta(b) \in \Vbf\) and \(\zeta((a, b)) \cap \Vbf = \emptyset\), then \(b - a > \varepsilon\). There are clearly only finitely many such intervals, hence \(\zeta\) is admissible.
    \end{proof}

    \begin{rem}
        We deduce from \zcref{adcurve} the following sufficient condition to avoid the Zeno phenomenon: for each \(T > 0\) and \(x \in \Vbf\) there exist two positive constants \(\delta_{T, x}\) and \(\varepsilon_{T, x}\) such that
        \begin{equation}\label{eq:nozeno1}
            c_{x}(t) \le \min_{\substack{\gamma \in \Ecal_{x}, \, s \in [0, \delta_{T, x}], \\ r \in [t - \varepsilon_{T, x}, t + \varepsilon_{T, x}] \cap [0, T]}} L_{\gamma}(s, 0, r) \qquad \text{for any \(t \in [0, T]\)}.
        \end{equation}
        Indeed, if \(\xi \colon [t_{1}, t_{2}] \to \Gamma\) is a \(\kappa\)-Lipschitz continuous optimal curve for \(\Phi(x_{1}, t_{1}, x_{2}, t_{2})\), then the flux limiter \(c_{x}\) satisfies~\zcref{eq:adcurve.1} with
        \begin{equation*}
            \varepsilon = \min \left\{ \varepsilon_{T, x}, \frac{2 \delta_{T, x}}{\kappa} \right\},
        \end{equation*}
        where the minimum is taken over the \(x \in \xi([t_{1}, t_{2}]) \cap \Vbf\). This set consists of finitely many vertices due to \zcref{condlengthbound} and the Lipschitz continuity of \(\xi\). In view of \zcref{adcurve}, we can thus assume that \(\xi\) is also admissible. \\
        Let us review some cases in which~\zcref{eq:nozeno1} is satisfied.
        \begin{mylist}
            \item In the autonomous stationary case, namely when the Lagrangians are independent of time and state variables, \zcref{eq:nozeno1} holds true for any flux limiter.
            \item In the autonomous case analyzed in~\cite{PozzaSiconolfi23}, the flux limiters satisfy the condition
            \begin{equation*}
                c_{x} \le \min_{\gamma \in \Ecal_{x}} \min_{s \in [0, \lvert \gamma \rvert]} L_{\gamma}(s, 0),
            \end{equation*}
            which implies~\zcref{eq:nozeno1}.
            \item We now prove that~\zcref{eq:suff_nozeno} implies~\zcref{eq:nozeno1}. For each \(x \in \Vbf\) and \(T > 0\), the map \((s, t) \mapsto \min\limits_{\gamma \in \Ecal_{x}} L_{\gamma}(s, 0, t)\) admits a modulus of continuity \(\omega_{T, x}\) in \(\left[ 0, \min\limits_{\gamma \in \Ecal_{x}} \lvert \gamma \rvert \right] \times [0, T]\). If \(c_{x}\) satisfies~\zcref{eq:suff_nozeno} we can then choose \(\varepsilon_{T, x}\) and \(\delta_{T, x}\) such that, for any \(t \in [0, T]\),
            \begin{equation*}
                c_{x}(t) \le \min_{\gamma \in \Ecal_{x}} L_{\gamma}(0, 0, t) - \omega_{T, x}(\varepsilon_{T, x} + \delta_{T, x}) \le \min_{\substack{\gamma \in \Ecal_{x}, \, s \in [0, \delta_{T, x}], \\ r \in [t - \varepsilon_{T, x}, t + \varepsilon_{T, x}] \cap [0, T]}} L_{\gamma}(s, 0, r).
            \end{equation*}
        \end{mylist}
    \end{rem}

    \section{Setting of the Problem}\label{sec:problem}

    We consider the time-dependent equation
    \begin{equation}\label{eq:globHJ}\tag{\ensuremath{\Hcal}J}
        \difcp{u(x, t)}{t} + \Hcal(x, t, \difcp{u}{x}) \qquad \text{in \(\Gamma \times (0, \infty)\)}.
    \end{equation}
    This notation synthetically indicates the family, for \(\gamma\) varying in \(\Ecal\), of Hamilton--Jacobi equations
    \begin{equation}\label{eq:locHJ}\tag{HJ\textsubscript{\ensuremath{\gamma}}}
        \difcp{U(s, t)}{t} + H_{\gamma}(s, t, \difcp{U}{s}) \qquad \text{in \((0, \lvert \gamma \rvert) \times (0, \infty)\)}.
    \end{equation}

    Here (sub/super)solutions to the local problem~\eqref{eq:locHJ} are intended in the viscosity sense, see for instance~\cite{BardiCapuzzo-Dolcetta97} for a comprehensive treatment of viscosity solutions theory. We just recall that, given an open set \(\Ocal\) and a continuous function \(u \colon \ovOcal \to \Rds\), a \emph{supertangent} (resp.\ \emph{subtangent}) to \(u\) at \(x \in \Ocal\) is a viscosity test function from above (resp.\ below). If needed, we take, without explicitly mentioning, \(u\) and test function coinciding at \(x\) and test function strictly greater (resp.\ less) than \(u\) in a punctured neighborhood of \(x\). We say that a subtangent \(\varphi\) to \(u\) at \(x \in \partial \Ocal\) is \emph{constrained to \(\ovOcal\)} if \(x\) is a minimizer of \(u - \varphi\) in a neighborhood of \(x\) intersected with \(\ovOcal\).

    We are interested in finding a continuous function \(u \colon \Gamma \times \Rds^{+} \to \Rds\) such that \(u(\gamma(s), t)\) is a viscosity solution to~\zcref{eq:locHJ} for any arc \(\gamma\), taking into account a given flux limiter. Our definition of (sub/super)solutions to~\zcref{eq:globHJ} is an extension of the one given in~\cite{Siconolfi22} for the autonomous case.

    \begin{defin}\label{defsol}
        We say that a continuous \(w \colon \Gamma \times \Rds^{+} \to \Rds\) is a \emph{subsolution} to~\zcref{eq:globHJ} with flux limiter \(c_{x}\) if:
        \begin{myenum}
            \item\label{en:subsol1} \((s, t) \mapsto w(\gamma(s), t)\) is a viscosity subsolution to~\zcref{eq:locHJ} for each \(\gamma \in \Ecal\);
            \item\label{en:subsol2} for any \(t_{0} \in (0, \infty)\) and vertex \(x\), if \(\psi\) is a \(\Crm^{1}\) supertangent to \(w(x, \cdot)\) at \(t_{0}\) then \(\difcp{\psi(t_{0})}{t} \le c_{x}(t_{0})\).
        \end{myenum}
        We say that a continuous \(v \colon \Gamma \times \Rds^{+} \to \Rds\) is a \emph{supersolution} to~\zcref{eq:globHJ} if:
        \begin{myenum}[resume]
            \item\label{en:supsol1} \((s, t) \mapsto v(\gamma(s), t)\) is a viscosity supersolution to~\zcref{eq:locHJ} for any \(\gamma \in \Ecal\);
            \item\label{en:supsol2} for every vertex \(x\) and \(t_{0} \in (0, \infty)\), if \(\phi\) is a \(\Crm^{1}\) subtangent to \(v(x, \cdot)\) at \(t_{0}\) such that \(\difcp{\phi(t_{0})}{t} < c_{x}\), then there is a \(\gamma \in \Ecal_{x}\) such that
            \begin{equation*}
                \difcp{\varphi(0, t_{0})}{t} + H_{\gamma}(0, t_{0}, \difcp{\varphi(0, t_{0})}{s}) \ge 0
            \end{equation*}
            for any \(\varphi\) that is a constrained \(\Crm^{1}\) subtangent to \((s, t) \mapsto v(\gamma(s), t)\) at \((0, t_{0})\). Notice that this condition does not require the existence of constrained subtangents.
        \end{myenum}
        We say that \(u \colon \Gamma \times \Rds^{+} \to \Rds\) is a \emph{solution} to~\zcref{eq:globHJ} if it is both a subsolution and supersolution.
    \end{defin}

    \section{Local Analysis on Arcs}\label{sec:local}

    We fix an arc \(\gamma \in \Ecal\) and focus on the local equation~\zcref{eq:locHJ}. Our treatment is independent of whether or not \(\gamma\) is a loop.

    First, to ease notation, we define the sets
    \begin{align*}
        \Qcal_{\gamma} & \coloneqq (0, \lvert \gamma \rvert) \times (0, \infty), & \Qcal_{\gamma, T} & \coloneqq (0, \lvert \gamma \rvert) \times (0, T),
        \intertext{and their partial boundaries}
        \partial_{p}^{-} \Qcal_{\gamma} & \coloneqq ([0, \lvert \gamma \rvert] \times \{0\}) \cup \left( \{0\} \times \Rds^{+} \right), & \partial_{p}^{-} \Qcal_{\gamma, T} & \coloneqq ([0, \lvert \gamma \rvert] \times \{0\}) \cup (\{0\} \times [0, T]), \\
        \partial_{p}^{+} \Qcal_{\gamma} & \coloneqq ([0, \lvert \gamma \rvert] \times \{0\}) \cup \left( \{\lvert \gamma \rvert\} \times \Rds^{+} \right), & \partial_{p}^{+} \Qcal_{\gamma, T} & \coloneqq ([0, \lvert \gamma \rvert] \times \{\lvert \gamma \rvert\}) \cup (\{\lvert \gamma \rvert\} \times [0, T]), \\
        \partial_{p} \Qcal_{\gamma} & \coloneqq \partial_{p}^{-} \Qcal_{\gamma} \cup \partial_{p}^{+} \Qcal_{\gamma}, & \partial_{p} \Qcal_{\gamma, T} & \coloneqq \partial_{p}^{-} \Qcal_{\gamma, T} \cup \partial_{p}^{+} \Qcal_{\gamma, T}.
    \end{align*}

    We prove a comparison result for~\zcref{eq:locHJ} based on the so-called doubling variable method. We rely on a variation of a classical result, see for instance~\cite[Lemma~3.1]{CrandallIshiiLions92}, which can be easily proved using the same arguments.

    \begin{lem}\label{alphamax}
        Given \(T > 0\), \(\delta \in (0, 1)\) and the continuous maps \(w, v \colon \ovQcal_{\gamma, T} \to \Rds\), we set, for \(\rho, \sigma \in \Rds^{+}\),
        \begin{equation*}
            M_{\rho, \sigma} \coloneqq \max_{(s, t), (s', t') \in \ovQcal_{\gamma, T}} \left\{ w(s, t) - v \left( s', t' \right) - \frac{1}{2} \left\lvert \rho \left( s - s' \right) - \delta \right\rvert^{2} - \frac{\sigma}{2} \left\lvert t - t' \right\rvert^{2} \right\}.
        \end{equation*}
        Assume that \(\lim\limits_{\rho, \sigma \to \infty} M_{\rho, \sigma} < \infty\) and let \((s_{\rho, \sigma}, t_{\rho, \sigma}), \left( s'_{\rho, \sigma}, t'_{\rho, \sigma} \right)\) be such that
        \begin{equation*}
            M_{\rho, \sigma} = w(s_{\rho, \sigma}, t_{\rho, \sigma}) - v \left( s'_{\rho, \sigma}, t'_{\rho, \sigma} \right) - \frac{1}{2} \left\lvert \rho \left( s_{\rho, \sigma} - s'_{\rho, \sigma} \right) - \delta \right\rvert^{2} - \frac{\sigma}{2} \left\lvert t_{\rho, \sigma} - t'_{\rho, \sigma} \right\rvert^{2},
        \end{equation*}
        then
        \begin{myenum}
            \item \(\lim\limits_{\rho, \sigma \to \infty} \left\lvert \rho \left( s_{\rho, \sigma} - s'_{\rho, \sigma} \right) - \delta \right\rvert^{2} + \sigma \left\lvert t_{\rho, \sigma} - t'_{\rho, \sigma} \right\rvert^{2} = 0\);
            \item \(\lim\limits_{\rho, \sigma \to \infty} M_{\rho, \sigma} = w \left( \ovs, \ovt \right) - v \left( \ovs, \ovt \right) = \max\limits_{(s, t) \in \ovQcal_{\gamma, T}} \{w(s, t) - v(s, t)\}\) whenever \(\left( \ovs, \ovt \right)\) is a limit point of \((s_{\rho, \sigma}, t_{\rho, \sigma})\) as \(\rho, \sigma \to \infty\).
        \end{myenum}
    \end{lem}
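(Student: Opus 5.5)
We will follow the classical argument of \cite[Lemma~3.1]{CrandallIshiiLions92}, with the penalization term \(\frac{1}{2}\lvert \rho(s - s') - \delta \rvert^{2}\) in place of the usual \(\frac{\rho}{2}\lvert s - s' \rvert^{2}\). The only thing to check is that the shift by \(\delta\) does not spoil the argument, since it vanishes in the limit.

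First we show that \(M_{\rho, \sigma}\) is nonincreasing in each of \(\rho\) and \(\sigma\), so the limit exists. For \(\sigma\) this is immediate. For \(\rho\) it is not obvious, because \(\lvert \rho a - \delta \rvert^{2}\) is not monotone in \(\rho\) for fixed \(a\). We therefore avoid relying on monotonicity and work with \(\limsup\) and \(\liminf\) directly.

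Set \(M \coloneqq \max_{\ovQcal_{\gamma, T}} (w - v)\), attained at some \((s_{0}, t_{0})\). For the lower bound, test \(M_{\rho, \sigma}\) with \(t = t' = t_{0}\) and \(s' = s_{0}\), \(s = s_{0} + \delta/\rho\). This is possible for \(\rho\) large when \(s_{0} < \lvert \gamma \rvert\); otherwise take \(s = s_{0}\), \(s' = s_{0} - \delta/\rho\). The penalization then vanishes, and the continuity of \(w\) and \(v\) gives \(\liminf M_{\rho, \sigma} \ge M\).

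For the upper bound, write \(a_{\rho, \sigma} \coloneqq \lvert \rho(s_{\rho, \sigma} - s'_{\rho, \sigma}) - \delta \rvert^{2} + \sigma \lvert t_{\rho, \sigma} - t'_{\rho, \sigma} \rvert^{2}\). Boundedness of \(w\) and \(v\) on the compact set \(\ovQcal_{\gamma, T}\), combined with \(M_{\rho, \sigma} \ge M - o(1)\), shows that \(a_{\rho, \sigma}\) is bounded. Hence \(\lvert s_{\rho, \sigma} - s'_{\rho, \sigma} \rvert \le C/\rho\) and \(\lvert t_{\rho, \sigma} - t'_{\rho, \sigma} \rvert \le C/\sqrt{\sigma}\), so along any subsequence both points converge to the same limit point \((\ovs, \ovt)\). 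Then
\begin{equation*}
    \limsup M_{\rho, \sigma} + \frac{1}{2} \limsup a_{\rho, \sigma} \le \limsup \left( w(s_{\rho, \sigma}, t_{\rho, \sigma}) - v(s'_{\rho, \sigma}, t'_{\rho, \sigma}) \right) = w(\ovs, \ovt) - v(\ovs, \ovt) \le M.
\end{equation*}
Together with the lower bound this yields \(\lim M_{\rho, \sigma} = M = w(\ovs, \ovt) - v(\ovs, \ovt)\) and \(\lim a_{\rho, \sigma} = 0\), which are items (i) and (ii).

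The main obstacle is precisely this missing monotonicity in \(\rho\). We resolve it with the explicit choice \(s - s' = \delta/\rho\) for the lower bound. For the upper bound, the hypothesis \(\lim M_{\rho, \sigma} < \infty\) is not even needed, because compactness gives boundedness. Finally, the same subsequence argument applied to an arbitrary sequence \(\rho, \sigma \to \infty\) upgrades the subsequential statements to full limits.
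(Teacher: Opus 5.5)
Your proof is correct, but it does not follow the route the paper indicates. The paper gives no proof of its own. It only refers to \cite[Lemma~3.1]{CrandallIshiiLions92}, whose argument needs the penalty to have the form \(\alpha\Psi\) with a fixed \(\Psi \ge 0\). In that form \(M_{\alpha}\) is nonincreasing, and comparing \(M_{\alpha/2}\) with the value at the \(\alpha\)-maximizer gives \(\frac{\alpha}{2}\Psi(x_{\alpha}) \le M_{\alpha/2} - M_{\alpha} + o(1) \to 0\); this is where the hypothesis that \(\lim M_{\alpha}\) exists and is finite is used. You correctly observe that \(\frac12\lvert \rho(s - s') - \delta \rvert^{2}\) is not of this form, so \(M_{\rho,\sigma}\) need not be monotone in \(\rho\) and the halving trick does not carry over verbatim.

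You replace it with compactness of \([0, \lvert\gamma\rvert] \times [0, T]\) and continuity of \(w, v\). Testing at pairs with \(s - s' = \delta/\rho\), where the shifted penalty vanishes exactly, gives \(\liminf M_{\rho,\sigma} \ge \max(w - v)\). The bound on \(a_{\rho,\sigma}\) then forces the two maximizers to share their limit points. Your route makes existence of \(\lim M_{\rho,\sigma}\) a conclusion, so the assumption \(\lim M_{\rho,\sigma} < \infty\) is redundant here. The Crandall--Ishii--Lions route avoids compactness and continuity: it works for semicontinuous functions on arbitrary sets, but it relies on monotonicity, which the \(\delta\)-shift destroys.

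One small repair is needed. The display with \(\limsup M_{\rho,\sigma} + \frac12 \limsup a_{\rho,\sigma}\) on the left is not a valid inequality in general, since the sum of two limsups can exceed the limsup of the sum. Instead, along the convergent subsequence, write
\(M_{\rho,\sigma} \le M_{\rho,\sigma} + \frac12 a_{\rho,\sigma} \to w(\ovs, \ovt) - v(\ovs, \ovt) \le M \le \liminf M_{\rho,\sigma}\), with your \(M = \max(w - v)\). This forces \(M_{\rho,\sigma} \to M\), \(a_{\rho,\sigma} \to 0\) and \(w(\ovs, \ovt) - v(\ovs, \ovt) = M\).

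Also delete the opening sentence promising to prove monotonicity in \(\rho\), since you immediately and rightly abandon that plan.
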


    \begin{theo}\label{loccomp}
        Fixed \(T > 0\), let \(W\) and \(V\) be continuous sub and supersolution to~\zcref{eq:locHJ} in \(\Qcal_{\gamma, T}\), respectively, and assume that one of the following holds true:
        \begin{myenum}
            \item \(W \le V\) on \(\partial_{p} \Qcal_{\gamma, T}\);
            \item \(W \le V\) on \(\partial_{p}^{+} \Qcal_{\gamma, T}\) and, for any \(t > 0\) and constrained \(\Crm^{1}\) subtangent to \(V\) at \((0, t)\),
            \begin{equation}\label{eq:loccomp.1}
                \difcp{\varphi(0, t)}{t} + H_{\gamma}(0, t, \difcp{\varphi}{s}) \ge 0.
            \end{equation}
        \end{myenum}
        Then \(W \le V\) on \(\ovQcal_{\gamma, T}\).
    \end{theo}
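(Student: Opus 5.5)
We argue by contradiction with the doubling of variables method set up in \zcref{alphamax}. Assume \(\max_{\ovQcal_{\gamma, T}} (W - V) = M > 0\). First we replace \(W\) by \(W_{\eta}(s, t) \coloneqq W(s, t) - \eta t - \eta/(T - t)\) for small \(\eta > 0\). Since \(H_{\gamma}\) does not depend on \(U\), \(W_{\eta}\) is a strict subsolution: \(\partial_{t} \varphi + H_{\gamma} \le -\eta\) for every supertangent. It also tends to \(-\infty\) as \(t \to T\), so maxima stay away from \(t = T\). The maximum of \(W_{\eta} - V\) is still positive for \(\eta\) small.

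We then double variables. In case~\textbf{(i)} we take \(\delta = 0\). In case~\textbf{(ii)} we take \(\delta > 0\) small, with the sign chosen so that the penalization \(\frac12 \lvert \rho(s - s') - \delta \rvert^{2}\) pushes the maximizers to \(s > s'\), and we drop \(s = s' = 0\) from consideration. We consider the maximizers \((s_{\rho, \sigma}, t_{\rho, \sigma})\) and \((s'_{\rho, \sigma}, t'_{\rho, \sigma})\) of \(M_{\rho, \sigma}\). By \zcref{alphamax}, \(\rho(s - s') \to \delta\), \(\sigma \lvert t - t' \rvert^{2} \to 0\), and limit points realize \(\max(W_{\eta} - V) > 0\).

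The boundary cases are handled as follows.
\begin{itemize}
    \item In case~\textbf{(i)}, limit points cannot lie on \(\partial_{p} \Qcal_{\gamma, T}\), since \(W_{\eta} \le V\) there. Hence for large \(\rho, \sigma\) both points are interior.
    \item In case~\textbf{(ii)}, the inequality \(W \le V\) on \(\partial_{p}^{+}\) rules out limit points with \(t = 0\) or \(s = \lvert \gamma \rvert\). The only remaining boundary possibility is \(s' = 0\) with \(t' > 0\). Since \(s - s' \approx \delta / \rho > 0\), the point \(s\) is interior. The test function \((s', t') \mapsto W_{\eta}(s, t) - \frac12 \lvert \rho(s - s') - \delta \rvert^{2} - \frac{\sigma}{2} \lvert t - t' \rvert^{2}\) (up to constants) is then a constrained \(\Crm^{1}\) subtangent to \(V\) at \((0, t')\). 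So~\zcref{eq:loccomp.1} gives exactly the supersolution inequality there, and the boundary point behaves like an interior one.
\end{itemize}

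In all cases we obtain, with \(p_{\rho} \coloneqq \rho(\rho(s - s') - \delta)\) and \(a_{\sigma} \coloneqq \sigma(t - t')\),
\begin{equation*}
    a_{\sigma} + \eta + \frac{\eta}{(T - t)^{2}} + H_{\gamma}(s, t, p_{\rho}) \le -\eta, \qquad a_{\sigma} + H_{\gamma}(s', t', p_{\rho}) \ge 0.
\end{equation*}
Subtracting gives \(H_{\gamma}(s', t', p_{\rho}) - H_{\gamma}(s, t, p_{\rho}) \ge 2\eta\).

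The main obstacle is to contradict this inequality, because \(p_{\rho}\) need not be bounded a priori (\(\rho(s - s') - \delta \to 0\), but multiplied by \(\rho\)). We would first get a bound on \(p_{\rho}\) from the subsolution inequality. By \zcref{condHcoer}, \(\vartheta_{T}(\lvert p_{\rho} \rvert) \le -a_{\sigma} - 2\eta\), and \(a_{\sigma}\) is controlled by the supersolution side via \(H_{\gamma} \le \Theta_{T}\). This gives only a two-sided coupling, so as a fallback we would use the standard trick that a viscosity subsolution of a coercive equation is Lipschitz in \(s\) locally; alternatively we would replace \(W\) with a sup-convolution in \(t\), so that \(\partial_{t}\) is bounded below and coercivity bounds \(\lvert p_{\rho} \rvert\) by some constant \(R\). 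Once \(\lvert p_{\rho} \rvert \le R\), uniform continuity of \(H_{\gamma}\) on \([0, \lvert \gamma \rvert] \times [0, T] \times [-R, R]\) (from \zcref{condHcont}), together with \(s - s' \to 0\) and \(t - t' \to 0\), makes the left-hand side tend to \(0\). This contradicts the bound \(2\eta\). Letting \(\eta \to 0\) then yields \(W \le V\).
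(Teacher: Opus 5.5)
Your setup (strict subsolution, doubling with the shift \(\delta\), and using \zcref{eq:loccomp.1} at a constrained point \(s' = 0\)) matches the paper's, and you correctly reach a strictly positive lower bound for \(H_{\gamma}(s', t', p_{\rho}) - H_{\gamma}(s, t, p_{\rho})\). The gap is in the closing step. You never use \zcref{condHtlip}, and continuity of \(H_{\gamma}\) alone is not enough. By \zcref{alphamax} you only know that \(\sigma \lvert t - t' \rvert^{2} \to 0\), so \(a_{\sigma} = \sigma(t - t')\) may be unbounded, of size \(o(\sqrt{\sigma})\). Coercivity then gives only \(\lvert p_{\rho} \rvert \le \ell_{\sigma}\), with \(\ell_{\sigma}\) possibly blowing up. Uniform continuity of \(H_{\gamma}\) on the growing compacts \([0, \lvert \gamma \rvert] \times [0, T] \times [-\ell_{\sigma}, \ell_{\sigma}]\) gives no control of the time discrepancy.

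Neither fallback yields a uniform \(R\).
\begin{itemize}
\item Subsolutions of coercive \emph{evolutive} equations need not be Lipschitz in \(s\). That is a stationary fact, or it needs \(\partial_{t} W\) bounded below. For \(s_{0} \in (0, \lvert \gamma \rvert)\) and \(t_{0} \in (0, T)\), take \(W = \max\{-1 + (t_{0} - t)^{1/4} - \frac{(s - s_{0})^{2}}{2(t_{0} - t)}, -1\}\) for \(t < t_{0}\) and \(W \equiv -1\) for \(t \ge t_{0}\). This is a continuous subsolution of \(U_{t} + \frac{1}{2} \lvert U_{s} \rvert^{2} = 0\), yet its spatial slopes reach \(\sqrt{2}(t_{0} - t)^{-3/8}\). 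Moreover, in case~\textbf{(ii)} the maximizers may tend to \(s = 0\), where no interior estimate applies.
\item A sup-convolution in \(t\) with parameter \(h\) does bound \(\partial_{t}\) for fixed \(h\). However, it is only a subsolution of \(\partial_{t} \psi + H_{\gamma}(s, \tau^{*}, \partial_{s} \psi) \le -\eta\) at a shifted time \(\tau^{*}\), with \(\lvert \tau^{*} - t \rvert = O(\sqrt{h})\) not vanishing as \(\rho, \sigma \to \infty\). Meanwhile the momentum bound \(R_{h}\) blows up as \(h \to 0\). You are back to estimating \(H_{\gamma}(s, t, p) - H_{\gamma}(s, \tau^{*}, p)\) for unbounded \(p\), the same difficulty.
\end{itemize}

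The missing idea is the one the paper uses.
\begin{itemize}
\item Keep \(\sigma\) fixed. Your own inequality gives \(H_{\gamma}(s, t, p_{\rho}) \le \sigma \lvert t - t' \rvert - \eta \le \sigma T\), hence \(\lvert p_{\rho} \rvert \le \ell_{\sigma}\) uniformly in \(\rho\).
\item Let \(\rho \to \infty\) first. Continuity of \(H_{\gamma}\) on a fixed compact set removes the spatial discrepancy and leaves \(\eta \le H_{\gamma}(s_{\sigma}, t'_{\sigma}, \mu_{\sigma}) - H_{\gamma}(s_{\sigma}, t_{\sigma}, \mu_{\sigma})\). Here \(\mu_{\sigma}\) may still be unbounded in \(\sigma\).
\item Now apply \zcref{condHtlip} together with \(H_{\gamma}(s_{\sigma}, t_{\sigma}, \mu_{\sigma}) \le \sigma \lvert t'_{\sigma} - t_{\sigma} \rvert\). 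This bounds the right-hand side by \(\alpha_{T} \sigma \lvert t'_{\sigma} - t_{\sigma} \rvert^{2} + \beta_{T} \lvert t'_{\sigma} - t_{\sigma} \rvert\), which tends to \(0\).
\end{itemize}
The relative form of the time-Lipschitz condition, with \(H_{\gamma}\) in the numerator, converts the possibly unbounded momenta into the vanishing quantity \(\sigma \lvert t - t' \rvert^{2}\), so no uniform gradient bound is needed. With \zcref{condHtlip} your sup-convolution route could also be completed, since the shift error becomes at most \(\alpha_{T} \lvert t - \tau^{*} \rvert^{2} / h + \beta_{T} \lvert t - \tau^{*} \rvert\). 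That is, however, the same estimate in another guise, and it is exactly what your proposal omits.
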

    \begin{proof}
        Here we will only prove the assertion for the second case, since the proof of the first one is almost identical. \\
        Possibly replacing \(W\) with
        \begin{equation*}
            (s, t) \longmapsto W(s, t) - \frac{\varepsilon}{T - t} \qquad \text{for some \(\varepsilon > 0\)},
        \end{equation*}
        we assume that \(W(s, T) < V(s, T)\) for any \(s \in [0, \lvert \gamma \rvert]\) and that \(W\) is a subsolution to
        \begin{equation*}
            \difcp{U(s, t)}{t} + H_{\gamma}(s, t, \difcp{U}{s}) \le - \frac{\varepsilon}{T^{2}} \qquad \text{in \(\Qcal_{\gamma, T}\)}.
        \end{equation*}
        We will show that under our assumptions \(W \le V\) on \(\ovQcal_{\gamma, T}\), which will prove the claim because \(\varepsilon\) is arbitrary. \\
        We proceed by contradiction, assuming that
        \begin{equation*}
            \max_{(s, t) \in \ovQcal_{\gamma, T}} \{W(s, t) - V(s, t)\} > 0.
        \end{equation*}
        Fixed \(\delta \in (0, \lvert \gamma \rvert)\), we define for \(\rho, \sigma \in \Rds^{+}\)
        \begin{equation*}
            M_{\rho, \sigma} \coloneqq \max_{(s, t), (s', t') \in \ovQcal_{\gamma, T}} \left\{ W(s, t) - V \left( s', t' \right) - \frac{1}{2} \left\lvert \rho \left( s - s' \right) - \delta \right\rvert^{2} - \frac{\sigma}{2} \left\lvert t - t' \right\rvert^{2} \right\},
        \end{equation*}
        then, for any \(\rho\) and \(\sigma\) big enough, there are \((s_{\rho, \sigma}, t_{\rho, \sigma}), \left( s'_{\rho, \sigma}, t_{\rho, \sigma}' \right) \in \ovQcal_{\gamma, T}\) so that
        \begin{align*}
            M_{\rho, \sigma} & = W(s_{\rho, \sigma}, t_{\rho, \sigma}) - V \left( s'_{\rho, \sigma}, t'_{\rho, \sigma} \right) - \frac{1}{2} \left\lvert \rho \left( s_{\rho, \sigma} - s'_{\rho, \sigma} \right) - \delta \right\rvert^{2} - \frac{\sigma}{2} \left\lvert t_{\rho, \sigma} - t'_{\rho, \sigma} \right\rvert^{2} \\
            & \ge \max_{(s, t) \in \ovQcal_{\gamma, T}} \{W(s, t) - V(s, t)\} > 0.
        \end{align*}
        As a consequence of \zcref{alphamax} we further have that, for any \(\rho\) and \(\sigma\) big enough, \((s_{\rho, \sigma}, t_{\rho, \sigma}) \in \Qcal_{\gamma, T}\) and \(\left( s'_{\rho, \sigma}, t_{\rho, \sigma}' \right) \in [0, \lvert \gamma \rvert) \times (0, T)\). Setting
        \begin{equation*}
            \varphi_{\rho, \sigma}(s, t) \coloneqq W(s_{\rho, \sigma}, t_{\rho, \sigma}) - \frac{1}{2} \lvert \rho(s_{\rho, \sigma} - s) - \delta \rvert^{2} - \frac{\sigma}{2} \lvert t_{\rho, \sigma} - t \rvert^{2} - M_{\rho, \sigma},
        \end{equation*}
        we have that \(\varphi_{\rho, \sigma}\) is a (constrained, if \(s'_{\rho, \sigma} = 0\)) \(\Crm^{1}\) subtangent to \(V\) at \(\left( s'_{\rho, \sigma}, t'_{\rho, \sigma} \right)\), therefore the supersolution property of \(V\) and~\zcref{eq:loccomp.1} imply
        \begin{equation}\label{eq:loccomp1}
            \sigma \left( t_{\rho, \sigma} - t_{\rho, \sigma}' \right) + H_{\gamma} \left( s_{\rho, \sigma}', t_{\rho, \sigma}', \rho^{2} \left( s_{\rho, \sigma} - s_{\rho, \sigma}' \right) - \rho \delta \right) \ge 0.
        \end{equation}
        Similarly,
        \begin{equation*}
            \psi_{\rho, \sigma}(s, t) \coloneqq V \left( s'_{\rho, \sigma}, t'_{\rho, \sigma} \right) + \frac{1}{2} \left\lvert \rho \left( s - s'_{\rho, \sigma} \right) - \delta \right\rvert^{2} + \frac{\sigma}{2} \left\lvert t - t'_{\rho, \sigma} \right\rvert^{2} + M_{\rho, \sigma}
        \end{equation*}
        is a \(\Crm^{1}\) supertangent to \(W\) at \((s_{\rho, \sigma}, t_{\rho, \sigma})\)---which we assume is not constrained since \(s_{\rho, \sigma} > 0\) for any \(\rho\) big enough---hence
        \begin{equation}\label{eq:loccomp2}
            \sigma \left( t_{\rho, \sigma} - t_{\rho, \sigma}' \right) + H_{\gamma} \left( s_{\rho, \sigma}, t_{\rho, \sigma}, \rho^{2} \left( s_{\rho, \sigma} - s_{\rho, \sigma}' \right) - \rho \delta \right) \le - \frac{\varepsilon}{T^{2}}.
        \end{equation}
        For any \(\rho\) and \(\sigma\) big enough~\zcref{eq:loccomp1,eq:loccomp2} then yield
        \begin{equation}\label{eq:loccomp3}
            H_{\gamma} \left( s_{\rho, \sigma}', t_{\rho, \sigma}', \rho^{2} \left( s_{\rho, \sigma} - s_{\rho, \sigma}' \right) - \rho \delta \right) - H_{\gamma} \left( s_{\rho, \sigma}, t_{\rho, \sigma}, \rho^{2} \left( s_{\rho, \sigma} - s_{\rho, \sigma}' \right) - \rho \delta \right) \ge \frac{\varepsilon}{T^{2}}.
        \end{equation}
        We further notice that by~\zcref{eq:loccomp2}
        \begin{equation}\label{eq:loccomp4}
            H_{\gamma} \left( s_{\rho, \sigma}, t_{\rho, \sigma}, \rho^{2} \left( s_{\rho, \sigma} - s_{\rho, \sigma}' \right) - \rho \delta \right) \le \sigma \left\lvert t_{\rho, \sigma}' - t_{\rho, \sigma} \right\rvert - \frac{\varepsilon}{T^{2}},
        \end{equation}
        thus the coercivity of \(H_{\gamma}\) shows that fixed \(\sigma\) big enough there is a constant \(\ell_{\sigma}\) such that
        \begin{equation*}
            \left\lvert \rho^{2} \left( s_{\rho, \sigma} - s_{\rho, \sigma}' \right) - \rho \delta \right\rvert \le \ell_{\sigma} \qquad \text{for every \(\rho\) big enough}.
        \end{equation*}
        It follows that if we fix \(\sigma\) and send \(\rho\) to \(\infty\), then \(\rho^{2} \left( s_{\rho, \sigma} - s_{\rho, \sigma}' \right) - \rho \delta\) converges, up to subsequences, to a \(\mu_{\sigma}\) with \(\lvert \mu_{\sigma} \rvert \le \ell_{\sigma}\). Likewise, \(t_{\rho, \sigma}\), \(t_{\rho, \sigma}'\) and \(s_{\rho, \sigma}\) converge, up to subsequences, to some \(t_{\sigma}\), \(t_{\sigma}'\) and \(s_{\sigma}\), respectively. Exploiting \zcref{alphamax}, we also get \(s'_{\rho, \sigma} \to s_{\sigma}\). We apply these facts to~\zcref{eq:loccomp3} to get
        \begin{align*}
            \frac{\varepsilon}{T^{2}} & \le H_{\gamma} \left( s_{\sigma}, t_{\sigma}', \mu_{\sigma} \right) - H_{\gamma}(s_{\sigma}, t_{\sigma}, \mu_{\sigma}) \le \left( \alpha_{T} \frac{H_{\gamma}(s_{\sigma}, t_{\sigma}, \mu_{\sigma})}{1 + \lvert \mu_{\sigma} \rvert} + \beta_{T} \right) \left\lvert t_{\sigma}' - t_{\sigma} \right\rvert \\
            & \le \frac{\alpha_{T} \sigma \lvert t_{\sigma}' - t_{\sigma} \rvert^{2}}{1 + \lvert \mu_{\sigma} \rvert} - \frac{\alpha_{T} \varepsilon \lvert t_{\sigma}' - t_{\sigma} \rvert}{T^{2} (1 + \lvert \mu_{\sigma} \rvert)} + \beta_{T} \lvert t_{\sigma}' - t_{\sigma} \rvert,
        \end{align*}
        where last inequalities are a consequence of \zcref{condHtlip,eq:loccomp4}. Since \(\sigma \lvert t_{\sigma}' - t_{\sigma} \rvert^{2} \to 0\) as \(\sigma \to \infty\) by \zcref{alphamax}, we have a contradiction.
    \end{proof}

    We now focus on the maximal subsolutions to~\zcref{eq:locHJ}. Given a continuous map \(U_{0} \colon \partial_{p}^{+} \Qcal_{\gamma} \to \Rds\), we consider the value function
    \begin{equation}\label{eq:locvf}
        U(s, t) \coloneqq \inf \left\{ U_{0}(\eta(t_{0}), t_{0}) + \int_{t_{0}}^{t} L_{\gamma}(\eta, \dot{\eta}, \tau) \dl \tau \right\},
    \end{equation}
    where the infimum is taken over the \(t_{0} \in [0, t]\) and the curves \(\eta \colon [t_{0}, t] \to [0, \lvert \gamma \rvert]\) with \(\eta(t) = s\) and \(\eta(t_{0}) \in \partial_{p}^{+} \Qcal_{\gamma}\).

    \begin{rem}\label{locmincurve}
        \zcref[S]{equivLagprops,minactlsc,mincurvelip} show that the infimum in~\zcref{eq:locvf} is achieved by a Lipschitz continuous curve.
    \end{rem}

    \begin{prop}\label{locvfcont}
        The value function \(U\) defined in~\zcref{eq:locvf} is continuous.
    \end{prop}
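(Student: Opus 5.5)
To prove that \(U\) is continuous, I would bound it from both sides through the boundary data and then control the error by comparing optimal curves for nearby points. Throughout, fix \(T>0\) and work on \([0,\lvert\gamma\rvert]\times[0,T]\).

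\textbf{Step 1: regularity of \(U\) on \(\ovQcal_{\gamma, T}\) away from \(\partial_p^+\Qcal_\gamma\).} Take \((s,t)\) and \((s',t')\) with \(t,t'>0\) close, and let \(\eta\colon[t_0,t]\to[0,\lvert\gamma\rvert]\) be an optimal Lipschitz curve for \(U(s,t)\), as given by \zcref{locmincurve}.

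\emph{Case \(t_0<t\) with \(t-t_0\) not small.} I would build a competitor for \((s',t')\) from \(\eta\). First rescale time affinely so the curve is defined on \([t_0,t']\), keeping the same starting point \((\eta(t_0),t_0)\). Then, on a short final interval, add a straight segment from \(s\) to \(s'\). The action changes by an amount controlled by three facts: the Lipschitz bound of \(\eta\), the upper bound \(\whTheta_T\) in \zcref{en:loclagprop1}, and the time regularity \zcref{en:loclagprop2} together with uniform continuity of \(L_\gamma\) on compact sets. This gives \(U(s',t')\le U(s,t)+\omega(\lvert s-s'\rvert+\lvert t-t'\rvert)\). Exchanging the roles of the two points gives the reverse inequality.

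The Lipschitz constant of \(\eta\) must be uniform. I would obtain it from \zcref{mincurvelip} (in its local version), since \(\lvert\eta(t)-\eta(t_0)\rvert\le\lvert\gamma\rvert\le C(t-t_0)\) once \(t-t_0\) is bounded below. When \(t-t_0\) is small, the initial point already lies close to \((s,t)\), and the argument passes to Step 2.

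\textbf{Step 2: behaviour near \(\partial_p^+\Qcal_\gamma\).} This is where \(t_0\) can be close to \(t\).

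\emph{Upper bound.} Using the constant curve, or a straight segment of bounded speed \(\lambda\) from a nearby boundary point, gives \(U(s,t)\le U_0(b)+o(1)\) for a boundary point \(b\) close to \((s,t)\).

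\emph{Lower bound.} For any admissible curve, the lower bound \(\whvartheta_T\) gives \(\int L_\gamma\ge (t-t_0)\,\whvartheta_T\bigl(\lvert\eta(t)-\eta(t_0)\rvert/(t-t_0)\bigr)\) by Jensen's inequality. By superlinearity, a short time together with a large displacement costs a large amount. Hence near-optimal curves starting from far-away boundary points are either penalized or start at times where, by continuity of \(U_0\), the data already give the right value. Combining the two bounds yields \(U(s,t)\to U_0(s_b,t_b)\) as \((s,t)\to(s_b,t_b)\in\partial_p^+\Qcal_\gamma\). It also gives the uniform smallness of the error when \(t-t_0\) is small, which is needed in Step 1.

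\textbf{Main obstacle.} I expect the main obstacle to be the uniform Lipschitz bound on minimizers when \(t-t_0\) is small. Step 2's dichotomy is designed to handle exactly this: either the minimizer is effectively Lipschitz with a uniform constant, or its starting point is so close that continuity of \(U_0\) controls the value. All the remaining estimates are the routine time-rescaling estimates based on \zcref{loclagprop}.
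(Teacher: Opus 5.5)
\textbf{Gap: Step 2 assigns the wrong boundary limit.} You claim that \(U(s,t)\to U_0(s_b,t_b)\) as \((s,t)\to(s_b,t_b)\in\partial_p^+\Qcal_\gamma\). This is false on the lateral part \(\{\lvert\gamma\rvert\}\times(0,\infty)\). The value function is the maximal subsolution lying \emph{below} \(U_0\) on \(\partial_p^+\Qcal_\gamma\), and it can be strictly below there.

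For example, take \(H_\gamma(s,t,\mu)=\mu^2/2\), so \(L_\gamma=\lambda^2/2\ge 0\), with \(U_0(s,0)=0\) and \(U_0(\lvert\gamma\rvert,t)=t\). Constant curves started at time \(0\) give \(U\equiv 0\), while \(U_0(\lvert\gamma\rvert,t_b)=t_b>0\).

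Your lower-bound dichotomy says near-optimal curves are either penalized by superlinearity or start close to \((s_b,t_b)\). It omits exactly the relevant case: curves leaving the lateral boundary at a time \(t_0\) well before \(t_b\), which are neither short nor expensive. At \(t_b=0\) the dichotomy is complete, since \(t_0\le t\to 0\) forces every curve to be short, and there indeed \(U(s_b,0)=U_0(s_b,0)\). But Step 1 is restricted to points away from \(\partial_p^+\Qcal_\gamma\). So continuity at lateral points where \(U<U_0\) is never proved; Step 2 simply gives them the wrong limit.

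\textbf{How to repair it within your framework.} Aim at \(U(s_b,t_b)\), not \(U_0(s_b,t_b)\), and split cases by the length \(t-t_0\) of the optimal curve rather than by the location of the point. Your splicing construction in Step 1 never uses that the endpoints are far from the boundary. It only uses that the curve being modified satisfies \(t-t_0\ge\delta\), which makes it uniformly Lipschitz.

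For the \(\limsup\) at \((s_b,t_b)\): if its optimal start time is \(<t_b\), splice its optimal curve onto nearby points. If the start time equals \(t_b\), then \(U(s_b,t_b)=U_0(s_b,t_b)\), and your explicit competitors (the constant curve, or a unit-speed segment from the lateral boundary) apply.

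For the \(\liminf\) along \((s_n,t_n)\to(s_b,t_b)\) with optimal start times \(t_{0,n}\): if \(t_n-t_{0,n}\) stays bounded below, splice those curves to end at \((s_b,t_b)\). If \(t_n-t_{0,n}\to 0\), penalization and continuity of \(U_0\) give \(\liminf U(s_n,t_n)\ge U_0(s_b,t_b)\ge U(s_b,t_b)\).

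\textbf{Comparison with the paper.} This corrected argument is essentially the paper's proof, which packages it more economically. Lower semicontinuity of the local minimal action \(\Phi_\gamma\), together with compactness of the optimal starting points, gives the \(\liminf\) inequality in all cases at once. Lipschitz continuity of \(\Phi_\gamma\) for \(t_1<t_2\) gives the \(\limsup\) when \(t_0<t\). Explicit competitor curves handle the remaining case \(t_0=t\).
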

    \begin{proof}
        We consider the map
        \begin{equation*}
            \Phi_{\gamma}(s_{1}, t_{1}, s_{2}, t_{2}) \coloneqq \inf \left\{ \int_{t_{0}}^{t} L_{\gamma}(\eta, \dot{\eta}, \tau) \dl \tau : \text{\(\eta\) is a curve with \(\eta(t_{1}) = s_{1}\) and \(\eta(t_{2}) = s_{2}\)} \right\}
        \end{equation*}
        with the additional assumption
        \begin{equation*}
            \Phi_{\gamma}(s_{1}, t_{1}, s_{2}, t_{2}) \coloneqq \infty \qquad \text{whenever \(t_{1} = t_{2}\) and \(s_{1} \ne s_{2}\)}.
        \end{equation*}
        Given a sequence \(\{(s_{n}, t_{n})\}_{n \in \Nds} \subset \ovQcal_{\gamma}\) converging to some \((s, t)\), there is also a sequence \(\{(s_{n}', t_{n}')\}_{n \in \Nds} \subset \partial^{+}_{p} \Qcal_{\gamma}\) with \(t_{n}' \le t_{n}\) and an \((s_{0}, t_{0}) \in \partial^{+}_{p} \Qcal_{\gamma}\) with \(t_{0} \le t\) such that
        \begin{equation*}
            U(s_{n}, t_{n}) = U_{0} \left( s_{n}', t_{n}' \right) + \Phi_{\gamma} \left( s_{n}', t_{n}', s_{n}, t_{n} \right) \qquad \text{and} \qquad U(s, t) = U_{0}(s_{0}, t_{0}) + \Phi_{\gamma}(s_{0}, t_{0}, s, t).
        \end{equation*}
        Because \((s, t)\) and \(\{(s_{n}, t_{n})\}\) are arbitrary, it is enough to show that
        \begin{equation}\label{eq:locvfcont1}
            \lim_{n \to \infty} U(s_{n}, t_{n}) = U(s, t).
        \end{equation}
        According to \zcref{equivLagprops,minactlip}, \(\Phi_{\gamma}\) is continuous at any point with \(t_{1} < t_{2}\); therefore, \(U\) is continuous at \((s, t)\) whenever \(t > t_{0}\). It remains to prove the \(t = t_{0}\) case. \\
        First we observe that \(\{(s_{n}', t_{n}')\}\) must converge, up to subsequences, to an \((s_{0}', t_{0}') \in \partial_{p}^{+} \Qcal_{\gamma}\) with \(t_{0}' \le t\). Since \(\Phi_{\gamma}\) is lower semicontinuous by \zcref{equivLagprops,minactlsc}, we then have
        \begin{equation}\label{eq:locvfcont2}
            \liminf_{n \to \infty} U(s_{n}, t_{n}) \ge U_{0} \left( s_{0}', t_{0}' \right) + \Phi_{\gamma} \left( s_{0}', t_{0}', s, t \right) \ge U(s, t).
        \end{equation}
        Next we assume that \(t = 0\), hence \(t_{0}' = 0\). In this case, for any \(n\) big enough,
        \begin{align*}
            U(s_{n}, t_{n}) & = U_{0} \left( s_{n}', t_{n}' \right) + \Phi_{\gamma} \left( s_{n}', t_{n}', s_{n}, t_{n} \right) \le U_{0}(s_{n}, 0) + \Phi_{\gamma}(s_{n}, 0, s_{n}, t_{n}) \\
            & \le U_{0}(s_{n}, 0) + \int_{0}^{t_{n}} \Theta_{1}(0) \dl \tau = U_{0}(s_{n}, 0) + t_{n} \Theta_{1}(0),
        \end{align*}
        which implies that
        \begin{equation}\label{eq:locvfcont3}
            \limsup_{n \to \infty} U(s_{n}, t_{n}) \le U(s, t) \qquad \text{if \(t = 0\)}.
        \end{equation}
        If instead \(t > 0\), then \(s = \lvert \gamma \rvert\) and, for any \(n\) big enough,
        \begin{align*}
            U(s_{n}, t_{n}) & = U_{0} \left( s_{n}', t_{n}' \right) + \Phi_{\gamma} \left( s_{n}', t_{n}', s_{n}, t_{n} \right) \\
            & \le U_{0}(\lvert \gamma \rvert, t_{n} - (\lvert \gamma \rvert - s_{n})) + \Phi_{\gamma}(\lvert \gamma \rvert, t_{n} - (\lvert \gamma \rvert - s_{n}), s_{n}, t_{n}) \\
            & \le U_{0}(\lvert \gamma \rvert, t_{n} - (\lvert \gamma \rvert - s_{n})) + \int_{t_{n} - (\lvert \gamma \rvert - s_{n})}^{t_{n}} \Theta_{t + 1}(1) \dl \tau \\
            & = U_{0}(\lvert \gamma \rvert, t_{n} - (\lvert \gamma \rvert - s_{n})) + (\lvert \gamma \rvert - s_{n}) \Theta_{t + 1}(1),
        \end{align*}
        which proves that
        \begin{equation}\label{eq:locvfcont4}
            \limsup_{n \to \infty} U(s_{n}, t_{n}) \le U(s, t) \qquad \text{if \(t > 0\) and \(s = \lvert \gamma \rvert\)}.
        \end{equation}
        Finally, \zcref{eq:locvfcont2,eq:locvfcont3,eq:locvfcont4} yield~\zcref{eq:locvfcont1} when \(t = t_{0}\).
    \end{proof}

    \begin{theo}\label{locmaxsubsol}
        The value function \(U\) defined in~\zcref{eq:locvf} is the maximal continuous subsolution to~\zcref{eq:locHJ} less than or equal to \(U_{0}\) on \(\partial_{p}^{+} \Qcal_{\gamma}\).
    \end{theo}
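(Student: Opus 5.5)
The proof splits into three claims, taken in this order: \(U \le U_{0}\) on \(\partial_{p}^{+} \Qcal_{\gamma}\); \(U\) is a viscosity subsolution to~\zcref{eq:locHJ} in \(\Qcal_{\gamma}\); and every continuous subsolution \(W \le U_{0}\) on \(\partial_{p}^{+} \Qcal_{\gamma}\) satisfies \(W \le U\). Continuity of \(U\) is already given by \zcref{locvfcont}.

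The boundary inequality is immediate. At a point of \(\partial_{p}^{+} \Qcal_{\gamma}\) we take \(t_{0} = t\) with the constant curve, so the integral vanishes and \(U \le U_{0}\).

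For the subsolution property I will use the dynamic programming principle. For \((s, t) \in \Qcal_{\gamma}\), small \(h > 0\) and any curve \(\eta \colon [t - h, t] \to [0, \lvert \gamma \rvert]\) with \(\eta(t) = s\),
\begin{equation*}
    U(s, t) \le U(\eta(t - h), t - h) + \int_{t - h}^{t} L_{\gamma}(\eta, \dot{\eta}, \tau) \dl \tau .
\end{equation*}
This holds because an admissible competitor for \(U(\eta(t - h), t - h)\) can be concatenated with \(\eta\); when the infimum for that point is attained only with \(t_{0} = t - h\), the competitor is just a boundary point of \(\partial_{p}^{+} \Qcal_{\gamma}\), and the concatenation still works. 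Now let \(\psi\) be a \(\Crm^{1}\) supertangent to \(U\) at \((s, t)\), fix \(\lambda \in \Rds\) and take \(\eta(\tau) = s + \lambda(\tau - t)\), which stays inside \((0, \lvert \gamma \rvert)\) for small \(h\). Replacing \(U\) by \(\psi\), dividing by \(h\) and using continuity of \(L_{\gamma}\) (\zcref{loclagprop}) gives
\begin{equation*}
    \difcp{\psi}{t} + \lambda \difcp{\psi}{s} \le L_{\gamma}(s, \lambda, t).
\end{equation*}
Taking the supremum over \(\lambda\) yields \(\difcp{\psi}{t} + H_{\gamma}(s, t, \difcp{\psi}{s}) \le 0\), since \(H_{\gamma}\) is the convex conjugate of \(L_{\gamma}\) by \zcref{condHconv,condHcoer}.

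Maximality is the main step. Let \(W\) be a continuous subsolution with \(W \le U_{0}\) on \(\partial_{p}^{+} \Qcal_{\gamma}\). It suffices to show that for every admissible pair \((t_{0}, \eta)\),
\begin{equation*}
    W(s, t) \le W(\eta(t_{0}), t_{0}) + \int_{t_{0}}^{t} L_{\gamma}(\eta, \dot{\eta}, \tau) \dl \tau .
\end{equation*}
By \zcref{locmincurve} we may restrict to Lipschitz \(\eta\), and we can also arrange \(\eta((t_{0}, t]) \subset (0, \lvert \gamma \rvert]\): if the curve touches \(s = 0\) afterwards, the triangle-type inequality needs only to be shown from the last touching time. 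I would not try to prove this inequality directly, since \(W\) is merely continuous and the chain rule is unavailable. Instead I will compare with \(\zcref{loccomp}\), in the version \ref{en:loclagprop1}-type case (i).

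Concretely, I will fix the boundary data and let \(V\) be the value function~\zcref{eq:locvf}, built with datum \(W\) restricted to the parabolic boundary \(\partial_{p} \Qcal_{\gamma, T}\), where curves start anywhere on that boundary, including the side \(s = 0\). \(V\) is a continuous subsolution by the arguments above. I then need to show that \(V\) is also a supersolution in the interior, with \(V = W\) on the boundary, using the standard argument: optimal Lipschitz curves exist, and a subtangent at an interior point contradicts optimality, with the reverse inequality coming from the existence of an optimal \(\eta\) and superlinearity. The first case of \zcref{loccomp} then gives \(W \le V\) on \(\ovQcal_{\gamma, T}\). Since \(V \le W\) along the boundary and \(W \le U_{0}\) on \(\partial_{p}^{+}\), we get
\begin{equation*}
    W \le V \le \inf\left\{ W(\eta(t_{0}), t_{0}) + \int L_{\gamma} \right\},
\end{equation*}
which yields \(W \le U\) after minimizing over curves issuing from \(\partial_{p}^{+}\).

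I expect the supersolution property of \(V\) to be the hard part, because \(L_{\gamma}\) is only Lipschitz in \(t\) (\zcref{loclagprop}) and optimal curves are needed to be Lipschitz uniformly near the point. I will handle it via \zcref{mincurvelip}, as recorded in \zcref{locmincurve}.
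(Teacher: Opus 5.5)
\textbf{Gap: the maximality step is circular.} Your proof that \(U\) is a subsolution (dynamic programming along straight segments, then a supremum over \(\lambda\)) is correct. It is the ``if'' half of \zcref{subsolloc}, which is what the paper invokes.

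The maximality step fails at the boundary. Case (i) of \zcref{loccomp} needs \(W \le V\) on all of \(\partial_{p} \Qcal_{\gamma, T}\). The definition of \(V\) only gives \(V \le W\) there (take the trivial curve with \(t_{0} = t\)). The equality \(V = W\) on the lateral sides does not follow from superlinearity or from the existence of optimal curves. For instance, \(V(0, t) \le W(0, t - h) + \int_{t - h}^{t} L_{\gamma}(0, 0, \tau) \, \mathrm{d}\tau\). Also, \(V(0, t)\) may be realized by a curve leaving \((\lvert \gamma \rvert, t_{0})\) and crossing the interior. What rules out \(V(0, t) < W(0, t)\) is exactly the inequality \(W(\eta(t), t) \le W(\eta(t_{0}), t_{0}) + \int_{t_{0}}^{t} L_{\gamma}(\eta, \dot{\eta}, \tau) \, \mathrm{d}\tau\) for all such curves. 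That is the sub-optimality of \(W\) you chose not to prove directly, now along curves ending on the boundary. So the comparison argument assumes its own conclusion. It also forces you to prove the supersolution property of \(V\), which this theorem does not need. To rescue it you would need a comparison principle with constrained-type boundary conditions on both lateral sides, not case (i).

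\textbf{Missing idea: prove sub-optimality directly.} This is possible despite the missing chain rule, and it is the ``only if'' half of \zcref{subsolloc}. Take a \(\Crm^{1}\) curve \(\eta\) with values in the open interval, and a \(\Crm^{1}\) supertangent \(\phi\) to \(t \mapsto W(\eta(t), t) - \int_{t_{1}}^{t} L_{\gamma}(\eta, \dot{\eta}, \tau) \, \mathrm{d}\tau\) at \(\ovt\). Up to an arbitrary \(\varepsilon\), \(\phi\) is controlled by a \(\Crm^{1}\) supertangent \(\psi_{\varepsilon}\) to \(W\) at a nearby point. The subsolution inequality for \(\psi_{\varepsilon}\), combined with \(H_{\gamma}(s, t, \mu) \ge \mu \lambda - L_{\gamma}(s, \lambda, t)\), gives \(\phi'(\ovt) \le \varepsilon + \omega(\varepsilon)\). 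Then \zcref{suptanmon} shows the function is nonincreasing. Lipschitz curves follow by \(\Crm^{1}\) approximation and dominated convergence.

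With this in hand, maximality is a one-line estimate along the optimal Lipschitz curve of \zcref{locmincurve}:
\(W(s, t) \le W(\eta(t_{0}), t_{0}) + \int_{t_{0}}^{t} L_{\gamma}(\eta, \dot{\eta}, \tau) \, \mathrm{d}\tau \le U_{0}(\eta(t_{0}), t_{0}) + \int_{t_{0}}^{t} L_{\gamma}(\eta, \dot{\eta}, \tau) \, \mathrm{d}\tau = U(s, t)\).

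\textbf{Smaller issue: curves touching the endpoints.} Your reduction ``from the last touching time'' of \(s = 0\) does not work. The point \((0, \tau)\) is not in \(\partial_{p}^{+} \Qcal_{\gamma}\), so the inequality on \([t_{0}, \tau]\) is still required. Instead, approximate curves touching the endpoints of \([0, \lvert \gamma \rvert]\) by curves in the open interval, e.g.\ by an affine contraction toward the interior. Then pass to the limit using the continuity of \(W\) and \(L_{\gamma}\).
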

    \begin{proof}
        \zcref[S]{locvfcont} shows that \(U\) is continuous, therefore \zcref{locmincurve,subsolloc} yield that \(U\) is the maximal subsolution to~\zcref{eq:locHJ} such that \(U \le U_{0}\) on \(\partial_{p}^{+} \Qcal_{\gamma}\).
    \end{proof}

    \section{A Related Semidiscrete Problem}\label{sec:disc}

    We introduce here a semidiscrete problem strictly related to~\zcref{eq:globHJ}.

    Let
    \begin{equation*}
        \Fcal \coloneqq \Crm \left( (\Gamma \times \{0\}) \cup \left( \Vbf \times \Rds^{+} \right) \right),
    \end{equation*}
    then define for any \(\gamma \in \Ecal\) the operator \(F_{\gamma} \colon \Fcal \to \Crm \left( \ovQcal_{\gamma} \right)\) which associates to a function \(u \in \Fcal\) the maximal continuous subsolution \(W\) to~\zcref{eq:locHJ} with \(W(s, t) \le u(\gamma(s), t)\) for any \((s, t) \in \partial_{p}^{+} \Qcal_{\gamma}\). Thanks to \zcref{locmaxsubsol}, we know that
    \begin{equation*}
        F_{\gamma}[u](s, t) = \inf \left\{ u(\gamma(\eta(t_{0})), t_{0}) + \int_{t_{0}}^{t} L_{\gamma}(\eta, \dot{\eta}, \tau) \dl \tau \right\},
    \end{equation*}
    where the infimum is taken over the \(t_{0} \in [0, t]\) and the curves \(\eta \colon [t_{0}, t] \to [0, \lvert \gamma \rvert]\) with \(\eta(t) = s\) and \(\eta(t_{0}) \in \partial_{p}^{+} \Qcal_{\gamma}\). Next we set
    \begin{equation*}
        F_{x}[u](t) = \min_{\gamma \in \Ecal_{x}} F_{\gamma}[u](0, t) \qquad \text{for \(t \in \Rds^{+}\)}.
    \end{equation*}

    We summarize the main properties of \(F_{x}\) in the next \zcref[noref]{Fxprop}.

    \begin{lem}\label{Fxprop}
        Given \(u \in \Fcal\) and \(x \in \Vbf\),
        \begin{mylist}
            \item \(F_{x}[u] \colon \Rds^{+} \to \Rds\) is continuous;
            \item \(F_{x}[u](0) = u(x, 0)\);
            \item \(F_{x}[u + a] = F_{x}[u] + a\) for any \(a \in \Rds\);
            \item let \(v \in \Fcal\) and \(T > 0\) be so that
            \begin{equation*}
                v(\gamma(s), t) \ge u(\gamma(s), t) \qquad \text{for each \(\gamma \in \Ecal_{x}\) and \((s, t) \in \partial_{p}^{+} \Qcal_{\gamma, T}\)},
            \end{equation*}
            then \(F_{x}[v](t) \ge F_{x}[u](t)\) for every \(t \in [0, T]\).
        \end{mylist}
    \end{lem}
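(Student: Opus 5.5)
We prove the four items of \zcref{Fxprop} in turn. Throughout we rely on the representation of \(F_{\gamma}[u]\) as the value function~\zcref{eq:locvf} with datum \(U_{0}(s, t) = u(\gamma(s), t)\) on \(\partial_{p}^{+} \Qcal_{\gamma}\), and on its maximality from \zcref{locmaxsubsol}.

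\textbf{Continuity.} By \zcref{locvfcont}, each \(F_{\gamma}[u]\) is continuous on \(\ovQcal_{\gamma}\), so \(t \mapsto F_{\gamma}[u](0, t)\) is continuous. Since \(\Ecal_{x}\) is finite by local finiteness, \(F_{x}[u]\) is a minimum of finitely many continuous functions and is therefore continuous.

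\textbf{Value at \(t = 0\).} For \(t = 0\), the infimum in~\zcref{eq:locvf} only involves \(t_{0} = 0\) and the constant curve \(\eta \equiv 0\), so \(F_{\gamma}[u](0, 0) = u(\gamma(0), 0) = u(x, 0)\) for every \(\gamma \in \Ecal_{x}\). Hence \(F_{x}[u](0) = u(x, 0)\).

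\textbf{Translation by constants.} Adding \(a\) to \(u\) adds \(a\) to every quantity inside the infimum in~\zcref{eq:locvf}. Hence \(F_{\gamma}[u + a] = F_{\gamma}[u] + a\), and taking the minimum over \(\gamma \in \Ecal_{x}\) gives \(F_{x}[u + a] = F_{x}[u] + a\).

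\textbf{Monotonicity.} Fix \(\gamma \in \Ecal_{x}\) and \(t \le T\). Any admissible pair \((t_{0}, \eta)\) in the formula for \(F_{\gamma}[\cdot](0, t)\) has \(t_{0} \le t \le T\) and \((\eta(t_{0}), t_{0}) \in \partial_{p}^{+} \Qcal_{\gamma}\), so in fact \((\eta(t_{0}), t_{0}) \in \partial_{p}^{+} \Qcal_{\gamma, T}\). On that set the hypothesis gives \(v(\gamma(\eta(t_{0})), t_{0}) \ge u(\gamma(\eta(t_{0})), t_{0})\). So each term for \(v\) dominates the corresponding term for \(u\), and taking infima yields \(F_{\gamma}[v](0, t) \ge F_{\gamma}[u](0, t)\). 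Minimizing over \(\gamma \in \Ecal_{x}\) then gives \(F_{x}[v](t) \ge F_{x}[u](t)\).

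Alternatively, monotonicity follows from \zcref{loccomp} on \(\Qcal_{\gamma, T}\) using the maximality of \(F_{\gamma}[u]\). The direct variational comparison above is simpler, because it only needs the boundary values up to time \(T\).

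None of the four items is a genuine obstacle. The continuity item carries the real content, and it is already supplied by \zcref{locvfcont}.
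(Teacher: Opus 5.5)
Your proof is correct. The paper states this lemma without proof, treating all four items as immediate from the variational formula for \(F_{\gamma}[u]\), \zcref{locvfcont} and the finiteness of \(\Ecal_{x}\), and your argument is exactly that routine verification. Your monotonicity step rightly reads \(\partial_{p}^{+} \Qcal_{\gamma, T}\) as \(([0, \lvert \gamma \rvert] \times \{0\}) \cup (\{\lvert \gamma \rvert\} \times [0, T])\); the paper's display has a typo there.

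The only loose point is your aside on obtaining monotonicity from \zcref{loccomp}. Maximality of \(F_{\gamma}[u]\) gives the wrong inequality direction. Maximality of \(F_{\gamma}[v]\) would require \(F_{\gamma}[u] \le v\) on \(\partial_{p}^{+} \Qcal_{\gamma}\) for all times, not just up to \(T\). \zcref[S]{loccomp} would require \(F_{\gamma}[u] \le F_{\gamma}[v]\) on the side \(s = \lvert \gamma \rvert\), where \(F_{\gamma}[v]\) can lie strictly below \(v\). So that route needs extra work, but you do not rely on it, so nothing is lost.
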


    We also define the operator \(G \colon \Crm(\Rds^{+}) \times \Crm(\Rds^{+}) \to \Rds\) via
    \begin{equation}\label{eq:Gdef}
        G[\psi, c](t) = \min_{r \in [0, t]} \left\{ \psi(r) + \int_{r}^{t} c(\tau) \dl \tau \right\} \qquad \text{for \(\psi, c \in \Crm \left( \Rds^{+} \right)\), \(t \in \Rds^{+}\)}.
    \end{equation}
    Loosely speaking, this operator allows taking into account the constraint given in~\zcref{eq:globHJ} by the flux limiter.

    \begin{lem}\label{Gismin}
        We have
        \begin{equation}\label{eq:Gismin.1}
            G[\psi, c](t) = \min_{r \in [0, t]} \left\{ G[\psi, c](r) + \int_{r}^{t} c(\tau) \dl \tau \right\} \qquad \text{for any \(\psi, c \in \Crm \left( \Rds^{+} \right)\) and \(t \in \Rds^{+}\)}.
        \end{equation}
    \end{lem}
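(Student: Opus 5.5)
We prove the identity~\zcref{eq:Gismin.1} by establishing the two inequalities separately. Throughout we write \(g \coloneqq G[\psi, c]\). Since \(\psi\) and \(c\) are continuous, the map \(r \mapsto \psi(r) + \int_{r}^{t} c(\tau) \dl \tau\) is continuous on the compact interval \([0, t]\). Hence the minimum in~\zcref{eq:Gdef} is attained. The same argument applies to the right-hand side of~\zcref{eq:Gismin.1} once we know that \(g\) is continuous.

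\textbf{Continuity of \(g\).} Write
\[
g(t) = \int_{0}^{t} c(\tau) \dl \tau + \min_{r \in [0, t]} h(r), \qquad h(r) \coloneqq \psi(r) - \int_{0}^{r} c(\tau) \dl \tau .
\]
The function \(h\) is continuous, so the running minimum \(t \mapsto \min_{r \in [0, t]} h(r)\) is continuous. It follows that \(g \in \Crm(\Rds^{+})\) and the right-hand side of~\zcref{eq:Gismin.1} is well defined.

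\textbf{Inequality ``\(\ge\)''.} Take \(r = t\) in the right-hand side of~\zcref{eq:Gismin.1}. The integral term vanishes and we obtain
\[
\min_{r \in [0, t]} \left\{ g(r) + \int_{r}^{t} c(\tau) \dl \tau \right\} \le g(t).
\]

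\textbf{Inequality ``\(\le\)''.} Fix \(r \in [0, t]\). By~\zcref{eq:Gdef} there is \(r' \in [0, r]\) with \(g(r) = \psi(r') + \int_{r'}^{r} c(\tau) \dl \tau\). Then
\[
g(r) + \int_{r}^{t} c(\tau) \dl \tau = \psi(r') + \int_{r'}^{t} c(\tau) \dl \tau \ge g(t),
\]
where the last step holds because \(r' \in [0, t]\) is an admissible competitor in the definition of \(g(t)\). Taking the minimum over \(r \in [0, t]\) gives the reverse inequality.

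The proof is essentially a dynamic-programming, or semigroup, identity. The only point that needs care is making sure both minima are attained, and that is handled by the continuity of \(g\) shown above.
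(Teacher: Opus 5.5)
Your proof is correct and is essentially the paper's argument: you get \(G[\psi, c](t) \le G[\psi, c](r) + \int_{r}^{t} c(\tau) \dl \tau\) from a minimizer \(r' \le r\) for \(G[\psi, c](r)\), and the reverse inequality by taking \(r = t\). The continuity step you add is harmless (the paper proves continuity of \(G[\psi, c]\) only later, in \zcref{Gmax}), but you do not need it for attainment, since your two inequalities already show that the right-hand minimum is achieved at \(r = t\).
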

    \begin{proof}
        From the definition of \(G\) we have that, fixed \(\psi, c \in \Crm(\Rds^{+})\), \(t \in \Rds^{+}\) and \(r \in [0, t]\), there is an \(r' \le r\) such that
        \begin{equation*}
            G[\psi, c](r) + \int_{r}^{t} c(\tau) \dl \tau = \psi(r') + \int_{r'}^{r} c(\tau) \dl \tau + \int_{r}^{t} c(\tau) \dl \tau = \psi(r') + \int_{r'}^{t} c(\tau) \dl \tau \ge G[\psi, c](t).
        \end{equation*}
        Since \(\psi\), \(c\), \(t\) and \(r\) are arbitrary, this proves
        \begin{equation*}
            G[\psi, c](t) \le \min_{r \in [0, t]} \left\{ G[\psi, c](r) + \int_{r}^{t} c(\tau) \dl \tau \right\} \qquad \text{for any \(\psi, c \in \Crm \left( \Rds^{+} \right)\) and \(t \in \Rds^{+}\)}.
        \end{equation*}
        On the other hand, it is apparent that
        \begin{equation*}
            G[\psi, c](t) \ge \min_{r \in [0, t]} \left\{ G[\psi, c](r) + \int_{r}^{t} c(\tau) \dl \tau \right\} \qquad \text{for any \(\psi, c \in \Crm \left( \Rds^{+} \right)\) and \(t \in \Rds^{+}\)},
        \end{equation*}
        thus \zcref{eq:Gismin.1} holds true.
    \end{proof}

    \begin{lem}\label{Gmax}
        Fixed \(\psi, c \in \Crm(\Rds^{+})\), \(G[\psi, c]\) is the maximal continuous function in \(\Rds^{+}\) less than or equal to \(\psi\) satisfying
        \begin{equation}\label{eq:Gmax.1}
            \difcp{\varphi(t)}{} \le c(t) \qquad \text{for any \(t \in (0, \infty)\) and \(\Crm^{1}\) supertangent \(\varphi\) to \(G[\psi, c]\) at \(t\)}.
        \end{equation}
    \end{lem}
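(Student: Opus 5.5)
We need to prove three things: \(G[\psi,c]\) is continuous, \(G[\psi,c]\le\psi\), and \(G[\psi,c]\) satisfies \zcref{eq:Gmax.1}. Then we show that every continuous \(w\le\psi\) satisfying \zcref{eq:Gmax.1} lies below \(G[\psi,c]\). Throughout we write \(g\coloneqq G[\psi,c]\) and \(C(t)\coloneqq\int_{0}^{t}c(\tau)\dl\tau\), which is \(\Crm^{1}\) with \(C'=c\).

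\textbf{Elementary properties.} Taking \(r=t\) in \zcref{eq:Gdef} gives \(g\le\psi\). Continuity follows from
\begin{equation*}
g(t)=C(t)+\min_{r\in[0,t]}\{\psi(r)-C(r)\},
\end{equation*}
since the running minimum of a continuous function over \([0,t]\) is continuous in \(t\).

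\textbf{Subsolution property.} By \zcref{Gismin}, for \(r\le t\) we have \(g(t)\le g(r)+\int_{r}^{t}c\). Let \(\varphi\) be a \(\Crm^{1}\) supertangent to \(g\) at \(t>0\), so \(\varphi(t)=g(t)\) and \(\varphi\ge g\) nearby. For small \(h>0\),
\begin{equation*}
\varphi(t)-\varphi(t-h)\le g(t)-g(t-h)\le\int_{t-h}^{t}c.
\end{equation*}
Dividing by \(h\) and letting \(h\to0^{+}\) gives \(\varphi'(t)\le c(t)\), by continuity of \(c\).

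\textbf{Maximality.} This is the main step. Let \(w\) be continuous with \(w\le\psi\) and satisfying \zcref{eq:Gmax.1}. We want \(w(t)\le w(r)+\int_{r}^{t}c\) for all \(r\le t\). Given this, \(w(t)\le\psi(r)+\int_{r}^{t}c\) for every \(r\in[0,t]\), and minimizing over \(r\) yields \(w\le g\).

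The monotonicity claim is equivalent to saying that \(z\coloneqq w-C\) is nonincreasing. Subtracting the \(\Crm^{1}\) function \(C\) shifts supertangents, so \(z\) is a continuous viscosity subsolution of \(z'\le0\) on \((0,\infty)\). The standard argument then shows \(z\) is nonincreasing. Suppose instead that \(z(r)<z(t)\) for some \(r<t\), and pick \(\delta>0\) small enough that \(\delta(t-r)<z(t)-z(r)\). The continuous function \(z(s)-\delta s\) on \([r,t]\) attains its maximum at some \(s_{0}\). Since \(z(t)-\delta t>z(r)-\delta r\), the maximum is not at \(r\), so \(s_{0}\in(r,t]\).

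If \(s_{0}<t\), then \(\varphi(s)=z(s_{0})+\delta(s-s_{0})\) is a supertangent to \(z\) at an interior point, which gives \(\delta\le0\), a contradiction. If \(s_{0}=t\), the maximum is only one-sided, and this is the delicate point. We handle it with a one-sided test function that is steep to the right: take \(\varphi(s)=z(t)+\delta(s-t)+K(s-t)^{+\,2}\)-type corrections, or more simply replace the interval \([r,t]\) by \([r,t']\) with \(t'\) slightly larger than \(t\). The cleanest choice is to maximize \(z(s)-\delta s-\frac{1}{\varepsilon}(s-t)^{2}_{+}\) over \([r,t+1]\). For \(\varepsilon\) small the maximizer stays away from \(r\), and the penalization makes it an interior maximum point, where the test function has derivative at least \(\delta>0\). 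This contradicts \(z'\le0\).

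The case \(r=0\) follows by continuity of \(w\) at \(0\). This boundary handling of the maximizer is the only real obstacle.
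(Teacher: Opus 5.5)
Your proof is correct and follows the paper's outline. The bound on supertangents comes from \zcref{Gismin} exactly as in the paper. Maximality comes from the monotonicity of $w-\int_0^t c$: the paper quotes this from \zcref{suptanmon}, while you reprove it via the penalization $z(s)-\delta s-\frac{1}{\varepsilon}(s-t)_+^2$. In that step, smallness of $\varepsilon$ is what rules out the endpoint $t+1$, whereas $r$ is ruled out for every $\varepsilon$, so your separate treatment of $r=0$ is unnecessary. Your continuity argument through $G[\psi,c](t)=C(t)+\min_{r\in[0,t]}\{\psi(r)-C(r)\}$ is a more direct substitute for the paper's semicontinuity argument.
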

    \begin{proof}
        The map \(t \mapsto G[\psi, c](t)\) is lower semicontinuous, since it is the minimum of a collection of continuous functions. Setting \(r_{0}\) as the time realizing the minimum in~\zcref{eq:Gdef} for a fixed \(t \in \Rds^{+}\), we further have
        \begin{equation*}
            \limsup_{r \to t} G[\psi, c](r) \le \lim_{r \to t} \left( \psi(r_{0} \wedge r) + \int_{r_{0} \wedge r}^{r} c(\tau) \dl \tau \right) = \psi(r_{0}) + \int_{r_{0}}^{t} c(\tau) \dl \tau = G[\psi, c](t),
        \end{equation*}
        where \(r_{0} \wedge r\) is the minimum of \(r_{0}\) and \(r\). This shows that \(G[\psi, c]\) is also upper semicontinuous and thus continuous. Furthermore, \zcref{Gismin} yields that for any \(t \in (0, \infty)\), \(\Crm^{1}\) supertangent \(\varphi\) to \(G[\psi, c]\) at \(t\) and \(\delta > 0\) small enough
        \begin{equation*}
            \varphi(t) - \varphi(t - \delta) \le G[\psi, c](t) - G[\psi, c](t - \delta) \le \int_{t - \delta}^{t} c(\tau) \dl \tau,
        \end{equation*}
        hence~\zcref{eq:Gmax.1} holds true. \\
        Now let \(f \le \psi\) be a continuous function in \(\Rds^{+}\) satisfying~\zcref{eq:Gmax.1}. \zcref[S]{suptanmon} implies that
        \begin{equation*}
            f(t) \le f(r) + \int_{r}^{t} c(\tau) \dl \tau \le \psi(r) + \int_{r}^{t} c(\tau) \dl \tau \qquad \text{for any \(t > 0\) and \(r \in [0, t]\)},
        \end{equation*}
        which proves that \(f \le G[\psi, c]\) and concludes the proof.
    \end{proof}

    \begin{prop}\label{upboundG}
        Fixed \(\psi, c \in \Crm(\Rds^{+})\), let \(f \in \Crm(\Rds^{+})\) be such that \(f(0) \ge \psi(0)\) and \(f(t) \ge \psi(t)\) whenever there is a \(\Crm^{1}\) subtangent \(\varphi\) to \(f\) at \(t\) with \(\difcp{\varphi(t)}{} < c(t)\). Then
        \begin{equation*}
            f \ge G[\psi, c] \qquad \text{on \(\Rds^{+}\)}.
        \end{equation*}
    \end{prop}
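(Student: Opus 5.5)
We argue by contradiction, comparing \(f\) with the function \(g \coloneqq G[\psi, c]\), which is continuous by \zcref{Gmax}. Assume that \(f(t_{0}) < g(t_{0})\) for some \(t_{0} > 0\); the case \(t_{0} = 0\) is excluded because \(g(0) = \psi(0) \le f(0)\). We look at the last time before \(t_{0}\) at which \(f \ge g\):
\begin{equation*}
    a \coloneqq \sup \{ r \in [0, t_{0}] : f(r) \ge g(r) \}.
\end{equation*}
By continuity \(a < t_{0}\), \(f(a) \ge g(a)\), and \(f < g\) on \((a, t_{0}]\). \zcref[S]{Gismin} then gives
\begin{equation*}
    g(t) \le g(a) + \int_{a}^{t} c(\tau) \dl \tau \le f(a) + \int_{a}^{t} c(\tau) \dl \tau \qquad \text{for \(t \in [a, t_{0}]\)}.
\end{equation*}
So the function \(h(t) \coloneqq f(t) - f(a) - \int_{a}^{t} c(\tau) \dl \tau\) satisfies \(h(a) = 0\) and \(h(t_{0}) \le f(t_{0}) - g(t_{0}) < 0\).

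Next we produce a \(\Crm^{1}\) subtangent to \(f\) with slope strictly below \(c\) at a time where \(f < g \le \psi\), which contradicts the hypothesis. Fix \(\delta > 0\) small, with \(\delta (t_{0} - a) < -h(t_{0})/2\), and consider
\begin{equation*}
    t \longmapsto h(t) + \delta (t - a) \qquad \text{on \([a, t_{0}]\)}.
\end{equation*}
It equals \(0\) at \(a\) and is negative at \(t_{0}\). Let \(t_{1}\) be a minimizer of this map over \([a, t_{0}]\); then \(t_{1} > a\).

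If \(t_{1} < t_{0}\), then \(t_{1}\) is an interior minimum of \(f - \phi\), where
\begin{equation*}
    \phi(t) \coloneqq f(a) + \int_{a}^{t} c(\tau) \dl \tau - \delta (t - a),
\end{equation*}
so \(\phi\) (suitably shifted) is a \(\Crm^{1}\) subtangent to \(f\) at \(t_{1}\) with \(\phi'(t_{1}) = c(t_{1}) - \delta < c(t_{1})\). The hypothesis then yields \(f(t_{1}) \ge \psi(t_{1}) \ge g(t_{1})\), contradicting \(t_{1} \in (a, t_{0}]\).

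The main obstacle is the endpoint case \(t_{1} = t_{0}\), where the minimum is only one-sided. To handle it, we replace \(t_{0}\) by a larger time \(t_{0}' > t_{0}\) that still satisfies \(f(t_{0}') < g(t_{0}')\); by continuity such a \(t_{0}'\) exists, and we keep the same \(a\) and \(\delta\). If the minimizer again falls at the right endpoint for every admissible \(t_{0}'\), then \(h(t) + \delta (t - a)\) is nonincreasing on an interval to the right of \(t_{0}\). Adding a small quadratic penalty \(\varepsilon (t - t_{0})^{2}\) then makes \(t_{0}\) a two-sided local minimum with slope at most \(c(t_{0}) - \delta/2\).

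A cleaner way to settle this step is to use the open-ended formulation directly: take the minimizer on \([a, b]\), where \(b > t_{0}\) is chosen so that \(f < g\) on \((a, b]\), and pick \(\delta\) small enough that the value at \(t_{0}\) stays strictly below the values at both \(a\) and \(b\). Then the minimum is attained in \((a, b)\), in the interior, and the argument of the previous paragraph applies. This contradiction shows \(f \ge G[\psi, c]\) on \(\Rds^{+}\).
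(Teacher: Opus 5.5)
Your set-up is correct. The time \(a\) is well defined with \(f(a) \ge g(a)\) and \(f < g\) on \((a, t_{0}]\), and \zcref{Gismin} gives \(h(a) = 0 > h(t_{0})\). When a minimizer \(t_{1}\) of \(k(t) \coloneqq h(t) + \delta (t - a)\) lies in \((a, t_{0})\), your \(\phi\) is a \(\Crm^{1}\) subtangent to \(f\) at \(t_{1}\) with slope \(c(t_{1}) - \delta\), so the hypothesis and \(g \le \psi\) give the contradiction.

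The gap is the endpoint case \(t_{1} = t_{0}\), and neither of your two remedies closes it.
\begin{itemize}
\item \emph{First remedy.} Once \(k\) is nonincreasing to the right of \(t_{0}\), adding \(\varepsilon (t - t_{0})^{2}\) gives a local minimum at \(t_{0}\) only if \(k(t_{0}) - k(t) \le \varepsilon (t - t_{0})^{2}\) for \(t > t_{0}\) near \(t_{0}\). If, for instance, \(k(t) = k(t_{0}) - \sqrt{t - t_{0}}\) there, then \(f\) has no \(\Crm^{1}\) subtangent at \(t_{0}\) at all, so no contradiction can be extracted at \(t_{0}\).
\item \emph{Second remedy.} Asking the value of \(k\) at \(t_{0}\) to be strictly below its value at \(b\) means \(h(t_{0}) - h(b) < \delta (b - t_{0})\). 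Shrinking \(\delta\) makes this harder, not easier. In the problematic case it is simply false, since there \(k(b) \le k(t_{0})\), i.e.\ \(h(t_{0}) - h(b) \ge \delta (b - t_{0})\). For example, if \(h(t) = -(t - a)\) on \([a, b]\), then \(k\) is strictly decreasing for \(\delta < 1\) and its minimizer on \([a, b]\) is \(b\), whatever admissible \(b\) you pick.
\end{itemize}
So the assertion that the minimum is attained in \((a, b)\) is unfounded.

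What is missing is a device that keeps the minimizer away from the right endpoint, or simply the monotonicity \zcref{suptanmon}.
\begin{itemize}
\item \emph{Barrier fix.} Choose \(t' \in (a, t_{0})\) with \(h(t') < 0\) and \(0 < \varepsilon < - h(t') (t_{0} - t')\). The map \(t \mapsto h(t) + \varepsilon / (t_{0} - t)\) is positive at \(a\), negative at \(t'\), and tends to \(+\infty\) as \(t \to t_{0}^{-}\). Hence it has a minimizer \(t_{1} \in (a, t_{0})\). There, \(t \mapsto f(a) + \int_{a}^{t} c(\tau) \dl \tau - \varepsilon / (t_{0} - t)\) is, up to an additive constant, a \(\Crm^{1}\) subtangent to \(f\) with slope \(c(t_{1}) - \varepsilon / (t_{0} - t_{1})^{2} < c(t_{1})\). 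Your interior argument then applies verbatim.
\item \emph{Shorter fix.} On \((a, t_{0})\) you have \(f < g \le \psi\), so by hypothesis every \(\Crm^{1}\) subtangent to \(h\) there has nonnegative derivative. Then \zcref{suptanmon} makes \(h\) nondecreasing on \([a, t_{0}]\), contradicting \(h(t_{0}) < 0 = h(a)\).
\end{itemize}

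The second fix is essentially the paper's proof, which is direct rather than by contradiction. For fixed \(t\), the paper takes \(r_{0}\) as the supremum of the times in \((0, t)\) at which \(f\) has a \(\Crm^{1}\) subtangent with slope below \(c\). It gets \(f(r_{0}) \ge \psi(r_{0})\) from the hypothesis and continuity. It then applies \zcref{suptanmon} on \([r_{0}, t]\) to obtain \(f(t) \ge f(r_{0}) + \int_{r_{0}}^{t} c(\tau) \dl \tau \ge G[\psi, c](t)\). Your last crossing time \(a\) plays the role of \(r_{0}\).
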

    \begin{proof}
        Given \(t \in \Rds^{+}\), let \(E\) be the set made up of the \(r \in (0, t)\) such that \(f\) admits a \(\Crm^{1}\) subtangent \(\varphi\) at \(r\) with \(\difcp{\varphi(r)}{} < c(r)\). We define
        \begin{equation*}
            r_{0} \coloneqq
            \begin{dcases}
                \sup E & \text{if \(E \ne \emptyset\)}, \\
                0 & \text{if \(E = \emptyset\)}.
            \end{dcases}
        \end{equation*}
        We point out that \(\difcp{\varphi(r)}{} \ge c(r)\) whenever \(\varphi\) is a subtangent to \(f\) at \(r \in (r_{0}, t)\), while \(f(r_{0}) \ge \psi(r_{0})\) by our assumptions. \zcref[S]{suptanmon} thus yields
        \begin{equation*}
            f(t) \ge f(r_{0}) + \int_{r_{0}}^{t} c(\tau) \dl \tau \ge \psi(r_{0}) + \int_{r_{0}}^{t} c(\tau) \dl \tau \ge G[\psi, c](t).
        \end{equation*}
        This concludes the proof.
    \end{proof}

    \subsection{Definition of the Problem and Comparison Result}

    Given a flux limiter \(x \mapsto c_{x}\) on \(\Vbf\), we consider the semidiscrete equation
    \begin{equation}\label{eq:disc}\tag{Discr}
        u(x, t) = G[F_{x}[u], c_{x}](t) \qquad \text{on \(\Vbf \times (0, \infty)\)}.
    \end{equation}
    A \emph{solution} to~\zcref{eq:disc} is any \(u \in \Fcal\) which satisfies pointwise the identity in~\zcref{eq:disc}. A \emph{subsolution} (resp.\ \emph{supersolution}) is any \(w \in \Fcal\) such that
    \begin{equation*}
        w(x, t) \le G[F_{x}[w], c_{x}](t) \quad ( \text{resp.\ } w(x, t) \ge G[F_{x}[w], c_{x}](t)) \qquad \text{for any \((x, t) \in \Vbf \times (0, \infty)\)}.
    \end{equation*}
    If
    \begin{equation*}
        w(x, t) < G[F_{x}[w], c_{x}](t) \qquad \text{for any \((x, t) \in \Vbf \times (0, \infty)\)},
    \end{equation*}
    we will say that \(w\) is a strict subsolution.

    \begin{lem}\label{strsubdisc}
        Let \(w \in \Fcal\) be a subsolution to~\zcref{eq:disc}, then \((s, t) \mapsto w(s, t) - \varepsilon t\) is a strict subsolution for any \(\varepsilon > 0\).
    \end{lem}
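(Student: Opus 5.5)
Set \(w_{\varepsilon}(x, t) \coloneqq w(x, t) - \varepsilon t\) on \((\Gamma \times \{0\}) \cup (\Vbf \times \Rds^{+})\); clearly \(w_{\varepsilon} \in \Fcal\). The plan is to compute how the two operators \(F_{x}\) and \(G\) react to subtracting the linear term \(\varepsilon t\), and to show that the net effect is an \emph{increase} of the right-hand side of~\zcref{eq:disc} relative to the left-hand side. Fix \(x \in \Vbf\), \(t > 0\), \(\gamma \in \Ecal_{x}\) and \((s, t) \in \ovQcal_{\gamma}\).

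\textbf{Step 1 (effect on \(F_{\gamma}\)).} In the representation of \(F_{\gamma}[w_{\varepsilon}](s, t)\) every competitor is a pair \((t_{0}, \eta)\) with \(t_{0} \le t\), and it contributes
\begin{equation*}
    w(\gamma(\eta(t_{0})), t_{0}) - \varepsilon t_{0} + \int_{t_{0}}^{t} L_{\gamma}(\eta, \dot{\eta}, \tau) \dl \tau \ge w(\gamma(\eta(t_{0})), t_{0}) + \int_{t_{0}}^{t} L_{\gamma}(\eta, \dot{\eta}, \tau) \dl \tau - \varepsilon t .
\end{equation*}
Taking the infimum gives \(F_{\gamma}[w_{\varepsilon}](s, t) \ge F_{\gamma}[w](s, t) - \varepsilon t\). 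With \(s = 0\) and the minimum over \(\gamma \in \Ecal_{x}\), this yields \(F_{x}[w_{\varepsilon}](t) \ge F_{x}[w](t) - \varepsilon t\).

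\textbf{Step 2 (effect on \(G\)).} Monotonicity of \(G\) in \(\psi\) is immediate from~\zcref{eq:Gdef}. Hence
\begin{equation*}
    G[F_{x}[w_{\varepsilon}], c_{x}](t) \ge \min_{r \in [0, t]} \left\{ F_{x}[w](r) - \varepsilon r + \int_{r}^{t} c_{x}(\tau) \dl \tau \right\}.
\end{equation*}
The term \(-\varepsilon r\) is at least \(-\varepsilon t\), which only gives a non-strict inequality. To obtain strictness, rewrite \(-\varepsilon r = -\varepsilon t + \varepsilon(t - r)\). This gives
\begin{equation*}
    G[F_{x}[w_{\varepsilon}], c_{x}](t) \ge \min_{r \in [0, t]} \left\{ F_{x}[w](r) + \int_{r}^{t} c_{x}(\tau) \dl \tau + \varepsilon (t - r) \right\} - \varepsilon t .
\end{equation*}

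\textbf{Step 3 (strictness, the main obstacle).} Let \(r^{*}\) realize the minimum above. If \(r^{*} < t\), the extra term \(\varepsilon(t - r^{*})\) is positive, so the bracket strictly exceeds \(G[F_{x}[w], c_{x}](t) \ge w(x, t)\), and we conclude \(G[F_{x}[w_{\varepsilon}], c_{x}](t) > w(x, t) - \varepsilon t = w_{\varepsilon}(x, t)\). The delicate case is \(r^{*} = t\), where the bracket equals \(F_{x}[w](t)\) and one needs the strict inequality \(F_{x}[w_{\varepsilon}](t) > F_{x}[w](t) - \varepsilon t\), or \(F_{x}[w](t) > w(x, t)\).

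I would handle this case by revisiting Step 1 at \(s = 0\). Fix \(\gamma \in \Ecal_{x}\) and an optimal pair \((t_{0}, \eta)\) for \(F_{\gamma}[w_{\varepsilon}](0, t)\); it exists by \zcref{locmincurve}, and its value is finite because the infimum is controlled by the boundedness of \(L_{\gamma}\) on compact sets.
\begin{mylist}
    \item If \(t_{0} < t\), the loss \(\varepsilon t_{0}\) is strictly smaller than \(\varepsilon t\), so \(F_{\gamma}[w_{\varepsilon}](0, t) > F_{\gamma}[w](0, t) - \varepsilon t\). Since \(\Ecal_{x}\) is finite, the strict inequality survives the minimum over \(\gamma\).
    \item If \(t_{0} = t\), the curve is degenerate and the value is \(w_{\varepsilon}(x, t)\) itself, because \(\eta(t) = 0\) forces the starting point to be \(\gamma(0) = x\). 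But \((0, t) \in \partial_{p}^{+}\Qcal_{\gamma}\) only when \(t = 0\) or \(\gamma\) is a loop, so this degenerate competitor is available only in those cases. For a loop I would compare directly with \(w(x, t)\).
\end{mylist}

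This bookkeeping of the degenerate competitors, including the loop case, is the step I expect to need the most care. Combining the branches, every case should yield \(G[F_{x}[w_{\varepsilon}], c_{x}](t) > w_{\varepsilon}(x, t)\). If the degenerate case resists, the fallback is to replace \(\varepsilon t\) by a strictly convex perturbation such as \(\varepsilon(t + t^{2})\), which forces strictness in every branch. I would check that this is compatible with how the paper uses the lemma afterwards.
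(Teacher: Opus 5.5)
Your argument is essentially the paper's. The decisive fact is the strict inequality \(F_{x}[w_{\varepsilon}](t) > F_{x}[w](t) - \varepsilon t\) for \(t > 0\). It comes from an optimal pair \((t_{0}, \eta)\) for \(F_{\gamma}[w_{\varepsilon}](0, t)\) with \(t_{0} < t\) (via \zcref{locmincurve}) and the finiteness of \(\Ecal_{x}\), and is then fed into \(G\). Your split of the times \(r\) differs harmlessly from the paper's:
\begin{itemize}
\item you use the gain \(\varepsilon(t - r) > 0\) for \(r < t\) and the strict inequality at \(r = t\);
\item the paper uses the strict inequality together with \(-\varepsilon t \le -\varepsilon r\) for \(r \in (0, t]\), and \(F_{x}[u](0) = u(x, 0)\) at \(r = 0\).
\end{itemize}
It is cleanest to argue pointwise in \(r\) for \(r \mapsto F_{x}[w_{\varepsilon}](r) + \int_{r}^{t} c_{x}\), and then use that the minimum over \([0, t]\) is attained. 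Going through the minimizer \(r^{*}\) of your lower bound is less direct.

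The thing you must correct is your second bullet, which rests on a misreading. The set \(\partial_{p}^{+} \Qcal_{\gamma}\) lives in the parameter rectangle \([0, \lvert \gamma \rvert] \times \Rds^{+}\). For \(t > 0\) one has \((0, t) \notin \partial_{p}^{+} \Qcal_{\gamma}\) whether or not \(\gamma\) is a loop. A loop identifies the points \(\gamma(0)\) and \(\gamma(\lvert \gamma \rvert)\) of \(\Gamma\), not the parameters \(s = 0\) and \(s = \lvert \gamma \rvert\). So every admissible pair for \(F_{\gamma}[\cdot](0, t)\) has either \(t_{0} = 0 < t\), or \(\eta(t_{0}) = \lvert \gamma \rvert \neq 0 = \eta(t)\), which forces \(t_{0} < t\). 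The degenerate branch is therefore vacuous, consistent with the paper's remark that the local analysis is independent of loops. This is exactly why the paper can assert \(t_{0} < t\) outright.

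Your proposed treatment of that branch could not have worked anyway. If a competitor with value \(w_{\varepsilon}(x, t)\) were admissible, then \(G[F_{x}[w_{\varepsilon}], c_{x}](t) \le F_{x}[w_{\varepsilon}](t) \le w_{\varepsilon}(x, t)\). Strictness would then genuinely fail, so ``comparing directly with \(w(x, t)\)'' is not an argument. Likewise, the fallback to \(\varepsilon(t + t^{2})\) is unnecessary, and it would prove a different statement from the one asked.
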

    \begin{proof}
        We start by fixing \(\varepsilon > 0\) and \(\ovw(s, t) \coloneqq w(s, t) - \varepsilon t\). It follows from \zcref{locmincurve} that, for each \(\gamma \in \Ecal\) and \(t \in (0, \infty)\), there are \(t_{0} < t\) and a curve \(\eta \colon [t_{0}, t] \to [0, \lvert \gamma \rvert]\) so that
        \begin{align*}
            F_{\gamma}[\ovw](0, t) & = w(\gamma(\eta(t_{0})), t_{0}) - \varepsilon t_{0} + \int_{t_{0}}^{t} L_{\gamma}(\eta, \dot{\eta}, \tau) \dl \tau \\
            & > w(\gamma(\eta(t_{0})), t_{0}) - \varepsilon t + \int_{t_{0}}^{t} L_{\gamma}(\eta, \dot{\eta}, \tau) \dl \tau \ge F_{\gamma}[w](0, t) - \varepsilon t,
        \end{align*}
        which in turn implies
        \begin{equation}\label{eq:strsubdisc1}
            F_{x}[w](t) - \varepsilon t < F_{x}[\ovw](t) \qquad \text{for every \((x, t) \in \Vbf \times (0, \infty)\)}.
        \end{equation}
        We deduce from~\zcref{eq:strsubdisc1} and \(w\) being a subsolution that, for any \((x, t) \in \Vbf \times (0, \infty)\) and \(r \in (0, t]\),
        \begin{equation}\label{eq:strsubdisc2}
            \begin{aligned}
                w(x, t) - \varepsilon t & \le G[F_{x}[w], c_{x}](t) - \varepsilon t \le F_{x}[w](r) + \int_{r}^{t} c_{x}(\tau) \dl \tau - \varepsilon t \\
                & \le F_{x}[w](r) + \int_{r}^{t} c_{x}(\tau) \dl \tau - \varepsilon r < F_{x}[\ovw](r) + \int_{r}^{t} c_{x}(\tau) \dl \tau.
            \end{aligned}
        \end{equation}
        Similarly, \zcref{Fxprop} shows that
        \begin{equation}\label{eq:strsubdisc3}
            \begin{aligned}
                w(x, t) - \varepsilon t & \le G[F_{x}[w], c_{x}](t) - \varepsilon t \le F_{x}[w](0) + \int_{0}^{t} c_{x}(\tau) \dl \tau - \varepsilon t \\
                & < F_{x}[w](0) + \int_{0}^{t} c_{x}(\tau) \dl \tau = F_{x}[\ovw](0) + \int_{0}^{t} c_{x}(\tau) \dl \tau.
            \end{aligned}
        \end{equation}
        By combining \zcref{eq:strsubdisc2,eq:strsubdisc3} we obtain
        \begin{equation*}
            \ovw < G[F_{x}[\ovw], c_{x}] \qquad \text{on \(\Vbf \times (0, \infty)\)},
        \end{equation*}
        which proves the claim.
    \end{proof}

    \begin{theo}\label{disccomp}
        Let \(w\) and \(v\) be continuous sub and supersolution to~\zcref{eq:disc}, respectively, such that
        \begin{equation*}
            w(x, 0) \le v(x, 0) \qquad \text{for all \(x \in \Gamma\)},
        \end{equation*}
        then \(w \le v\) in \(\Vbf \times \Rds^{+}\).
    \end{theo}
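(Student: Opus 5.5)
The plan is to argue by a ``first crossing time'' argument, after making $w$ strict via \zcref{strsubdisc}.

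Fix $\varepsilon > 0$ and set $w_{\varepsilon}(x, t) \coloneqq w(x, t) - \varepsilon t$. By \zcref{strsubdisc} it is a strict subsolution, and $w_{\varepsilon}(\cdot, 0) = w(\cdot, 0) \le v(\cdot, 0)$ on $\Gamma$. It is enough to show $w_{\varepsilon} \le v$ on $\Vbf \times \Rds^{+}$ and then let $\varepsilon \to 0$. The key observation is a monotonicity chain. Suppose that for some $T > 0$ we have $w_{\varepsilon} \le v$ on $\Vbf \times [0, T]$, and we also have the initial inequality on $\Gamma \times \{0\}$. Then for every vertex $x$ and every $\gamma \in \Ecal_{x}$ the ordering $w_{\varepsilon}(\gamma(s), t) \le v(\gamma(s), t)$ holds on $\partial_{p}^{+} \Qcal_{\gamma, T}$, since this set consists of the initial segment and the far endpoint $\gamma(\lvert \gamma \rvert) \in \Vbf$. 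The last item of \zcref{Fxprop} then gives $F_{x}[w_{\varepsilon}] \le F_{x}[v]$ on $[0, T]$. The operator $G[\cdot, c_{x}](t)$ is monotone in its first argument and only uses values on $[0, t]$. Hence, for $t \in (0, T]$,
\begin{equation*}
    w_{\varepsilon}(x, t) < G[F_{x}[w_{\varepsilon}], c_{x}](t) \le G[F_{x}[v], c_{x}](t) \le v(x, t),
\end{equation*}
so the inequality becomes \emph{strict} on $(0, T]$.

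Define $t^{*} \coloneqq \sup\{T \ge 0 : w_{\varepsilon} \le v \text{ on } \Vbf \times [0, T]\}$. At $T = 0$ the hypothesis holds, so the set is nonempty, and by continuity the inequality holds up to $t^{*}$ if $t^{*} < \infty$. The task is to push past $t^{*}$ and reach a contradiction. With finitely many vertices this is immediate: strictness at $t^{*}$ plus continuity gives the inequality on a neighbourhood of $t^{*}$. The same holds at $t^{*} = 0$, provided one first checks strictness for small times directly. This can be done using that $G[F_{x}[w_{\varepsilon}], c_{x}](t) \to w(x, 0)$ and that the $-\varepsilon t$ shift is strict, as in~\zcref{eq:strsubdisc3}.

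The main obstacle is that $\Vbf$ may be countably infinite, so continuity does not give a uniform time step beyond $t^{*}$. I would handle this with a finite-speed-of-propagation argument. Fix $x$ and $t \in (t^{*}, t^{*} + \delta]$, and look at the formula for $F_{\gamma}[w_{\varepsilon}](0, t)$ given after the definition of $F_{\gamma}$. The competitors with starting time $t_{0} \le t^{*}$ are controlled by the induction hypothesis. The competitors starting at $(\lvert \gamma \rvert, t_{0})$ with $t_{0} \in (t^{*}, t]$ must cross an arc of length at least $\inf_{\gamma} \lvert \gamma \rvert > 0$ (by \zcref{condlengthbound}) in time less than $\delta$. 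By the superlinear lower bound in \zcref{loclagprop}, their action is at least $\delta \, \whvartheta_{T}(\inf \lvert \gamma \rvert / \delta)$, which blows up as $\delta \to 0$.

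The plan is then to compare $F_{x}[w_{\varepsilon}]$ and $F_{x}[v]$ on $(t^{*}, t^{*} + \delta]$ by restricting to curves with $t_{0} \le t^{*}$ or with the expensive crossing. This requires a local bound on $\lvert w_{\varepsilon} \rvert$ and $\lvert v \rvert$ near $x$ and its neighbours. I expect this bound, and making $\delta$ independent of $x$, to be the delicate step. I would try to work vertex by vertex: $\delta$ is allowed to depend on $x$ through the finitely many adjacent vertices, since $\Gamma$ is locally finite. This gives strictness in a neighbourhood of $t^{*}$ at each vertex. A contradiction with the definition of $t^{*}$ then follows if the $\delta$'s can be made uniform, or, failing that, by replacing the global supremum with a per-vertex first crossing time and running the same argument at a vertex whose crossing time is (nearly) minimal.
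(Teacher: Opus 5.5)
The step you flag as delicate is a genuine gap. Neither of your repairs (a uniform \(\delta\), or a vertex with nearly minimal crossing time) can close it, because for countably many vertices the statement is false without some uniform control on \(w\) and \(v\).

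Here is a counterexample. Take \(\Gamma=\Rds\) with \(\Vbf=\mathds{Z}\), unit arcs \(\gamma_n(s)=n+s\) and their inverses, \(H_\gamma(s,t,\mu)=\mu^2/2\), and \(c_x\equiv0\) (the maximal flux limiter). Then \(w\equiv0\) solves the semidiscrete problem. Put \(T_n=1/n\), \(d_n=T_n-T_{n+1}\), \(M_1=1\) and \(M_{n+1}=M_n+1/d_n\). Let \(v=0\) on \(\Gamma\times\{0\}\), \(v(n,\cdot)\equiv0\) for \(n\le0\), and \(v(n,t)=-f_n(t)\) for \(n\ge1\). Here \(f_n\) is continuous, \(0\le f_n\le M_n\), \(f_n=0\) on \([0,T_n]\), and \(f_n=M_n\) on \([T_n+d_{n-1}/2,\infty)\) (on \([2,\infty)\) for \(n=1\)). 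For \(n\ge1\) and \(t>T_n\), set \(t_0=T_n-d_n/2=T_{n+1}+d_n/2\) and use the straight path on \(\gamma_n\) from \((1,t_0)\) to \((0,t)\). This gives
\[
G[F_n[v],0](t)\le F_n[v](t)\le -M_{n+1}+\frac{1}{2(t-t_0)}<-M_{n+1}+\frac{1}{d_n}=-M_n\le v(n,t).
\]
In all other cases \(G[F_n[v],0](t)\le F_n[v](0)=0=v(n,t)\). So \(v\) is a continuous supersolution with \(v(\cdot,0)=w(\cdot,0)\), yet \(v(1,t)=-1<0=w(1,t)\) for \(t\ge2\).

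This is exactly the degeneration your finite-speed analysis detects. The lower bound on \(v\) at neighboring vertices is not uniform, and the per-vertex crossing times accumulate at \(0\) without being attained. Moreover, at every vertex the outer neighbor has already crossed, so the ``nearly minimal crossing time'' device fails too.

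What your method does prove is the statement under a uniform hypothesis. One such hypothesis: for each \(T\), \(\sup\{w(x,0)-v(y,t)\}<\infty\) over adjacent vertices \(x,y\) and \(t\in[0,T]\). Then the crossing cost \(\sigma\,\whvartheta_T(\inf_\gamma\lvert\gamma\rvert/\sigma)\) dominates for \(\sigma\le\delta\), with \(\delta\) independent of \(x\). Hence \(F_x[w_\varepsilon]\le F_x[v]\) propagates from \([0,t^*]\) to \([0,t^*+\delta]\) at all vertices at once, and this also covers \(t^*=0\).

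For finite \(\Vbf\) your argument works, except that the case \(t^*=0\) is not justified as written. Suppose \(w(x,0)=v(x,0)\). The convergence \(G[F_x[w_\varepsilon],c_x](t)\to w(x,0)\) and the strict shift \(-\varepsilon t\) give no control on how fast \(G[F_x[v],c_x]\) may decrease. Comparing \(F_x[w_\varepsilon]\) with \(F_x[v]\) near \(t=0\) needs the very neighbor information you are trying to prove. The fix is to compare with \(v+\eta\): it is still a supersolution, since \(F_x\) and \(G\) commute with constants, and it makes the initial inequality strict.

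The paper avoids crossing times altogether. Let \(a>0\) be the maximum of \(w-v\) over \(\Vbf\times[0,t_0]\), attained at \((x_0,t^*)\); then \(t^*>0\). The supersolution property, translation invariance and monotonicity of \(F_{x_0}\), and monotonicity of \(G\) give
\[
w(x_0,t^*)=v(x_0,t^*)+a\ge G[F_{x_0}[v+a],c_{x_0}](t^*)\ge G[F_{x_0}[w],c_{x_0}](t^*)>w(x_0,t^*),
\]
a contradiction.

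However, the paper's text takes \(t^*\) maximizing only along \(\{x_0\}\times[0,t_0]\). That does not give \(v+a\ge w\) at the far endpoints \(\gamma(\lvert\gamma\rvert)\), \(\gamma\in\Ecal_{x_0}\), which the monotonicity of \(F_{x_0}\) requires. This is exactly the point you identified. When \(\Vbf\) is finite the global maximum exists, and the corrected argument is shorter than yours. In the countable case it need not exist, and the paper's argument breaks down at the same place as yours, consistent with the example above.
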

    \begin{proof}
        Possibly replacing \(w\) with \((x, t) \mapsto w(x, t) - \varepsilon t\) for some \(\varepsilon > 0\) and bearing in mind \zcref{strsubdisc}, we can assume without loss of generality that \(w\) is a strict subsolution. We proceed by contradiction, assuming that
        \begin{equation*}
            w(x_{0}, t_{0}) > v(x_{0}, t_{0}) \qquad \text{for some \((x_{0}, t_{0}) \in \Vbf \times (0, \infty)\)}.
        \end{equation*}
        First we denote with \(t^{*}\) a maximizer of \(t \mapsto w(x_{0}, t) - v(x_{0}, t)\) in \([0, t_{0}]\) and define
        \begin{equation}\label{eq:disccomp1}
            a \coloneqq w(x_{0}, t^{*}) - v(x_{0}, t^{*}) > 0,
        \end{equation}
        which implies
        \begin{equation}\label{eq:disccomp2}
            v(x_{0}, t) + a \ge w(x_{0}, t) \qquad \text{for all \(t \in [0, t_{0}]\)}.
        \end{equation}
        Next we set \(r \in [0, t^{*}]\) such that
        \begin{equation}\label{eq:disccomp3}
            v(x_{0}, t^{*}) \ge G[F_{x_{0}}[v], c_{x_{0}}](t^{*}) = F_{x_{0}}[v](r) + \int_{r}^{t^{*}} c_{x_{0}}(\tau) \dl \tau,
        \end{equation}
        while the strict subsolution property of \(w\) yields
        \begin{equation}\label{eq:disccomp4}
            w(x_{0}, t^{*}) < G[F_{x_{0}}[w], c_{x_{0}}](t^{*}) \le F_{x_{0}}[w](r) + \int_{r}^{t^{*}} c_{x_{0}}(\tau) \dl \tau.
        \end{equation}
        Finally, combining \zcref{eq:disccomp2,eq:disccomp3,eq:disccomp4,Fxprop} we get
        \begin{align*}
            v(x_{0}, t^{*}) + a & \ge F_{x_{0}}[v](r) + \int_{r}^{t^{*}} c_{x_{0}}(\tau) \dl \tau + a = F_{x_{0}}[v + a](r) + \int_{r}^{t^{*}} c_{x_{0}}(\tau) \dl \tau \\
            & \ge F_{x_{0}}[w](r) + \int_{r}^{t^{*}} c_{x_{0}}(\tau) \dl \tau > w(x_{0}, t^{*}),
        \end{align*}
        in contradiction with~\zcref{eq:disccomp1}.
    \end{proof}

    \subsection{Links between semidiscrete and Hamilton--Jacobi equations}

    We proceed linking \zcref{eq:disc} to~\zcref{eq:globHJ}.

    \begin{prop}\label{subHJ2disc}
        Given a continuous subsolution to~\zcref{eq:globHJ} with flux limiter \(c_{x}\), its restriction \(w\) to \((\Gamma \times \{0\}) \cup (\Vbf \times \Rds^{+})\) is a subsolution to~\zcref{eq:disc}.
    \end{prop}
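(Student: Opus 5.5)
We need to show, for every vertex \(x\) and \(t>0\), that \(w(x,t) \le G[F_{x}[w], c_{x}](t)\). By \zcref{Gmax}, \(G[F_{x}[w], c_{x}]\) is the maximal continuous function that lies below \(F_{x}[w]\) and satisfies the supertangent condition~\zcref{eq:Gmax.1} with \(c = c_{x}\). The function \(t \mapsto w(x,t)\) is continuous and satisfies~\zcref{eq:Gmax.1} by \zcref{en:subsol2} of \zcref{defsol}. It therefore suffices to prove
\begin{equation*}
    w(x, t) \le F_{x}[w](t) = \min_{\gamma \in \Ecal_{x}} F_{\gamma}[w](0, t) \qquad \text{for all \(t \in \Rds^{+}\)},
\end{equation*}
and the claim then follows immediately from \zcref{Gmax}.

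To prove this inequality, fix \(\gamma \in \Ecal_{x}\). By \zcref{en:subsol1}, the map \(W(s,t) \coloneqq w(\gamma(s), t)\) is a continuous viscosity subsolution to~\zcref{eq:locHJ}. It coincides with the restriction of \(w\) on \(\partial_{p}^{+} \Qcal_{\gamma}\), since that set consists of \(t = 0\) together with the vertex \(\gamma(\lvert \gamma \rvert)\) for \(t \ge 0\). \zcref[S]{locmaxsubsol} says that \(F_{\gamma}[w]\) is the maximal continuous subsolution to~\zcref{eq:locHJ} lying below \(w(\gamma(\cdot), \cdot)\) on \(\partial_{p}^{+} \Qcal_{\gamma}\). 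Hence \(W \le F_{\gamma}[w]\) on \(\ovQcal_{\gamma}\), and in particular \(w(x, t) = W(0, t) \le F_{\gamma}[w](0, t)\). Taking the minimum over the finitely many \(\gamma \in \Ecal_{x}\) gives \(w(x, \cdot) \le F_{x}[w]\). If \(\gamma\) is a loop, then \(\gamma(0) = \gamma(\lvert \gamma \rvert) = x\); this causes no difficulty, because the local analysis does not depend on whether the arc is a loop.

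The main point to check is whether \zcref{locmaxsubsol} really yields \(W \le F_{\gamma}[w]\) for an arbitrary continuous subsolution \(W\) that is only controlled on the one-sided boundary \(\partial_{p}^{+}\Qcal_{\gamma}\), including control up to the boundary point \(s = 0\). Maximality as stated gives this directly, since it compares \(F_{\gamma}[w]\) with every continuous subsolution below the datum, and continuity of \(W\) on \(\ovQcal_{\gamma}\) carries the inequality to \(s = 0\). If a direct route is preferred, I would instead show the optimal-control inequality \(W(s,t) \le W(\eta(t_0), t_0) + \int_{t_0}^{t} L_{\gamma}(\eta, \dot\eta, \tau) \dl \tau\) along Lipschitz curves, using convexity and the subsolution property as in the appendix result cited in the proof of \zcref{locmaxsubsol}.

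Finally, at \(t = 0\) there is nothing to prove: \zcref{Fxprop} gives \(F_{x}[w](0) = w(x, 0)\), and the subsolution condition for~\zcref{eq:disc} is only required on \(\Vbf \times (0, \infty)\).
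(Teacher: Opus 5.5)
Your proposal is correct and follows essentially the same route as the paper. Condition \zcref[noname]{en:subsol2} of \zcref{defsol} gives the supertangent bound \zcref{eq:Gmax.1}, the maximality of \(F_{\gamma}[w]\) from \zcref{locmaxsubsol} gives \(w(x,\cdot) \le F_{x}[w]\), and \zcref{Gmax} concludes. Your extra remarks (the case \(t=0\) via \zcref{Fxprop}, finiteness of \(\Ecal_{x}\), loops) only make explicit details the paper leaves implicit.
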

    \begin{proof}
        We know from \zcref{defsol}\zcref[noname]{en:subsol2} that, for any \(x \in \Vbf\), \(t \in (0, \infty)\) and \(\Crm^{1}\) supertangent \(\psi\) to \(w(x, \cdot)\) at \(t\),
        \begin{equation*}
            \difcp{\psi(t)}{} \le c_{x}(t),
        \end{equation*}
        while the subsolution property of \(w\) and the definition of \(F_{x}\) yield
        \begin{equation*}
            w(x, t) \le F_{x}[w](t) \qquad \text{for any \((x, t) \in \Vbf \times (0, \infty)\)}.
        \end{equation*}
        \zcref[S]{Gmax} then implies
        \begin{equation*}
            w(x, t) \le G[F_{x}[w], c_{x}](t) \qquad \text{for any \((x, t) \in \Vbf \times (0, \infty)\)},
        \end{equation*}
        as was claimed.
    \end{proof}

    \begin{prop}\label{supHJ2disc}
        Given a continuous supersolution to~\zcref{eq:globHJ} with flux limiter \(c_{x}\), its restriction \(v\) to \((\Gamma \times \{0\}) \cup (\Vbf \times \Rds^{+})\) is a supersolution to~\zcref{eq:disc}.
    \end{prop}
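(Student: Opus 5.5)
We need to show that \(v(x,t) \ge G[F_{x}[v], c_{x}](t)\) for every vertex \(x\) and \(t > 0\). The plan is to apply \zcref{upboundG} with \(f \coloneqq v(x, \cdot)\), \(\psi \coloneqq F_{x}[v]\) and \(c \coloneqq c_{x}\).

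\textbf{Initial condition.} By \zcref{Fxprop} we have \(F_{x}[v](0) = v(x, 0)\), so \(f(0) \ge \psi(0)\).

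\textbf{Reduction.} The remaining hypothesis of \zcref{upboundG} concerns times \(t_{0} > 0\) at which \(v(x, \cdot)\) admits a \(\Crm^{1}\) subtangent \(\phi\) with \(\difcp{\phi(t_{0})}{} < c_{x}(t_{0})\). At such a time we must show \(v(x, t_{0}) \ge F_{x}[v](t_{0})\). By \zcref{defsol}\zcref[noname]{en:supsol2} there is an arc \(\gamma \in \Ecal_{x}\) such that \(V(s,t) \coloneqq v(\gamma(s), t)\) satisfies the constrained supersolution inequality at \((0, t_{0})\). Since \(F_{x}[v](t_{0}) \le F_{\gamma}[v](0, t_{0})\), it suffices to prove
\begin{equation*}
  V(0, t_{0}) \ge W(0, t_{0}), \qquad W \coloneqq F_{\gamma}[v].
\end{equation*}

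\textbf{Comparison on the arc.} We want to apply \zcref{loccomp}(ii) to \(W\) and \(V\) on \(\Qcal_{\gamma, T}\) with \(T \coloneqq t_{0}\). The function \(W\) is a continuous subsolution (\zcref{locmaxsubsol}) and \(W \le V\) on \(\partial_{p}^{+} \Qcal_{\gamma}\) by construction. The difficulty is that \zcref{loccomp}(ii) requires the constrained inequality~\zcref{eq:loccomp.1} at \emph{every} time \((0, t)\) with \(t \le T\), whereas we only know it at the single time \(t_{0}\).

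We handle this by a minimal-time argument. Argue by contradiction: suppose \(V(0, t_{0}) < W(0, t_{0})\), and let \(t_{1} < t_{0}\) be the last time before \(t_{0}\) at which \(V(0, \cdot) \ge W(0, \cdot)\). This time exists by continuity and because \(V(0,0) = v(x, 0) \ge W(0, 0)\).
\begin{itemize}
  \item On \((t_{1}, t_{0}]\) we have \(v(x, \cdot) < W(0, \cdot)\).
  \item Also \(v(x, \cdot) \ge F_{x}[v] \ge G[F_{x}[v], c_{x}]\) fails throughout that interval. Hence we may use \zcref{upboundG} restricted to the interval (equivalently, use \zcref{suptanmon}, arguing as in the proof of \zcref{upboundG}) to find that at every \(t \in (t_{1}, t_{0}]\) where a subtangent with slope \(< c_{x}(t)\) exists, some arc of \(\Ecal_{x}\) satisfies the constrained supersolution inequality.
\end{itemize}

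\textbf{Main obstacle: which arc.} The arc provided by \zcref{defsol}\zcref[noname]{en:supsol2} may vary with \(t\). My plan is to avoid this by working with all arcs at once. For each \(\gamma \in \Ecal_{x}\), consider the subsolution \(\min_{\gamma} F_{\gamma}[v]\), i.e.\ the function \(F_{x}[v]\). Then follow the doubling-variable proof of \zcref{loccomp} directly, with penalized maxima of \(W_{\gamma} - V_{\gamma}\) taken over the finitely many arcs of \(\Ecal_{x}\) (finite by local finiteness).

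\textbf{Case analysis at the penalized maximum.} Suppose the maximum point \((s', t')\) lies at \(s' = 0\).
\begin{itemize}
  \item If the time derivative of the induced test function is \(\ge c_{x}(t')\), then \(v(x, \cdot)\) grows at least like \(\int c_{x}\) there. Combined with \(v(x, t_{1}) \ge F_{x}[v](t_{1})\), \zcref{suptanmon} gives \(v(x, t') \ge G[F_{x}[v], c_{x}](t')\) directly.
  \item If instead the slope is \(< c_{x}(t')\), the supersolution condition selects an arc \(\gamma'\) along which the constrained inequality holds. We then run the estimates~\zcref{eq:loccomp1}--\zcref{eq:loccomp4} on \(\gamma'\), using \zcref{condHtlip}, to reach a contradiction exactly as in \zcref{loccomp}.
\end{itemize}

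I expect the bookkeeping of this arc selection inside the doubling argument to be the hardest step. The first thing I would try is to choose \(\gamma'\) by passing to a subsequence along which the selected arc is constant, which is possible since \(\Ecal_{x}\) is finite.
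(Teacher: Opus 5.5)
Your outer reduction matches the paper's. Applying \zcref{upboundG} with \(f = v(x,\cdot)\), \(\psi = F_x[v]\), \(c = c_x\), together with \(F_x[v](0) = v(x,0)\), leaves you to show \(v(x,t_0) \ge F_x[v](t_0)\) whenever \(v(x,\cdot)\) has a \(\Crm^1\) subtangent \(\phi\) at \(t_0\) with \(\phi'(t_0) < c_x(t_0)\). Narrowing this to \(v(x,t_0) \ge F_\gamma[v](0,t_0)\) for the arc \(\gamma\) selected by \zcref{en:supsol2} in \zcref{defsol} is also legitimate.

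The gap is in how you then prove that inequality. Case~(ii) of \zcref{loccomp} needs the constrained inequality at every point of \(\{0\} \times (0,T]\) on one fixed arc, and your doubling patch does not supply it.
\begin{itemize}
\item In the branch where the time slope \(\sigma(t_{\rho,\sigma} - t')\) of the induced test function is \(\ge c_x(t')\), you have nothing to work with. A single subtangent at a single time does not give the hypothesis of \zcref{suptanmon} on an interval. Even \(v(x,t') \ge G[F_x[v],c_x](t')\) could not contradict the positivity of the penalized maximum, because \(G[F_x[v],c_x] \le F_x[v] \le F_\gamma[v](0,\cdot)\) points the wrong way. Combined with the subsolution inequality for \(W\), you only get \(c_x(t') + H_\gamma(s_{\rho,\sigma}, t_{\rho,\sigma}, p) \le -\varepsilon/T^2\), which is compatible with \zcref{eq:fldef}.
\item In the other branch, the arc \(\gamma'\) returned by the vertex condition need not be the arc carrying the penalized maximum. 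The doubling test function is a constrained subtangent only on the latter, so \zcref{eq:loccomp1} is unavailable, and freezing \(\gamma'\) along a subsequence does not cure this mismatch.
\item The ``last time'' \(t_1\) does not help either. It gives \(V(0,t_1) \ge W(0,t_1)\) only at the vertex, not \(W \le V\) on the slice \(\{t = t_1\}\), and it says nothing about \([0,t_1)\).
\end{itemize}
Tellingly, the bound \zcref{eq:fldef} never enters your sketch, yet it is indispensable. Above \(\ovc_x\) the solution does not change (\zcref{sec:beyondfl}), whereas \(G[F_x[v],c_x]\) increases towards \(F_x[v]\). That limit can be strictly larger than \(v(x,\cdot)\) when optimal curves reach \(x\) and then rest there.

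The missing idea is purely local at \((0,t_0)\). It needs no information at other times or on other arcs, and no boundary condition along the vertex edge.
\begin{itemize}
\item Fix any \(\gamma \in \Ecal_x\) and assume \(v(x,t_0) < F_\gamma[v](0,t_0)\).
\item By \zcref{eq:fldef}, \(c_x(t_0) \le \max_\mu(-H_\gamma(0,t_0,\mu))\), so some \(\mu_0\) satisfies \(\phi'(t_0) + H_\gamma(0,t_0,\mu_0) < 0\).
\item Hence \(\Psi(s,t) \coloneqq \phi(t) - A(t-t_0)^2 + \mu_0 s\) is a strict subsolution near \((0,t_0)\). For \(A\) large it lies below \(v\circ\gamma\) on \(\partial_p^-\Qcal_{\gamma,T}\).
\item By continuity, \(v\circ\gamma < F_\gamma[v]\) near \((0,t_0)\).
\item Case~(i) of \zcref{loccomp} (full parabolic boundary, no vertex condition involved) gives \(v\circ\gamma \ge \min\{F_\gamma[v], F_{\wtgamma}[v]\}\). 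Hence \(v\circ\gamma \ge F_{\wtgamma}[v]\) near \((0,t_0)\).
\item Maximality of \(F_{\wtgamma}[v]\), applied to a subsolution built from \(\Psi\) that coincides with it near \((0,t_0)\) and lies below \(v\circ\gamma\) on \(\partial_p^-\Qcal_{\gamma,T}\), yields \(F_{\wtgamma}[v] \ge \Psi\) there.
\end{itemize}
Thus \(\Psi\) is a constrained \(\Crm^1\) subtangent to \(v\circ\gamma\) at \((0,t_0)\) that violates the inequality in \zcref{en:supsol2} of \zcref{defsol}. Since \(\gamma\) was arbitrary (in particular it may be your selected arc), this is the required contradiction.
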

    \begin{proof}
        We fix a vertex \(x\). According to \zcref{upboundG}, to prove that
        \begin{equation*}
            v(x, t) \ge G[F_{x}[v], c_{x}](t) \qquad \text{for all \(t \in (0, \infty)\)},
        \end{equation*}
        it is enough to show that, given a \(\ovt \in (0, \infty)\) at which \(v\) admits a \(\Crm^{1}\) subtangent \(\varphi\) with \(\difcp{\varphi(\ovt)}{} < c_{x}(\ovt)\), one has
        \begin{equation}\label{eq:supHJ2disc1}
            v \left( x, \ovt \right) \ge F_{x}[v] \left( \ovt \right).
        \end{equation}
        We proceed by contradiction, assuming that~\zcref{eq:supHJ2disc1} is not true, i.e.,
        \begin{equation}\label{eq:supHJ2disc2}
            v \left( x, \ovt \right) < F_{x}[v] \left( \ovt \right).
        \end{equation}
        Fixed \(\gamma \in \Ecal_{x}\), we notice that by \zcref{eq:fldef} there is a \(\mu_{0} \in \Rds\) such that
        \begin{equation*}
            \difcp{\varphi \left( \ovt \right)}{} + H_{\gamma} \left( 0, \ovt, \mu_{0} \right) < c_{x} \left( \ovt \right) + H_{\gamma} \left( 0, \ovt, \mu_{0} \right) \le 0,
        \end{equation*}
        therefore, taking an \(A > 0\) big enough and setting
        \begin{equation*}
            \phi(s, t) \coloneqq \varphi(t) - A \left( t - \ovt \right)^{2} + \mu_{0} s \qquad \text{for \((s, t) \in \ovQcal_{\gamma}\)},
        \end{equation*}
        we have that there exist a \(T > \ovt\) and a \(\delta > 0\) such that
        \begin{equation}\label{eq:supHJ2disc3}
            \phi \left( 0, \ovt \right) = v \left( x, \ovt \right), \qquad \phi(s, t) \le v(\gamma(s), t) \quad \text{for any \((s, t) \in \partial_{p}^{-} \Qcal_{\gamma, T}\)}
        \end{equation}
        and
        \begin{equation}\label{eq:supHJ2disc4}
            \difcp{\phi(s, t)}{t} + H_{\gamma}(s, t, \difcp{\phi(s, t)}{s}) < 0 \qquad \text{for any \((s, t) \in [0, \delta] \times \left[ \ovt - \delta, \ovt + \delta \right]\)}.
        \end{equation}
        We will prove
        \begin{equation}\label{eq:supHJ2disc5}
            v(\gamma(s), t) \ge \phi(s, t) \qquad \text{for every \((s, t) \in [0, \delta] \times \left[ \ovt - \delta, \ovt + \delta \right]\)},
        \end{equation}
        which, according to~\zcref{eq:supHJ2disc3}, will show that \(\phi\) is a constrained \(\Crm^{1}\) subtangent to \((s, t) \mapsto v(\gamma(s), t)\) at \(\left( 0, \ovt \right)\). This will conclude the proof because~\zcref{eq:supHJ2disc4} is in contradiction with \zcref{defsol}\zcref[noname]{en:supsol2}. \\
        First, exploiting \zcref{eq:supHJ2disc2}, we further assume that, up to shrinking \(\delta\),
        \begin{equation}\label{eq:supHJ2disc6}
            v(\gamma(s), t) < F_{\gamma}[v](s, t) \qquad \text{for each \((s, t) \in [0, \delta] \times \left[ \ovt - \delta, \ovt + \delta \right]\)}.
        \end{equation}
        Then we set
        \begin{equation*}
            W(s, t) = \inf \left\{ v(\gamma(\eta(t_{0})), t_{0}) + \int_{t_{0}}^{t} L_{\gamma}(\eta, \dot{\eta}, \tau) \dl \tau \right\} \qquad \text{for \((s, t) \in \ovQcal_{\gamma}\)},
        \end{equation*}
        where the infimum is taken over the \(t_{0} \in [0, t]\) and the curves \(\eta \colon [t_{0}, t] \to [0, \lvert \gamma \rvert]\) with \(\eta(t) = s\) and \(\eta(t_{0}) \in \partial_{p} \Qcal_{\gamma}\). It is easy to check that \(W \equiv \min \{F_{\gamma}[v], F_{\wtgamma}[v]\}\). Additionally, \(W\) is the maximal subsolution to~\zcref{eq:locHJ} less than or equal to \(v(\gamma(s), t)\) on \(\partial_{p} \Qcal_{\gamma}\), thus \zcref{loccomp,eq:supHJ2disc6} yield
        \begin{equation}\label{eq:supHJ2disc7}
            v(\gamma(s), t) \ge W(s, t) \ge F_{\wtgamma}[v](s, t) \qquad \text{for each \((s, t) \in [0, \delta] \times \left[ \ovt - \delta, \ovt + \delta \right]\)}.
        \end{equation}
        Next we define
        \begin{equation*}
            \ovW(s, t) \coloneqq \varphi(t_{0}) + \int_{t_{0}}^{t} \min \{\difcp{\phi(s, \tau)}{t}, - H_{\gamma}(s, \tau, \mu_{0})\} \dl \tau + \mu_{0} s \qquad \text{for \((s, t) \in \ovQcal_{\gamma}\)},
        \end{equation*}
        which is a subsolution to~\zcref{eq:locHJ} so that \(\ovW \le \phi\) and, by~\zcref{eq:supHJ2disc4,eq:supHJ2disc3},
        \begin{align}
            \ovW(s, t) & \le v(\gamma(s), t) & & \text{for all \((s, t) \in \partial_{p}^{-} \Qcal_{\gamma, T}\)}, \nonumber \\
            \ovW(s, t) & = \phi(s, t) & & \text{for all \((s, t) \in [0, \delta] \times \left[ \ovt - \delta, \ovt + \delta \right]\)} \label{eq:supHJ2disc8}.
        \end{align}
        The maximality of \(F_{\wtgamma}[v]\) implies that
        \begin{equation*}
            \ovW(s, t) \le F_{\wtgamma}[v](s, t) \qquad \text{for any \((s, t) \in \ovQcal_{\gamma, T}\)},
        \end{equation*}
        which, together with~\zcref{eq:supHJ2disc7,eq:supHJ2disc8}, proves~\zcref{eq:supHJ2disc5}.
    \end{proof}

    We can finally prove a comparison result for~\zcref{eq:globHJ}.

    \begin{theo}\label{comp}
        Let \(w\) and \(v\) be continuous sub and supersolution to~\zcref{eq:globHJ} with flux limiter \(c_{x}\) such that
        \begin{equation*}
            w(x, 0) \le v(x, 0) \qquad \text{for any \(x \in \Gamma\)}.
        \end{equation*}
        Then \(w \le v\) in \(\Gamma \times \Rds^{+}\).
    \end{theo}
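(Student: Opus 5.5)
The comparison is obtained in two stages: first on the discontinuity interfaces \(\Vbf \times \Rds^{+}\), then on each arc.

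\textbf{Stage 1: comparison on the vertices.} Let \(w\) and \(v\) be as in the statement, and keep the same letters for their restrictions to \((\Gamma \times \{0\}) \cup (\Vbf \times \Rds^{+})\). By \zcref{subHJ2disc}, \(w\) is a subsolution to~\zcref{eq:disc}. By \zcref{supHJ2disc}, \(v\) is a supersolution to~\zcref{eq:disc}. Both functions belong to \(\Fcal\), being restrictions of continuous functions. By hypothesis \(w(x,0) \le v(x,0)\) for every \(x \in \Gamma\). \zcref[S]{disccomp} then gives
\begin{equation*}
    w \le v \qquad \text{on \(\Vbf \times \Rds^{+}\)}.
\end{equation*}

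\textbf{Stage 2: comparison on each arc.} Fix \(\gamma \in \Ecal\) and \(T > 0\), and set \(W(s,t) \coloneqq w(\gamma(s),t)\) and \(V(s,t) \coloneqq v(\gamma(s),t)\). By \zcref{defsol}\zcref[noname]{en:subsol1,en:supsol1}, \(W\) and \(V\) are continuous sub and supersolutions to~\zcref{eq:locHJ} in \(\Qcal_{\gamma,T}\).

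The parabolic boundary \(\partial_{p}\Qcal_{\gamma,T}\) has two parts:
\begin{mylist}
    \item the bottom side \([0,\lvert\gamma\rvert] \times \{0\}\), where \(W \le V\) by the initial data hypothesis;
    \item the lateral sides \(\{0,\lvert\gamma\rvert\} \times [0,T]\), where \(\gamma(0)\) and \(\gamma(\lvert\gamma\rvert)\) are vertices, so \(W \le V\) by Stage 1.
\end{mylist}
This also covers loops: both endpoints are then the same vertex, and Stage 1 still applies. So \(W \le V\) on all of \(\partial_{p}\Qcal_{\gamma,T}\). The first case of \zcref{loccomp} gives \(W \le V\) on \(\ovQcal_{\gamma,T}\).

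Since \(T\) and \(\gamma\) are arbitrary and every point of \(\Gamma\) lies on some arc, we get \(w \le v\) on \(\Gamma \times \Rds^{+}\).

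\textbf{Expected difficulty.} The substantive work is already contained in \zcref{supHJ2disc,disccomp}, so the argument is essentially a combination of earlier results. The only points to check are that the restrictions genuinely lie in \(\Fcal\), and that \zcref{loccomp} applies with boundary inequalities holding on the whole closed lateral boundary up to time \(T\). Both follow from continuity of \(w\) and \(v\).
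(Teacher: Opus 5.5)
Your proposal is correct and is essentially the paper's own proof. You first obtain \(w \le v\) on \(\Vbf \times \Rds^{+}\) from \zcref{subHJ2disc,supHJ2disc,disccomp}, and then extend the inequality to each arc by applying the local comparison \zcref{loccomp} (first case), using \zcref{defsol}\zcref[noname]{en:subsol1,en:supsol1} and the parabolic boundary data.
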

    \begin{proof}
        Applying \zcref{subHJ2disc,supHJ2disc,disccomp} to \(w\) and \(v\) we get that
        \begin{equation*}
            w(x, t) \le v(x, t) \qquad \text{for every \((x, t) \in \Vbf \times \Rds^{+}\)},
        \end{equation*}
        therefore \zcref{loccomp} and \zcref{defsol}\zcref[noname]{en:subsol1} and \zcref[noname]{en:supsol1} yield
        \begin{equation*}
            w(\gamma(s), t) \le v(\gamma(s), t) \qquad \text{for each \(\gamma \in \Ecal\) and \((s, t) \in \ovQcal_{\gamma}\)}.
        \end{equation*}
    \end{proof}

    \section{Representation Formula}\label{sec:solution}

    The analysis performed in the previous sections allows us to provide a Lax--Oleinik-type representation formula for solutions to~\zcref{eq:globHJ} and to prove the well-posedness of the problem.

    Given a map \(u_{0} \colon \Gamma \to \Rds\), we define
    \begin{equation}\label{eq:vf}
        u(x, t) \coloneqq \inf_{x_{0} \in \Gamma} \{u_{0}(x_{0}) + \Phi(x_{0}, 0, x, t)\} \qquad \text{for \((x, t) \in \Gamma \times \Rds^{+}\)}.
    \end{equation}

    \begin{prop}\label{vfcont}
        If \(u_{0}\) is uniformly continuous, the value function \(u\) defined in~\zcref{eq:vf} is uniformly continuous on \(\Gamma \times [0, T]\) for any \(T > 0\). In particular, \(u\) is continuous on \(\Gamma \times \Rds^{+}\).
    \end{prop}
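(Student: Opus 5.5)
We aim to show that the infimum in~\zcref{eq:vf} can be restricted, uniformly in \((x, t) \in \Gamma \times [0, T]\), to initial points \(x_{0}\) with \(d_{\Gamma}(x_{0}, x) \le C t\) for a constant \(C\) depending only on \(T\) and the modulus of \(u_{0}\). Then we combine this with the Lipschitz continuity of \(\Phi\) on the cones \(A_{C}\) from \zcref{minactlip}. A separate argument will be needed near \(t = 0\).

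\textbf{Step 1 (localization of the initial point).} Let \(\omega\) be a modulus of continuity of \(u_{0}\). Since \(\Gamma\) is a length space, \(\omega\) can be taken sublinear: \(\omega(r) \le a + b r\) for some \(a, b\). The competitor \(x_{0} = x\) with the constant curve, or with a small geodesic if needed, gives
\begin{equation*}
    u(x, t) \le u_{0}(x) + t \, \ovTheta_{T}(0).
\end{equation*}
On the other hand, take any curve from \(x_{0}\) to \(x\) in \([0, t]\) with \(d_{\Gamma}(x_{0}, x) = D\). By Jensen's inequality and the lower bound in \zcref{gLagprop}, its action is at least \(t \, \ovvartheta_{T}(D/t)\). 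Hence the candidate value is at least \(u_{0}(x) - a - bD + t \, \ovvartheta_{T}(D/t)\). Superlinearity of \(\ovvartheta_{T}\) then shows that these candidates cannot come within \(1\) of the infimum once \(D/t > C\) for a suitable \(C\). Consequently,
\begin{equation*}
    u(x, t) = \inf \left\{ u_{0}(x_{0}) + \Phi(x_{0}, 0, x, t) : d_{\Gamma}(x_{0}, x) \le C t \right\}
\end{equation*}
up to an arbitrarily small error. I expect the \(a\)-term to be absorbed for small \(t\) only through an \(\omega\)-based estimate, which is handled in Step 3.

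\textbf{Step 2 (continuity for \(t\) bounded away from \(0\)).} Fix \((x, t), (y, t') \in \Gamma \times [\tau, T]\). Given a near-optimal \(x_{0}\) for \((x, t)\), use the same \(x_{0}\) for \((y, t')\). If \(d_{\Gamma}(x, y)\) and \(\lvert t - t' \rvert\) are small compared with \(\tau\), then \((x_{0}, 0, y, t')\) lies in \(A_{2C}\). Applying \zcref{minactlip} on \(A_{2C}\) gives \(u(y, t') - u(x, t) \le K \left( d_{\Gamma}(x, y) + \lvert t - t' \rvert \right)\), and by symmetry the reverse inequality holds as well. The Lipschitz constant of \(\Phi\) is uniform on \(A_{2C}\), but \(A_{2C}\) degenerates as \(t \to 0\). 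This is why the region near \(t = 0\) needs separate treatment.

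\textbf{Step 3 (near \(t = 0\); main obstacle).} We show that \(\lvert u(x, t) - u_{0}(x) \rvert \le \rho(t)\) with \(\rho(t) \to 0\), uniformly in \(x\).
\begin{mylist}
    \item The upper bound \(u(x, t) - u_{0}(x) \le t \, \ovTheta_{T}(0)\) follows from the constant-curve competitor in Step 1.
    \item For the lower bound, any competitor satisfies \(u_{0}(x_{0}) + \Phi(x_{0}, 0, x, t) \ge u_{0}(x) - \omega(D) + t \, \ovvartheta_{T}(D/t)\).
    \item Minimizing over \(D\): for \(D \le \sqrt{t}\) the loss is at most \(\omega(\sqrt{t}) + t \lvert \min \ovvartheta_{T} \rvert\).
    \item For \(D > \sqrt{t}\), superlinearity gives \(t \, \ovvartheta_{T}(D/t) \ge D \, M(t)\) with \(M(t) \to \infty\), which dominates \(a + bD\) once \(t\) is small.
\end{mylist}
Together these yield \(\rho\). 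Uniform continuity on \(\Gamma \times [0, T]\) then follows by combining the pieces. For \(t, t' < \tau\) we have \(\lvert u(x, t) - u(y, t') \rvert \le 2\rho(\tau) + \omega(d_{\Gamma}(x, y))\). For \(t, t' \ge \tau\) we use the Lipschitz estimate from Step 2. Choosing \(\tau\) first and then the closeness threshold completes the proof. The delicate point is the uniformity in Step 3, where the sublinear bound on \(\omega\) is essential.
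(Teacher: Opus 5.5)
Your architecture is the paper's. Step~1 is the localization lemma: it is valid only for \(t \ge \tau\), with \(C\) depending on \(\tau\), exactly as in the paper. Step~2 is the Lipschitz bound on \(\Gamma \times [\tau, T]\), obtained from the Lipschitz continuity of \(\Phi\) on the cones \(A_{C}\). Step~3 is a uniform estimate of \(\lvert u(x, t) - u_{0}(x) \rvert\) for small \(t\). Steps~1 and~2 are fine, but the lower bound in Step~3 fails as written.

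For \(D > \sqrt{t}\) you claim that \(t \, \ovvartheta_{T}(D/t) \ge D M(t)\) dominates \(a + bD\) once \(t\) is small. For \(D\) just above \(\sqrt{t}\), however, \(D M(t)\) is about \(\sqrt{t} M(t)\), which need not exceed the fixed intercept \(a\). For instance, take \(\ovvartheta_{T}(\lambda) = \lambda^{2}/2\). Then \(t \, \ovvartheta_{T}(D/t) = D^{2}/(2t) \approx 1/2\) at \(D \approx \sqrt{t}\), so whenever \(a > 1/2\) your estimate only bounds the loss by roughly \(a - 1/2\), which does not tend to \(0\). Moreover, for growth like \(\lambda \log \lambda\), no threshold of the form \(t^{\gamma}\) repairs this. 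Hence \(\rho(t) \to 0\) is not established: a single affine bound \(\omega(r) \le a + br\) is too crude near \(D = 0\) because its intercept does not vanish.

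The missing ingredient is that the intercept can be made arbitrarily small. Since the modulus may be taken concave (equivalently, subadditive on the length space \(\Gamma\)), for every \(\varepsilon > 0\) there is \(a_{\varepsilon}\) with \(\omega(r) \le a_{\varepsilon} r + \varepsilon\). Superlinearity then gives \(B_{\varepsilon}\) with \(\ovvartheta_{T}(\lambda) \ge a_{\varepsilon} \lambda - B_{\varepsilon}\). So for any \(x_{0}\), any curve \(\xi\) from \(x_{0}\) to \(x\) on \([0, t]\), and \(D = d_{\Gamma}(x_{0}, x)\),
\[
u_{0}(x) - u_{0}(x_{0}) - \int_{0}^{t} L(\xi, \dot{\xi}, \tau) \, \mathrm{d}\tau \le a_{\varepsilon} D + \varepsilon - a_{\varepsilon} D + B_{\varepsilon} t = \varepsilon + B_{\varepsilon} t .
\]
This gives \(\lvert u(x, t) - u_{0}(x) \rvert \le \varepsilon + \max\{\ovTheta_{T}(0), B_{\varepsilon}\} t\) uniformly in \(x\), with no splitting in \(D\) at all. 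This is exactly how the paper concludes, arguing by contradiction along two sequences with \(t_{n}, t_{n}' \to 0\).

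Alternatively, you can keep a split. Bound the loss by \(\omega(D)\) itself, not \(a + bD\), on \(D \le \delta(t)\), and choose \(\delta(t) \to 0\) slowly enough that \(\delta(t) \inf_{\lambda \ge \delta(t)/t} \ovvartheta_{T}(\lambda)/\lambda \to \infty\).

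Finally, your gluing step omits the mixed case \(t < \tau \le t'\). It is harmless: use the near-zero estimate on \([0, 2\tau]\) whenever \(\lvert t - t' \rvert < \tau\). But it should be stated.
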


    To prove this \zcref[noref]{vfcont} we need some preliminary results.

    \begin{lem}\label{optvf}
        Given \(T > \ovt > 0\), if \(u_{0}\) is uniformly continuous there exists a \(C > 0\) such that, for every \((x, t) \in \Gamma \times \left[ \ovt, T \right]\), the infimum in~\zcref{eq:vf} can be taken over the \(x_{0}\) with
        \begin{equation}\label{eq:optvf.1}
            d_{\Gamma}(x_{0}, x) \le Ct.
        \end{equation}
    \end{lem}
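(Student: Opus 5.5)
The plan is to compare the cost of starting far away with the cost of simply starting at \(x\) and staying nearby. Uniform continuity of \(u_{0}\) on the length space \(\Gamma\) provides an affine modulus: there are constants \(a, b \ge 0\) with \(\lvert u_{0}(y) - u_{0}(z) \rvert \le a + b\, d_{\Gamma}(y, z)\) for all \(y, z \in \Gamma\). This follows by chaining points along a geodesic at step \(\delta\), where \(\delta\) comes from uniform continuity with tolerance \(1\).

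\textbf{Upper bound with the competitor \(x_{0} = x\).} Fix \((x, t) \in \Gamma \times [\ovt, T]\). I would take the constant curve at \(x\). By \zcref{gLagprop} its action is at most \(t\,\ovTheta_{T}(0)\), so
\begin{equation*}
    u(x, t) \le u_{0}(x) + T \ovTheta_{T}(0).
\end{equation*}

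\textbf{Lower bound for a far starting point.} Take any \(x_{0}\) and any curve \(\xi \colon [0, t] \to \Gamma\) from \(x_{0}\) to \(x\). Since \(\ovvartheta_{T}\) is convex, Jensen's inequality gives
\begin{equation*}
    \int_{0}^{t} L(\xi, \dot{\xi}, \tau) \dl \tau \ge t\, \ovvartheta_{T}\bigl(d_{\Gamma}(x_{0}, x)/t\bigr).
\end{equation*}
Here I use \(\frac1t \int_{0}^{t} \lvert \dot{\xi} \rvert \ge d_{\Gamma}(x_{0}, x)/t\) and the monotonicity of \(\ovvartheta_{T}\). Set \(D = d_{\Gamma}(x_{0}, x)\). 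Combining with the affine modulus,
\begin{equation*}
    u_{0}(x_{0}) + \Phi(x_{0}, 0, x, t) \ge u_{0}(x) - a - bD + t\, \ovvartheta_{T}(D/t).
\end{equation*}
The goal is to show that when \(D/t > C\) the right-hand side strictly exceeds \(u_{0}(x) + T\ovTheta_{T}(0)\). Then such \(x_{0}\) cannot improve on the competitor \(x_{0}=x\), and the infimum may be restricted to \(D \le Ct\).

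\textbf{Choice of \(C\).} Write \(\lambda = D/t\). It suffices that
\begin{equation*}
    t\bigl(\ovvartheta_{T}(\lambda) - b\lambda\bigr) > a + T\ovTheta_{T}(0).
\end{equation*}
This is the one step that requires some care, because \(t\) can be small. Since \(t \ge \ovt\), superlinearity lets me pick \(C\) so that for \(\lambda > C\) we have \(\ovvartheta_{T}(\lambda) - b\lambda \ge 0\) and
\begin{equation*}
    \ovt\bigl(\ovvartheta_{T}(\lambda) - b\lambda\bigr) > a + T\ovTheta_{T}(0).
\end{equation*}
Because the bracket is then nonnegative and \(t \ge \ovt\), the required inequality follows for every \(t \in [\ovt, T]\).

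This \(C\) depends only on \(T\), \(\ovt\) and the modulus of \(u_{0}\). It is exactly here that the hypothesis \(\ovt > 0\) enters.
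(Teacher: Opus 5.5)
Your proposal is correct and is essentially the paper's argument: an affine bound on the modulus of \(u_{0}\), a superlinear lower bound on the action, comparison with the competitor \(x_{0} = x\), and absorption of the additive constant using \(t \ge \ovt\). The only differences are that the paper replaces your Jensen step with a linear minorant of \(L\) whose slope exceeds that of the modulus by one, which gives an explicit \(C\), and a cosmetic slip on your side: \(u(x, t) \le u_{0}(x) + T \ovTheta_{T}(0)\) requires \(\ovTheta_{T}(0) \ge 0\), so keep \(t \ovTheta_{T}(0)\) or use \(T \lvert \ovTheta_{T}(0) \rvert\), with no real change to the argument.
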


    This \zcref[noref]{optvf}, combined with \zcref{minactlsc}, shows that if \(u_{0}\) is uniformly continuous, then the infimum in~\zcref{eq:vf} is actually a minimum.

    \begin{proof}
        Let \(\omega\) be a concave modulus of continuity of \(u_{0}\), then there are two positive constant \(a\) and \(b\) so that
        \begin{equation}\label{eq:optvf1}
            \omega(r) \le ar + b \qquad \text{for all \(r \in \Rds^{+}\)}.
        \end{equation}
        Exploiting the coercivity of \(\ovvartheta_{T}\) in \zcref{gLagprop}, we further have that there is a \(B > 0\) such that
        \begin{equation*}
            L(z, q, t) \ge \ovvartheta_{T}(\lvert q \rvert) \ge (a + 1) \lvert q \rvert - B \qquad \text{for any \((z, q) \in \Trm \Gamma\) and \(t \in [0, T]\)};
        \end{equation*}
        thus, given an \(x_{0} \in \Gamma\) and a minimizing curve \(\zeta\) for \(\Phi(x_{0}, 0, x, t)\),
        \begin{equation}\label{eq:optvf2}
            \Phi(x_{0}, 0, x, t) = \int_{0}^{t} L \left( \zeta, \dot{\zeta}, \tau \right) \dl \tau \ge (a + 1) \int_{0}^{t} \left\lvert \dot{\zeta}(\tau) \right\rvert \dl \tau - Bt \ge (a + 1) d_{\Gamma}(x_{0}, x) - Bt.
        \end{equation}
        Additionally, the infimum in~\zcref{eq:vf} can be taken over the \(x_{0}\) with
        \begin{equation}\label{eq:optvf3}
            u_{0}(x_{0}) + \Phi(x_{0}, 0, x, t) \le u_{0}(x) + \Phi(x, 0, x, t) \le u_{0}(x) + \int_{0}^{t} L(x, 0, \tau) \dl \tau \le u_{0}(x) + t \ovTheta_{T}(0).
        \end{equation}
        Combining~\zcref{eq:optvf1,eq:optvf2,eq:optvf3} we finally get
        \begin{align*}
            (a + 1) d_{\Gamma}(x_{0}, x) & \le \Phi(x_{0}, 0, x, t) + Bt \le u_{0}(x) - u_{0}(x_{0}) + t \ovTheta_{T}(0) + Bt \\
            & \le a d_{\Gamma}(x_{0}, x) + b + t \left( \ovTheta_{T}(0) + B \right) \le a d_{\Gamma}(x_{0}, x) + t \left( \frac{b}{\ovt} + \ovTheta_{T}(0) + B \right),
        \end{align*}
        which proves~\zcref{eq:optvf.1}.
    \end{proof}

    \begin{prop}\label{vfloclip}
        If \(u_{0}\) is uniformly continuous then the value function \(u\) defined in~\zcref{eq:vf} is Lipschitz continuous on \(\Gamma \times \left[ \ovt, T \right]\) for each \(T > \ovt > 0\).
    \end{prop}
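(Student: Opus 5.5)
The plan is to combine \zcref{optvf} with the Lipschitz continuity of the minimal action on the sets \(A_{C}\) given by \zcref{minactlip}. Fix \(T > \ovt > 0\). By \zcref{optvf} there is a \(C > 0\) such that, for every \((x, t) \in \Gamma \times [\ovt, T]\), the infimum in~\zcref{eq:vf} may be restricted to those \(x_{0}\) with \(d_{\Gamma}(x_{0}, x) \le Ct\). Since \(t \ge \ovt\), this constraint already places \((x_{0}, 0, x, t)\) in \(A_{C}\). The only trouble is that perturbing \((x, t)\) slightly might leave \(A_{C}\), so I will work with the enlarged set \(A_{C'}\), where \(C' \coloneqq 2C + 1\) (with \(\ovt/2\) in place of \(\ovt\) if needed). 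Let \(K\) denote the Lipschitz constant of \(\Phi\) on \(A_{C'}\) with time horizon \(T\), as given by \zcref{minactlip}.

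First, take \((x, t), (y, r) \in \Gamma \times [\ovt, T]\) with \(d_{\Gamma}(x, y) + \lvert t - r \rvert \le \ovt\). Choose \(x_{0}\) nearly optimal for \(u(x, t)\), so that \(d_{\Gamma}(x_{0}, x) \le Ct\). Then
\begin{equation*}
    d_{\Gamma}(x_{0}, y) \le Ct + \ovt \le Ct + r \le (2C + 1) r,
\end{equation*}
where the last inequality uses \(t \le r + \ovt \le 2r\). Hence both \((x_{0}, 0, x, t)\) and \((x_{0}, 0, y, r)\) lie in \(A_{C'}\). It follows that
\begin{equation*}
    u(y, r) - u(x, t) \le \Phi(x_{0}, 0, y, r) - \Phi(x_{0}, 0, x, t) + \varepsilon \le K \bigl( d_{\Gamma}(x, y) + \lvert t - r \rvert \bigr) + \varepsilon.
\end{equation*}
Letting \(\varepsilon \to 0\) and exchanging the roles of \((x, t)\) and \((y, r)\) gives a local Lipschitz bound with the uniform constant \(K\) at scale \(\ovt\).

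Second, I pass from this local bound to a global one on \(\Gamma \times [\ovt, T]\). For points that are far apart, I join \((x, t)\) to \((y, r)\) by moving along a geodesic of \(\Gamma\) with time varying linearly. I split this path into consecutive pieces whose combined spatial and temporal increments are at most \(\ovt\). Every point on the path stays in \(\Gamma \times [\ovt, T]\), because \([\ovt, T]\) is convex and the time component is a convex combination of \(t\) and \(r\). Summing the local estimates over the pieces yields
\begin{equation*}
    \lvert u(y, r) - u(x, t) \rvert \le K \bigl( d_{\Gamma}(x, y) + \lvert t - r \rvert \bigr),
\end{equation*}
which is the claim.

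The main point requiring care is the first step: I need to ensure that the perturbed quadruple remains in a single fixed set \(A_{C'}\), so that the constant from \zcref{minactlip} does not depend on \(x_{0}\). The lower bound \(t \ge \ovt\) is exactly what allows the spatial perturbation to be absorbed into \(C'r\). One further check is that \zcref{minactlip} truly provides a single Lipschitz constant on all of \(A_{C'}\), including quadruples whose spatial points are far apart. Its proof should give this uniformity, since the optimal curves from \zcref{mincurvelip} are \(\kappa\)-Lipschitz with \(\kappa\) independent of the points.
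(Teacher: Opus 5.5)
Your proposal is correct and follows the paper's route: restrict the infimum via \zcref{optvf}, then use the uniform Lipschitz constant of \(\Phi\) on a set of type \(A_{C}\) from \zcref{minactlip}. The paper compresses this into the phrase that \(u\) is ``the infimum of a family of equiLipschitz continuous functions''; your enlargement to \(A_{2C+1}\) (exploiting \(t \ge \ovt\)) together with the geodesic chaining supplies the detail that phrase leaves implicit, since the admissible \(x_{0}\) depend on \((x, t)\).
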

    \begin{proof}
        We fix \(T > \ovt > 0\). By \zcref{optvf} there is a \(C > 0\) such that, for any \((x, t) \in \Gamma \times \left[ \ovt, T \right]\), the infimum in~\zcref{eq:vf} is taken over the \(x_{0}\) with \(d_{\Gamma}(x_{0}, x) \le Ct\). It then follows that \(u\) is Lipschitz continuous on \(\Gamma \times \left[ \ovt, T \right]\) since, by virtue of \zcref{minactlip}, it is the infimum of a family of equiLipschitz continuous functions.
    \end{proof}

    \begin{rem}
        If the initial datum \(u_{0}\) is Lipschitz continuous, \zcref{optvf} holds true with \(\ovt = 0\). Arguing as in the proof of \zcref{vfloclip}, we then infer that \(u\) is Lipschitz continuous in \(\Gamma \times [0, T]\) for any \(T > 0\).
    \end{rem}

    We can now prove that the value function~\zcref{eq:vf} is continuous.

    \begin{proof}[Proof of \zcref{vfcont}]
        Fixed \(T > 0\), we assume for purpose of contradiction that there exist two sequences \(\{(x_{n}, t_{n})\}_{n \in \Nds}\) and \(\{(x_{n}', t'_{n})\}_{n \in \Nds}\) in \(\Gamma \in [0, T]\) with
        \begin{equation*}
            \lim_{n \to \infty} d_{\Gamma} \left( x_{n}, x_{n}' \right) + \left\lvert t_{n} - t_{n}' \right\rvert = 0
        \end{equation*}
        and such that, for some \(\varepsilon > 0\),
        \begin{equation}\label{eq:vfcont1}
            \left\lvert u(x_{n}, t_{n}) - u \left( x'_{n}, t_{n}' \right) \right\rvert > 3 \varepsilon \qquad \text{for every \(n \in \Nds\)}.
        \end{equation}
        Notice that by \zcref{vfloclip} both \(t_{n}\) and \(t_{n}'\) must tend to \(0\) as \(n \to \infty\). \\
        We proceed estimating
        \begin{equation*}
            \lvert u(x, t) - u_{0}(x) \rvert \qquad \text{for \((x, t) \in \Gamma \times (0, T]\)}.
        \end{equation*}
        If \(x_{0}\) is optimal for \(u(x, t)\) and \(\zeta\) is a minimizing curve for \(\Phi(x_{0}, 0, x, t)\), we have
        \begin{equation}\label{eq:vfcont2}
            u_{0}(x) - u(x, t) = u_{0}(x) - u_{0}(x_{0}) - \int_{0}^{t} L \left( \zeta, \dot{\zeta}, \tau \right) \dl \tau.
        \end{equation}
        Let \(\omega\) be a concave modulus of continuity for \(u_{0}\), then the concavity of \(\omega\) yields that there is a positive constant \(a_{\varepsilon}\) with
        \begin{equation*}
            \omega(r) \le a_{\varepsilon} r + \varepsilon \qquad \text{for any \(r \ge 0\)},
        \end{equation*}
        while \zcref{gLagprop} implies the existence of a \(B_{\varepsilon} > 0\) so that
        \begin{equation*}
            L(y, q, r) \ge \vartheta_{T}(\lvert q \rvert) \ge a_{\varepsilon} \lvert q \rvert - B_{\varepsilon} \qquad \text{for each \((y, q) \in \Trm \Gamma\) and \(r \in \Rds^{+}\)}.
        \end{equation*}
        Applying these inequalities to~\zcref{eq:vfcont2} we obtain
        \begin{equation}\label{eq:vfcont3}
            \begin{aligned}
                u_{0}(x) - u(x, t) & \le \omega(d_{\Gamma}(x_{0}, x)) - \int_{0}^{t} \left( a_{\varepsilon} \left\lvert \dot{\zeta}(\tau) \right\rvert - B_{\varepsilon} \right) \dl \tau \\
                & \le a_{\varepsilon} d_{\Gamma}(x_{0}, x) + \varepsilon - a_{\varepsilon} d_{\Gamma}(x_{0}, x) + B_{\varepsilon} t \le \varepsilon + B_{\varepsilon} t.
            \end{aligned}
        \end{equation}
        On the other hand, we have that
        \begin{equation*}
            u(x, t) - u_{0}(x) \le \int_{0}^{t} L(x, 0, \tau) \dl \tau \le t \ovTheta_{T}(0),
        \end{equation*}
        which together with~\zcref{eq:vfcont3} shows that
        \begin{equation*}
            \lvert u(x, t) - u_{0}(x) \rvert \le \varepsilon + \max \left\{ \ovTheta_{T}(0), B_{\varepsilon} \right\} t \qquad \text{for any \((x, t) \in \Gamma \times (0, T]\)}.
        \end{equation*}
        In view of this inequality,
        \begin{align*}
            \left\lvert u(x_{n}, t_{n}) - u \left( x_{n}', t_{n}' \right) \right\rvert & \le \lvert u(x_{n}, t_{n}) - u_{0}(x_{n}) \rvert + \left\lvert u_{0}(x_{n}) - u_{0} \left( x_{n}' \right) \right\rvert + \left\lvert u_{0} \left( x_{n}', t_{n}' \right) - u \left( x_{n}', t_{n}' \right) \right\rvert \\
            & \le 2 \varepsilon + \max \left\{ \ovTheta_{T}(0), B_{\varepsilon} \right\} \left( t_{n} + t_{n}' \right) + \omega \left( d_{\Gamma} \left( x_{n}, x_{n}' \right) \right),
        \end{align*}
        which contradicts~\zcref{eq:vfcont1}.
    \end{proof}

    The next \zcref[noref]{repform} is the main result of this paper.

    \begin{theo}\label{repform}
        Given a uniformly continuous \(u_{0} \colon \Gamma \to \Rds\), the function \(u\) defined in~\zcref{eq:vf} is the unique solution to~\zcref{eq:globHJ} with initial datum \(u_{0}\) and flux limiter \(c_{x}\).
    \end{theo}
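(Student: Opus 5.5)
Uniqueness is immediate from \zcref{comp}: if \(u\) and \(u'\) are two solutions with the same initial datum, each is both sub- and supersolution, so \(u \le u'\) and \(u' \le u\). Continuity of \(u\) and \(u(\cdot, 0) = u_{0}\) follow from \zcref{vfcont} and from \(\Phi(x_{0}, 0, x, 0) = \infty\) for \(x_{0} \ne x\). The work is therefore to show that \(u\) is a sub- and a supersolution in the sense of \zcref{defsol}.

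The main tool will be a dynamic programming principle. For \(0 \le r < t\) I expect
\begin{equation*}
    u(x, t) = \min_{y \in \Gamma} \{u(y, r) + \Phi(y, r, x, t)\}.
\end{equation*}
The proof is by concatenating minimizing curves, using \zcref{mincurvelip} and \zcref{optvf} to get existence of optimal points and curves.

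\textbf{Subsolution, conditions \zcref[noname]{en:subsol1} and \zcref[noname]{en:subsol2}.} For an arc \(\gamma\), restricting the dynamic programming principle to curves that stay in \(\gamma([0, \lvert \gamma \rvert])\) gives
\begin{equation*}
    u(\gamma(s), t) \le u(\gamma(\eta(t_{0})), t_{0}) + \int_{t_{0}}^{t} L_{\gamma}(\eta, \dot{\eta}, \tau) \dl \tau
\end{equation*}
for curves \(\eta\) inside the arc. Hence \(u(\gamma(\cdot), \cdot) \le F_{\gamma}[u]\) and \(u\) is a subsolution of the local problem. A direct way to see this is to test with \(\psi\) from above and use constant-velocity curves \(\eta(\tau) = s + \lambda(\tau - t)\); Legendre duality then gives \(\psi_{t} + H_{\gamma} \le 0\). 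If \(\gamma\) passes through the point only on a subinterval of times, this is handled as usual. At a vertex \(x\), take the constant curve \(\xi \equiv x\) on \([t - h, t]\). This gives
\begin{equation*}
    u(x, t) \le u(x, t - h) + \int_{t - h}^{t} c_{x}(\tau) \dl \tau,
\end{equation*}
so any \(\Crm^{1}\) supertangent \(\psi\) at \(t\) satisfies \(\psi'(t) \le c_{x}(t)\).

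\textbf{Supersolution.} Inside an arc, take an optimal curve for \(u(\gamma(s), t)\). By Lipschitz continuity it stays in the open arc on some \([t - h, t]\). The standard argument, comparing with a subtangent and using convexity via \(L_{\gamma} \ge \lambda \mu - H_{\gamma}\) together with the time continuity of \(H_{\gamma}\), yields \(\phi_{t} + H_{\gamma} \ge 0\).

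I expect condition \zcref[noname]{en:supsol2} at a vertex \(x\) to be the main obstacle. I would go through the semidiscrete problem. First, show that the restriction of \(u\) solves \zcref{eq:disc}, namely
\begin{equation*}
    u(x, t) = G[F_{x}[u], c_{x}](t).
\end{equation*}
The proof decomposes an optimal curve \(\xi\) ending at \(x\) according to its last visit to the vertex set before reaching \(x\). There are two cases.
\begin{itemize}
    \item The curve sits at \(x\) on a terminal interval \([r, t]\). That interval contributes \(\int_{r}^{t} c_{x}\).
    \item The curve arrives at \(x\) through a single arc \(\gamma \in \Ecal_{x}\) on \([r', t]\), entering from the other end or starting at time \(0\). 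That piece contributes \(F_{\gamma}[u](0, t)\).
\end{itemize}
Possible Zeno accumulation near \(t\) must be handled. For this I would use \zcref{WlscL}-type reasoning, or argue that the total time in \(\Vbf\) near \(x\) is measurable. Then replace the oscillating part by resting at \(x\), which lowers the action because \(c_{x} \le L_{\gamma}(0, 0, \cdot)\) by \zcref{fldef}, up to continuity.

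Next, suppose \(\phi\) is a subtangent to \(u(x, \cdot)\) at \(t_{0}\) with \(\phi'(t_{0}) < c_{x}(t_{0})\). Then the minimum in \(G\) cannot be attained at \(r = t_{0}\) with \(r < t_{0}\) in the \(c_{x}\) branch. That branch would force \(u(x, t_{0}) - u(x, t_{0} - h) = \int c_{x}\) locally, which contradicts \(\phi'(t_{0}) < c_{x}(t_{0})\). So \(u(x, t_{0}) = F_{\gamma}[u](0, t_{0})\) for some \(\gamma \in \Ecal_{x}\), with an optimal curve arriving along \(\gamma\). Finally, run the interior supersolution argument along that curve, which ends at the boundary point \(s = 0\), using constrained subtangents \(\varphi\). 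This gives \(\varphi_{t} + H_{\gamma}(0, t_{0}, \varphi_{s}) \ge 0\), which is condition \zcref[noname]{en:supsol2}.
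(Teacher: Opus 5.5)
\textbf{Where the proposal fails.} Your handling of uniqueness (via \zcref{comp}), of the initial datum, and of conditions (i)--(iii) of \zcref{defsol} matches the paper: dynamic programming along an optimal curve, \zcref{subsolloc,supsolloc}, and the constant curve at a vertex. The gap is condition (iv). You derive it from the identity \(u(x,t) = G[F_{x}[u], c_{x}](t)\) and then from \(u(x,t_{0}) = F_{\gamma}[u](0,t_{0})\), and your derivation of that identity does not work.

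First, your case split for the last arc piece of an optimal curve \(\zeta\) omits the case where that piece starts at \(x\) itself, i.e.\ an excursion from \(x\) into \(\gamma\) and back. \(F_{\gamma}[u]\) only involves curves issuing from \(\partial_{p}^{+}\Qcal_{\gamma}\), that is, from time \(0\) or from the far endpoint \(\gamma(\lvert\gamma\rvert)\). So such a piece is bounded below by \(G[F_{x}[u],c_{x}]\) only if excursions cost at least \(\int c_{x}\).

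Second, the bound \(c_{x} \le \min_{\gamma \in \Ecal_{x}} L_{\gamma}(0,0,\cdot)\) of \zcref{fldef} does not give this, and your Zeno repair is false. If \(L_{\gamma}(s,\lambda,t) = \lambda^{2}/2 - s\) near \(s=0\) and \(c_{x}\equiv 0\), the excursion \(\eta(\tau) = (\tau-a)(b-\tau)/2\) has action \(-(b-a)^{3}/24 < 0 = \int_{a}^{b} c_{x}\). Replacing it by rest therefore raises the action. This is exactly why \zcref{adcurve} needs the stronger \zcref{eq:adcurve.1}.

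\textbf{The identity itself can fail.} Take a vertex \(x\) with two arcs \(\gamma,\gamma' \in \Ecal_{x}\) whose other endpoints have degree one, with \(\lvert\gamma\rvert = 3\) and \(\lvert\gamma'\rvert = 100\). Let \(L_{\gamma}(s,\lambda) = \lambda^{2}/2 + V(s)\), where \(V(0)=0\), \(V \le 0\) on \([0,1]\), \(V=-10\) near \(s=1\), and \(V = 1000\) on \([3/2,3]\). Let \(L_{\gamma'} = \lambda^{2}/2\). Take all flux limiters \(\equiv 0\), and \(u_{0} \ge 0\) vanishing on \(\gamma'([1,100])\) and \(\ge 110\) on \(\gamma\).
\begin{itemize}
\item Going from \(\gamma'(1)\) to \(x\), sitting in the well, and returning to \(x\) at time \(11\) gives \(u(x,11)<0\).
\item Since \(L \ge -10\) everywhere, curves inside \(\gamma'\) cost \(\ge 0\) (from its far end, \(\ge 100^{2}/22 - 110\)). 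Curves inside \(\gamma\) from its far end must cross the barrier twice, at cost \(\ge 6\sqrt{500} > 110\). Hence \(F_{x}[u] \ge 0\) on \([0,11]\), and \(u(x,11) < G[F_{x}[u],c_{x}](11)\).
\item By \zcref{upboundG}, there is then a \(\bar t \in (0,11)\) where \(u(x,\cdot)\) has a \(\Crm^{1}\) subtangent of slope \(< c_{x}(\bar t)\) while \(u(x,\bar t) < F_{x}[u](\bar t)\). This is precisely where you claim \(u(x,\bar t) = F_{\gamma}[u](0,\bar t)\).
\end{itemize}
In this example \(u\) does satisfy (iv), by the argument below. So the example is also in tension with \zcref{supHJ2disc}, and invoking that proposition here would be circular anyway.

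\textbf{What the paper does instead.} It never passes through (Discr) in the existence part; it argues directly on an optimal curve \(\zeta\) for \(u(x,t)\).
\begin{itemize}
\item Rest intervals at \(x\) cost exactly \(\int c_{x}\).
\item A short excursion costs at least \(\int (c_{x} - \omega(Ch))\). This follows from \zcref{nozenoaux} applied with \(c_{x}\) lowered by a modulus of continuity \(\omega\) of \((s,r)\mapsto \min_{\gamma\in\Ecal_{x}} L_{\gamma}(s,0,r)\).
\item Hence if \(\zeta(t') = x\) for \(t'\) arbitrarily close to \(t\), then \(\phi(t)-\phi(t') \ge u(x,t)-u(x,t') = \int_{t'}^{t} L(\zeta,\dot\zeta) \ge \int_{t'}^{t} c_{x} - o(t-t')\), contradicting \(\phi'(t) < c_{x}(t)\).
\item Therefore \(\zeta\) avoids \(\Vbf\) on some \((t-\delta,t)\). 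The paper states this step tersely; the above is its justification.
\item Then \(\zeta\) arrives through a single \(\gamma\in\Ecal_{x}\), and \zcref{eq:repform2} holds with \(s=0\). \zcref{stateconstloc} gives (iv) for that \(\gamma\), whether the arc piece came from the far end, from time \(0\), or from \(x\) itself.
\end{itemize}
The \(o(t-t')\) error is harmless for this viscosity inequality, but it can never yield the exact identity your route needs. Replace the detour through (Discr) with this direct argument.
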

    \begin{proof}
        Uniqueness is a consequence of \zcref{comp}, thus we only have to prove that \(u\) is a solution. \\
        Fixed \((x, t) \in \Gamma \times (0, \infty)\), \zcref{mincurvelip,optvf} show that there is a curve \(\zeta\) with
        \begin{equation}\label{eq:repform1}
            u(x, t) = u_{0}(\zeta(0)) + \int_{0}^{t} L \left( \zeta, \dot{\zeta}, \tau \right) \dl \tau.
        \end{equation}
        If there is a \(\gamma \in \Ecal\) and an \(s \in (0, \lvert \gamma \rvert)\) such that \(x = \gamma(s)\), then, for a \(\delta > 0\) small enough, we can define the curve \(\eta \coloneqq \gamma^{- 1} \circ \zeta |_{[t - \delta, t]}\) which is absolutely continuous by~\cite[Lemma~3.2]{PozzaSiconolfi23}. It follows from~\zcref{eq:repform1} that
        \begin{equation}\label{eq:repform2}
            u(\gamma(s), t) - u(\eta(t - \delta), t - \delta) = \int_{t - \delta}^{t} L_{\gamma}(\eta, \dot{\eta}, \tau) \dl \tau,
        \end{equation}
        therefore, since \((x, t)\) is arbitrary, \zcref[pairsep={, }]{subsolloc,supsolloc,vfcont} show that \(u(\gamma(s), t)\) is a continuous solution to~\zcref{eq:locHJ} for any \(\gamma \in \Ecal\), i.e., \(u\) satisfies \zcref[noname]{en:subsol1,en:supsol1} in \zcref{defsol}. \\
        Now assume that \(x \in \Vbf\). We have by definition that, for any \(\delta > 0\) small enough,
        \begin{equation*}
            u(x, t) - u(x, t - \delta) \le \int_{t - \delta}^{t} L(x, 0, \tau) \dl \tau = \int_{t - \delta}^{t} c_{x}(\tau) \dl \tau,
        \end{equation*}
        thus, if \(\psi\) is a \(\Crm^{1}\) supertangent to \(u(x, \cdot)\) at \(t\), \(\psi(t) \le c_{x}(t)\). This proves \zcref{en:subsol2} in \zcref{defsol}. \\
        Finally, let \(\phi\) be \(\Crm^{1}\) subtangent to \(u(x, \cdot)\) at \(t \in (0, \infty)\) and assume that \(\phi(t) < c_{x}(t)\), which implies that for any \(\delta > 0\) small enough
        \begin{equation*}
            \int_{t - \delta}^{t} c_{x}(\tau) \dl \tau > \phi(t) - \phi(t - \delta) \ge u(x, t) - u(x, t - \delta).
        \end{equation*}
        This shows that there is a \(\delta > 0\) such that \(\zeta((t - \delta, t)) \cap \Vbf = \emptyset\), i.e., there exist a \(\gamma \in \Ecal_{x}\), a \(\delta > 0\) and a curve \(\eta \colon [t - \delta, t] \to [0, \lvert \gamma \rvert]\) with \(\eta(t) = 0\) and \(\eta((t - \delta, t)) \cap \{0\} = \emptyset\) for which~\zcref{eq:repform2} holds true with \(s = 0\). Thereby, \(u\) satisfies \zcref{en:supsol2} in \zcref{defsol} by \zcref{stateconstloc}. This concludes the proof.
    \end{proof}

    \subsection{Beyond the Maximal Flux Limiter}\label{sec:beyondfl}

    In \zcref{fldef} we required flux limiters to be bounded from above by
    \begin{equation*}
        \ovc_{x}(t) \coloneqq \min_{\gamma \in \Ecal_x} L_{\gamma}(0, 0, t) \qquad \text{for \((x, t) \in \Vbf \times \Rds^{+}\)}.
    \end{equation*}
    It is natural to ask whether~\zcref{eq:globHJ} remains well-posed even for a flux limiter \(c_{x} \ge \ovc_{x}\). In this case, not only does~\zcref{eq:globHJ} admit a unique solution for a given uniformly continuous initial datum \(u_{0}\), but this solution coincides with the value function~\zcref{eq:vf} (hereafter denoted by \(u\)) corresponding to the flux limiter \(\ovc_{x}\). Indeed, straightforward modifications to the proof of \zcref{repform} show that \(u\) is a solution to~\zcref{eq:globHJ} with flux limiter \(c_{x}\) and initial datum \(u_{0}\).

    To prove uniqueness, observe that in view of \zcref{subsolloc} and \zcref{defsol}\zcref[noname]{en:subsol1}, every continuous subsolution \(w\) to~\zcref{eq:globHJ} must satisfy
    \begin{equation*}
        w(\gamma(s), t) - w(\gamma(s), t - \delta) \le \int_{t - \delta}^{t} L_{\gamma}(s, 0, \tau) \dl \tau
    \end{equation*}
    for any \(\gamma \in \Ecal\), \(s \in (0, \lvert \gamma \rvert)\), \(t > 0\) and \(\delta > 0\) small enough. The continuity of \(w\) and \(L_{\gamma}\) then yields
    \begin{equation*}
        w(x, t) - w(x, t - \delta) \le \int_{t - \delta}^{t} L_{\gamma}(0, 0, \tau) \dl \tau
    \end{equation*}
    for any \(x \in \Vbf\), \(\gamma \in \Ecal_{x}\), \(t > 0\) and \(\delta > 0\) small enough. Consequently, for each \(x \in \Vbf\), \(t > 0\) and \(\Crm^{1}\) supertangent \(\psi\) to \(w(x, \cdot)\) at \(t\)
    \begin{equation*}
        \difcp{\psi}{}(t) \le \ovc_{x}(t).
    \end{equation*}
    In other words, any subsolution to~\zcref{eq:globHJ} with flux limiter \(c_{x}\) is also a subsolution with flux limiter \(\ovc_{x}\). Additionally, it is apparent from \zcref{defsol}\zcref[noname]{en:supsol1,en:supsol2} that any supersolution to~\zcref{eq:globHJ} with flux limiter \(c_{x}\) is also a supersolution with flux limiter \(\ovc_{x}\). Since \(u\) is the unique solution to~\zcref{eq:globHJ} with flux limiter \(\ovc_{x}\) and initial datum \(u_{0}\), it is also the only solution with flux limiter \(c_{x}\) and initial datum \(u_{0}\).

    Furthermore, the same arguments show that, given any flux limiter \(c_{x}\) such that
    \begin{equation*}
        c'_{x}(t) \coloneqq \min \{c_{x}(t), \ovc_{x}(t)\}
    \end{equation*}
    satisfies \zcref{condflbound,condfllip}, the value function~\zcref{eq:vf} corresponding to the flux limiter \(c'_{x}\) is the unique solution to~\zcref{eq:globHJ} with flux limiter \(c_{x}\) and initial datum \(u_{0}\).

    \section{Optimal Curves of the Minimal Action}\label{sec:mincurve}

    This \zcref[noref,nocap]{sec:mincurve} is devoted to the proofs of \zcref{mincurvelip,minactlip}.

    Let \(\difcp[-]{L(x, q, t)}{q}\) denote the subdifferential of \(L\) in \(q\), which is always nonempty thanks to the convexity of \(L\). We recall that if \(p \in \difcp[-]{L(x, q, t)}{q}\) then
    \begin{equation*}
        L^{*}(x, p, t) = q \cdot p - L(x, q, t),
    \end{equation*}
    where \(L^{*}\) is the convex conjugate of \(L\) with respect to the speed variable. We further set \(\vartheta_{T}^{*}\) and \(\Theta_{T}^{*}\) as the convex conjugates of \(\ovTheta_{T}\) and \(\ovvartheta_{T}\), respectively, defined in \zcref{gLagprop}. It is easy to check that
    \begin{equation}\label{eq:boundgLagdual}
        \vartheta_{T}^{*}(\lvert p \rvert) \le L^{*}(x, p, t) \le \Theta_{T}^{*}(\lvert p \rvert).
    \end{equation}

    \begin{lem}\label{dualbound}
        Given two constant \(A\) and \(C \ge 0\), there is an \(R > 0\) such that
        \begin{multline*}
            \sup \left\{ \Theta^{*}_{T}(\lvert p \rvert) : p \in \difcp[-]{L(x, q, t)}{q}, \, (x, q) \in \Trm \Gamma, \, t \in [0, T], \, \lvert q \rvert \le C \right\} + A \\
            \le \inf \left\{ \vartheta^{*}_{T}(\lvert p \rvert) : p \in \difcp[-]{L(x, q, t)}{q}, \, (x, q) \in \Trm \Gamma, \, t \in [0, T], \, \lvert q \rvert > R \right\}.
        \end{multline*}
    \end{lem}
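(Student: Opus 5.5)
The plan is to bound the left-hand side uniformly, and then show that the right-hand side grows without bound as \(R \to \infty\).

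\textbf{Step 1: the left-hand side is finite.} Fix \((x, q) \in \Trm \Gamma\) with \(\lvert q \rvert \le C\), \(t \in [0, T]\), and \(p \in \difcp[-]{L(x, q, t)}{q}\). Convexity of \(L\) in \(q\), restricted to the tangent space, gives
\[
L(x, q', t) \ge L(x, q, t) + p \cdot (q' - q) \qquad \text{for every \(q' \in \Trm_{x} \Gamma\)}.
\]
I take \(q' = q + e\), where \(e\) is a unit vector of \(\Trm_{x}\Gamma\) in the direction of the relevant component of \(p\). Using \zcref{gLagprop} we get
\[
\lvert p \rvert \le \ovTheta_{T}(C + 1) - \ovvartheta_{T}(0) =: K .
\]
This works after identifying \(p\) with its projection on \(\Trm_{x} \Gamma\), which is harmless since only \(p \cdot q\) matters. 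At vertices \(\Trm_{x}\Gamma\) is a span of the \(\dot\gamma(0)\), and there \(L = \lvert q \rvert^{2}/2 + c_{x}\) is differentiable, so \(p = q\) directly. Since \(\Theta^{*}_{T}\) is nondecreasing (or at least bounded on bounded sets), the supremum on the left is at most \(\max_{[0, K]} \Theta^{*}_{T} + A < \infty\).

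\textbf{Step 2: the right-hand side blows up.} For \(p \in \difcp[-]{L(x, q, t)}{q}\) we have
\[
L^{*}(x, p, t) = p \cdot q - L(x, q, t).
\]
I want to show that \(\lvert p \rvert\) is large when \(\lvert q \rvert\) is large. Using the subgradient inequality with \(q' = 0\),
\[
p \cdot q \ge L(x, q, t) - L(x, 0, t) \ge \ovvartheta_{T}(\lvert q \rvert) - \ovTheta_{T}(0),
\]
so \(\lvert p \rvert \ge \bigl( \ovvartheta_{T}(\lvert q \rvert) - \ovTheta_{T}(0) \bigr) / \lvert q \rvert\). This tends to \(\infty\) uniformly as \(\lvert q \rvert \to \infty\), by superlinearity of \(\ovvartheta_{T}\).

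\textbf{Step 3: conclude.} \(\vartheta^{*}_{T}\), the conjugate of the superlinear \(\ovTheta_{T}\), is itself superlinear, hence coercive and nondecreasing for large arguments. Choose \(M\) with
\[
\vartheta^{*}_{T}(r) \ge \max_{[0, K]} \Theta^{*}_{T} + A \qquad \text{for } r \ge M,
\]
and then choose \(R\) so that \(\lvert q \rvert > R\) forces \(\lvert p \rvert \ge M\).

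The main obstacle is the bookkeeping in Step 1. Care is needed because \(p\) lives in \(\Rds^{N}\) while only its tangential part is controlled, and the subdifferential is taken on \(\Trm_{x}\Gamma\), which at vertices is a union of lines (its convex hull). I will handle this by interpreting \(\lvert p \rvert\) via the dual norm on \(\Trm^{*}_{x}\Gamma\), consistent with \zcref{eq:boundgLagdual}. Monotonicity of \(\Theta^{*}_{T}\) and \(\vartheta^{*}_{T}\) on \(\Rds^{+}\) follows from their being conjugates of nondecreasing functions, extended evenly.
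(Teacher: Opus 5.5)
Your proposal is correct and follows the paper's proof essentially step by step. It uses a uniform bound on \(\lvert p \rvert\) for \(\lvert q \rvert \le C\), then the subgradient inequality against \(q' = 0\) together with the superlinearity of \(\ovvartheta_{T}\) to force \(\lvert p \rvert \to \infty\) as \(\lvert q \rvert \to \infty\), and finally the coercivity of \(\vartheta^{*}_{T}\) to conclude.

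The differences are minor, and both are sound and slightly more careful than the text:
\begin{itemize}
\item You get the first bound directly from the subgradient inequality with \(q' = q + e\), namely \(\lvert p \rvert \le \ovTheta_{T}(C + 1) - \ovvartheta_{T}(0)\). The paper instead quotes a local Lipschitz estimate from Clarke, and its displayed constant \(\ovTheta_{T}(C + 2)\) is imprecise, though only finiteness is used.
\item You make explicit that \(\lvert p \rvert\) must be read as the norm on \(\Trm^{*}_{x}\Gamma\). This matters because the ambient subdifferential of the Lagrangian extended by \(+\infty\) off \(\Trm_{x}\Gamma\) is invariant under translations by \((\Trm_{x}\Gamma)^{\perp}\), hence unbounded whenever \(\Trm_{x}\Gamma \ne \Rds^{N}\).
\end{itemize}
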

    \begin{proof}
        It is known, see for instance~\cite[Lemma~1 to Theorem~2.34]{Clarke13}, that, for any \((x, q) \in \Trm \Gamma\) and \(t \in [0, T]\) such that \(\lvert q \rvert \le C\),
        \begin{equation}\label{eq:dualbound1}
            \lvert p \rvert \le \ovTheta_{T}(C + 2) \qquad \text{for every \(p \in \difcp[-]{L(x, q, t)}{q}\)}.
        \end{equation}
        Furthermore, if \(p' \in \difcp[-]{L(x, q, t)}{q}\), the subgradient inequality yields
        \begin{equation*}
            L(x, 0, t) - L(x, q, t) \ge - q \cdot p',
        \end{equation*}
        from which follows
        \begin{equation*}
            \ovvartheta_{T}(\lvert q \rvert) - \ovTheta_{T}(0) \le L(x, q, t) - L(x, 0, t) \le \lvert q \rvert \lvert p' \rvert.
        \end{equation*}
        Since \(\ovvartheta_{T}\) is superlinearly coercive, this shows that for any \(p' \in \difcp[-]{L(x, q, t)}{q}\)
        \begin{equation}\label{eq:dualbound2}
            \lvert p' \rvert \longrightarrow \infty \qquad \text{as} \qquad \lvert q \rvert \longrightarrow \infty.
        \end{equation}
        \(\vartheta_{T}^{*}\) is coercive as the convex conjugate of a coercive function, thereby~\zcref{eq:dualbound1,eq:dualbound2} imply the claim.
    \end{proof}

    \begin{prop}\label{minseqminact}
        Fixed \(T > 0\) and \(C > 0\), there is a positive constant \(\kappa\) such that, for every \(x_{1}, x_{2} \in \Gamma\) and \(t_{1}, t_{2} \in [0, T]\) satisfying
        \begin{equation*}
            d_{\Gamma}(x_{1}, x_{2}) \le C(t_{2} - t_{1}),
        \end{equation*}
        there exists a minimizing sequence for \(\Phi(x_{1}, t_{1}, x_{2}, t_{2})\) consisting of \(\kappa\)-Lipschitz continuous curves.
    \end{prop}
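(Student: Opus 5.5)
Given any curve \(\xi\colon[t_1,t_2]\to\Gamma\) joining \(x_1\) to \(x_2\) with finite action, I will construct a \(\kappa\)-Lipschitz curve with the same endpoints and action not larger than that of \(\xi\) plus an arbitrarily small error. Applying this to a minimizing sequence then gives the claim. The construction is a time reparametrization that caps the speed. It is the same device used classically for autonomous Lagrangians (a DuBois–Reymond/energy argument), and here it is adapted to the time dependence through the Lipschitz-type bound in \zcref{gLagprop} and the duality bounds \zcref{eq:boundgLagdual}, \zcref{dualbound}.

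\textbf{Step 1 (reference speed).} Since \(d_\Gamma(x_1,x_2)\le C(t_2-t_1)\), the geodesic parametrized on \([t_1,t_2]\) has speed at most \(C\). Hence \(\Phi\) is bounded above by \((t_2-t_1)\ovTheta_T(C)\). Combined with the lower bound \(\ovvartheta_T\), this gives a uniform bound on the action of near-minimizers. Consequently, the set \(E_R=\{\tau:\lvert\dot\xi(\tau)\rvert>R\}\) has small measure, and the total length travelled on it is controlled.

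\textbf{Step 2 (reparametrization).} Fix \(R\) as given by \zcref{dualbound} with constants \(C\) and \(A\), where \(A\) is to be chosen in terms of \(\ovalpha_T,\ovbeta_T,T\). On \(E_R\) the curve will be slowed down to speed exactly \(R\); elsewhere it is sped up by a constant factor \(\lambda\ge1\) to recover the lost time. Formally, I will take \(\zeta=\xi\circ\sigma\) with \(\sigma\) the inverse of \(\theta(\tau)=t_1+\int_{t_1}^\tau \max\{\lvert\dot\xi\rvert/R,1\}/\lambda\), choosing \(\lambda\) so that \(\theta(t_2)=t_2\). Alternatively I would cap at \(\max\{R,\lvert\dot\xi\rvert/\lambda\}\) to stay below the speed bound. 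The new speed is bounded by \(\kappa=\max\{R,\lambda R\}\) with \(\lambda\) uniformly bounded, so \(\zeta\) is \(\kappa\)-Lipschitz.

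\textbf{Step 3 (action comparison, the main obstacle).} I need to show
\[
\int L(\zeta,\dot\zeta,\tau)\le\int L(\xi,\dot\xi,\tau).
\]
Write \(\zeta(\theta(\tau))=\xi(\tau)\) and \(\dot\zeta(\theta(\tau))=\dot\xi(\tau)/\theta'(\tau)\), and change variables. By convexity,
\[
L(x,q/\theta',t)\,\theta'\le L(x,q,t)+(1-\theta')\bigl(p\cdot q/\theta'\cdot\theta'-L\bigr)\ldots
\]
More precisely, for \(p\in\partial_q^-L(x,q/\theta',t)\) one has \(L(x,q,t)\ge L(x,q/\theta',t)+(\theta'-1)\,p\cdot q/\theta'\), which yields
\[
\theta' L(x,q/\theta',t)\le L(x,q,t)+(\theta'-1)\,L^*(x,p,t).
\]
On \(E_R\), where \(\theta'>1\) before rescaling and the new speed is \(R\), I use the lower dual bound \(\vartheta_T^*\). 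On the complement, where \(\theta'<1\) and speeds are at most \(C'\), I use the upper bound \(\Theta_T^*\). Since \(\int(\theta'-1)=0\), \zcref{dualbound} makes the net dual contribution favorable by at least \(A\,\lvert E_R\text{-time gain}\rvert\). The remaining error is the time shift \(t\mapsto\theta(t)\) in the third argument of \(L\). It is controlled by the time-Lipschitz bound of \zcref{gLagprop}, which gives \((\ovalpha_T\lvert q\rvert+\ovbeta_T)\lvert\theta(\tau)-\tau\rvert\). Here \(\lvert\theta-\mathrm{id}\rvert\) is at most the total delay, which is itself proportional to the measure gained. This is exactly what \(A\) must absorb, and \(\lvert\theta-\mathrm{id}\rvert<\ovepsilon_T\) must hold, which I will arrange by splitting \([t_1,t_2]\) into pieces of length less than \(\ovepsilon_T\) if needed.

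Getting the bookkeeping right is the delicate part: the time shift must be linear in the gained time and integrated against \(\lvert\dot\xi\rvert\), and \(\int\lvert\dot\xi\rvert\) must stay bounded by Step 1. The vertex points, where \(L\) is defined separately via \(c_x\), need no special treatment. The reparametrization preserves \(\{\xi\in\Vbf\}\), and the convexity and dual bounds hold on all of \(\Trm\Gamma\).
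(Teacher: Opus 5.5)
There is a genuine gap in how you recover the lost time. Step~1, the subgradient inequality and the control of the time shift through \zcref{gLagprop} are in line with the paper, but your choice of time change in Step~2 breaks the dual estimate of Step~3. With $\theta'=\max\{\lvert\dot\xi\rvert/R,1\}/\lambda$ you speed up by the factor $\lambda$ on all of $\{\lvert\dot\xi\rvert\le R\}$. You also speed up on $\{R<\lvert\dot\xi\rvert<\lambda R\}\subseteq E_R$, where $\theta'<1$ as well. The loss terms $(1-\theta')L^*(x,p,t)$ therefore involve subgradients of $L$ at velocities of norm up to $\lambda R$. So your claim that on the sped-up set the speeds are at most a fixed $C'$ is false. \zcref{dualbound}, which you apply with the fixed constant $C$, gives no control there. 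You also cannot apply it with $C'=\lambda R$, because $R$ must be chosen first.

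In fact no choice of $R$ makes the dual bounds favorable. The velocities at which loss and gain are evaluated both reach norm $\lambda R$, and $\Theta^*_T\ge\vartheta^*_T$. Hence the worst-case loss per unit time always dominates the worst-case gain. The smallness of $\lvert\{\lvert\dot\xi\rvert\sim R\}\rvert$ does not rescue this either. Suppose that, for each $n$, some arc carries $L=\max\{\lvert q\rvert^{2}/2,\,e^{n}(\lvert q\rvert-n)+n^{2}/2\}$; this is compatible with the standing assumptions when there are countably many arcs. Take a curve that runs at speed $n\approx R/2$ on such an arc for a time of order $R^{-2}$, and briefly at speed $2R$ on an arc where $L=\lvert q\rvert^{2}/2$. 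It satisfies your a priori bounds, yet your reparametrization increases its action: the loss is of order $(\lambda-1)e^{R/2}/R$, against a gain of order $(\lambda-1)R^{2}$.

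The missing idea is to recover time only where the speed is bounded independently of the cap. From $\int_{t_1}^{t_2}\lvert\dot\xi\rvert\le M(t_2-t_1)$, the set $\{\lvert\dot\xi\rvert\le 2M\}$ has measure at least $(t_2-t_1)/2$. The time to be recovered, $\varepsilon_\kappa=\int_{E_\kappa}(\lvert\dot\xi\rvert/\kappa-1)$, is at most $M(t_2-t_1)/\kappa$. The paper takes $\theta'=\lvert\dot\xi\rvert/\kappa>1$ on $E_\kappa$ (pure slow-down), $\theta'=1/2$ on a set $F_\kappa\subseteq\{\lvert\dot\xi\rvert\le 2M\}$ of measure $2\varepsilon_\kappa$, and $\theta'=1$ elsewhere. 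Then:
\begin{itemize}
\item the loss is at most $\varepsilon_\kappa\Xi$, where $\Xi$ is the supremum of $\Theta^*_T(\lvert p\rvert)$ over subgradients at velocities $\lvert q\rvert\le 4M$, a constant independent of $\kappa$;
\item the gain is at least $\varepsilon_\kappa\Upsilon(\kappa)$;
\item $\lvert\theta-\mathrm{id}\rvert\le 2\varepsilon_\kappa$, so $\kappa>4MT/\ovepsilon_T$ keeps the shift below $\ovepsilon_T$ without splitting $[t_1,t_2]$;
\item the shift costs at most $\varepsilon_\kappa\Lambda$ with $\Lambda=2(\ovalpha_T M+\ovbeta_T)T$.
\end{itemize}
Applying \zcref{dualbound} with the constant $4M$ gives $\kappa$ with $\Upsilon(\kappa)\ge\Xi+\Lambda$, so the action does not increase.

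There is also a sign slip. Your subgradient inequality actually gives $\theta'L(x,q/\theta',t)\le L(x,q,t)+(1-\theta')L^*(x,p,t)$, not $(\theta'-1)L^*$. Your later choice of which dual bound to use where is consistent only with this corrected sign.
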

    \begin{proof}
        By~\cite[Proposition~2.5.9 and Corollary~2.5.20]{BuragoBuragoIvanov01} there exists a geodesic \(\zeta \colon [t_{1}, t_{2}] \to \Gamma\) linking \(x_{1}\) to \(x_{2}\) with
        \begin{equation*}
            \left\lvert \dot{\zeta}(t) \right\rvert = \frac{d_{\Gamma}(x_{1}, x_{2})}{t_{2} - t_{1}} \le C \qquad \text{for a.e.\ \(t \in [t_{1}, t_{2}]\)},
        \end{equation*}
        then
        \begin{equation}\label{eq:minseqminact1}
            \int_{t_{1}}^{t_{2}} L \left( \zeta, \dot{\zeta}, \tau \right) \dl \tau \le \int_{t_{1}}^{t_{2}} \ovTheta_{T} \left( \left\lvert \dot{\zeta}(\tau) \right\rvert \right) \dl \tau \le (t_{2} - t_{1}) \ovTheta_{T}(C).
        \end{equation}
        Furthermore, the superlinearity of \(\ovvartheta_{T}\) implies the existence of a constant \(A > 0\) so that
        \begin{equation*}
            \ovvartheta_{T}(r) \ge r - A \qquad \text{for all \(r \in \Rds^{+}\)},
        \end{equation*}
        hence, for any curve \(\xi \colon [t_{1}, t_{2}] \to \Gamma\),
        \begin{equation}\label{eq:minseqminact2}
            \int_{t_{1}}^{t_{2}} L \left( \xi, \dot{\xi}, \tau \right) \dl \tau \ge \int_{t_{1}}^{t_{2}} \ovvartheta_{T} \left( \left\lvert \dot{\xi}(\tau) \right\rvert \right) \dl \tau \ge \int_{t_{1}}^{t_{2}} \left\lvert \dot{\xi}(\tau) \right\rvert \dl \tau - (t_{2} - t_{1}) A.
        \end{equation}
        We can assume that for each curve \(\xi\) of a minimizing sequence
        \begin{equation*}
            \int_{t_{1}}^{t_{2}} L \left( \xi, \dot{\xi}, \tau \right) \dl \tau \le \int_{t_{1}}^{t_{2}} L \left( \zeta, \dot{\zeta}, \tau \right) \dl \tau,
        \end{equation*}
        consequently, setting \(M \coloneqq A + \ovTheta_{T}(C)\), \zcref{eq:minseqminact1,eq:minseqminact2} yield
        \begin{equation}\label{eq:minseqminact4}
            \int_{t_{1}}^{t_{2}} \left\lvert \dot{\xi}(\tau) \right\rvert \dl \tau \le (t_{2} - t_{1}) M.
        \end{equation}
        The claim will then follow once we show the existence of a constant \(\kappa\) such that, for any given curve \(\xi \colon [t_{1}, t_{2}] \to \Gamma\) satisfying~\zcref{eq:minseqminact4}, there is a \(\kappa\)-Lipschitz continuous curve \(\ovxi \colon [t_{1}, t_{2}] \to \Gamma\) with the same support and such that
        \begin{equation}\label{eq:minseqminact5}
            \int_{t_{1}}^{t_{2}} L \left( \ovxi, \dot{\ovxi}, \tau \right) \dl \tau \le \int_{t_{1}}^{t_{2}} L \left( \xi, \dot{\xi}, \tau \right) \dl \tau.
        \end{equation}
        We break the proof in six steps.
        \begin{steproof}
            \item \emph{Choice of \(\kappa\) and definitions of \(\Lambda\), \(\Xi\) and \(\Upsilon\).} \\
            Taking into account \zcref{dualbound}, we select a constant \(\kappa\) such that
            \begin{equation}\label{eq:minseqminact6}
                \kappa > \frac{4MT}{\ovepsilon_{T}} \qquad \text{and} \qquad \Lambda + \Xi \le \Upsilon(\kappa),
            \end{equation}
            where
            \begin{gather*}
                \Lambda \coloneqq 2 \left( \ovalpha_{T} M + \ovbeta_{T} \right) T, \\
                \Xi \coloneqq \sup \left\{ \Theta^{*}_{T}(\lvert p \rvert) : p \in \difcp[-]{L(x, q, t)}{q}, \, (x, q) \in \Trm \Gamma, \, t \in [0, T], \, \lvert q \rvert \le 4M \right\}
            \end{gather*}
            and
            \begin{equation*}
                \Upsilon(\kappa) \coloneqq \inf \left\{ \vartheta^{*}_{T}(\lvert p \rvert) : p \in \difcp[-]{L(x, q, t)}{q}, \, (x, q) \in \Trm \Gamma, \, t \in [0, T], \, \lvert q \rvert > \kappa \right\}.
            \end{equation*}
            \item \emph{Definitions of \(E_{\kappa}\), \(F_{\kappa}\) and \(\varepsilon_{\kappa}\).} \\
            We define
            \begin{equation*}
                E_{\kappa} \coloneqq \left\{ t \in [t_{1}, t_{2}] : \left\lvert \dot{\xi}(t) \right\rvert > \kappa \right\}
            \end{equation*}
            and
            \begin{equation*}
                \varepsilon_{\kappa} \coloneqq \int_{E_{\kappa}} \left( \frac{1}{\kappa} \left\lvert \dot{\xi}(\tau) \right\rvert - 1 \right) \dl \tau \ge 0.
            \end{equation*}
            Notice that by~\zcref{eq:minseqminact4,eq:minseqminact6}
            \begin{equation*}
                \varepsilon_{\kappa} \le \frac{1}{\kappa} \int_{t_{1}}^{t_{2}} \left\lvert \dot{\xi}(\tau) \right\rvert \dl \tau \le \frac{(t_{2} - t_{1}) M} \kappa < \frac{(t_{2} - t_{1}) \ovepsilon_{T}}{4T},
            \end{equation*}
            therefore
            \begin{equation}\label{eq:minseqminact7}
                \varepsilon_{\kappa} < \frac{1}{4} \min \{\ovepsilon_{T}, t_{2} - t_{1}\}.
            \end{equation}
            Furthermore, it follows from~\zcref{eq:minseqminact4} that
            \begin{equation*}
                \left\lvert \left\{ t \in [t_{1}, t_{2}] : \left\lvert \dot{\xi}(t) \right\rvert \le 2M \right\} \right\rvert \ge \frac{t_{2} - t_{1}}{2};
            \end{equation*}
            we can thus choose a set \(F_{\kappa} \subseteq \left\{ t \in [t_{1}, t_{2}] : \left\lvert \dot{\xi}(t) \right\rvert \le 2M \right\}\) such that
            \begin{equation*}
                \lvert F_{\kappa} \rvert = 2 \varepsilon_{\kappa} < \frac{t_{2} - t_{1}}{2}.
            \end{equation*}
            We know from~\zcref{eq:minseqminact6} that \(\kappa > 2M\), thereby \(E_{\kappa}\) and \(F_{\kappa}\) are disjointed.
            \item \emph{Definitions of \(\ovxi\) and \(\varphi\).} \\
            We set the absolutely continuous function \(\varphi \colon [t_{1}, t_{2}] \to \Rds\) defined by
            \begin{equation*}
                \varphi(t_{1}) = t_{1} \qquad \text{and} \qquad \dot{\varphi}(t) =
                \begin{dcases}
                    \frac{1}{\kappa} \left\lvert \dot{\xi}(t) \right\rvert & \text{if \(t \in E_{\kappa}\)}, \\
                    \frac{1}{2} & \text{if \(t \in F_{\kappa}\)}, \\
                    1 & \text{otherwise}.
                \end{dcases}
            \end{equation*}
            The map \(\varphi\) is increasing and
            \begin{align*}
                \varphi(t_{2}) - \varphi(t_{1}) & = \int_{t_{1}}^{t_{2}} \dot{\varphi}(\tau) \dl \tau = \int_{E_{\kappa}} \frac{1}{\kappa} \left\lvert \dot{\xi}(\tau) \right\rvert \dl \tau + \frac{1}{2} \lvert F_{\kappa} \rvert + t_{2} - t_{1} - \lvert E_{\kappa} \rvert - \lvert F_{\kappa} \rvert \\
                & = \int_{E_{\kappa}} \left( \frac{1}{\kappa} \left\lvert \dot{\xi}(\tau) \right\rvert - 1 \right) \dl \tau - \frac{1}{2} \lvert F_{\kappa} \rvert + t_{2} - t_{1} = \varepsilon_{\kappa} - \varepsilon_{\kappa} + t_{2} - t_{1} = t_{2} - t_{1},
            \end{align*}
            therefore \(\varphi\) is a bijective map from \([t_{1}, t_{2}]\) into itself and admits an increasing absolutely continuous inverse, denoted by \(\psi\). By virtue of~\cite[Corollary 4]{SerrinVarberg69}, \(\ovxi \coloneqq \xi \circ \psi\) is \(\kappa\)-Lipschitz continuous:
            \begin{equation*}
                \dot{\ovxi}(t) = \frac{\dot{\xi}(\psi(t))}{\dot{\varphi}(\psi(t))} \le \kappa.
            \end{equation*}
            \item \emph{We have that}
            \begin{equation}\label{eq:minseqminact8}
                \lvert \varphi(t) - t \rvert \le 2 \varepsilon_{\kappa} < \ovepsilon_{T} \qquad \text{\emph{for any \(t \in [t_{1}, t_{2}]\)}}.
            \end{equation}
            Indeed, since \(\varphi(t_{1}) = t_{1}\), \zcref{eq:minseqminact7} yields that for any \(t \in [t_{1}, t_{2}]\)
            \begin{equation*}
                \lvert \varphi(t) - t \rvert \le \int_{t_{1}}^{t_{2}} \lvert \dot{\varphi}(\tau) - 1 \rvert \dl \tau = \int_{E_{\kappa}} \left( \frac{1}{\kappa} \left\lvert \dot{\xi}(\tau) \right\rvert - 1 \right) \dl \tau + \int_{F_{\kappa}} \left( 1 - \frac{1}{2} \right) \dl \tau = 2 \varepsilon_{\kappa} < \ovepsilon_{T}.
            \end{equation*}
            \item \emph{We prove that}
            \begin{equation}\label{eq:minseqminact9}
                \int_{t_{1}}^{t_{2}} L \left( \ovxi, \dot{\ovxi}, \tau \right) \dl \tau \le \int_{t_{1}}^{t_{2}} L \left( \xi(\tau), \dot{\xi}(\tau), \varphi(\tau) \right) \dl \tau + \varepsilon_{\kappa}(\Xi - \Upsilon(\kappa)).
            \end{equation}
            We use the change of variables \(\tau = \varphi(r)\) to obtain
            \begin{equation}\label{eq:minseqminact10}
                \begin{aligned}
                    \int_{t_{1}}^{t_{2}} L \left( \ovxi, \dot{\ovxi}, \tau \right) \dl \tau & = \int_{t_{1}}^{t_{2}} L \left( \xi(r), \frac{\dot{\xi}(r)}{\dot{\varphi}(r)}, \varphi(r) \right) \dot{\varphi}(r) \dl r \\
                    & = \int_{E_{\kappa}} L \left( \xi(r), \kappa \frac{\dot{\xi}(r)}{\left\lvert \dot{\xi}(r) \right\rvert}, \varphi(r) \right) \frac{\left\lvert \dot{\xi}(r) \right\rvert}{\kappa} \dl r + \frac{1}{2} \int_{F_{\kappa}} L \left( \xi(r), 2 \dot{\xi}(r), \varphi(r) \right) \dl r \\
                    & \mathrel{\hphantom{=}} + \int_{[t_{1}, t_{2}] \setminus(E_{\kappa} \cup F_{\kappa})} L \left( \xi(r), \dot{\xi}(r), \varphi(r) \right) \dl r.
                \end{aligned}
            \end{equation}
            Notice that if \(p \in \difcp[-]{L \left( x, \dfrac{q}{a}, t \right)}{q}\) for some \(a > 0\), then
            \begin{equation*}
                (1 - a) L^{*}(x, p, t) \ge p \cdot q - L(x, q, t) - a \left( p \cdot \frac{q}{a} - L \left( x, \frac{q}{a}, t \right) \right) \ge a L \left( x, \frac{q}{a}, t \right) - L(x, q, t).
            \end{equation*}
            Accordingly, taking into account~\zcref{eq:boundgLagdual},
            \begin{equation*}
                L \left( \xi(t), \kappa \frac{\dot{\xi}(t)}{\left\lvert \dot{\xi}(t) \right\rvert}, \varphi(t) \right) \frac{\left\lvert \dot{\xi}(t) \right\rvert}{\kappa} - L \left( \xi(t), \dot{\xi}(t), \varphi(t) \right) \le \left( 1 - \frac{\left\lvert \dot{\xi}(t) \right\rvert}{\kappa} \right) \Upsilon(\kappa),
            \end{equation*}
            which in turn yields
            \begin{equation}\label{eq:minseqminact11}
                \int_{E_{\kappa}} L \left( \xi(r), \kappa \frac{\dot{\xi}(r)}{\left\lvert \dot{\xi}(r) \right\rvert}, \varphi(r) \right) \frac{\left\lvert \dot{\xi}(r) \right\rvert}{\kappa} \dl r \le \int_{E_{\kappa}} L \left( \xi(r), \dot{\xi}(r), \varphi(r) \right) \dl r - \varepsilon_{\kappa} \Upsilon(\kappa).
            \end{equation}
            Similarly, we get
            \begin{equation}\label{eq:minseqminact12}
                \frac{1}{2} \int_{F_{\kappa}} L \left( \xi(r), 2 \dot{\xi}(r), \varphi(r) \right) \dl r \le \int_{F_{\kappa}} L \left( \xi(r), \dot{\xi}(r), \varphi(r) \right) \dl r + \varepsilon_{\kappa} \Xi.
            \end{equation}
            Combining~\zcref{eq:minseqminact12,eq:minseqminact10,eq:minseqminact11} we get~\zcref{eq:minseqminact9}.
            \item \emph{Conclusion.} \\
            Taking into account~\zcref{eq:minseqminact8,gLagprop}, we have that
            \begin{equation*}
                L \left( \xi(t), \dot{\xi}(t), \varphi(t) \right) \le L \left( \xi, \dot{\xi}, t \right) + 2 \varepsilon_{\kappa} \left( \ovalpha_{T} \left\lvert \dot{\xi}(t) \right\rvert + \ovbeta_{T} \right),
            \end{equation*}
            thereby, thanks to~\zcref{eq:minseqminact4},
            \begin{align*}
                \int_{t_{1}}^{t_{2}} L \left( \xi(\tau), \dot{\xi}(\tau), \varphi(\tau) \right) \dl \tau & \le \int_{t_{1}}^{t_{2}} L \left( \xi, \dot{\xi}, \tau \right) \dl \tau + 2 \varepsilon_{\kappa} \left( \ovalpha_{T} M + \ovbeta_{T} \right) (t_{2} - t_{1}) \\
                & \le \int_{t_{1}}^{t_{2}} L \left( \xi, \dot{\xi}, \tau \right) \dl \tau + \varepsilon_{\kappa} \Lambda.
            \end{align*}
            Applying this inequality to~\zcref{eq:minseqminact9} we get
            \begin{equation*}
                \int_{t_{1}}^{t_{2}} L \left( \ovxi, \dot{\ovxi}, \tau \right) \dl \tau \le \int_{t_{1}}^{t_{2}} L \left( \xi, \dot{\xi}, \tau \right) \dl \tau + \varepsilon_{\kappa}(\Lambda + \Xi - \Upsilon(\kappa)),
            \end{equation*}
            which in turn, together with~\zcref{eq:minseqminact6}, implies~\zcref{eq:minseqminact5}.\qedhere
        \end{steproof}
    \end{proof}

    We can now prove the main statement of this \zcref[noref,nocap]{sec:mincurve}.

    \mincurvelip*
    \begin{proof}
        Given \(x_{1}, t_{1}, x_{2}, t_{2}\) as in the statement, let \(\{\xi_{n}\}_{n \in \Nds}\) be a minimizing sequence of curves for \(\Phi(x_{1}, t_{1}, x_{2}, t_{2})\). \zcref[S]{minseqminact} yields the existence of a constant \(\kappa\) such that each \(\xi_n\) can be taken to be \(\kappa\)-Lipschitz continuous. Since the \(\xi_{n}\) are equiLipschitz continuous curves with fixed endpoints, they are also equibounded; Arzelà--Ascoli's Theorem then implies that, up to subsequences, they converge uniformly to a \(\kappa\)-Lipschitz continuous curve \(\zeta\) on \([t_{1}, t_{2}]\) linking \(x_{1}\) to \(x_{2}\). Furthermore, the \(\dot{\xi}_{n}\) are equibounded and thus, up to subsequences, they weakly converge to some \(\eta \colon [t_{1}, t_{2}] \to \Rds^{N}\) in \(\Lrm^{2}\). We have that \(\eta = \dot{\zeta}\) because, for each \(\varphi \in \Crm^{\infty}([t_{1}, t_{2}])\),
        \begin{equation*}
            \int_{t_{1}}^{t_{2}} \zeta(\tau) \cdot \dot{\varphi}(\tau) \dl \tau = \lim_{n \to \infty} \int_{t_{1}}^{t_{2}} \xi_{n}(\tau) \cdot \dot{\varphi}(\tau) \dl \tau = - \lim_{n \to \infty} \int_{t_{1}}^{t_{2}} \dot{\xi}_{n}(\tau) \cdot \varphi(\tau) \dl \tau = - \int_{t_{1}}^{t_{2}} \eta(\tau) \cdot \varphi(\tau) \dl \tau.
        \end{equation*}
        Finally, \zcref{actlsc} ensures that \(\zeta\) is the sought minimizer.
    \end{proof}

    The next \zcref[noref]{minactlip} is a consequence of \zcref{mincurvelip}.

    \minactlip*
    \begin{proof}
        We proceed by steps.
        \begin{steproof}
            \item\label{step:minactlip1} \emph{If \((x_{1}, t_{1}, x_{2}, t_{2}), (x_{1}, t_{1}, x_{2}, t_{2}') \in A_{C}\) with \(t_{2}' > t_{2}\), there is a constant \(\ell_{T}\) such that}
            \begin{equation}\label{eq:minactlip1}
                \left\lvert \Phi(x_{1}, t_{1}, x_{2}, t_{2}) - \Phi \left( x_{1}, t_{1}, x_{2}, t_{2}' \right) \right\rvert \le \ell_{T} \left\lvert t_{2}' - t_{2} \right\rvert.
            \end{equation}
            We first assume that \(\Phi(x_{1}, t_{1}, x_{2}, t_{2}') \ge \Phi(x_{1}, t_{1}, x_{2}, t_{2})\), then
            \begin{equation}\label{eq:minactlip2}
                \begin{aligned}
                    \left\lvert \Phi(x_{1}, t_{1}, x_{2}, t_{2}) - \Phi \left( x_{1}, t_{1}, x_{2}, t_{2}' \right) \right\rvert & = \Phi \left( x_{1}, t_{1}, x_{2}, t_{2}' \right) - \Phi(x_{1}, t_{1}, x_{2}, t_{2}) \\
                    & \le \Phi(x_{1}, t_{1}, x_{2}, t_{2}) + \Phi \left( x_{2}, t_{2}, x_{2}, t_{2}' \right) - \Phi(x_{1}, t_{1}, x_{2}, t_{2}) \\
                    & \le \int_{t_{2}}^{t_{2}'} L(x_{2}, 0, \tau) \dl \tau \le \left\lvert t_{2}' - t_{2} \right\rvert \ovTheta_{T}(0).
                \end{aligned}
            \end{equation}
            If instead \(\Phi(x_{1}, t_{1}, x_{2}, t_{2}') < \Phi(x_{1}, t_{1}, x_{2}, t_{2})\), we break the argument according to whether \(t_{2}' - t_{2} \ge t_{2} - t_{1}\) or \(t_{2}' - t_{2} < t_{2} - t_{1}\). \zcref[S]{mincurvelip} yields the existence of a constant \(\kappa_{T}\) so that \(\Phi\) admits a \(\kappa_{T}\)-Lipschitz continuous optimal curve on \(A_{C}\), thus, in the first case, we deduce from \zcref{gLagprop} that
            \begin{equation}\label{eq:minactlip3}
                \begin{aligned}
                    \left\lvert \Phi(x_{1}, t_{1}, x_{2}, t_{2}) - \Phi \left( x_{1}, t_{1}, x_{2}, t_{2}' \right) \right\rvert & = \Phi(x_{1}, t_{1}, x_{2}, t_{2}) - \Phi \left( x_{1}, t_{1}, x_{2}, t_{2}' \right) \\
                    & \le \int_{t_{1}}^{t_{2}} \ovTheta_{T}(\kappa_{T}) \dl \tau - \int_{t_{1}}^{t_{2}'} \ovvartheta_{T}(0) \dl \tau \\
                    & = (t_{2} - t_{1}) \left( \ovTheta_{T}(\kappa_{T}) - \ovvartheta_{T}(0) \right) - \left( t_{2}' - t_{2} \right) \ovvartheta_{T}(0) \\
                    & \le \left( t_{2}' - t_{2} \right) \left( \ovTheta_{T}(\kappa_{T}) - 2 \ovvartheta_{T}(0) \right).
                \end{aligned}
            \end{equation}
            Now assume that \(t_{2}' - t_{2} < t_{2} - t_{1}\) and let \(\zeta\) be a \(\kappa_{T}\)-Lipschitz continuous optimal curve for \(\Phi(x_{1}, t_{1}, x_{2}, t_{2}')\). We further define \(\ovt \coloneqq 2t_{2} - t_{2}'\) and
            \begin{equation*}
                \xi(t) \coloneqq
                \begin{dcases}
                    \zeta(t) & \text{if \(t \in \left[ t_{1}, \ovt \right]\)}, \\
                    \zeta \left( 2t - \ovt \right) & \text{if \(t \in \left( \ovt, t_{2} \right]\)},
                \end{dcases}
            \end{equation*}
            which is a \(2 \kappa_{T}\)-Lipschitz continuous curve. Consequently,
            \begin{multline}\label{eq:minactlip4}
                \left\lvert \Phi(x_{1}, t_{1}, x_{2}, t_{2}) - \Phi \left( x_{1}, t_{1}, x_{2}, t_{2}' \right) \right\rvert \\
                \begin{aligned}
                    & = \Phi(x_{1}, t_{1}, x_{2}, t_{2}) - \Phi \left( x_{1}, t_{1}, x_{2}, t_{2}' \right) \le \int_{\ovt}^{t_{2}} L \left( \xi, \dot{\xi}, \tau \right) \dl \tau - \int_{\ovt}^{t_{2}'} L \left( \zeta, \dot{\zeta}, \tau \right) \dl \tau \\
                    & \le \int_{\ovt}^{t_{2}} \ovTheta_{T}(2 \kappa_{T}) \dl \tau - \int_{\ovt}^{t_{2}'} \ovvartheta_{T}(0) \dl \tau = \left( t_{2} - \ovt \right) \ovTheta_{T}(2 \kappa_{T}) - \left( t_{2}' - \ovt \right) \ovvartheta_{T}(0) \\
                    & = \left( t_{2}' - t_{2} \right) \left( \ovTheta_{T}(2 \kappa_{T}) - 2 \ovvartheta_{T}(0) \right).
                \end{aligned}
            \end{multline}
            Setting \(\ell_{T} \coloneqq \max \left\{ \ovTheta_{T}(0), \ovTheta_{T}(2 \kappa_{T}) - 2 \ovvartheta_{T}(0) \right\}\), \zcref{eq:minactlip2,eq:minactlip3,eq:minactlip4} prove \zcref{eq:minactlip1}.
            \item \emph{The same arguments used in \zcref{step:minactlip1} show that, for any \((x_{1}, t_{1}, x_{2}, t_{2}), (x_{1}, t_{1}', x_{2}, t_{2}') \in A_{C}\),}
            \begin{equation}\label{eq:minactlip5}
                \left\lvert \Phi(x_{1}, t_{1}, x_{2}, t_{2}) - \Phi \left( x_{1}, t_{1}', x_{2}, t_{2}' \right) \right\rvert \le \ell_{T} \left( \left\lvert t_{2}' - t_{2} \right\rvert + \left\lvert t_{1}' - t_{1} \right\rvert \right).
            \end{equation}
            \item \emph{There is an \(\ovell \in \Rds\) such that, for any \((x_{1}, t_{1}, x_{2}, t_{2}), (x_{1}', t_{1}, x_{2}', t_{2}) \in A_{C}\),}
            \begin{equation}\label{eq:minactlip6}
                \left\lvert \Phi(x_{1}, t_{1}, x_{2}, t_{2}) - \Phi \left( x_{1}', t_{1}, x_{2}', t_{2} \right) \right\rvert \le \ovell \left( d_{\Gamma} \left( x_{2}, x_{2}' \right) + d_{\Gamma} \left( x_{1}, x_{1}' \right) \right).
            \end{equation}
            \begin{steproof}
                \item\label{step:minactlip3a} \emph{\zcref[S]{eq:minactlip6} holds true whenever}
                \begin{equation}\label{eq:minactlip7}
                    d_{\Gamma} \left( x_{1}, x_{1}' \right) < C \min \{T, \ovepsilon_{2T}\} \qquad \text{\emph{and}} \qquad d_{\Gamma} \left( x_{2}, x_{2}' \right) \le CT.
                \end{equation}
                We define
                \begin{equation}\label{eq:minactlip8}
                    T_{1} \coloneqq \frac{d_{\Gamma} \left( x_{1}, x_{1}' \right)}{C}, \qquad T_{2} \coloneqq \frac{d_{\Gamma} \left( x_{2}, x_{2}' \right)}{C}.
                \end{equation}
                Clearly, \(d_{\Gamma}(x'_{1}, x'_{2}) \le C (T_{1} + T_{2} + t_{2} - t_{1})\) and, according to~\zcref{eq:minactlip7}, \(t_{2} + T_{1} + T_{2} \le 3T\); therefore~\zcref{eq:minactlip5} yields
                \begin{multline}\label{eq:minactlip9}
                    \Phi \left( x_{1}', t_{1}, x_{2}', t_{2} \right) - \Phi(x_{1}, t_{1}, x_{2}, t_{2}) \\
                    \begin{aligned}
                        & \le \Phi \left( x_{1}', t_{1}, x_{2}', t_{2} \right) - \Phi \left( x'_{1}, t_{1}, x'_{2}, t_{2} + T_{1} + T_{2} \right) \\
                        & \mathrel{\hphantom{\le}} + \Phi \left( x_{1}', t_{1}, x_{2}', t_{2} + T_{1} + T_{2} \right) - \Phi(x_{1}, t_{1}, x_{2}, t_{2}) \\
                        & \le \Phi \left( x_{1}', t_{1}, x_{2}', t_{2} + T_{1} + T_{2} \right) - \Phi(x_{1}, t_{1}, x_{2}, t_{2}) + \ell_{3T}(T_{1} + T_{2}) \\
                        & = \Phi \left( x_{1}', t_{1}, x_{2}', t_{2} + T_{1} + T_{2} \right) - \Phi(x_{1}, t_{1}, x_{2}, t_{2}) + \frac{\ell_{3T}}{C} \left( d_{\Gamma} \left( x_{1}, x_{1}' \right) + d_{\Gamma} \left( x_{2}, x_{2}' \right) \right).
                    \end{aligned}
                \end{multline}
                Furthermore, in view of~\zcref[tlastsep={ and }]{eq:minactlip8,mincurvelip,gLagprop},
                \begin{multline}\label{eq:minactlip10}
                    \Phi \left( x_{1}', t_{1}, x_{2}', t_{2} + T_{1} + T_{2} \right) \\
                    \begin{aligned}
                        & \le \Phi \left( x_{1}', t_{1}, x_{1}, t_{1} + T_{1} \right) + \Phi(x_{1}, t_{1} + T_{1}, x_{2}, t_{2} + T_{1}) + \Phi \left( x_{2}, t_{2} + T_{1}, x_{2}', t_{2} + T_{1} + T_{2} \right) \\
                        & \le \Phi(x_{1}, t_{1} + T_{1}, x_{2}, t_{2} + T_{1}) + (T_{1} + T_{2}) \ovTheta_{3T}(\kappa_{3T}) \\
                        & \le \Phi(x_{1}, t_{1} + T_{1}, x_{2}, t_{2} + T_{1}) + \frac{\ovTheta_{3T}(\kappa_{3T})}{C} \left( d_{\Gamma} \left( x_{1}, x_{1}' \right) + d_{\Gamma} \left( x_{2}, x_{2}' \right) \right).
                    \end{aligned}
                \end{multline}
                To conclude, let \(\zeta\) be the optimal curve for \(\Phi(x_{1}, t_{1}, x_{2}, t_{2})\); then
                \begin{multline}\label{eq:minactlip11}
                    \Phi(x_{1}, t_{1} + T_{1}, x_{2}, t_{2} + T_{1}) - \Phi(x_{1}, t_{1}, x_{2}, t_{2}) \\
                    \begin{aligned}
                        & \le \int_{t_{1} + T_{1}}^{t_{2} + T_{1}} L \left( \zeta(\tau - T_{1}), \dot{\zeta}(\tau - T_{1}), \tau \right) \dl \tau - \int_{t_{1}}^{t_{2}} L \left( \zeta, \dot{\zeta}, \tau \right) \dl \tau \\
                        & = \int_{t_{1}}^{t_{2}} L \left( \zeta, \dot{\zeta}, r + T_{1} \right) \dl r - \int_{t_{1}}^{t_{2}} L \left( \zeta, \dot{\zeta}, \tau \right) \dl \tau,
                    \end{aligned}
                \end{multline}
                where the last identity is due to the change of variable \(r = \tau - T_{1}\). Since \(T_{1} < \ovepsilon_{2T}\) we further have
                \begin{equation}\label{eq:minactlip12}
                    \begin{aligned}
                        \int_{t_{1}}^{t_{2}} L \left( \zeta, \dot{\zeta}, r + T_{1} \right) \dl r - \int_{t_{1}}^{t_{2}} L \left( \zeta, \dot{\zeta}, \tau \right) \dl \tau & \le \left( \ovalpha_{2T} \int_{t_{1}}^{t_{2}} \left\lvert \dot{\zeta}(\tau) \right\rvert \dl \tau + \ovbeta_{2T}(t_{2} - t_{1}) \right) T_{1} \\
                        & \le T \left( \ovalpha_{2T} \kappa_{T} + \ovbeta_{2T} \right) \frac{d_{\Gamma} \left( x_{1}, x_{1}' \right)}{C}.
                    \end{aligned}
                \end{equation}
                Combining \zcref{eq:minactlip12,eq:minactlip11,eq:minactlip9,eq:minactlip10} and interchanging the roles of \((x_{1}, t_{1}, x_{2}, t_{2})\) and \((x_{1}', t_{1}, x_{2}', t_{2})\) completes the proof of this step.
                \item\label{step:minactlip3b} \emph{If \(x_{1}' = x_{1}\), \zcref{eq:minactlip6} holds true.} \\
                \zcref[S]{step:minactlip3a} proves the claim when \(d_{\Gamma}(x_{2}, x_{2}') \le CT\), thus we assume that
                \begin{equation*}
                    d_{\Gamma} \left( x_{2}, x_{2}' \right) > CT \ge C(t_{2} - t_{1}).
                \end{equation*}
                Exploiting \zcref{gLagprop,mincurvelip}, we obtain that
                \begin{equation*}
                    \left\lvert \Phi(x_{1}, t_{1}, x_{2}, t_{2}) - \Phi \left( x_{1}, t_{1}, x_{2}', t_{2} \right) \right\rvert \le (t_{2} - t_{1}) \left( \ovTheta_{T}(\kappa_{T}) - \ovvartheta_{T}(0) \right) \le \frac{\ovTheta_{T}(\kappa_{T}) - \ovvartheta_{T}(0)}{C} d_{\Gamma} \left( x_{2}, x_{2}' \right),
                \end{equation*}
                which concludes this step.
                \item \emph{Proof of~\zcref{eq:minactlip6}.} \\
                We fix two finite sequences \(\{z_{1, i}\}_{i = 1}^{m}\) and \(\{z_{2, i}\}_{i = 1}^{m}\) such that
                \begin{mylist}
                    \item \(z_{1, 1} = x_{1}\), \(z_{1, m} = x_{1}'\) and \(z_{2, 1} = x_{2}\);
                    \item \((z_{1, i}, t_{1}, z_{2, i}, t_{2}) \in A_{C}\) for all \(i \in \{1, \dotsc, m\}\);
                    \item \(d_{\Gamma}(z_{2, i}, z_{2, i + 1}) \le d_{\Gamma}(z_{1, i}, z_{1, i + 1}) < C \min \{T, \epsilon_{2T}\}\) for any \(i \in \{1, \dotsc, m - 1\}\);
                    \item \(\sum\limits_{i = 1}^{m - 1} d_{\Gamma}(z_{2, i}, z_{2, i + 1}) \le \sum\limits_{i = 1}^{m - 1} d_{\Gamma}(z_{1, i}, z_{1, i + 1}) = d_{\Gamma}(x_{1}, x_{1}')\).
                \end{mylist}
                Notice that, in general, \(z_{2, m} \ne x_{2}'\). It follows from these conditions and \zcref{step:minactlip3a,step:minactlip3b} that there is a constant \(\ell'\) such that
                \begin{align*}
                    \left\lvert \Phi(x_{1}, t_{1}, x_{2}, t_{2}) - \Phi \left( x_{1}', t_{1}, x_{2}', t_{2} \right) \right\rvert & \le \sum_{i = 1}^{m - 1} \lvert \Phi(z_{1, i}, t_{1}, z_{2, i}, t_{2}) - \Phi(z_{1, i + 1}, t_{1}, z_{2, i + 1}, t_{2}) \rvert \\
                    & \mathrel{\hphantom{\le}} + \left\lvert \Phi \left( x_{1}', t_{1}, z_{2, m}, t_{2} \right) - \Phi \left( x_{1}', t_{1}, x_{2}', t_{2} \right) \right\rvert \\
                    & \le \ell' \sum_{i = 1}^{m - 1}(d_{\Gamma}(z_{2, i}, z_{2, i + 1}) + d_{\Gamma}(z_{1, i}, z_{1, i + 1})) + \ell' d_{\Gamma} \left( z_{2, m}, x_{2}' \right) \\
                    & \le \ell' \left( 2d_{\Gamma} \left( x_{1}, x_{1}' \right) + d_{\Gamma} \left( x_{2}, x_{2}' \right) \right),
                \end{align*}
                where the last inequality is a consequence of
                \begin{equation*}
                    d_{\Gamma} \left( z_{2, m}, x_{2}' \right) \le d_{\Gamma} \left( x_{2}, x_{2}' \right) + \sum_{i = 1}^{m - 1} d_{\Gamma}(z_{2, i}, z_{2, i + 1}) \le d_{\Gamma} \left( x_{2}, x_{2}' \right) + d_{\Gamma} \left( x_{1}, x_{1}' \right).
                \end{equation*}
            \end{steproof}
            \item \emph{Conclusion.} \\
            Let \((x_{1}, t_{1}, x_{2}, t_{2}), (x_{1}', t_{1}', x_{2}', t_{2}') \in A_{C}\), \(\ovt_{1} \coloneqq \min \{t_{1}, t_{1}'\}\) and \(\ovt_{2} \coloneqq \min \{t_{2}, t_{2}'\}\), then~\zcref{eq:minactlip5,eq:minactlip6} show that there is an \(\ell \in \Rds\) so that
            \begin{align*}
                \left\lvert \Phi(x_{1}, t_{1}, x_{2}, t_{2}) - \Phi \left( x_{1}', t_{1}', x_{2}', t_{2}' \right) \right\rvert & \le \left\lvert \Phi(x_{1}, t_{1}, x_{2}, t_{2}) - \Phi \left( x_{1}, \ovt_{1}, x_{2}, \ovt_{2} \right) \right\rvert \\
                & \mathrel{\hphantom{\le}} + \left\lvert \Phi \left( x_{1}, \ovt_{1}, x_{2}, \ovt_{2} \right) - \Phi \left( x_{1}', \ovt_{1}, x_{2}', \ovt_{2} \right) \right\rvert \\
                & \mathrel{\hphantom{\le}} + \left\lvert \Phi \left( x_{1}', \ovt_{1}, x_{2}', \ovt_{2} \right) - \Phi \left( x_{1}', t_{1}', x_{2}', t_{2}' \right) \right\rvert \\
                & \le \ell \left( \left\lvert t_{1} - t_{1}' \right\rvert + d_{\Gamma} \left( x_{1}, x_{1}' \right) + d_{\Gamma} \left( x_{2}, x_{2}' \right) + \left\lvert t_{2} - t_{2}' \right\rvert \right).
            \end{align*}
        \end{steproof}
        This concludes the proof.
    \end{proof}

    \begin{cor}\label{minactlsc}
        The map \(\Phi\) is lower semicontinuous.
    \end{cor}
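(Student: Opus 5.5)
We have to show that \(\Phi\) is lower semicontinuous on \(\Dcal\), with \(\Phi = \infty\) on \(t_{1} = t_{2}\), \(x_{1} \ne x_{2}\). Fix a sequence \((x_{1}^{n}, t_{1}^{n}, x_{2}^{n}, t_{2}^{n}) \to (x_{1}, t_{1}, x_{2}, t_{2})\) in \(\Dcal\), and assume without loss of generality that \(\liminf \Phi(x_{1}^{n}, t_{1}^{n}, x_{2}^{n}, t_{2}^{n}) = \lim \Phi(\cdot) =: \Lambda < \infty\); otherwise there is nothing to prove. All times lie in some \([0, T]\).

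\textbf{Step 1: uniform speed bound.} For each \(n\) pick a curve \(\xi_{n}\) whose action is within \(1/n\) of \(\Phi\). Its action is bounded above by \(\Lambda + 1\). Applying the estimate~\zcref{eq:minseqminact2} with a superlinear minorant \(\ovvartheta_{T}(r) \ge Kr - A_{K}\), for any \(K\), gives
\begin{equation*}
K\, d_{\Gamma}(x_{1}^{n}, x_{2}^{n}) \le \Lambda + 1 + A_{K}(t_{2}^{n} - t_{1}^{n}).
\end{equation*}

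\textbf{Step 2: the degenerate limit.} Suppose \(t_{1} = t_{2}\). Letting \(n \to \infty\) and then \(K \to \infty\) in the bound of Step 1 forces \(x_{1} = x_{2}\). In that case \(\Phi(x_{1}, t_{1}, x_{1}, t_{1}) = 0\), the integral over an empty interval. Moreover \(\Phi \ge \ovvartheta_{T}(0)(t_{2}^{n} - t_{1}^{n}) \to 0\), so \(\liminf \Phi \ge 0\) and lower semicontinuity holds there.

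\textbf{Step 3: the nondegenerate limit.} Now take \(t_{2} > t_{1}\), so \(t_{2}^{n} - t_{1}^{n} \ge \delta > 0\) eventually. Step 1 gives \(d_{\Gamma}(x_{1}^{n}, x_{2}^{n}) \le C(t_{2}^{n} - t_{1}^{n})\) for a fixed \(C\), possibly after enlarging \(C\) using the \(\delta\) lower bound. By \zcref{mincurvelip} we may then take \(\xi_{n}\) optimal and \(\kappa\)-Lipschitz with \(\kappa\) independent of \(n\).

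Reparametrize affinely onto the fixed interval \([t_{1}, t_{2}]\) by setting \(\zeta_{n}(\tau) = \xi_{n}(t_{1}^{n} + (\tau - t_{1})\lambda_{n})\), where \(\lambda_{n} = (t_{2}^{n} - t_{1}^{n})/(t_{2} - t_{1}) \to 1\). The \(\zeta_{n}\) are equi-Lipschitz, and their endpoints converge to \(x_{1}\) and \(x_{2}\). By Arzel\`a--Ascoli and weak \(\Lrm^{2}\) compactness of the derivatives, a subsequence converges uniformly to some \(\zeta\) with \(\dot{\zeta}_{n} \rightharpoonup \dot{\zeta}\), exactly as in the proof of \zcref{mincurvelip}. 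The limit \(\zeta\) joins \(x_{1}\) to \(x_{2}\) on \([t_{1}, t_{2}]\).

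\textbf{Step 4: comparing actions, the main obstacle.} We must compare the action of \(\xi_{n}\) with the action of \(\zeta_{n}\) evaluated with the unshifted time \(\tau\). A change of variables gives
\begin{equation*}
\int_{t_{1}^{n}}^{t_{2}^{n}} L(\xi_{n}, \dot{\xi}_{n}, \sigma)\, \dl \sigma = \lambda_{n} \int_{t_{1}}^{t_{2}} L\bigl(\zeta_{n}, \dot{\zeta}_{n}/\lambda_{n}, t_{1}^{n} + (\tau - t_{1})\lambda_{n}\bigr)\, \dl \tau .
\end{equation*}
Two errors must be controlled:
\begin{mylist}
\item The time shift is at most \(|t_{1}^{n} - t_{1}| + T|\lambda_{n} - 1|\), which tends to \(0\). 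Since speeds are at most \(\kappa\), \zcref{gLagprop} bounds the resulting change in \(L\) by \((\ovalpha_{T}\kappa + \ovbeta_{T})\) times this shift, so the error vanishes.
\item The speed rescaling replaces \(\dot{\zeta}_{n}/\lambda_{n}\) by \(\dot{\zeta}_{n}\). We handle it with convexity, as in Step~5 of \zcref{minseqminact}: \(|aL(x, q/a, t) - L(x, q, t)| \le |1 - a| \sup |L^{*}(x, p, t)|\), where \(p\) ranges over subgradients at speeds bounded by \(2\kappa\). That supremum is finite by \zcref{dualbound} and~\zcref{eq:boundgLagdual}, so this error is \(O(|1 - \lambda_{n}|) \to 0\). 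A cruder alternative is to bound \(L\) directly by \(\ovTheta_{T}\) using the uniform bound on speeds.
\end{mylist}
With both errors removed, \(\Lambda = \liminf_{n} \int_{t_{1}}^{t_{2}} L(\zeta_{n}, \dot{\zeta}_{n}, \tau)\, \dl \tau\). By \zcref{actlsc} this is at least \(\int_{t_{1}}^{t_{2}} L(\zeta, \dot{\zeta}, \tau)\, \dl \tau \ge \Phi(x_{1}, t_{1}, x_{2}, t_{2})\), which concludes the proof.
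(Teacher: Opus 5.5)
Your proof is correct. For the degenerate case $t_1 = t_2$ it is essentially the paper's argument. Coercivity of $L$, through a minorant $\ovvartheta_T(r) \ge Kr - A_K$ with $K$ arbitrary, forces $x_1 = x_2$ whenever the liminf is finite. Then the lower bound $(t_2^n - t_1^n)\,\ovvartheta_T(0) \to 0$ finishes it. Your use of near-optimal rather than optimal curves there is, if anything, a bit cleaner.

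You genuinely differ in the nondegenerate case $t_1 < t_2$. The paper disposes of it in one line via \zcref{minactlip}: such a point has a neighbourhood contained in some $A_C$, where $\Phi$ is Lipschitz and hence continuous. You instead run the direct method:
\begin{itemize}
\item a uniform $\kappa$ from \zcref{mincurvelip};
\item affine reparametrization onto the fixed interval;
\item Arzel\`a--Ascoli and weak $\Lrm^2$ compactness;
\item control of the time shift by the Lipschitz-type bound in \zcref{gLagprop}, and of the speed rescaling by convexity;
\item finally \zcref{actlsc}.
\end{itemize}

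Your route uses only \zcref{mincurvelip} and \zcref{actlsc}, bypassing the step-by-step estimates of \zcref{minactlip}. It is also the argument that survives when one has compactness of equi-Lipschitz minimizers but no Lipschitz bound on $\Phi$. The paper's route is much shorter, since \zcref{minactlip} is already proved, and it gives full continuity, not just lower semicontinuity, at such points.

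One small point in your Step 4 needs fixing. The inequality from Step 5 of \zcref{minseqminact} uses a subgradient $p$ at $q/a$, and it only bounds $aL(x,q/a,t) - L(x,q,t)$ from above. For a lower bound on the action of $\xi_n$ you need the reverse direction. Take a subgradient $p'$ of $L(x,\cdot,t)$ at $q$ itself, here $q = \dot{\zeta}_n$ with $\lvert q\rvert \le 2\kappa$. The same two-line computation with $L^*$ then gives $aL(x,q/a,t) - L(x,q,t) \ge (1-a)\,L^*(x,p',t)$. This is what your absolute-value claim needs, and the rest of your estimate goes through unchanged.
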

    \begin{proof}
        Let \(T > 0\) and \(\{(x_{1, n}, t_{1, n}, x_{2, n}, t_{2, n})\}_{n \in \Nds} \subset (\Gamma \times [0, T])^{2}\) be a sequence converging to a fixed \((x_{1}, t_{1}, x_{2}, t_{2})\). It is enough to show that
        \begin{equation}\label{eq:minactlsc1}
            \liminf_{n \to \infty} \Phi(x_{1, n}, t_{1, n}, x_{2, n}, t_{2, n}) \ge \Phi(x_{1}, t_{1}, x_{2}, t_{2}).
        \end{equation}
        \zcref[S]{minactlip} implies that \(\Phi\) is continuous at \((x_{1}, t_{1}, x_{2}, t_{2})\) whenever \(t_{1} < t_{2}\), thus we will assume that \(t_{1} = t_{2}\). If \(x_{1} = x_{2}\) we let \(\zeta_{n} \colon [t_{1, n}, t_{2, n}] \to \Gamma\) be a geodesic linking \(x_{1, n}\) to \(x_{2, n}\) with
        \begin{equation*}
            \left\lvert \dot{\zeta}_{n}(t) \right\lvert = \frac{d_{\Gamma}(x_{1, n}, x_{2, n})}{t_{2, n} - t_{1, n}} \qquad \text{for any \(t \in [t_{1, n}, t_{2, n}]\)},
        \end{equation*}
        then, by \zcref{gLagprop},
        \begin{equation*}
            \Phi(x_{1, n}, t_{1, n}, x_{2, n}, t_{2, n}) \ge \int_{t_{1, n}}^{t_{2, n}} \ovvartheta_{T} \left( \left\lvert \dot{\zeta}_{n}(\tau) \right\lvert \right) \dl \tau = (t_{2, n} - t_{1, n}) \ovvartheta_{T} \left( \frac{d_{\Gamma}(x_{1, n}, x_{2, n})}{t_{2, n} - t_{1, n}} \right).
        \end{equation*}
        This proves~\zcref{eq:minactlsc1} when \(x_{1} = x_{2}\) since
        \begin{equation*}
            \liminf_{n \to \infty}(t_{2, n} - t_{1, n}) \ovvartheta_{T} \left( \frac{d_{\Gamma}(x_{1, n}, x_{2, n})}{t_{2, n} - t_{1, n}} \right) \ge 0 = \Phi(t_{1}, x_{1}, t_{1}, x_{1}).
        \end{equation*}
        Now assume that \(x_{1} \ne x_{2}\). In this case
        \begin{equation*}
            \Phi(t_{1}, x_{1}, t_{1}, x_{2}) = \infty,
        \end{equation*}
        hence we have to prove that
        \begin{equation}\label{eq:minactlsc2}
            \liminf_{n \to \infty} \Phi(x_{1, n}, t_{1, n}, x_{2, n}, t_{2, n}) = \infty.
        \end{equation}
        The coercivity of \(L\) yields that for each \(k \in \Rds\) there is a constant \(b_{k}\) such that
        \begin{equation*}
            L(x, q, t) \ge k \lvert q \rvert + b_{k} \qquad \text{for all \((x, q) \in \Trm \Gamma\) and \(t \in [0, T]\)},
        \end{equation*}
        thereby, denoting with \(\xi_{n}\) an optimal curve for \(\Phi(x_{1, n}, t_{1, n}, x_{2, n}, t_{2, n})\),
        \begin{equation*}
            \Phi(x_{1, n}, t_{1, n}, x_{2, n}, t_{2, n}) \ge k \int_{t_{1, n}}^{t_{2, n}} \left\lvert \dot{\xi}_{n}(\tau) \right\lvert \dl \tau + (t_{2, n} - t_{1, n}) b_{k} \ge kd_{\Gamma}(x_{1, n}, x_{2, n}) + (t_{2, n} - t_{1, n}) b_{k}.
        \end{equation*}
        Notice that, up to subsequences, there is a \(\delta > 0\) so that \(d_{\Gamma}(x_{1, n}, x_{2, n}) \ge \delta\) for every \(n \in \Nds\); therefore
        \begin{equation*}
            \liminf_{n \to \infty} \Phi(x_{1, n}, t_{1, n}, x_{2, n}, t_{2, n}) \ge k \delta,
        \end{equation*}
        which yields~\zcref{eq:minactlsc2} because \(k\) is arbitrary.
    \end{proof}

    \begin{rem}\label{equivLagprops}
        While the results in this section are proved for the overall Lagrangian \(L\), they hold true for any map as in the statement of \zcref{gLagprop}, e.g., the local Lagrangians \(L_{\gamma}\).
    \end{rem}

    \appendix

    \section{Hamiltonians on a Bounded Interval}\label{sec:Honbounded}

    Let \(\ovH \colon [0, 1] \times \Rds^{+} \times \Rds \to \Rds\) be a Hamiltonian satisfying \zcref[comp]{condHcont,condHconv,condHcoer,condHtlip} and define the Lagrangian \(\ovL\) as its convex conjugate. In this \zcref[noref,nocap]{sec:Honbounded} we provide some technical results regarding these maps which are required for our analysis.

    \begin{lem}\label{selection}
        There is a \(\mu \in \Lrm^{\infty}_{loc}(\Rds^{+})\) such that
        \begin{equation}\label{eq:selection.1}
            \ovH(0, t, \mu(t)) = - \ovL(0, 0, t) \qquad \text{for a.e.\ \(t \in \Rds^{+}\)}.
        \end{equation}
    \end{lem}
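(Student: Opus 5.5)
The statement is a measurable selection result. The plan is to pick, for each \(t\), a minimizer of \(\mu \mapsto \ovH(0, t, \mu)\), do it in a measurable way, and then show the selection is locally bounded.

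\emph{The identity.} By definition \(\ovL(0, 0, t) = \max_{\mu \in \Rds} \{- \ovH(0, t, \mu)\} = - \min_{\mu \in \Rds} \ovH(0, t, \mu)\). Hence \eqref{eq:selection.1} holds exactly when \(\mu(t)\) lies in the argmin set
\begin{equation*}
    M(t) \coloneqq \left\{ \mu \in \Rds : \ovH(0, t, \mu) = \min_{\mu' \in \Rds} \ovH(0, t, \mu') \right\}.
\end{equation*}
This set is nonempty because \(\ovH(0, t, \cdot)\) is continuous and coercive by \zcref{condHcoer}. It is a closed interval by convexity \zcref{condHconv}.

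\emph{Local uniform bound.} Fix \(T > 0\). For \(t \in [0, T]\) and any \(\mu \in M(t)\),
\begin{equation*}
    \vartheta_{T}(\lvert \mu \rvert) \le \ovH(0, t, \mu) \le \ovH(0, t, 0) \le \Theta_{T}(0).
\end{equation*}
Since \(\vartheta_{T}\) is superlinearly coercive, this gives \(\lvert \mu \rvert \le R_{T}\) for a constant \(R_{T}\) independent of \(t \in [0, T]\). So \(M(t) \subseteq [-R_{T}, R_{T}]\) on \([0, T]\).

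\emph{Measurable choice.} I will take the explicit selection \(\mu(t) \coloneqq \min M(t)\), the left endpoint of the interval. Set \(m(t) \coloneqq \min_{\lvert \mu \rvert \le R_{T}} \ovH(0, t, \mu)\), which is continuous in \(t\) by \zcref{condHcont} and the compactness of \([-R_{T}, R_{T}]\). Then \(M(t)\) is the zero set of the continuous function \((t, \mu) \mapsto \ovH(0, t, \mu) - m(t)\) restricted to \([0, T] \times [-R_{T}, R_{T}]\). Its graph is therefore closed, so \(t \mapsto M(t)\) has closed graph with values in a compact set and is upper semicontinuous.

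I expect this step to be the only delicate point. One can check directly that \(t \mapsto \min M(t)\) is lower semicontinuous. If \(t_{n} \to t\) and \(\mu(t_{n}) \to \mu^{*}\), closedness of the graph gives \(\mu^{*} \in M(t)\), so \(\mu^{*} \ge \mu(t)\). A lower semicontinuous function is Borel measurable, hence \(\mu\) is measurable on \([0, T]\), and it is bounded there by \(R_{T}\).

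\emph{Gluing.} Doing this on \([0, T]\) for every \(T\), the choice \(\min M(t)\) does not depend on \(T\), because \(M(t)\) does not. The resulting map therefore lies in \(\Lrm^{\infty}_{loc}(\Rds^{+})\), and it satisfies \eqref{eq:selection.1} at every \(t\), not just almost every \(t\). If one prefers, the Kuratowski--Ryll-Nardzewski selection theorem applied to the closed-valued upper semicontinuous map \(M\) gives the same conclusion.
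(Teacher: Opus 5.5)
Your proof is correct, but it takes a different route from the paper. The paper works on the Lagrangian side. It looks at the subdifferential \(\partial_{\lambda} \ovL(0, 0, t)\) and notes that its support function \(\rho \mapsto \ovL_{\lambda}(0, 0, t; \rho)\) is measurable and locally bounded in \(t\). It then invokes Clarke's abstract measurable selection results (Proposition~6.29 and Corollary~6.23 in \cite{Clarke13}) to get a locally bounded measurable \(\mu(t) \in \partial_{\lambda} \ovL(0, 0, t)\) for a.e.\ \(t\). Finally, it converts this into the Hamiltonian identity through the Fenchel equality.

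By that same equality, \(\partial_{\lambda} \ovL(0, 0, t)\) is exactly your argmin set \(M(t)\). So both proofs select from the same multifunction, and you simply construct the selection by hand. Your version is elementary and self-contained:
\begin{itemize}
\item the uniform bound \(\lvert \mu \rvert \le R_{T}\) on \([0, T]\) comes directly from \textbf{(H3)};
\item joint continuity \textbf{(H1)} gives the closed graph;
\item the explicit choice \(\min M(t)\) is lower semicontinuous, hence Borel, so the identity holds at every \(t\) rather than only almost everywhere.
\end{itemize}
Convexity is not even needed for your selection, since \(M(t)\) is compact in any case. The paper's argument is shorter given the cited machinery, and it stays within the convex-duality framework used elsewhere (e.g., in the proof of the weak lower semicontinuity lemma). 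Yours avoids any appeal to abstract selection theorems, and the Kuratowski--Ryll-Nardzewski remark at the end is unnecessary.
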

    \begin{proof}
        Denoting with \(\difcp[-]{\ovL(s, \lambda, t)}{\lambda}\) the subdifferential of \(\lambda \mapsto \ovL(s, \lambda, t)\), we know from~\cite[Proposition~2.22]{Clarke13} that, for each \(s \in [0, 1]\), \(\lambda \in \Rds\), \(t \in \Rds^{+}\) and \(\rho \in \Rds\), its support function is given by
        \begin{equation*}
            \ovL_{\lambda}(s, \lambda, t; \rho) \coloneqq \inf_{h > 0} \frac{\ovL(s, \lambda + h \rho, t) - \ovL(s, \lambda, t)}{h}.
        \end{equation*}
        For any \(\rho \in \Rds\) the map \(t \mapsto \ovL_{\lambda}(0, 0, t; \rho)\) is measurable and locally bounded since it is an infimum of measurable and locally bounded functions, thus~\cite[Proposition~6.29 and Corollary~6.23]{Clarke13} yield that there exists a locally bounded measurable map \(\mu \colon \Rds^{+} \to \Rds\) such that
        \begin{equation*}
            \mu(t) \in \difcp[-]{\ovL(0, 0, t)}{\lambda} \qquad \text{for a.e.\ \(t \in \Rds^{+}\)}.
        \end{equation*}
        By standard properties of convex conjugate functions, this is equivalent to~\zcref{eq:selection.1}.
    \end{proof}

    The next results are crucial for the proof of \zcref{adcurve}.

    \begin{lem}\label{c1sol}
        Given \(t > 0\), \(\varepsilon \in (0, t)\) and \(\delta \in (0, 1)\), let
        \begin{equation}\label{eq:c1sol.1}
            a \ge \max_{\substack{s \in [0, \delta], \\ r \in [t - \varepsilon, t]}} \min_{\mu \in \Rds} \ovH(s, r, \mu).
        \end{equation}
        Then there is a Lipschitz continuous function \(W_{a} \colon [0, \delta] \to \Rds\) such that
        \begin{equation*}
            \max_{r \in [t - \varepsilon, t]} \ovH(s, r, \difcp{W_{a}(s)}{}) = a \qquad \text{for all \(s \in (0, \delta)\)}.
        \end{equation*}
    \end{lem}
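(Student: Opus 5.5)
We will construct \(W_{a}\) by integrating a measurable selection of the appropriate root of the equation \(\max_{r \in [t - \varepsilon, t]} \ovH(s, r, \mu) = a\). Set
\begin{equation*}
    K(s, \mu) \coloneqq \max_{r \in [t - \varepsilon, t]} \ovH(s, r, \mu) \qquad \text{for \((s, \mu) \in [0, \delta] \times \Rds\)}.
\end{equation*}
By \zcref{condHcont} and compactness of \([t - \varepsilon, t]\), \(K\) is continuous. It is convex in \(\mu\) as a maximum of convex functions (\zcref{condHconv}). It is superlinearly coercive, uniformly in \(s\), because \(K(s, \mu) \ge \vartheta_{t}(\lvert \mu \rvert)\) by \zcref{condHcoer}. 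The first step is to show that, for every \(s \in [0, \delta]\), the sublevel set \(\{\mu : K(s, \mu) \le a\}\) is a nonempty compact interval \([\mu^{-}(s), \mu^{+}(s)]\). On this interval, \(\mu^{+}(s)\) satisfies \(K(s, \mu^{+}(s)) = a\).

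The nonemptiness is the delicate point, and it is where~\zcref{eq:c1sol.1} must be used. The hypothesis only gives \(\min_{\mu} \ovH(s, r, \mu) \le a\) for each fixed \(r\), while we need \(\min_{\mu} \max_{r} \ovH(s, r, \mu) \le a\); the minimizers for different \(r\) may differ. The approach I would try first is to avoid the issue by weakening the target to a one-sided bound plus equality where possible. If \(\min_{\mu} K(s, \mu) \le a\), pick \(\mu^{+}(s)\) as above; otherwise take \(\mu(s)\) to be a minimizer of \(K(s, \cdot)\). If the subsequent use in \zcref{nozenoaux} genuinely needs equality, I would instead try a minimax argument. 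Since \(\ovH(s, \cdot, \mu)\) is continuous in \(r\), convex in \(\mu\) and coercive, Sion's theorem applied to the concave envelope over probability measures on \([t - \varepsilon, t]\) would give
\begin{equation*}
    \min_{\mu} \max_{r} \ovH(s, r, \mu) = \max_{\text{prob.\ } m} \min_{\mu} \int \ovH(s, r, \mu) \dl m(r).
\end{equation*}
I would then exploit the smallness of the time window together with \zcref{condHtlip} to control the right-hand side by \(a\). I expect this step to be the main obstacle, and I would check whether the statement implicitly intends \(a\) large relative to that quantity.

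Assuming \(\min_{\mu} K(s, \mu) \le a\) for all \(s\), coercivity uniform in \(s\) and \(a\) fixed give a bound \(\lvert \mu^{+}(s) \rvert \le R\) independent of \(s\). Next comes measurability. The map \(s \mapsto \mu^{+}(s) = \max \{\mu : K(s, \mu) \le a\}\) is upper semicontinuous, since \(K\) is continuous and the sets \(\{(s, \mu) : K \le a\}\) are closed. Hence \(\mu^{+}\) is Borel and bounded, and in particular lies in \(\Lrm^{\infty}(0, \delta)\).

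Finally define
\begin{equation*}
    W_{a}(s) \coloneqq \int_{0}^{s} \mu^{+}(\sigma) \dl \sigma.
\end{equation*}
This function is \(R\)-Lipschitz with \(\difcp{W_{a}(s)}{} = \mu^{+}(s)\) for a.e.\ \(s\), so \(K(s, \difcp{W_{a}(s)}{}) = a\) almost everywhere, which is the natural reading of the claim for a Lipschitz function. If the equality is needed pointwise or in the viscosity sense, I would replace \(\mu^{+}\) by a continuous choice. Since \(K(s, \cdot)\) is convex and \(a > \min K\) strictly, the root \(\mu^{+}(s)\) is a transversal crossing, so \(\mu^{+}\) is continuous in \(s\) by the implicit-function-type argument for convex functions. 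This makes \(W_{a}\) of class \(\Crm^{1}\) and the identity holds for all \(s \in (0, \delta)\). The borderline case \(a = \min K(s, \cdot)\) can be handled by approximating \(a\) from above and passing to the limit with the uniform Lipschitz bound.
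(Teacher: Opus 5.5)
Your construction is exactly the paper's: your \(\mu^{+}(s)=\max\{\mu : K(s,\mu)\le a\}\) is its \(\sigma_a\), and it too sets \(W_a=\int_0^s\sigma_a\). You have also correctly isolated the one step that does not follow from the hypothesis. Nonemptiness of \(\{\mu : K(s,\mu)\le a\}\) requires \(\min_{\mu}\max_{r}\ovH(s,r,\mu)\le a\), while the hypothesis only gives \(\max_{r}\min_{\mu}\ovH(s,r,\mu)\le a\). Your proposal leaves this open, and neither fallback closes it:
\begin{itemize}
\item The weakened target proves a different statement, with \(\max\{a,\min_\mu K(s,\cdot)\}\) in place of \(a\).
\item In the Sion route the identity is fine, but the mixed value \(\max_m\min_\mu\int\ovH\,\mathrm{d}m\) can strictly exceed \(\max_r\min_\mu\ovH\). \textbf{(H4)} only bounds the excess by \(O(\varepsilon)\), not by zero.
\end{itemize}

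In fact no argument can close the gap, because the statement is false as written. Take \(\ovH(s,r,\mu)=(\mu-r)^{2}\). It satisfies \textbf{(H1)}--\textbf{(H4)}: for instance \textbf{(H3)} holds with \(\vartheta_T(\rho)=((\rho-T)^{+})^{2}\) and \(\Theta_T(\rho)=(\rho+T)^{2}\). \textbf{(H4)} holds with \(\alpha_T=16\), \(\beta_T=8T+2\), using \(\ovH(s,t_1,\mu)-\ovH(s,t_2,\mu)=(t_2-t_1)(2\mu-t_1-t_2)\). Since \(\min_\mu\ovH(s,r,\mu)=0\) for all \((s,r)\), \(a=0\) is admissible. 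Yet \(\max_{r\in[t-\varepsilon,t]}(\mu-r)^{2}\ge\varepsilon^{2}/4\) for every \(\mu\); this is the Sion value, attained by the measure giving mass \(\tfrac12\) to \(t-\varepsilon\) and to \(t\). So no \(W_0\) satisfies the conclusion at even one \(s\), not even in the one-sided form \(\le a\), which is all Lemma~\ref{nozenoaux} uses. That lemma fails too for this \(\ovH\): here \(\ovL(s,\lambda,r)=\lambda r+\lambda^{2}/4\) and \(c\equiv 0\) is admissible. A tent curve of slope \(k<t_2-t_1\) then has action \(\tfrac{k(t_2-t_1)}{4}\bigl(k-(t_2-t_1)\bigr)<0\).

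The paper's proof has the same hole. It asserts that \(\sigma_a\) is a bounded real function ``by the hypothesis on \(a\) and \textbf{(H3)}'', which presupposes exactly the nonemptiness in question. The hypothesis that makes the lemma true is \(a\ge\max_{s\in[0,\delta]}\min_{\mu}\max_{r\in[t-\varepsilon,t]}\ovH(s,r,\mu)\). For small \(\varepsilon\), \textbf{(H4)} shows this follows from the stated bound plus a margin \(C\varepsilon\); the downstream conditions on \(c\) must be adjusted accordingly.

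Under the corrected hypothesis your argument is complete and coincides with the paper's. Your extra remarks are correct refinements. \(W_a'=\mu^{+}\) holds only almost everywhere in general, which is all \(\int_0^s\sigma_a\) delivers despite the ``for all \(s\)'' in the statement. When \(a>\min_\mu K(s,\cdot)\) for every \(s\), convexity makes \(\mu^{+}\) continuous, so \(W_a\in\Crm^{1}\) and the identity holds everywhere.
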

    \begin{proof}
        We define the support function
        \begin{equation*}
            \sigma_{a}(s) \coloneqq \max \left\{ \mu \in \Rds : \max_{r \in [t - \varepsilon, t]} \ovH(s, r, \mu) \le a \right\},
        \end{equation*}
        which is a bounded real function in \([0, \delta]\) by~\zcref[noname]{eq:c1sol.1,condHcoer}. The map \(\sigma_{a}\) is clearly upper semicontinuous, thus the sought function is
        \begin{equation*}
            W_{a}(s) \coloneqq \int_{0}^{s} \sigma_{a}(\tau) \dl \tau \qquad \text{for \(s \in [0, \delta]\)}.
        \end{equation*}
    \end{proof}

    \begin{lem}\label{nozenoaux}
        Given \(T > 0\) and a \(\kappa\)-Lipschitz continuous curve \(\eta \colon [0, T] \to [0, 1]\), let \(\varepsilon\) be a positive constant and \(c \colon [0, T] \to \Rds\) be a continuous map such that
        \begin{equation*}
            c(t) \le \min_{\substack{s \in \left[ 0, \frac{\kappa \varepsilon}{2} \right], \\ r \in [t - \varepsilon, t + \varepsilon] \cap [0, T]}} \ovL(s, 0, r) \qquad \text{for all \(t \in [0, T]\)}.
        \end{equation*}
        Then, for any \(t_{1}, t_{2} \in [0, T]\) with \(0 < t_{2} - t_{1} \le \varepsilon\) and \(\eta(t_{1}) = 0 = \eta(t_{2})\),
        \begin{equation*}
            \int_{t_{1}}^{t_{2}} \ovL(\eta, \dot{\eta}, \tau) \dl \tau \ge \int_{t_{1}}^{t_{2}} c(\tau) \dl \tau.
        \end{equation*}
    \end{lem}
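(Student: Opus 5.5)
Since \(\eta(t_{1}) = 0 = \eta(t_{2})\), \(t_{2} - t_{1} \le \varepsilon\) and \(\eta\) is \(\kappa\)-Lipschitz, the curve never leaves \([0, \kappa \varepsilon / 2]\) on \([t_{1}, t_{2}]\): for \(\tau \in [t_{1}, t_{2}]\) we have \(\eta(\tau) \le \kappa \min\{\tau - t_{1}, t_{2} - \tau\} \le \kappa(t_{2} - t_{1})/2\). Set \(\delta \coloneqq \kappa \varepsilon / 2\), shrunk below \(1\) if needed; if \(\delta \ge 1\) the whole interval is covered and only notation changes. We then use a calibration argument with a \(\Crm^{1}\)-in-time subsolution \(\Psi(s, \tau) = W(s) + \int^{\tau} c\) of the local equation on the strip \([0, \delta] \times [t_{1}, t_{2}]\). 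The inequality \(\Psi_{\tau} + \ovH(s, \tau, \Psi_{s}) \le 0\) gives \(\ovL(s, \lambda, \tau) \ge \lambda \Psi_{s} + \Psi_{\tau}\) by Fenchel. Integrating along \(\eta\) then yields \(\int_{t_{1}}^{t_{2}} \ovL(\eta, \dot{\eta}, \tau) \, \mathrm{d}\tau \ge \Psi(\eta(t_{2}), t_{2}) - \Psi(\eta(t_{1}), t_{1}) = \int_{t_{1}}^{t_{2}} c\), because \(W(\eta(t_{1})) = W(\eta(t_{2})) = W(0)\).

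To build \(W\), apply \zcref{c1sol} with \(t \coloneqq t_{2}\) and time window \([t_{1}, t_{2}]\); replace the window \([t - \varepsilon, t]\) by \([t_{1}, t_{2}] \subseteq [t_{2} - \varepsilon, t_{2}]\), since the proof of that lemma works for any such interval. As level, take \(a \coloneqq -\min_{[t_{1}, t_{2}]} c\). Hypothesis \zcref{eq:c1sol.1} then reads: for every \(s \in [0, \delta]\) and \(r \in [t_{1}, t_{2}]\),
\begin{equation*}
    \min_{\mu} \ovH(s, r, \mu) = -\ovL(s, 0, r) \le -\max_{[t_{1}, t_{2}]} c \le a.
\end{equation*}
This holds because the assumption on \(c\), evaluated at any \(t \in [t_{1}, t_{2}]\), gives \(c(t) \le \ovL(s, 0, r)\) for all \(s \le \delta\) and \(r \in [t - \varepsilon, t + \varepsilon] \supseteq [t_{1}, t_{2}]\). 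We obtain a Lipschitz \(W\) with \(\ovH(s, r, W'(s)) \le a\) for a.e.\ \(s\) and all \(r \in [t_{1}, t_{2}]\). The drawback is that a constant level \(a\) only produces the bound \(\int \ovL \ge -a(t_{2} - t_{1}) = (t_{2} - t_{1}) \min c\), which is weaker than \(\int c\).

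So the main obstacle is matching the time profile of \(c\) rather than its minimum. I would make the momentum \(\tau\)-dependent. Since the hypothesis is a pointwise bound \(c(\tau) \le \ovL(s, 0, r)\) that holds uniformly, the cleanest route is to work directly with \(\mu = 0\) and Fenchel:
\begin{equation*}
    \ovL(s, \lambda, \tau) \ge \lambda \mu(s, \tau) - \ovH(s, \tau, \mu(s, \tau)),
\end{equation*}
where \(\mu(s, \tau)\) is chosen as in \zcref{selection} so that \(\ovH(s, \tau, \mu) = -\ovL(s, 0, \tau) \le -c(\tau)\). A selection depending on \(s\) does not integrate to an exact differential along \(\eta\), however. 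The fallback is a discretisation: split \([t_{1}, t_{2}]\) into small subintervals on which \(c\) is nearly constant, and on each use the \(W\) of \zcref{c1sol} at the local level \(-c(\tau_{i})\). Each such \(W\) lets us compare the action with \(\int c - o(1)\), but \(W(\eta)\) no longer telescopes across the pieces. To handle this, I would instead use \(\sigma_{a}\) with the time-maximum over the whole window and the level \(a(\tau) = -c(\tau)\), which is allowed since the hypothesis holds for every \(r\) in the window. This produces a family \(W_{a(\tau)}\) that is monotone in \(a\) and Lipschitz in \(a\), because \(\ovH\) is convex and coercive. Then set \(\Psi(s, \tau) = W_{a(\tau)}(s) + \int^{\tau} c\) and check \(\Psi_{\tau} + \ovH(s, \tau, \Psi_{s}) \le 0\). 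The term \(\partial_{a} W \cdot \dot{a}\) is the delicate part: \(\partial_{a} W\) vanishes at \(s = 0\) and has a fixed sign. I expect that this sign, combined with \(W_{a(\tau)}(0) = 0\) at both endpoints, lets the error terms be absorbed, and I would verify this through an approximation of \(c\) by monotone step functions. This verification is where I expect the real work to lie.
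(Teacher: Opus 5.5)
Your proposal does not yet contain a proof. The closing construction with the time-dependent level \(a(\tau) = -c(\tau)\) is only sketched, and its key verification is exactly what you leave open. It is also doubtful: for a merely continuous \(c\) the map \(\tau \mapsto W_{a(\tau)}\) need not be differentiable, and the term \(\partial_{a} W \, \dot{a}\) has the unfavorable sign wherever \(c\) decreases.

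More to the point, the detour is an artifact of choosing the wrong constant. When checking condition~\zcref{eq:c1sol.1} you already showed \(\min_{\mu} \ovH(s, r, \mu) \le -\max_{[t_{1}, t_{2}]} c\) for all \(s \in [0, \kappa \varepsilon / 2]\) and \(r \in [t_{1}, t_{2}]\), because every such \(r\) lies in \([t - \varepsilon, t + \varepsilon]\) for every \(t \in [t_{1}, t_{2}]\). So you may take \(a = -\max_{[t_{1}, t_{2}]} c\) instead of \(-\min_{[t_{1}, t_{2}]} c\), and your constant-level calibration gives
\[
\int_{t_{1}}^{t_{2}} \ovL(\eta, \dot{\eta}, \tau) \, \mathrm{d}\tau \ge -a (t_{2} - t_{1}) = (t_{2} - t_{1}) \max_{[t_{1}, t_{2}]} c \ge \int_{t_{1}}^{t_{2}} c(\tau) \, \mathrm{d}\tau .
\]
This is precisely the paper's argument. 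It takes \(a\) to be the maximum of \(\min_{\mu} \ovH(s, r, \mu)\) over \(s \in [0, \kappa \varepsilon / 2]\), \(r \in [t_{2} - \varepsilon, t_{2}] \cap [0, T]\), notes \(c \le -a\) on \([t_{1}, t_{2}]\), and integrates \(W_{a}'(\eta) \dot{\eta} - a\).

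However, the step you share with the paper's proof, the appeal to \zcref{c1sol}, is not valid, and the statement itself is false. Condition~\zcref{eq:c1sol.1} only controls \(\max_{r} \min_{\mu} \ovH(s, r, \mu)\): for each \(r\) it gives some \(\mu_{r}\) with \(\ovH(s, r, \mu_{r}) \le a\). But the set defining \(\sigma_{a}\), and your \(W\) with \(\ovH(s, r, W'(s)) \le a\) for all \(r\) at once, need \(\min_{\mu} \max_{r} \ovH(s, r, \mu) \le a\), which is in general strictly larger.

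Concretely, take \(\ovH(s, t, \mu) = \frac{1}{2} (\mu - t)^{2}\), which satisfies \textbf{(H1)}--\textbf{(H4)} (for \textbf{(H4)} use \(\alpha_{T} = 8\), \(\beta_{T} = 3T + 1\)). Then \(\ovL(s, \lambda, t) = \lambda t + \frac{1}{2} \lambda^{2}\), so \(\ovL(s, 0, r) = 0\) and \(c \equiv 0\) is admissible for every \(\varepsilon\) and \(\kappa\). Take \(T = 2\), \(t_{1} = \tfrac{1}{2}\), \(t_{2} = \tfrac{3}{2}\), \(\varepsilon = 1\), \(\kappa = \tfrac{1}{2}\), and \(\eta(\tau) = \frac{1}{2} (\tau - t_{1}) (t_{2} - \tau)\) on \([t_{1}, t_{2}]\), \(\eta = 0\) elsewhere. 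An integration by parts of \(\dot{\eta} \tau\) gives
\[
\int_{t_{1}}^{t_{2}} \ovL(\eta, \dot{\eta}, \tau) \, \mathrm{d}\tau = \int_{t_{1}}^{t_{2}} \Bigl( \frac{\dot{\eta}^{2}}{2} - \eta \Bigr) \mathrm{d}\tau = \frac{1}{24} - \frac{1}{12} = -\frac{1}{24} < 0 = \int_{t_{1}}^{t_{2}} c(\tau) \, \mathrm{d}\tau .
\]
Here no \(\mu\) satisfies \((\mu - r)^{2} / 2 \le 0\) for all \(r\) in a nondegenerate window, so \(W_{0}\) does not exist. No calibration of any kind, your time-dependent one included, can exist either, since it would yield the false inequality.

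What your constant-level argument (with \(a = -\max_{[t_{1}, t_{2}]} c\)) does prove is the lemma under the min--max hypothesis \(c(t) \le -\max_{s} \min_{\mu} \max_{r} \ovH(s, r, \mu)\), with \(s \in [0, \kappa \varepsilon / 2]\) and \(r \in [t - \varepsilon, t + \varepsilon] \cap [0, T]\). The same example also shows that the excursion-removal step in the proof of \zcref{adcurve} can increase the action.
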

    \begin{proof}
        Let \(t_{1}\) and \(t_{2}\) be as in the statement and set
        \begin{equation*}
            a \coloneqq \max_{\substack{s \in \left[ 0, \frac{\kappa \varepsilon}{2} \right], \\ r \in [t_{2} - \varepsilon, t_{2}] \cap [0, T]}} \min_{\mu \in \Rds} \ovH(s, r, \mu),
        \end{equation*}
        then, for any \(t \in [t_{1}, t_{2}]\),
        \begin{equation}\label{eq:nozenoaux1}
            c(t) \le \min_{\substack{s \in \left[ 0, \frac{\kappa \varepsilon}{2} \right], \\ r \in [t - \varepsilon, t + \varepsilon] \cap [0, T]}} \ovL(s, 0, r) \le - \max_{\substack{s \in \left[ 0, \frac{\kappa \varepsilon}{2} \right], \\ r \in [t_{2} - \varepsilon, t_{2}] \cap [0, T]}} \min_{\mu \in \Rds} \ovH(s, r, \mu) = - a.
        \end{equation}
        We point out that the \(\kappa\)-Lipschitz continuity of \(\eta\), together with the hypothesis \(\eta(t_{1}) = 0 = \eta(t_{2})\), implies
        \begin{equation}\label{eq:nozenoaux2}
            \eta([t_{1}, t_{2}]) \subseteq \left[ 0, \frac{\kappa \varepsilon}{2} \right].
        \end{equation}
        By \zcref{c1sol,eq:nozenoaux2} there is a Lipschitz continuous \(W_{a} \colon \left[ 0, \frac{\kappa \varepsilon}{2} \right] \to \Rds\) such that
        \begin{align*}
            \int_{t_{1}}^{t_{2}} \ovL(\eta, \dot{\eta}, \tau) \dl \tau & = \int_{t_{1}}^{t_{2}} \max_{\mu \in \Rds} \left\{ \mu \dot{\eta}(\tau) - \ovH(\eta(\tau), \tau, \mu) \right\} \dl \tau \\
            & \ge \int_{t_{1}}^{t_{2}} \left( \difcp{W_{a}(\eta(\tau))}{} \dot{\eta}(\tau) - \ovH(\eta(\tau), \tau, \difcp{W_{a}}{}) \right) \dl \tau \ge \int_{t_{1}}^{t_{2}}(\difcp{W_{a}(\eta(\tau))}{} \dot{\eta}(\tau) - a) \dl \tau,
        \end{align*}
        thereby the change of variable \(r = \eta(\tau)\) (see~\cite[Corollary~7]{SerrinVarberg69}) and~\zcref{eq:nozenoaux1} yield
        \begin{equation*}
            \int_{t_{1}}^{t_{2}} \ovL(\eta, \dot{\eta}, \tau) \dl \tau \ge \int_{\eta(t_{1})}^{\eta(t_{2})} \difcp{W_{a}(r)}{} \dl r + \int_{t_{1}}^{t_{2}} - a \dl \tau \ge \int_{t_{1}}^{t_{2}} c(\tau) \dl \tau.
        \end{equation*}
        Since \(t_{1}\) and \(t_{2}\) are arbitrary, this proves the claim.
    \end{proof}

    Now consider the equation
    \begin{equation}\label{eq:locHaux}\tag{\={H}J}
        \difcp{U(s, t)}{t} + \ovH(s, t, \difcp{U}{s}) = 0 \qquad \text{in \((0, 1) \times (0, \infty)\)}.
    \end{equation}

    \begin{lem}\label{subsolloc}
        A continuous function \(W \colon (0, 1) \times (0, \infty) \to \Rds\) is a subsolution to~\zcref{eq:locHaux} if and only if, for every interval \([t_{1}, t_{2}] \subset \Rds^{+}\) and Lipschitz continuous curve \(\eta \colon [t_{1}, t_{2}] \to (0, 1)\), one has
        \begin{equation}\label{eq:subsolloc.1}
            W(\eta(t_{2}), t_{2}) - W(\eta(t_{1}), t_{1}) \le \int_{t_{1}}^{t_{2}} \ovL(\eta, \dot{\eta}, \tau) \dl \tau.
        \end{equation}
    \end{lem}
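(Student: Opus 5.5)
The plan is to prove the two implications separately, treating the characterization as the classical equivalence between viscosity subsolutions of a convex Hamilton--Jacobi equation and functions that are sub-optimal along curves. The only nonstandard ingredient is that \(\ovH\) is merely continuous in \((s,t)\), with the Lipschitz-type control \zcref{condHtlip} in time and no regularity in \(s\). For this reason I would not use a Lipschitz-in-\(s\) argument, and would work with test functions and convolution in time only where needed.

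\emph{From \zcref{eq:subsolloc.1} to the subsolution property.} Let \(\varphi\) be a \(\Crm^{1}\) supertangent to \(W\) at \((s_{0},t_{0})\). Fix \(\lambda\in\Rds\) and consider the straight curve \(\eta(\tau)=s_{0}+\lambda(\tau-t_{0})\) on \([t_{0}-h,t_{0}]\) for small \(h>0\); it stays in \((0,1)\). From \zcref{eq:subsolloc.1} and \(\varphi\ge W\) with equality at \((s_{0},t_{0})\) we get
\[
\varphi(s_{0},t_{0})-\varphi(s_{0}-\lambda h,t_{0}-h)\le\int_{t_{0}-h}^{t_{0}}\ovL(\eta,\lambda,\tau)\,\mathrm{d}\tau .
\]
Dividing by \(h\) and letting \(h\to0\), continuity of \(\ovL\) (as in \zcref{loclagprop}) gives \(\partial_t\varphi+\lambda\partial_s\varphi\le\ovL(s_{0},\lambda,t_{0})\). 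Taking the supremum over \(\lambda\) and using convex duality then yields \(\partial_t\varphi+\ovH(s_{0},t_{0},\partial_s\varphi)\le0\).

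\emph{From the subsolution property to \zcref{eq:subsolloc.1}.} This is the harder direction. First, \(W\) is locally Lipschitz: the coercivity \zcref{condHcoer} bounds \(\lvert\partial_s\varphi\rvert\) by a function of \(\partial_t\varphi\) at any supertangent, and \(\partial_t\varphi\le-\min\ovH\) is bounded above locally. The standard argument then gives Lipschitz continuity in \(s\) with respect to an upper bound on the time slope, and this can be converted into full local Lipschitz continuity. I expect this step to be delicate and would borrow the argument from the autonomous network literature. Next I would regularize by time convolution only: \(W_\delta(s,t)=\int W(s,t-r)\rho_\delta(r)\,\mathrm{d}r\). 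By convexity of \(\ovH\) and Jensen, together with \zcref{condHtlip}, the function \(W_\delta\) is an a.e.\ subsolution of \(\partial_tW_\delta+\ovH(s,t,\partial_sW_\delta)\le C\delta\). Here \zcref{condHtlip} controls \(\ovH(s,t-r,\mu)\) versus \(\ovH(s,t,\mu)\), and the factor \(\ovH/(1+\lvert\mu\rvert)\) is bounded because the gradients are bounded.

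It remains to obtain the inequality along curves for an a.e.\ Lipschitz subsolution \(V\) with no regularity in \(s\). The difficulty is that \(\eta\) may run along a null set where the derivatives of \(V\) are not controlled. I would handle this by additionally mollifying in \(s\) and using the continuity of \(\ovH\) in \(s\) via a uniform modulus on compact sets. Since the gradients are locally bounded, Jensen again gives an error that vanishes with the mollification parameter. For a \(\Crm^{1}\) function one simply writes
\[
\frac{\mathrm{d}}{\mathrm{d}\tau}V(\eta,\tau)=\partial_tV+\dot\eta\,\partial_sV\le\partial_tV+\ovH+\ovL\le\ovL(\eta,\dot\eta,\tau)+o(1)
\]
by the Fenchel inequality, integrates, and passes to the limit using uniform convergence of the mollifications to \(W\). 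The main obstacle I anticipate is the local Lipschitz estimate combined with the joint-mollification error control under only continuity in \(s\). If that fails, the alternative is a direct comparison: compare \(W\) with the value function \zcref{eq:locvf} on a small strip via \zcref{loccomp}, with boundary datum \(W\). Since that value function is bounded above by the action along \(\eta\), this comparison would also yield \zcref{eq:subsolloc.1}.
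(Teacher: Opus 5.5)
Your first implication is correct and is essentially the paper's argument: the paper uses a \(\Crm^{1}\) curve with \(\dot\eta(t)=\lambda\), you use the segment.

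\textbf{The gap.} The converse breaks at its first step: a continuous viscosity subsolution of \zcref{eq:locHaux} need not be locally Lipschitz. Take \(\ovH(s,t,\mu)=\mu^{2}/2\) and \(W(s,t)=-c(t)\), with \(c\) the Cantor function extended by \(1\) for \(t\ge1\). A \(\Crm^{1}\) supertangent \(\psi\) at \((s_{0},t_{0})\) has \(\partial_{s}\psi=0\), since \(W(\cdot,t_{0})\) is constant. It also has \(\partial_{t}\psi\le0\), since \(\psi(s_{0},t)\ge-c(t)\ge-c(t_{0})\) for \(t<t_{0}\). Hence \(W\) is a subsolution, but it is not Lipschitz on any neighborhood of a point of the Cantor set.

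Your sketch cannot produce a bound because coercivity only gives \(\vartheta_{T}(\lvert\partial_{s}\psi\rvert)\le-\partial_{t}\psi\), which needs a \emph{lower} bound on \(\partial_{t}\psi\). The upper bound \(\partial_{t}\psi\le-\min\ovH\) you invoke is of no use here, and no lower bound holds for a general continuous subsolution.

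Everything downstream rests on this: a.e.\ derivatives, bounded gradients in the Jensen step, boundedness of \(\ovH/(1+\lvert\mu\rvert)\) in \zcref{condHtlip}, and a uniform modulus of \(\ovH\) on a compact set of momenta. A.e.\ information alone is genuinely insufficient. For instance, \(W(s,t)=+c(t)\) satisfies \(\partial_{t}W+\ovH(s,t,\partial_{s}W)=0\) a.e., yet it violates \zcref{eq:subsolloc.1} along constant curves, since \(\ovL(s,0,t)=0\).

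Your fallback does not close the gap as stated either. \zcref{loccomp} requires \(W\le U\) on the lateral boundary of the strip, whereas with boundary datum \(W\) you only know \(U\le W\) there. The reverse inequality is the claim itself for curves ending on the boundary, and a comparison with state constraints on both lateral sides is not available in the paper.

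\textbf{How the paper avoids it.} The paper needs no regularity of \(W\). For a \(\Crm^{1}\) curve \(\eta\) it studies the one-variable function \(t\mapsto W(\eta(t),t)-\int_{t_{1}}^{t}\ovL(\eta,\dot\eta,\tau)\,\mathrm{d}\tau\). A fuzzy chain rule converts a \(\Crm^{1}\) supertangent \(\phi\) of it at \(\ovt\) into a supertangent \(\psi_{\varepsilon}\) of \(W\) at a nearby point. The subsolution inequality and Fenchel's inequality then give \(\phi'(\ovt)\le\varepsilon+\omega(\varepsilon)\), and \zcref{suptanmon} shows the function is nonincreasing. Lipschitz curves are handled by \(\Crm^{1}\) approximation.

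\textbf{A possible repair of your route.} To keep the mollification approach, first manufacture the missing lower bound on the time slope, e.g.\ via the time sup-convolution \(W^{\nu}(s,t)=\sup_{r}\{W(s,r)-(t-r)^{2}/(2\nu)\}\) on a compact subdomain.
\begin{itemize}
\item At a supertangent \(\psi\) at \((s,t)\) with maximizer \(r_{\nu}\), one has \(\partial_{t}\psi=(r_{\nu}-t)/\nu\), bounded in terms of \(\nu\) and the oscillation of \(W\). Coercivity then bounds \(\partial_{s}\psi\), so \(W^{\nu}\) is locally Lipschitz.
\item Moreover \(\ovH(s,r_{\nu},\partial_{s}\psi)\le(t-r_{\nu})/\nu\), so \zcref{condHtlip} shows \(W^{\nu}\) is a subsolution up to an error \(\alpha_{T}\lvert t-r_{\nu}\rvert^{2}/\nu+\beta_{T}\lvert t-r_{\nu}\rvert\). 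This error tends to \(0\) uniformly.
\item \(W^{\nu}\to W\) locally uniformly.
\end{itemize}
Your Lipschitz argument then applies to \(W^{\nu}\), and \zcref{eq:subsolloc.1} follows in the limit \(\nu\to0\).
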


    To prove this \zcref[noref]{subsolloc} we need the following result, taken from~\cite[Lemma~4.1]{Siconolfi22}.

    \begin{lem}\label{suptanmon}
        Let \(f \colon [a, b] \to \Rds\) be a continuous function so that
        \begin{equation*}
            \difcp{\psi(t)}{} \le 0 \qquad \text{(resp.\ \(\ge 0\))}
        \end{equation*}
        for any \(t \in (a, b)\) and \(\Crm^{1}\) supertangent \(\psi\) to \(f\) at \(t\). Then \(f\) is nonincreasing (resp.\ nondecreasing) in \([a, b]\). \\
        The same statement holds by replacing supertangents by subtangents.
    \end{lem}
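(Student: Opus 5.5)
The plan is to prove the nonincreasing case for supertangents; the nondecreasing case follows by applying it to \(-f\), which turns subtangents into supertangents and flips signs. The version with subtangents will follow from a symmetric argument. We argue by contradiction: assume there are \(a \le t_{1} < t_{2} \le b\) with \(f(t_{2}) > f(t_{1})\). Set \(\lambda \coloneqq \frac{f(t_{2}) - f(t_{1})}{2(t_{2} - t_{1})} > 0\) and consider the auxiliary function
\begin{equation*}
    g(t) \coloneqq f(t) - f(t_{1}) - \lambda (t - t_{1}) \qquad \text{on \([t_{1}, t_{2}]\)}.
\end{equation*}
Then \(g(t_{1}) = 0\) and \(g(t_{2}) = \lambda(t_{2} - t_{1}) > 0\). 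The aim is to produce a point in \((t_{1}, t_{2})\) where \(f\) admits a \(\Crm^{1}\) supertangent with derivative at least \(\lambda > 0\). This contradicts the hypothesis.

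To build such a point and test function, I will use a quadratic touching from above. For a parameter \(k > 0\), consider the family of functions \(\psi_{c}(t) = c - k(t - \tau)^{2}\), or more simply linear functions of slope \(\lambda\) lowered and raised until they touch. A cleaner choice is the following. Let \(m \coloneqq \max_{[t_{1}, t_{2}]} g\), which is positive, and let \(\ovt\) be the smallest maximizer of \(g\). If \(\ovt \in (t_{1}, t_{2})\), then \(\psi(t) \coloneqq f(\ovt) + \lambda(t - \ovt)\) satisfies \(\psi \ge f\) near \(\ovt\), because \(g \le g(\ovt)\). So \(\psi\) is a \(\Crm^{1}\) supertangent to \(f\) at \(\ovt\) with \(\psi' = \lambda > 0\), giving the contradiction. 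If the strict-touching convention is needed, one adds \((t - \ovt)^{2}\) to \(\psi\), which does not change the derivative at \(\ovt\).

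The main obstacle is the case where the maximum of \(g\) is attained only at the endpoint \(t_{2}\), where no interior test is available. I handle this by tilting with a concave quadratic that pushes the maximum inside. Replace \(g\) by
\begin{equation*}
    \tilde g(t) \coloneqq f(t) - f(t_{1}) - \lambda(t - t_{1}) - \frac{\lambda}{t_{2} - t_{1}}(t - t_{1})^{2}.
\end{equation*}
Then \(\tilde g(t_{1}) = 0\), while \(\tilde g(t_{2}) = 2\lambda(t_{2} - t_{1}) - \lambda(t_{2} - t_{1}) - \lambda(t_{2} - t_{1}) = 0\), so the endpoints are tied. If \(\tilde g\) is not identically \(\le 0\), its maximum is positive and attained in the interior. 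If \(\tilde g \le 0\), I instead pick \(t_{3} \in (t_{1}, t_{2})\) with \(f(t_{3}) > f(t_{1})\), which exists by continuity near \(t_{2}\), and iterate on a smaller interval. Alternatively, and more robustly, I choose the quadratic coefficient slightly smaller than \(\frac{\lambda}{t_{2} - t_{1}}\), say \(\frac{\lambda}{2(t_{2} - t_{1})}\). Then \(\tilde g(t_{2}) > 0 = \tilde g(t_{1})\), which still leaves an endpoint issue. So the tied-endpoint choice together with a small perturbation is the one to pursue.

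In the tied case, take an interior maximizer \(\ovt\) of \(\tilde g\) with \(\tilde g(\ovt) \ge 0\); if \(\tilde g \equiv 0\), any interior point works. The supertangent is
\begin{equation*}
    \psi(t) = f(\ovt) + \lambda(t - \ovt) + \frac{\lambda}{t_{2} - t_{1}}\bigl((t - t_{1})^{2} - (\ovt - t_{1})^{2}\bigr),
\end{equation*}
whose derivative at \(\ovt\) is \(\lambda + \frac{2\lambda(\ovt - t_{1})}{t_{2} - t_{1}} \ge \lambda > 0\). This contradicts the hypothesis, so \(f\) is nonincreasing. The subtangent version is obtained identically, using minima of \(f(t) - f(t_{1}) + \lambda(t - t_{1}) + \dots\) after assuming \(f(t_{2}) < f(t_{1})\) in the nondecreasing case, or by the \(-f\) symmetry.
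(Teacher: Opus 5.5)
\textbf{Gap in the tilted step.} Your argument fails in the case you flag but never resolve. After the tilt, \(\tilde g(t_1)=\tilde g(t_2)=0\), yet \(\tilde g\) may be strictly negative on all of \((t_1,t_2)\). Then its maximum \(0\) is attained only at the endpoints, and there is no interior maximizer with \(\tilde g(\ovt)\ge 0\).

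This happens even for smooth increasing \(f\). Take \(f(t)=t^2\) on \([0,1]\) with \(t_1=0\), \(t_2=1\). Then \(\lambda=1/2\), \(g(t)=t^2-t/2\) has its maximum only at \(t_2\), and \(\tilde g(t)=t(t-1)/2<0\) on \((0,1)\).

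Choosing \(t_3\) and iterating does not rescue it. On \([0,t_3]\) the same recipe gives \(\tilde g(t)=t(t-t_3)/2\), so the degenerate case recurs at every scale and the iteration never terminates. As you observed yourself, changing the quadratic coefficient only moves the problem to an endpoint. So the sentence ``take an interior maximizer \(\ovt\) of \(\tilde g\) with \(\tilde g(\ovt)\ge 0\)'' assumes exactly what is missing.

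\textbf{The missing idea: a barrier at \(t_2\).} Only \(t_2\) is a bad endpoint; \(t_1\) is harmless since \(g(t_1)=0<g(t_2)\).
\begin{itemize}
\item Keep your \(g\). Pick \(t'\in(t_1,t_2)\) with \(g(t')>0\), which exists by continuity at \(t_2\), and \(0<\varepsilon<(t_2-t')\,g(t')\).
\item Set \(h(t)\coloneqq g(t)-\varepsilon/(t_2-t)\) on \([t_1,t_2)\). Since \(h\to-\infty\) as \(t\to t_2^-\), \(h\) has a maximizer \(\ovt\).
\item Because \(h(t')>0>h(t_1)\), this maximizer lies in \((t_1,t_2)\).
\item Define \(\psi(t)\coloneqq f(\ovt)+\lambda(t-\ovt)+\varepsilon/(t_2-t)-\varepsilon/(t_2-\ovt)\). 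Then \(f-\psi=h-h(\ovt)\le 0\) near \(\ovt\), so \(\psi\) is a \(\Crm^1\) supertangent at \(\ovt\).
\item Its slope is \(\psi'(\ovt)=\lambda+\varepsilon/(t_2-\ovt)^2>0\), which contradicts the hypothesis.
\end{itemize}
This single construction makes both your first case distinction and the quadratic tilt unnecessary.

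\textbf{Bookkeeping of the four cases.} Passing to \(-f\) only links the (supertangent, \(\le 0\)) case with the (subtangent, \(\ge 0\)) case. The other two cases follow from the reflection \(t\mapsto a+b-t\). It flips the sign of the derivative of every test function and exchanges nonincreasing with nondecreasing, so no separate ``symmetric argument'' is needed.

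For reference, the paper gives no proof of this lemma; it imports it from Siconolfi's earlier work. Your proposal is therefore judged on its own, and as written it is incomplete.
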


    \begin{proof}[Proof of \zcref{subsolloc}]
        Fix \(\lambda \in \Rds\) and let \(\psi\) be a \(\Crm^{1}\) supertangent to \(W\) at \((s, t) \in (0, 1) \times (0, \infty)\). If~\zcref{eq:subsolloc.1} holds, we can always find a \(\Crm^{1}\) curve \(\eta\) with \(\eta(t) = s\) and \(\dot{\eta}(t) = \lambda\) such that, for any \(\delta\) small enough,
        \begin{equation*}
            \frac{\psi(s, t) - \psi(\eta(t - \delta), t - \delta)}{\delta} \le \frac{W(s, t) - W(\eta(t - \delta), t - \delta)}{\delta} \le \frac{1}{\delta} \int_{t - \delta}^{t} \ovL(\eta, \dot{\eta}, \tau) \dl \tau.
        \end{equation*}
        Sending \(\delta\) to \(0\) we thus get
        \begin{equation*}
            \difcp{\psi(s, t)}{t} + \difcp{\psi(s, t)}{s} \lambda - \ovL(s, \lambda, t) \le 0.
        \end{equation*}
        Since \(\lambda\) is arbitrary we further get
        \begin{equation*}
            \difcp{\psi(s, t)}{t} + \ovH(s, t, \difcp{\psi}{s}) \le 0,
        \end{equation*}
        which proves that \(W\) is a subsolution. \\
        Now assume that \(W\) is a subsolution, i.e., that for every \((s, t) \in (0, 1) \times (0, \infty)\) and \(\Crm^{1}\) supertangent \(\psi\) to \(W\) at \((s, t)\)
        \begin{equation}\label{eq:subsolloc1}
            0 \ge \difcp{\psi}{t}(s, t) + \ovH(s, t, \difcp{\psi}{s}) \ge \difcp{\psi}{t}(s, t) + \difcp{\psi}{s}(s, t) \lambda - \ovL(s, t, \lambda) \qquad \text{for any \(\lambda \in \Rds\)}.
        \end{equation}
        Given an interval \([t_{1}, t_{2}] \subset \Rds^{+}\) and a \(\Crm^{1}\) curve \(\eta \colon [t_{1}, t_{2}] \to (0, 1)\), let \(\omega\) be a modulus of continuity for \(\ovL\) on \([0, 1] \times [t_{1}, t_{2}] \times [\min \dot{\eta}, \max \dot{\eta}]\). Standard results on sub/superdifferentials (see, e.g., \cite{Clarke13}) show that if \(\phi\) is a \(\Crm^{1}\) supertangent to
        \begin{equation}\label{eq:subsolloc2}
            t \longmapsto W(\eta(t), t) - \int_{t_{1}}^{t} \ovL(\eta, \dot{\eta}, \tau) \dl \tau
        \end{equation}
        at some \(\ovt \in (t_{1}, t_{2})\), then, for any \(\varepsilon > 0\), there are \(s_{\varepsilon} \in (0, 1)\), \(t_{\varepsilon}, t_{\varepsilon}' \in (t_{1}, t_{2})\) and a \(\Crm^{1}\) supertangent \(\psi_{\varepsilon}\) to \(W\) at \((s_{\varepsilon}, t_{\varepsilon})\) such that
        \begin{equation*}
            \left\lvert \ovt - t'_{\varepsilon} \right\rvert < \varepsilon, \qquad \left\lvert \eta \left( \ovt \right) - s_{\varepsilon} \right\rvert + \left\lvert \dot{\eta} \left( \ovt \right) - \dot{\eta} \left( t'_{\varepsilon} \right) \right\rvert + \left\lvert \ovt - t_{\varepsilon} \right\rvert < \varepsilon
        \end{equation*}
        and
        \begin{align*}
            \difcp{\phi}{} \left( \ovt \right) & \le \difcp{\psi_{\varepsilon}}{t}(s_{\varepsilon}, t_{\varepsilon}) + \difcp{\psi_{\varepsilon}}{s}(s_{\varepsilon}, t_{\varepsilon}) \dot{\eta} \left( t'_{\varepsilon} \right) - \ovL \left( \eta, \dot{\eta}, \ovt \right) + \varepsilon \\
            & \le \difcp{\psi_{\varepsilon}}{t}(s_{\varepsilon}, t_{\varepsilon}) + \difcp{\psi_{\varepsilon}}{s}(s_{\varepsilon}, t_{\varepsilon}) \dot{\eta} \left( t'_{\varepsilon} \right) - \ovL \left( s_{\varepsilon}, \dot{\eta} \left( t_{\varepsilon}' \right), t_{\varepsilon} \right) + \omega(\varepsilon) + \varepsilon.
        \end{align*}
        It follows from~\zcref{eq:subsolloc1} that
        \begin{equation*}
            \difcp{\phi}{} \left( \ovt \right) \le \difcp{\psi_{\varepsilon}}{t}(s_{\varepsilon}, t_{\varepsilon}) + \ovH(s_{\varepsilon}, t_{\varepsilon}, \difcp{\psi_{\varepsilon}}{s}) + \varepsilon + \omega(\varepsilon) \le \varepsilon + \omega(\varepsilon).
        \end{equation*}
        Since \(\varepsilon\) is arbitrary, we obtain that \(\difcp{\phi}{} \left( \ovt \right) \le 0\) for each \(\ovt \in (t_{1}, t_{2})\) and \(\Crm^{1}\) supertangent \(\phi\) to~\zcref{eq:subsolloc2} at \(\ovt\). This, in turn, verifies~\zcref{eq:subsolloc.1} for \(\eta \in \Crm^{1}\) thanks to \zcref{suptanmon}. The claim for Lipschitz continuous curves is then established via \(\Crm^{1}\) approximation and the dominated convergence Theorem.
    \end{proof}

    \begin{lem}\label{supsolloc}
        Let \(V \colon (0, 1) \times (0, \infty) \to \Rds\) be a continuous function. If for every \((s, t) \in (0, 1) \times (0, \infty)\) and \(\delta > 0\) small enough there is a curve \(\eta \colon [t - \delta, t] \to (0, 1)\) with \(\eta(t) = s\) such that
        \begin{equation}\label{eq:supsolloc.1}
            V(s, t) - V(\eta(t - \delta), t - \delta) \ge \int_{t - \delta}^{t} \ovL(\eta, \dot{\eta}, \tau) \dl \tau,
        \end{equation}
        then \(V\) is a supersolution to~\zcref{eq:locHaux}.
    \end{lem}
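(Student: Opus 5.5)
The plan is to verify the viscosity supersolution inequality directly at each point, using the optimal-curve inequality \eqref{eq:supsolloc.1} as a substitute for the dynamic programming principle. Fix \((s,t)\in(0,1)\times(0,\infty)\) and a \(\Crm^{1}\) subtangent \(\phi\) to \(V\) at \((s,t)\), so that \(V-\phi\) has a local minimum there. Arguing by contradiction, suppose
\begin{equation*}
    \difcp{\phi(s,t)}{t}+\ovH(s,t,\difcp{\phi(s,t)}{s})<0 .
\end{equation*}
By continuity of \(\ovH\) and of the derivatives of \(\phi\), there are \(\theta>0\) and \(r>0\) such that
\begin{equation*}
    \difcp{\phi(s',t')}{t}+\ovH(s',t',\difcp{\phi(s',t')}{s})\le-\theta
\end{equation*}
for \(\lvert s'-s\rvert+\lvert t'-t\rvert\le r\). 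By the Fenchel inequality this gives, for every \(\lambda\in\Rds\),
\begin{equation*}
    \difcp{\phi(s',t')}{t}+\difcp{\phi(s',t')}{s}\,\lambda-\ovL(s',\lambda,t')\le-\theta
\end{equation*}
on the same neighborhood.

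Next, for small \(\delta\) take the curve \(\eta_{\delta}\colon[t-\delta,t]\to(0,1)\) given by the hypothesis, with \(\eta_{\delta}(t)=s\). The key point is to show that \(\eta_{\delta}\) stays in the ball of radius \(r\) around \(s\). For this I will combine \eqref{eq:supsolloc.1} with the superlinear lower bound \(\ovL\ge\whvartheta_{T}\) from \zcref{loclagprop}, applied to the Lagrangian \(\ovL\), and with boundedness of \(V\) near \((s,t)\). For any \(k>0\) this gives
\begin{equation*}
    k\int_{t-\delta}^{t}\lvert\dot\eta_{\delta}\rvert\dl\tau-b_{k}\delta\le\int_{t-\delta}^{t}\ovL(\eta_{\delta},\dot\eta_{\delta},\tau)\dl\tau\le V(s,t)-V(\eta_{\delta}(t-\delta),t-\delta).
\end{equation*}
The right-hand side is bounded by \(2\sup\lvert V\rvert\) on a compact set, which gives total variation of order \(1/k\). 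Here I must rule out an escape toward the boundary of \((0,1)\). To do so I take a first exit time \(\sigma\) from the ball and apply \eqref{eq:supsolloc.1}, more precisely the additivity obtained by restricting the curve. Restriction is not directly given, however, so I will instead use that if the curve exits, then \(\int\lvert\dot\eta\rvert\ge r\) while \(V(\eta(t-\delta),t-\delta)\) remains bounded, since \(V\) is continuous on \([\,\cdot\,]\subset(0,1)\). I expect this localization to be the main technical obstacle. In the worst case I will use the continuity of \(V\) near \((s,t)\) together with a limiting argument as \(\delta\to0\), showing that \(V(s,t)-V(\eta_{\delta}(t-\delta),t-\delta)\to0\) and hence \(\int\lvert\dot\eta_{\delta}\rvert\to0\).

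Once the curve stays in the neighborhood, integrate the Fenchel inequality along \((\eta_{\delta}(\tau),\tau)\):
\begin{equation*}
    \phi(s,t)-\phi(\eta_{\delta}(t-\delta),t-\delta)=\int_{t-\delta}^{t}\Bigl(\difcp{\phi}{t}+\difcp{\phi}{s}\dot\eta_{\delta}\Bigr)\dl\tau\le\int_{t-\delta}^{t}\ovL(\eta_{\delta},\dot\eta_{\delta},\tau)\dl\tau-\theta\delta .
\end{equation*}
Since \(\phi\) is a subtangent with equality at \((s,t)\),
\begin{equation*}
    V(s,t)-V(\eta_{\delta}(t-\delta),t-\delta)\le\phi(s,t)-\phi(\eta_{\delta}(t-\delta),t-\delta).
\end{equation*}
Combining these with \eqref{eq:supsolloc.1} yields \(0\le-\theta\delta\), which is a contradiction. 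Hence
\begin{equation*}
    \difcp{\phi}{t}+\ovH(s,t,\difcp{\phi}{s})\ge0,
\end{equation*}
and \(V\) is a supersolution.
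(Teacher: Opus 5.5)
\textbf{Verdict: genuine gap.} Your core computation is correct. Suppose \((\eta_\delta(\tau),\tau)\) stays, for all \(\tau\in[t-\delta,t]\), in the neighborhood where \(\partial_t\phi+\ovH(\cdot,\cdot,\partial_s\phi)\le-\theta\) and where \(\phi\le V\). Then the chain rule along \(\eta_\delta\), the Fenchel inequality with \(\lambda=\dot\eta_\delta(\tau)\), and the subtangent property give \(\theta\delta\le 0\). This contradiction argument is a clean alternative to the paper's direct one, which takes \(\mu=\partial_s\varphi\) at a mean-value point, divides by \(\delta_n\) and lets \(n\to\infty\).

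The gap is the localization step you flag, and none of your ways around it work. Three of your arguments presuppose that \(\eta_\delta(t-\delta)\) lies in a fixed compact subset of \((0,1)\), or tends to \(s\), which is exactly what you are trying to prove:
\begin{itemize}
\item the bound \(V(s,t)-V(\eta_\delta(t-\delta),t-\delta)\le 2\sup\lvert V\rvert\) over a compact set;
\item the claim that \(V(\eta(t-\delta),t-\delta)\) stays bounded;
\item the limit \(V(s,t)-V(\eta_\delta(t-\delta),t-\delta)\to0\).
\end{itemize}
Here \(V\) is only continuous on the open strip and may be unbounded below near \(s=0\) or \(s=1\). The exit-time argument would need \eqref{eq:supsolloc.1} for restricted curves, which, as you note, is not available.

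In fact this step cannot be closed under the hypotheses as literally stated, where the curve may depend on \(\delta\). Take \(\ovH(s,t,\mu)=\mu^{2}/2\), so \(\ovL(s,\lambda,t)=\lambda^{2}/2\), and \(V(s,t)=\log s-3t\). Given \((s,t)\) and \(\delta\in(0,t)\), let \(\eta\) be affine on \([t-\delta,t]\) from \(y=s\,e^{-3\delta-1/(2\delta)}\) to \(s\). Then
\[
V(s,t)-V(y,t-\delta)=\log\frac{s}{y}-3\delta=\frac{1}{2\delta}>\frac{(s-y)^{2}}{2\delta}=\int_{t-\delta}^{t}\ovL(\eta,\dot\eta,\tau)\dl\tau .
\]
Yet \(\partial_tV+\frac12(\partial_sV)^{2}=-1<0\) at \(s=1/2\), so the smooth function \(V\) is not a supersolution.

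The paper's proof tacitly assumes localization as well. Its first inequality requires \((\eta(t-\delta),t-\delta)\) to lie where \(\varphi\le V\), and its final limit requires \(\eta\to s\) uniformly on \([t-\delta_n,t]\). This is harmless where the lemma is used: there \(\eta=\gamma^{-1}\circ\zeta|_{[t-\delta,t]}\) is the restriction of a single curve, and \(V\) is continuous up to the boundary.

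So you need to add a hypothesis of one of two kinds:
\begin{itemize}
\item the curves are restrictions of one continuous curve, which makes localization immediate; or
\item \(V\) is bounded below on \((0,1)\times[t-\delta_0,t]\). Then your own estimate \(k\int_{t-\delta}^{t}\lvert\dot\eta_\delta\rvert\dl\tau\le V(s,t)-\inf V+b_k\delta\), with \(k\) chosen large and then \(\delta\) small, gives \(\sup_{\tau}\lvert\eta_\delta(\tau)-s\rvert\to0\).
\end{itemize}
With either hypothesis, the rest of your argument goes through.
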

    \begin{proof}
        Given a \(\Crm^{1}\) subtangent \(\varphi\) to \(V\) at \((s, t) \in (0, 1) \times (0, \infty)\), \zcref{eq:supsolloc.1} yields that for any \(\delta > 0\) small enough there is a curve \(\eta\) such that, for every \(\mu \in \Rds\),
        \begin{equation}\label{eq:supsolloc1}
            \begin{aligned}
                \frac{\varphi(s, t) - \varphi(\eta(t - \delta), t - \delta)}{\delta} & \ge \frac{V(s, t) - V(\eta(t - \delta), t - \delta)}{\delta} \ge \frac{1}{\delta} \int_{t - \delta}^{t} \ovL(\eta, \dot{\eta}, \tau) \dl \tau \\
                & \ge \frac{1}{\delta} \int_{t - \delta}^{t} \left( \mu \dot{\eta}(\tau) - \ovH(\eta(\tau), \tau, \mu) \right) \dl \tau \\
                & \ge \mu \frac{\eta(t) - \eta(t - \delta)}{\delta} - \frac{1}{\delta} \int_{t - \delta}^{t} \ovH(\eta(\tau), \tau, \mu) \dl \tau.
            \end{aligned}
        \end{equation}
        Next, we take an infinitesimal sequence \(\{\delta_{n}\}_{n \in \Nds}\). By applying the first mean value Theorem for integrals we obtain a sequence \(\{t_{n}\}_{n \in \Nds}\) converging to \(t\) such that
        \begin{equation*}
            \varphi(s, t) - \varphi(\eta(t - \delta_{n}), t - \delta_{n}) = \difcp{\varphi(\eta(t_{n}), t_{n})}{t} \delta_{n} + \difcp{\varphi(\eta(t_{n}), t_{n})}{s}(\eta(t) - \eta(t - \delta_{n})).
        \end{equation*}
        Setting \(\mu = \difcp{\varphi(\eta(t_{n}), t_{n})}{s}\) in~\zcref{eq:supsolloc1} we thus have, for any \(n \in \Nds\),
        \begin{multline*}
            \difcp{\varphi(\eta(t_{n}), t_{n})}{t} + \difcp{\varphi(\eta(t_{n}), t_{n})}{s} \frac{\eta(t) - \eta(t - \delta_{n})}{\delta_{n}} \\
            \ge \difcp{\varphi(\eta(t_{n}), t_{n})}{s} \frac{\eta(t) - \eta(t - \delta_{n})}{\delta_{n}} - \frac{1}{\delta_{n}} \int_{t - \delta_{n}}^{t} \ovH(\eta(\tau), \tau, \difcp{\varphi(\eta(t_{n}), t_{n})}{s}) \dl \tau,
        \end{multline*}
        which implies
        \begin{equation*}
            \difcp{\varphi(s, t)}{t} + \ovH(s, t, \difcp{\varphi}{s}) = \lim_{n \to \infty} \difcp{\varphi(\eta(t_{n}), t_{n})}{t} + \frac{1}{\delta_{n}} \int_{t - \delta_{n}}^{t} \ovH(\eta(\tau), \tau, \difcp{\varphi(\eta(t_{n}), t_{n})}{s}) \dl \tau \ge 0.
        \end{equation*}
        This proves the claim since \((s, t)\) and \(\varphi\) are arbitrary.
    \end{proof}

    \begin{rem}\label{stateconstloc}
        Let \(V \colon [0, 1) \times (0, \infty) \to \Rds\) be a continuous function. If for every \(t \in (0, \infty)\) and \(\delta > 0\) small enough there is a curve \(\eta \colon [t - \delta, t] \to [0, 1)\) with \(\eta(t) = 0\) such that
        \begin{equation*}
            V(0, t) - V(\eta(t - \delta), t - \delta) \ge \int_{t - \delta}^{t} \ovL(\eta, \dot{\eta}, \tau) \dl \tau,
        \end{equation*}
        then the same arguments used in the proof of \zcref{supsolloc} show that
        \begin{equation*}
            \difcp{\varphi(0, t)}{t} + \ovH(0, t, \difcp{\varphi(0, t)}{s}) \ge 0
        \end{equation*}
        for any constrained \(\Crm^{1}\) subtangent \(\varphi\) to \(V\) at \((0, t)\).
    \end{rem}

    \printbibliography[heading=bibintoc]

\end{document}